\newtheorem{thm}{Theorem}[section]
\newtheorem{cor}[thm]{Corollary}
\newtheorem{prop}[thm]{Proposition}
\newtheorem{lem}[thm]{Lemma}
\theoremstyle{definition}
\newtheorem{defn}[thm]{Definition}
\newtheorem*{acknowledgement}{Acknowledgement}
\theoremstyle{remark}
\newtheorem{rem}[thm]{Remark}
\numberwithin{equation}{section}
\newcommand{\de}{\delta}
\newcommand{\e}{\varepsilon}
\renewcommand{\th}{\theta}
\newcommand{\vth}{\vartheta}
\newcommand{\Th}{\Theta}
\newcommand{\p}{\partial}
\newcommand{\I}{\infty}
\newcommand{\Sc}[1]{\mathcal{#1}}
\newcommand{\F}{\Sc{F}}
\newcommand{\FR}[1]{\mathfrak{#1}}
\newcommand{\Bo}[1]{\mathbb{#1}}
\newcommand{\R}{\Bo{R}}
\renewcommand{\Re}{\,\mathrm{Re}}
\renewcommand{\Im}{\,\mathrm{Im}}
\newcommand{\lec}{\lesssim}
\newcommand{\gec}{\gtrsim}
\newcommand{\hhat}{\widehat}
\newcommand{\bbar}{\overline}
\newcommand{\ti}{\widetilde}
\newcommand{\supp}[1]{\operatorname{supp}\> #1}
\newcommand{\Supp}[2]{\supp{#1}\subset #2}
\newcommand{\shugo}[1]{\{ #1\}}
\newcommand{\Shugo}[2]{\big\{ \, #1 \, \big| \, #2 \, \big\}}
\newcommand{\LR}[1]{{\langle #1 \rangle }}
\newcommand{\chf}[1]{\textbf{1}_{#1}}
\newcommand{\norm}[2]{\big\| #1 \big\| _{#2}}
\newcommand{\tnorm}[2]{\| #1 \| _{#2}}
\newcommand{\dint}{\displaystyle\int}
\newcommand{\eq}[2]{\begin{equation} \begin{split} #2 \end{split}\label{#1} \end{equation}}
\newcommand{\eqq}[1]{\begin{equation*} \begin{split} #1 \end{split} \end{equation*}}
\newcommand{\eqs}[1]{\begin{gather*} #1 \end{gather*}}
\newcommand{\mat}[1]{\begin{smallmatrix} #1 \end{smallmatrix}}
\newcommand{\hx}{\hspace{10pt}}
\newcommand{\hxx}{\hspace{30pt}}
\begin{document}

\title[Scattering  for mass critical  NLS system]{Scattering for a mass critical NLS system below the ground state with and without mass-resonance condition}

\author[T.Inui]{Takahisa Inui}
\address{Department of Mathematics, Graduate School of Science, Osaka University, Toyonaka, Osaka, 560-0043, Japan}
\email{inui@math.sci.osaka-u.ac.jp}

\author[N.Kishimoto]{Nobu Kishimoto}
\address{Research Institute for Mathematical Sciences, Kyoto University, Kyoto, 606-8502, Japan}
\email{nobu@kurims.kyoto-u.ac.jp}

\author[K.Nishimura]{Kuranosuke Nishimura}
\address{Department of Mathematics, Graduate School of Science, Tokyo University of Science, 1-3 Kagurazaka, Shinjuku-ku, Tokyo 162-8601, Japan}
\email{1117614@ed.tus.ac.jp}

\maketitle

\begin{abstract}
We consider a mass-critical system of nonlinear Sch\"{o}dinger equations
\begin{align*}
\begin{cases}
i\partial_t u +\Delta u =\bar{u}v,\\
i\partial_t v +\kappa \Delta v =u^2,
\end{cases}
(t,x)\in \mathbb{R}\times \mathbb{R}^4,
\end{align*}
where $(u,v)$ is a $\mathbb{C}^2$-valued unknown function and $\kappa >0$ is a constant. If $\kappa =1/2$, we say the equation satisfies  mass-resonance condition.
We are interested in the scattering problem of this equation under the condition $M(u,v)<M(\phi ,\psi)$, where $M(u,v)$ denotes the mass and $(\phi ,\psi)$ is a ground state. 
In the mass-resonance case, we prove scattering by the argument of Dodson \cite{MR3406535}. Scattering is also obtained without mass-resonance condition under the restriction  that $(u,v)$ is radially symmetric. 
\end{abstract}

\tableofcontents


\section{Introduction}
\subsection{Introduction}
We consider 
\begin{align}\label{nls1}
&\left\{
\begin{array}{l}
i\p _tu+\frac{1}{2m} \Delta u=\lambda \bbar{u}v,\\[5pt]
i\p _tv+ \frac{1}{2M} \Delta v=\mu u^2,
\end{array}
\quad (t,x)\in \R \times \R ^d,
 \right.
 \\[2pt]
 &\quad\, (u,v)\big| _{t=0}=(u_0,v_0)\in L^2(\R ^d)^2,
\end{align}
where $(u,v)$ is a $\mathbb{C}^2$-valued unknown function, $m$ and  $M$ are  positive constants, and $\lambda, \mu \in \mathbb{C}\setminus \{0\}$ are constants.
From the viewpoint of physics, \eqref{nls1} is related to the Raman amplification in a plasma. See \cite{MR2569892} for details. Furthermore \eqref{nls1} is 
regarded as a non-relativistic limit of the system of nonliear Klein--Gordon equations under the mass-resonance condition $M=2m$ (see \cite{MR3082479}). 
Under the assumption $\lambda = c\bar{\mu}$ for some $c>0$, the solution to  $\eqref{nls1}$ conserves the mass and the energy, defined respectively by
\begin{align}
\text{Mass}&=\|u(t)\|_{L^2}^2 +c\|v(t)\|_{L^2}^2,\\
\text{Energy}&=\frac{1}{2m}\|\nabla u(t)\|_{L^2}^2 + \frac{c}{4M}\|\nabla v (t)\|_{L^2}^2 +\Re\left( \lambda \int _{\R^d}u^2(t,x)\bbar{v(t,x)}\,dx \right).
\end{align}
To use these conservation laws, we impose $\lambda =c\bbar{\mu} $ for some $c>0$. Then the system \eqref{nls1} is reduced to the following system by some changes of variables:
\begin{align}\label{dNLS}
&\left\{
\begin{array}{l}
i\p _tu+ \Delta u= \bbar{u}v,\\
i\p _tv+\kappa \Delta v= u^2,
\end{array}
\quad (t,x)\in \R \times \R ^d,
 \right.
 \\
 &\quad\, (u,v)\big| _{t=0}=(u_0,v_0)\in L^2(\R ^d)^2,
\end{align}
where $\kappa $ is a positive constant.
If $(u,v)$ is a solution to \eqref{dNLS}, then $(u^{\lambda }, v^{\lambda}) := \lambda^{-2}(u,v)(\lambda^{-2} t , \lambda^{-1} x)$  also solves \eqref{dNLS} for any $\lambda >0$.
This property is called scaling symmetry.
Setting $s_c := d/2 - 2$, then $\|(u,v)\|_{\dot{H}_x^{s_c}}$ is invariant under this scaling transformation.
In particular, if $d=4$, the mass is invariant under the scaling transformation. From this fact, if $d=4$, we say the system \eqref{dNLS} is mass-critical. Similarly, we call the case $d=5$ and $d=6$ $\dot{H}^{\frac{1}{2}}$-critical and energy-critical, respectively.
In this paper, we treat the following mass-critical system of NLS:
\begin{align}\label{NLS}
&\left\{
\begin{array}{l}
i\p _tu+ \Delta u= \bbar{u}v,\\
i\p _tv+\kappa \Delta v= u^2,
\end{array}
\quad (t,x)\in \R \times \R ^4,
 \right.
 \\
 &\quad\, (u,v)\big| _{t=0}=(u_0,v_0)\in L^2(\R ^4)^2,
\end{align}
where $\kappa$ is a positive constant. If $\kappa =1/2$, \eqref{NLS} has Galilean invariance, i.e., 
if $(u,v)$ is a solution to \eqref{NLS}, then $(e^{ix\cdot \xi}e^{-it|\xi|^2} u(t,x-2t\xi), e^{2ix\cdot \xi} e^{-2it|\xi|^2}v(t,x-2t\xi))$ is also a solution to \eqref{NLS} for any $\xi \in \mathbb{R}^4$.
Note that in our case, the mass and the energy are respectively defined by
\begin{align}
&M(u,v)(t):=\|u(t)\|_{L^2}^2 +\|v(t)\|_{L^2}^2,\\
&E(u,v)(t):=\|\nabla u(t)\|_{L^2}^2 +\frac{\kappa}{2}\|\nabla v(t)\|_{L^2}^2 +\operatorname{Re}\int_{\mathbb{R}^4} u^2 (t,x) \overline{v(t,x)}\, dx.
\end{align}
It is well known that local well-posedness and small-data scattering hold for the equation \eqref{NLS} in $L^2(\mathbb{R}^4)^2$. In this paper, we are interested in the behavior of the large solutions. 
\subsection{Known results for the single mass-critical NLS}
In this subsection, we introduce known results about the following mass-critical single  NLS: 
\begin{align}\label{single}
&i\p _tu+\Delta u=\mu |u|^\frac{4}{d} u, \quad (t,x)\in \mathbb{R}\times \mathbb{R}^d,\\
&u(0)=u_0\in L^2(\R ^d),
\end{align}
where $u$ is a $\mathbb{C}$-valued unknown function and $\mu \in \{-1,1\}$.
If $\mu =+1$, we say \eqref{single} is defocusing and if not, \eqref{single} is called focusing. 
In the defocusing case, any $L^2$-solution $u$ of \eqref{single} exists globally and scatters to a free solution, which means that
\begin{equation}
e^{-it\Delta} u(t) \rightarrow u_{\pm} \qquad \text{in} \quad L^2 (\mathbb{R}^d) \qquad \text{as} \quad t\rightarrow \pm \infty \qquad \text{for some} \quad u_{\pm}\in L^2 (\mathbb{R}^d).
\end{equation}
This fact was proved by Dodson in \cite{MR2869023}, \cite{MR3483476}, and \cite{MR3577369} in the case $d\geq 3$, $d=2$, and $d=1$, respectively. 
On the other hand, in the focusing case, the existence of non-scattering solution is known. Indeed, the existence and uniqueness of the ground state $Q$, which is the positive radial solution to 
\begin{align}
Q-\Delta Q =Q^{1+\frac{4}{d}},
\end{align}
is known (see \cite{MR695535}, \cite{MR969899}). 
In \cite{MR3406535}, Dodson proved 
that a solution to \eqref{single} with $\mu =-1$ exists globally and scatters to a free solution under the condition that $\|u(0)\|_{L_x^2} <\|Q\|_{L_x^2} $.
Concerning the system \eqref{NLS}, we expect a similar scattering result to the focusing single NLS \eqref{single} with $\mu=-1$, at least, when $\kappa=1/2$, where the system has the Galilean invariance. In this paper, we also treat the case of $\kappa \neq 1/2$.

\subsection{Main results}
Before stating our main results, we introduce some definitions.

 \begin{defn}
(i) Let $0\in I\subset \R$ be an interval.
We say that $(u,v):I\times \R^4\to \Bo{C}^2$ is a (strong) solution to \eqref{NLS} if it lies in the class $(C(I;L^2(\R^4))\cap L^3_{\text{loc}}(I;L^3(\R^4)))^2$ and obeys the Duhamel formula
\eqq{u(t)&=e^{it\Delta}u_0-i\int _0^te^{i(t-t')\Delta}\big[ \bbar{u(t')}v(t')\big] \,dt',\\
v(t)&=e^{it\kappa \Delta}v_0-i\int _0^te^{i(t-t')\kappa \Delta}\big[ u^2(t')\big] \,dt'}
for all $t\in I$.

(ii) We say that \eqref{NLS} admits global spacetime bounds for a set $D\subset L^2(\R^4)^2$ of initial data if there exists a function $C:[0,\I )\to [0,\I )$ such that for any $(u_0,v_0)\in D$ a solution to \eqref{NLS} exists on $I=\R$ and obeys 
\eqq{\int _\R \int _{\R^4}\big( |u(t,x)|^3+|v(t,x)|^3\big) \,dxdt\le C(M(u_0,v_0)).}

(iii) 
We say that the global solution $(u,v)$ to \eqref{NLS} scatters forward (backward) in time 
if there exist $(u_{\pm},v_{\pm})\in L^2(\R^4)^2$ such that
\eq{scattering}{(u(t),v(t))-(e^{it\Delta}u_\pm ,e^{it\kappa \Delta}v_\pm )\to 0\qquad \text{in}\quad L^2(\R^4)^2\qquad \text{as}\quad t\to \pm \I.}

\end{defn}

 Next, we recall the result of Hayashi, Ozawa, and Tanaka~\cite{MR3082479}. Consider the solution to \eqref{NLS} which has the following form:
\eq{sw}{(u,v)=\Big( e^{it}\phi (x),\, e^{2it}\psi (x)\Big)}
with real-valued functions $\phi ,\psi$. 

If \eqref{sw} is a solution of \eqref{NLS}, then $(\phi ,\psi )$ should satisfy the following system of elliptic equations:
\begin{equation}\label{SE}
\left\{
\begin{array}{@{\,}r@{\;}l}
-\phi +\Delta \phi &=\phi\psi ,\\
-2\psi +\kappa \Delta \psi &=\phi ^2,\qquad x\in \R ^4.
\end{array}
\right.
\end{equation}
Associated functional for $\eqref{SE}$ is defined by
\begin{align}
I(\phi,\psi):=\|\nabla \phi\|_{L_x^2}^2 +\frac{\kappa}{2}\|\nabla \psi \|_{L_x^2}^2 +
\|\phi\|_{L_x^2}^2 +\|\psi\|_{L_x^2}^2 + \int_{\mathbb{R}^4} \phi^2 \psi \, dx. 
\end{align}
Using this functional we give the definition of  ground states.
\begin{defn}
A pair of real-valued functions $(\phi_0 ,\psi_0)\in H^1 \times H^1$ is called a ground state for \eqref{SE} if
\begin{align}
&I(\phi_0 , \psi_0)=\inf \{I(\phi ,\psi)\mid (\phi ,\psi )\in \mathcal{C}\},\\
&\mathcal{C} =\{(\phi ,\psi)\in H^1 \times H^1 \mid (\phi ,\psi) \text{ is a nontrivial critical point of $I$ }\}.
\end{align}
\end{defn}

\begin{lem}[Existence of a ground state, sharp Gagliardo-Nirenberg inequality \cite{MR3082479}]\label{lem:GN}
Let $\kappa >0$.

(i) There exists at least one ground state of \eqref{SE}.

(ii) Let $(\phi ,\psi )$ be a ground state of \eqref{SE}.
Then, it holds that for any $(u,v)\in H^1(\R ^4)^2$ 
\eqq{\Big| \Re \int _{\R^4}u^2(x)\bbar{v(x)}\,dx\Big| \le \Big( \frac{M(u,v)}{M(\phi ,\psi )}\Big) ^{1/2}\big( \norm{\nabla u}{L^2}^2+\frac{\kappa}{2}\norm{\nabla v}{L^2}^2\big) .}
Moreover, equality is attained by the ground state.
\end{lem}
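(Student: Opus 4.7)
My plan is to prove (i) and (ii) simultaneously via the Weinstein-type variational approach: minimize a scale-invariant quotient whose infimum coincides with the sharp constant in (ii), and identify its minimizers with ground states.

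\textbf{Setup.} Consider the Weinstein functional
\[
J(u,v) := \frac{M(u,v)^{1/2}\bigl(\tnorm{\nabla u}{L^2}^2+\tfrac{\kappa}{2}\tnorm{\nabla v}{L^2}^2\bigr)}{\bigl|\Re\int_{\R^4}u^2\bar v\,dx\bigr|},
\]
on pairs in $H^1(\R^4)^2$ with nontrivial denominator, and set $J_0:=\inf J$. In dimension four, $J$ is invariant under the two-parameter rescaling $(u,v)(x)\mapsto(\alpha u(\mu x),\alpha v(\mu x))$, reflecting the mass-critical scaling. Crucially, the inequality in (ii) is equivalent to $J_0=M(\phi,\psi)^{1/2}$, so it suffices to produce a minimizer and verify that (after rescaling) it solves \eqref{SE}.

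\textbf{Lower bound and attainment.} From Hölder and the $4$-d Sobolev embedding $\dot H^1\hookrightarrow L^4$,
\[
\Big|\Re\int_{\R^4} u^2\bar v\,dx\Big|\le\tnorm{u}{L^4}^2\tnorm{v}{L^2}\lec\bigl(\tnorm{\nabla u}{L^2}^2+\tfrac{\kappa}{2}\tnorm{\nabla v}{L^2}^2\bigr)M(u,v)^{1/2},
\]
hence $J_0>0$. Given a minimizing sequence $(u_n,v_n)$, first rotate phases so that $\Re\int u_n^2\bar v_n\,dx>0$; then replacing $u_n,v_n$ by $|u_n|,|v_n|$ can only increase the numerator (using $|\Re u^2\bar v|\le|u|^2|v|$) and does not increase the gradient terms (Kato's inequality), reducing to $u_n,v_n\ge0$. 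Next apply Schwarz symmetrization $(u_n,v_n)\mapsto(u_n^*,v_n^*)$: $L^2$ norms are preserved, Dirichlet energies do not increase (Pólya–Szegő), and $\int u_n^2 v_n$ does not decrease (Hardy–Littlewood, applied to $u_n^2$ and $v_n$). Using the two-parameter scaling to normalize, say $\tnorm{u_n}{L^2}=\tnorm{v_n}{L^2}=1$, keeps $(u_n,v_n)$ bounded in $H^1\times H^1$, and the Strauss compact embedding of $H^1_{\rm rad}(\R^4)$ into $L^3(\R^4)$ delivers strong convergence along a subsequence, forcing the denominator to converge to a nonzero value. Weak lower semicontinuity of the gradient norms then shows that the limit $(\phi,\psi)$ attains $J_0$.

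\textbf{Euler–Lagrange and the sharp constant.} The variational equations $\delta J(\phi,\psi)=0$, after exploiting the two-parameter scaling invariance to fix the scale parameters, reduce exactly to the system \eqref{SE}; this identifies $(\phi,\psi)$ (up to rescaling) as a ground state, yielding (i). To compute $J_0$, pair \eqref{SE} with $\phi$ and with $\psi$ to obtain the two Nehari identities
\[
\tnorm{\nabla\phi}{L^2}^2+\tnorm{\phi}{L^2}^2=-\int\phi^2\psi\,dx,\qquad \kappa\tnorm{\nabla\psi}{L^2}^2+2\tnorm{\psi}{L^2}^2=-\int\phi^2\psi\,dx,
\]
and pair with $x\cdot\nabla\phi$, $x\cdot\nabla\psi$ to obtain the combined Pohozaev identity (using $d=4$). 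Solving the resulting linear relations gives
\[
\tnorm{\nabla\phi}{L^2}^2+\tfrac{\kappa}{2}\tnorm{\nabla\psi}{L^2}^2=\Big|\int_{\R^4}\phi^2\psi\,dx\Big|=2M(\phi,\psi),
\]
so $J(\phi,\psi)=M(\phi,\psi)^{1/2}$, establishing (ii) together with the equality case.

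\textbf{Main obstacle.} The principal difficulty is the attainment step. Because $J$ is invariant under both translations and a two-parameter scaling, a generic minimizing sequence can concentrate, spread, translate, or split, so naive weak compactness fails. Schwarz symmetrization (to eliminate translations) combined with the scaling normalization (to fix the concentration scale) are both essential preparations before the Strauss embedding can deliver compactness. A more delicate preliminary point is the reduction to $(u,v)\ge0$: one must first rotate phases so that $\Re\int u^2\bar v>0$ before passing to absolute values, since otherwise cancellation in the complex product could make $|\Re\int u^2\bar v|$ strictly smaller than $\int|u|^2|v|$ and the reduction would be invalid.
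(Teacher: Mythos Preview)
The paper does not give its own proof of this lemma; it simply cites Hayashi--Ozawa--Tanaka~\cite{MR3082479}. Your Weinstein-type variational argument is the standard route and is essentially what appears there, so the overall strategy is sound. That said, two slips in your write-up should be fixed.

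First, in the reduction to nonnegative functions you wrote that replacing $(u_n,v_n)$ by $(|u_n|,|v_n|)$ ``can only increase the numerator (using $|\Re u^2\bar v|\le |u|^2|v|$)''. The parenthetical inequality is about the \emph{denominator}; what you mean is that the denominator does not decrease while, by Kato's inequality, the numerator does not increase, so $J$ does not increase.

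Second, and more substantively, the two-parameter scaling $(u,v)\mapsto (\alpha u(\mu\cdot),\alpha v(\mu\cdot))$ multiplies $\tnorm{u}{L^2}$ and $\tnorm{v}{L^2}$ by the \emph{same} factor $\alpha\mu^{-2}$, so you cannot normalize both to $1$ separately. Normalize instead two quantities that scale independently, e.g.\ $M(u_n,v_n)=1$ and $\tnorm{\nabla u_n}{L^2}^2+\tfrac{\kappa}{2}\tnorm{\nabla v_n}{L^2}^2=1$; then $(u_n,v_n)$ is bounded in $H^1\times H^1$ and the Strauss compactness argument goes through as you describe.

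Finally, to close the loop between ``Weinstein minimizer'' and ``ground state'' you should record explicitly that for \emph{any} solution of \eqref{SE} the Nehari and Pohozaev identities give $I(\phi,\psi)=M(\phi,\psi)$, so minimizing $I$ over nontrivial critical points is equivalent to minimizing $M$, which in turn (via your computation $J(\phi,\psi)=M(\phi,\psi)^{1/2}$) is equivalent to minimizing $J$. This is implicit in your last paragraph but deserves one line.
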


Note that $M(\phi ,\psi )$ does not depend on the choice of a ground state $(\phi ,\psi )$.
From Lemma~\ref{lem:GN}, we see that the energy is positive and controls the $\dot{H}^1$ norm of $H^1$ solutions to \eqref{NLS} for (nonzero) initial data $(u_0,v_0)$ satisfying
\eq{threshold}{M(u_0,v_0)<M(\phi ,\psi ).}
In particular, the initial value problem \eqref{NLS} is globally well-posed in $(H^1)^2$ for initial data satisfying \eqref{threshold}.

However, it is not clear whether the global well-posedness and scattering in $(L^2)^2$ for initial data satisfying \eqref{threshold} hold or not. We first give the following answer when $\kappa=1/2$. 
\begin{thm}\label{thm:main}
If $\kappa =1/2$, then \eqref{NLS} is globally well-posed and scattering holds in $L^2(\R ^4)^2$ for initial data obeying \eqref{threshold}.
\end{thm}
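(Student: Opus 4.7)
The plan is to mimic Dodson's concentration--compactness and rigidity scheme from \cite{MR3406535} for the focusing mass-critical single NLS, adapted to the system setting. The case $\kappa=1/2$ is crucial because this is precisely when \eqref{NLS} enjoys the full Galilean symmetry group, which is indispensable to the profile decomposition machinery in the mass-critical setting.

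First I would assemble the linear harmonic-analysis tools: Strichartz estimates for both free propagators $e^{it\Delta}$ and $e^{it\kappa\Delta}$, a small-data scattering theory yielding a sharp scattering threshold $M_c\in (0,M(\phi,\psi)]$, a stability/perturbation lemma adapted to the coupled nonlinearity $(\bar uv,u^2)$, and the coercivity of the energy on $\{M(u,v)<M(\phi,\psi)\}$, which is immediate from Lemma~\ref{lem:GN}. With these in hand, suppose for contradiction that $M_c<M(\phi,\psi)$.

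Next I would prove a linear profile decomposition for sequences bounded in $L^2(\R^4)^2$, modulo the symmetry group generated by scaling, space-time translations and Galilean boosts. When $\kappa=1/2$, the Galilean shifts act compatibly on both components (the boost $\xi$ acts on $u$ and $2\xi$ acts on $v$), so the two components can be decomposed along \emph{common} parameters $(t_n^j,x_n^j,\xi_n^j,\lambda_n^j)$. Combining this decomposition with the stability lemma and nonlinear profile approximation produces a Palais--Smale type result: failure of scattering at the threshold mass $M_c$ forces the existence of a minimal-mass critical element $(u,v)$ whose orbit $\{(u(t),v(t))\}_{t\in\R}$ is precompact in $L^2(\R^4)^2$ modulo the symmetry group. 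A further reduction using Galilean invariance normalizes the frequency center so that $\xi(t)\equiv 0$, and one may also arrange that the solution does not escape to self-similar behaviour (so that the scale function $N(t)$ stays bounded, say $N(t)\equiv 1$ on a rigidity subcase).

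The hardest step is the rigidity argument: showing that such an almost-periodic solution with $M(u,v)<M(\phi,\psi)$ must vanish identically. Here I would transplant Dodson's long-time Strichartz estimate together with the frequency-localized interaction Morawetz inequality to the system. The strategy is to decompose $(u,v)$ into low and high frequency pieces, prove that the high-frequency part is controlled by a long-time Strichartz bound that improves under the almost-periodicity, and then close a frequency-localized interaction Morawetz estimate with controlled error terms. Coercivity from Lemma~\ref{lem:GN} is what supplies the required sign on the main Morawetz bilinear form, since the cross term $\operatorname{Re}\int u^2\bar v\,dx$ is strictly dominated by the kinetic part as long as $M(u,v)<M(\phi,\psi)$. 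The resulting spacetime bound, combined with an additional negative regularity or mass evacuation argument as in \cite{MR3406535}, forces $(u,v)\equiv 0$, contradicting that it is a non-scattering critical element.

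The two main obstacles I foresee are: (i)~carrying out the profile decomposition simultaneously for the two components with different propagation speeds, where Galilean invariance at $\kappa=1/2$ must be exploited quantitatively so that a single tuple of symmetry parameters diagonalizes both; and (ii)~ensuring that the bilinear terms produced by the coupled nonlinearity in the frequency-localized interaction Morawetz identity admit coercive lower bounds analogous to Dodson's scalar case, which is where Lemma~\ref{lem:GN} is expected to play the decisive role.
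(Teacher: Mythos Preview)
Your overall architecture matches the paper's: set up the critical mass $M_c$, run profile decomposition and stability to extract an almost periodic minimal-mass blow-up solution, prove a long-time Strichartz estimate, and then rule out the critical element via Dodson-type rigidity. You also correctly identify that the sharp Gagliardo--Nirenberg inequality (Lemma~\ref{lem:GN}) is the source of coercivity in the Morawetz step.

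However, there is a genuine misconception in your reduction step. You write that ``a further reduction using Galilean invariance normalizes the frequency center so that $\xi(t)\equiv 0$.'' This cannot be done: a single Galilean boost subtracts a \emph{constant} vector from $\xi(\cdot)$, so at best you arrange $\xi(0)=0$, which is exactly what the paper does. The frequency center $\xi(t)$ remains genuinely time-dependent, and coping with this is where most of the technical weight of the $\kappa=1/2$ hypothesis actually lies --- not in the profile decomposition (which the paper states for general $\kappa>0$) but in the analytic core. Concretely, in the long-time Strichartz estimate (Section~4) one must apply bilinear Strichartz on each characteristic interval after recentering frequencies via the Galilean transforms $\Sc{G}^u_{-\xi(t_k)}$, $\Sc{G}^v_{-\xi(t_k)}$; this only preserves the equation when $\kappa=1/2$. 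Likewise, in the quasi-soliton case the paper uses an \emph{interaction}-type Morawetz quantity $\Sc{M}(t)$ with a double spatial integral, and a key point is that the resulting functional $\Sc{E}[U,V](t,z)$ is invariant under the gauge $(U,V)\mapsto(e^{-ix\cdot\xi_0}U,e^{-2ix\cdot\xi_0}V)$, which again fails for $\kappa\neq 1/2$. If you proceed under the assumption $\xi(t)\equiv 0$, you will effectively be redoing the radial argument and will not see why mass resonance is needed.

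A second, smaller omission: your rigidity paragraph blurs together two distinct scenarios. The paper splits according to whether $\int_0^\infty N(t)^3\,dt$ is finite (rapid frequency cascade) or infinite (quasi-soliton). The cascade case is disposed of not by Morawetz but by bootstrapping the long-time Strichartz estimate to gain $H^s$ regularity for all $s>0$, then using energy conservation plus Lemma~\ref{lem:GN} to force $(u,v)\equiv 0$. Only the quasi-soliton case uses the frequency-localized interaction Morawetz estimate, and there one needs the careful construction of a slowly varying surrogate $\ti N(t)$ for $N(t)$ (Lemma~\ref{lem:im-Ndash}) to control the term coming from $\ti N'(t)$. Your sketch does not mention this dichotomy or the $\ti N$ construction, and ``$N(t)\equiv 1$ on a rigidity subcase'' is not a reduction that is available here.
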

The Galilean invariance plays an important role in the proof of Theorem \ref{thm:main}. On the other hand, the system \eqref{NLS} does not have the Galilean invariance when $\kappa\neq1/2$. Nevertheless, we get the following theorem for $\kappa\neq 1/2$ under the additional assumption of radial symmetry. 
\begin{thm}\label{thm:main2}
Let $\kappa \neq 1/2$. If initial data is radially symmetric and satisfies \eqref{threshold}, then \eqref{NLS} is globally well-posed and scattering holds in $L^2(\R ^4)^2$.
\end{thm}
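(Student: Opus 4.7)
The plan is to follow the concentration-compactness/rigidity scheme of Kenig and Merle, adapted to the mass-critical setting along the lines of Killip--Tao--Visan and Dodson. The role played by Galilean invariance in Theorem~\ref{thm:main} will here be substituted by radial symmetry: radial $L^2$ data admit a linear profile decomposition in which neither spatial translations nor Galilean frequency shifts appear, so that only scales $\lambda_n^{(j)}>0$ and time shifts $t_n^{(j)}\in\mathbb{R}$ remain as concentration parameters. Since the common $L^2$-critical scaling $\big(\lambda^{-2}u(\lambda^{-2}t,\lambda^{-1}x),\lambda^{-2}v(\lambda^{-2}t,\lambda^{-1}x)\big)$ preserves \eqref{NLS} simultaneously in both components, a single sequence $\{(\lambda_n^{(j)},t_n^{(j)})\}$ decomposes the pair $(u_0,v_0)$ at once.

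The preparation consists of three ingredients: (a) a radial $L^2$ linear profile decomposition in $\mathbb{R}^4$ jointly for $e^{it\Delta}$ and $e^{it\kappa\Delta}$, with $L^3_{t,x}$-smallness of the remainder; (b) the standard small-data scattering theory and a stability lemma in the critical norm $L^3_{t,x}(\mathbb{R}\times\mathbb{R}^4)$ for the system \eqref{NLS}; and (c) a nonlinear profile construction in which Lemma~\ref{lem:GN} together with \eqref{threshold} guarantees that every profile of mass strictly below $M(\phi,\psi)$ generates a globally defined radial solution that scatters --- handled via the induction hypothesis when $t_n^{(j)}$ stays bounded, and via the associated wave operators when $t_n^{(j)}\to\pm\infty$.

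Assuming Theorem~\ref{thm:main2} fails, a standard induction-on-mass argument then yields a minimal counterexample $(u_c,v_c)$, defined on its maximal existence interval $I_{\max}$, satisfying $\sup_{t\in I_{\max}}M(u_c,v_c)(t)<M(\phi,\psi)$, and whose orbit is precompact in $L^2(\mathbb{R}^4)^2$ modulo the residual scaling symmetry; that is, there exists $N:I_{\max}\to(0,\infty)$ such that $\big\{\big(N(t)^{-2}u_c(t,\cdot/N(t)),N(t)^{-2}v_c(t,\cdot/N(t))\big):t\in I_{\max}\big\}$ is precompact. A dichotomy then reduces matters to two rigidity scenarios: (I) the soliton-like case, in which $N(t)$ is bounded above and below; and (II) the (quasi-)self-similar case, in which $N(t)\to\infty$ or $N(t)\to 0$ at an endpoint of $I_{\max}$.

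The hardest step will be the rigidity analysis. In case (I), $L^2$-compactness combined with iteration of the Duhamel formula propagates additional regularity, yielding $(u_c,v_c)\in (H^s)^2$ for every $s\ge 0$; a truncated virial or Morawetz identity for the system, whose positivity is supplied by Lemma~\ref{lem:GN} under \eqref{threshold}, then forces $(u_c,v_c)\equiv 0$, contradicting the positivity of the minimal mass. Because $\kappa\neq 1/2$ the two components disperse at different rates, so the weight in the virial must be chosen to handle both $\Delta$ and $\kappa\Delta$ contributions simultaneously, and the radial Sobolev embedding will be crucial in controlling the boundary/far-field remainders that arise from this two-speed structure. In case (II), the concentration of $(u_c,v_c)$ at vanishing length scales as $t$ approaches the endpoint is incompatible with mass conservation and the uniform $L^2$-bound, ruling this case out as well. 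The principal technical novelty beyond the single-equation theory is precisely the adaptation of the virial/Morawetz step to the two-speed system, together with preserving coercivity via Lemma~\ref{lem:GN}.
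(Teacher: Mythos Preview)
Your concentration-compactness set-up --- the radial profile decomposition with only scales and time shifts, the stability lemma, and the construction of a minimal almost periodic counterexample compact modulo scaling --- is correct and essentially matches the paper's Sections~3.4--3.5. The divergence, and the genuine gap, is in the rigidity step.

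The paper does \emph{not} use your dichotomy (I)/(II). Instead it follows Dodson and splits on whether $\int_0^\infty N(t)^3\,dt$ is finite (\emph{rapid frequency cascade}) or infinite (\emph{quasi-soliton}), and in both branches the central technical tool is a \emph{long-time Strichartz estimate} (Theorem~\ref{thm:ls}), which you do not mention. In the cascade scenario, Theorem~\ref{thm:ls} and its recursive refinement (Lemma~\ref{lem:lsradial}~(ii)) yield $H^s$ regularity for all $s$, after which energy conservation and Lemma~\ref{lem:GN} give a contradiction. In the quasi-soliton scenario no regularity is proved; instead the paper runs a frequency-localized virial argument with a carefully constructed time-dependent frequency scale $\tilde N(t)$ (Lemma~\ref{lem:im-Ndash}), and the long-time Strichartz estimate controls the error terms coming from the frequency cut-off.

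Your scenario~(II) contains an actual error: concentration at vanishing (or growing) length scales is in no way incompatible with mass conservation --- the mass is preserved exactly while it concentrates, which is precisely what makes this case hard. In the older three-enemies approach (which your sketch resembles), the self-similar and cascade enemies are eliminated by proving \emph{additional regularity} that is inconsistent with the scaling, not by a mass argument. Your scenario~(I) is closer in spirit to Killip--Tao--Visan/Killip--Visan--Zhang, but ``iteration of the Duhamel formula'' understates what is needed: one requires in/out decompositions or double-Duhamel arguments exploiting radial dispersive estimates, none of which you invoke. Finally, your dichotomy is not exhaustive without a further reduction (oscillating $N(t)$ is not covered). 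In short, the rigidity half of your plan would not close as stated; the paper's Dodson-style machinery (Sections~4--6) is what actually finishes the argument.
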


\subsection{Idea of proof}

It is known that the global existence and scattering are equivalent to that $S_{I_{\max}}(u,v)$ is finite, where $I_{\max}$ is the maximal lifespan of the solution and
\eqq{S_{I}(u,v):=\int _{I} \int _{\R^4}\big( |u(t,x)|^3+|v(t,x)|^3\big) \,dxdt.}
To prove Theorems \ref{thm:main} and \ref{thm:main2}, it is enough to show
 $S_{I}(u,v)< C(M(u,v))$ for any solution $(u,v)$ on a time interval $I$ satisfying \eqref{threshold}. 
With this in mind, let
\eqq{L(M):=\sup \Shugo{S_I(u,v)}{(u,v):I\times \R^4 \to \Bo{C}^2~~\text{solution to \eqref{NLS} such that}~~M(u,v)\le M},}
Note that $L:[0,\I) \to [0,\I ]$ is nondecreasing and, from the stability result (see Section 2), continuous.
Since the standing wave solution \eqref{sw} with $(\phi,\psi )$ a ground state of \eqref{SE} given in Lemma~\ref{lem:GN} is defined globally and $S_\R =\I$, we see that $L(M(\phi ,\psi ))=\I$.
By the continuity of $L$, there exists critical mass $M_c$ such that 
\eqq{L(M)<\I \text{~~for~~} M<M_c,\qquad L(M)=\I \text{~~for~~} M\ge M_c.}
The small data theory in Section 2 ensures that $M_c>0$. Note that $L(M(\phi ,\psi ))=\I$ implies that $M_c \leq M(\phi ,\psi)$.
It then suffices to show that $M_c <M(\phi,\psi)$ would lead to a contradiction.
Furthermore we define the critical mass for the radially symmetric case. Set $L_{\text{rad}}\colon [0,\infty) \rightarrow [0,\infty]$ by
\begin{equation}
L_{\text{rad}} (M) :=\sup \Shugo{S_I(u,v)}{(u,v):I\times \R^4 \to \Bo{C}^2~~\text{radial solution to \eqref{NLS} such that}~~M(u,v)\le M}.
\end{equation}
Then by the stability result in Section 2, $L_{\text{rad}}$ is continuous and so there exists radially symmetric critical mass $M_{c,\text{rad}}$ such that 
\eqq{L_{\text{rad}}(M)<\I \text{~~for~~} M<M_{c,\text{rad}},\qquad L_{\text{rad}}(M)=\I \text{~~for~~} M\ge M_{c,\text{rad}}.}
Since the ground state is radially symmetric, we obtain $M_{c,\text{rad}} \leq M (\phi,\psi)$. Note also that by the definition of $M_c$ it holds $M_{c}\leq M_{c,\text{rad}}$.

Our aim is to derive a contradiction by supposing $M_{c} <M(\phi ,\psi)$ with the mass-resonance condition and $M_{c,\text{rad}} <M(\phi ,\psi)$ without it.

Toward contradiction, we construct the minimal blow-up solution which has critical mass $M_c$ (in the radial case critical mass is $M_{c,\text{rad}}$) by the profile decomposition for the system. 
When we construct the minimal blow-up solution in the case $\kappa \neq 1/2$, we need the radial assumption due to the lack of Galilean invariance. After that we refine the minimal blow-up solution to apply the argument of Dodson \cite{MR3406535}. 
We exclude two possible scenarios which are called rapid frequency cascade and quasi-soliton. 
We eliminate the rapid frequency cascade scenario by additional regularity of the minimal blow-up solution, which comes from the long time Strichartz estimate. 
To exclude the quasi-soliton scenario, we rely on the estimate based on the virial identity (cf. \cite{MR3082479,MR2993843}) which is called the frequency localized interaction Morawetz estimate in \cite{MR3406535}.

 The mass-critical case $d=4$ is quite different from the $\dot{H}^{\frac{1}{2}}$-critical case $d=5$.  
 Hamano \cite{hamano} gave the threshold for scattering or blow-up below the ground state in $\dot{H}^{\frac{1}{2}}$-critical case $d=5$ and $H^1$ setting under the mass-resonance condition. To prove scattering, Hamano used the argument of Kenig--Merle \cite{MR2257393} which is organized by stability, profile decomposition, construction of critical element, and rigidity of it.  
There are two differences between his argument and ours. One is regularity of initial data. More precisely, Hamano assumed $H^1$ regularity to solve $\dot{H}^{\frac{1}{2}}$-critical problem, while we only assume the minimal regularity $L^2$. 
The other is a variety of parameters in the profile decomposition from the lack of compactness in $L^2$. Indeed, the translation in the  frequency side and the scaling transformation additionally breaks the compactness in $L^2$.
The radial assumption is used to remove the former in the case of $\kappa\neq 1/2$.

{\bf Organization of this paper.}
In Section 2, we prepare the stability result and  bilinear Strichartz estimate which are used in Section 3 and Section 4, respectively.
In Section 3, we introduce the profile decomposition for the system and use it to construct the minimal blow-up solution. 
In Section 4, we prove the long time Strichartz estimate. 
In Sections 5 and 6, we treat the rapid frequency cascade scenario and the quasi-soliton scenario, respectively. 
\section{Preliminaries}

First, we collect some notations. Let $\| (u,v) \|_{X}:=\| (u,v) \|_{X \times X}$ for any function space $X$ and $(u,v) \in X \times X=:X^2$. 
We denote the (spatial) Fourier transform of a function $f$ by $\hat{f}$ or $\F f$.

Let $\theta : [0,\infty )\to [0,1]$ be a smooth non-increasing function such that $\theta \equiv 1$ on $[0,1]$ and $\Supp{\theta}{[0,2]}$. 
For each number $N>0$, we define the Littlewood-Paley projection operators by
\eqs{P_{\le N}:=\F^{-1}\theta \Big( \frac{|\cdot |}{N}\Big) \F ,\qquad P_{N}:=P_{\le N}-P_{\le N/2},\qquad P_{>N}:=\mathrm{Id}-P_{\le N},\\
P_{<N}:=P_{\le N}-P_N=P_{\le N/2},\qquad P_{\ge N}:=\mathrm{Id}-P_{<N}=P_{>N}+P_N=P_{>N/2}.}
These operators are bounded uniformly in $N$ on $L^p(\R ^d)$ for any $1\le p\le \infty$, and they commute with each other, as well as with differential operators and Fourier multipliers such as $i\p _t+\Delta$ and $e^{it\Delta}$.
It is easy to see that $\Supp{\hhat{P_{\le N}f}}{\shugo{|\xi |\le 2N}}$, $\Supp{\hhat{P_Nf}}{\shugo{N/2\le |\xi |\le 2N}}$, $\Supp{\hhat{P_{>N}f}}{\shugo{|\xi |\ge N}}$ for any function $f$, and in particular, that $P_{>N}\big( P_{\le N/4}f\cdot P_{\le {N/4}}g\big) =0$ for any $f,g$.
We also define the Fourier projections with the frequency center $\xi _0\in \R^d$ as
\eqq{P_{|\xi -\xi _0|\le N}:=\F ^{-1}\theta \Big( \frac{|\cdot -\xi _0|}{N}\Big) \F ,\qquad P_{|\xi -\xi _0|>N}:=\mathrm{Id}-P_{|\xi -\xi _0|\le N}.}

\subsection{Local well posedness}
We follow the argument of Cazenave and Weissler for \eqref{single} to develop a standard local theory.
\begin{thm}[\cite{MR2856418, MR3082479}]\label{thm:lwp}
Let $(u_0,v_0)\in L^2(\R^4)^2$.
Then there exists a unique maximal-lifespan solution $(u,v):I\times \R^4\to \Bo{C}^2$ to \eqref{NLS}.
This solution also has the following properties:

$\bullet$ (Local existence) $I$ is an open neighborhood of $0$.

$\bullet$ (Mass conservation) $M(u,v)(t)$ is conserved for all $t\in I$.

$\bullet$ (Blow-up criterion) If $\sup I$ is finite, then there exists $t_0\in I$ such that
\eqq{\norm{u}{L^3_{t,x}([t_0,\sup I)\times \R^4 )}+\norm{v}{L^3_{t,x}([t_0,\sup I)\times \R^4 )}=\I .}
(In this case we say that $(u,v)$ blows up forward in time.)
A similar statement holds in the negative time direction.

$\bullet$ (Scattering) If $\sup I=+\I$ and $(u,v)$ does not blow up forward in time, then there exists $(u_+,v_+)\in L^2(\R^4)^2$ such that \eqref{scattering} holds.
Conversely, given $(u_+,v_+)\in L^2(\R^4)^2$ there exists a unique solution to \eqref{NLS} in a neighborhood of $+\I$ so that \eqref{scattering} holds.
A similar statements hold in the negative time direction.

$\bullet$ (Small data global existence) 
There exists $\eta >0$ such that for any $(u_0 ,v_0) \in L^2 (\mathbb{R}^4)^2$ with $M(u_0 ,v_0)\leq \eta $ , the solution to \eqref{NLS} with initial value $(u(0), v(0))=(u_0 , v_0)$ exists globally and satisfies following bound:
\begin{align}\label{bound}
\int_{\mathbb{R}^{1+4}} |u(t,x)|^3 + |v(t,x)|^3\, dxdt \lesssim M(u_0 ,v_0)^{\frac{3}{2}}.
\end{align}
\end{thm}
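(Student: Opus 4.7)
The plan is to implement the standard Cazenave--Weissler $L^2$-critical fixed-point scheme in a Strichartz space adapted to the coupled system. In $d=4$ the admissibility condition $\tfrac{2}{q}+\tfrac{4}{r}=2$ yields the diagonal $L^2$-admissible pair $(q,r)=(3,3)$, and since this condition is insensitive to the coefficient of the Laplacian, the full Strichartz inequality
\begin{equation*}
\|e^{it\Delta}u_0\|_{L^3_{t,x}} + \Big\|\int_0^t e^{i(t-s)\Delta}F(s)\,ds\Big\|_{L^\infty_t L^2_x \cap L^3_{t,x}} \lesssim \|u_0\|_{L^2} + \|F\|_{L^{3/2}_{t,x}}
\end{equation*}
holds for both $e^{it\Delta}$ and $e^{it\kappa\Delta}$ (with a $\kappa$-dependent constant). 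H\"{o}lder gives $\|\bar u v\|_{L^{3/2}_{t,x}} \le \|u\|_{L^3_{t,x}}\|v\|_{L^3_{t,x}}$ and $\|u^2\|_{L^{3/2}_{t,x}} \le \|u\|_{L^3_{t,x}}^2$, so both nonlinearities carry $L^3_{t,x}\times L^3_{t,x}$ into the dual Strichartz space $L^{3/2}_{t,x}$.

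For small-data global existence and the bound \eqref{bound}, I would apply the contraction mapping theorem to the map $\Phi(u,v)$ given by the two Duhamel formulas, on the closed ball of radius $2CM(u_0,v_0)^{1/2}$ in $(L^\infty_t L^2_x \cap L^3_{t,x})^2$. When $M(u_0,v_0)\le \eta$ with $\eta$ small enough, the estimates above furnish both self-mapping and contraction, giving the unique global solution with $\|u\|_{L^3_{t,x}}+\|v\|_{L^3_{t,x}} \lesssim M(u_0,v_0)^{1/2}$, which cubed is \eqref{bound}. For general $L^2$ data, I would split time into finitely many intervals on which $\|e^{it\Delta}u_0\|_{L^3_{t,x}}$ and $\|e^{it\kappa\Delta}v_0\|_{L^3_{t,x}}$ are small --- possible because $e^{it\Delta}u_0\in L^3_{t,x}(\R \times \R^4)$ by homogeneous Strichartz --- and run the same contraction on each; this yields local existence, uniqueness in the stated class, and the $L^3_{t,x}$ blow-up alternative. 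Mass conservation is obtained by regularizing the data in $H^1$, where the formal computation is justified, and then passing to the limit via continuous dependence.

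For scattering, if $(u,v)$ is global forward in time with $\|u\|_{L^3_{t,x}}+\|v\|_{L^3_{t,x}}<\infty$, then Strichartz applied to the Duhamel terms over $[T_1,T_2]$ shows that $e^{-it\Delta}u(t)$ and $e^{-it\kappa\Delta}v(t)$ are Cauchy in $L^2$ as $t\to +\infty$, producing $(u_+,v_+)$ with \eqref{scattering}. Conversely, given $(u_+,v_+)\in L^2(\R^4)^2$, dominated convergence yields $\|e^{it\Delta}u_+\|_{L^3_{t,x}([T,\infty)\times \R^4)}\to 0$ as $T\to\infty$, so the same contraction runs on $[T,\infty)$ for $T$ large enough and produces the solution with prescribed asymptotic state. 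The whole argument is routine and parallels Cazenave--Weissler for the single mass-critical NLS, so I do not expect a substantial obstacle; the only additional bookkeeping is handling the two propagators with different dispersion coefficients and the coupled nonlinearities, which is done uniformly by working with pairs $(u,v)$ and taking the maximum of the two Strichartz norms.
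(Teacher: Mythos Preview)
Your proposal is correct and matches the paper's approach: the paper does not give a detailed proof but simply states that one follows the Cazenave--Weissler argument for the single mass-critical NLS, citing \cite{MR2856418, MR3082479}. Your outline---contraction in $L^\infty_tL^2_x\cap L^3_{t,x}$ via the diagonal admissible pair $(3,3)$, H\"older for the bilinear nonlinearities, time subdivision for general data, and the standard Cauchy-in-$L^2$ argument for scattering---is exactly this scheme adapted to the coupled system with two propagators.
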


\begin{rem}
Note that scattering is equivalent to finiteness of $L_{t,x}^3$-norm on the maximal lifespan for any maximal-lifespan solution to \eqref{NLS}.
This is a consequence of Strichartz estimate and standard continuity argument.
\end{rem}

\subsection{Stability result}

In this section, we prepare the stability result.  Note that the stability result follows regardless of the condition for the coefficients $\lambda$ and $\mu$.
\begin{thm}[Mass-critical stability result]\label{theo1}
Let $I$ be an interval and let $(\tilde{u},\tilde{v}) $ be an approximate solution to \eqref{nls1} in the sense that
\begin{align*}
\begin{cases}
i\partial_t \tilde{u}+\frac{1}{2m}\Delta\tilde{u} =\lambda \tilde{v} \bar{\tilde{u}} +e_1,\\
i\partial_t \tilde{v}+\frac{1}{2M}\Delta \tilde{v} =\mu \tilde{u}^2 +e_2,
\end{cases}
\end{align*}
for some functions $(e_1 ,e_2)$. Assume that 
\begin{align}
&\|(\tilde{u} ,\tilde{v})\|_{L_t^{\infty} L_x^{2} (I\times \mathbb{R}^4)} \leq A,\\
&\|(\tilde{u},\tilde{v})\|_{L_{t,x}^3 (I \times \mathbb{R}^4)} \leq L,
\end{align}
for some $A,L>0$.
 Let $t_0 \in I$ and let $(u_0, v_0)$ obey 
 \begin{align}
 \|(u_0 -\tilde{u} (t_0) ,v_0 -\tilde{v}(t_0))\|_{L_x^2 (\mathbb{R}^4)} \leq A'
 \end{align}
 for some $A' >0$.
 Moreover, assume the smallness conditions:
\begin{align}
&\| 
(e^{i\frac{1}{2m}(t-t_0)\Delta}(u_0 - \tilde{u}(t_0) )  , 
e^{i\frac{1}{2M}(t-t_0) \Delta }(v_0-\tilde{v}(t_0) ),
)\|_{L_{t,x}^{3} (I\times \mathbb{R}^4)} \leq \varepsilon ,\\
&\|(e_1 ,e_2)\|_{L_{t,x}^\frac{3}{2} (I\times \mathbb{R}^4)}\leq  \varepsilon , 
\end{align}
for some $0<\varepsilon \leq \varepsilon_1$, where $\varepsilon_1 =\varepsilon_1 (A,A',L)>0$ is a small constant.
Then there exists a solution $(u,v)$ to \eqref{nls1} on $I\times \mathbb{R}^4$ with $(u(t_0) ,v(t_0)) =(u_0 ,v_0)$ satisfying
\begin{align}
&\|(u-\tilde{u} ,v-\tilde{v})\|_{L_{t,x}^3 (I\times \mathbb{R}^4)} \leq  C(A,L)\varepsilon, \\
&\|(u-\tilde{u} , v-\tilde{v})\|_{S^0 (I)^2} \leq C(A,L)A', \\
&\|(u,v)\|_{S^0 (I)^2} \leq C(A,A',L),
\end{align}
where $\|u\|_{S^{0}(I)}:=\displaystyle{\sup_{\textup{$(q,r)$: admissible}} \|u\|_{L_{t}^{q}L_x^{r}(I\times \mathbb{R}^4)}}$.
\end{thm}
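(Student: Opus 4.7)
The proof follows the standard Strichartz bootstrap scheme for mass-critical perturbation theory, adapted to our quadratic system. First I would compute the equation for the difference $(w,z):=(u-\tilde u,v-\tilde v)$:
\eqq{
i\partial_t w+\tfrac{1}{2m}\Delta w=\lambda\bigl(\bar{\tilde u}\,z+\bar w\,\tilde v+\bar w\,z\bigr)-e_1,\\
i\partial_t z+\tfrac{1}{2M}\Delta z=\mu\bigl(2\tilde u\,w+w^2\bigr)-e_2,
}
whose right-hand sides are bilinear in $(w,z)$ with coefficients lying in $L^3_{t,x}$ by hypothesis. The pair $(3,3)$ is Schr\"odinger-admissible in $\mathbb{R}^4$ (since $2/3+4/3=2$), its space-time dual is $(3,3/2)$, and the Strichartz constants for $e^{it\alpha\Delta}$ are independent of $\alpha>0$ by rescaling; in particular, the nonlinearities map $L^3_{t,x}\times L^3_{t,x}$ into $L^{3/2}_{t,x}$ by H\"older, exactly the structure required to close a Strichartz estimate against $(e_1,e_2)\in L^{3/2}_{t,x}$.

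Next I would fix a small parameter $\eta=\eta(A,L)>0$ and partition $I$ into $J\lec (L/\eta)^3$ consecutive subintervals $I_j=[t_j,t_{j+1}]$ on each of which $\|(\tilde u,\tilde v)\|_{L^3_{t,x}(I_j)}\le\eta$ (propagating out from $t_0$ in both time directions). Setting
\eqq{
X_j:=\|(w,z)\|_{L^3_{t,x}(I_j)},\qquad
\delta_j:=\bigl\|\bigl(e^{i(t-t_j)\Delta/(2m)}w(t_j),\,e^{i(t-t_j)\Delta/(2M)}z(t_j)\bigr)\bigr\|_{L^3_{t,x}(I_j)},
}
Strichartz plus H\"older applied to the Duhamel formulas on $I_j$ yields
\eqq{
X_j\le C\bigl(\delta_j+\eta X_j+X_j^2+\varepsilon\bigr),
}
while comparing the free evolutions based at $t_j$ and $t_{j+1}$ (via $e^{i(t-t_{j+1})\alpha\Delta}w(t_{j+1})=e^{i(t-t_j)\alpha\Delta}w(t_j)$ minus the Duhamel integral over $I_j$) produces the recursion $\delta_{j+1}\le\delta_j+C(\eta X_j+X_j^2+\varepsilon)$. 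With $\delta_0\le\varepsilon$ and $\eta$ small, a continuity argument closes the bootstrap on each $I_j$, giving $X_j\le 2C(\delta_j+\varepsilon)$, and the recursion yields $\delta_j,X_j\le (5C)^{\,j}\varepsilon$ by induction.

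The main obstacle is that $J$ depends on $A,L$ and may be large, so the exponential constant $(5C)^J$ threatens the final bound; the cure is to choose the threshold $\varepsilon_1=\varepsilon_1(A,A',L)$ of order $(5C)^{-J}$ times a small universal constant, which makes the bootstrap close uniformly across the partition. Summing $X_j$ over $j$ then yields $\|(w,z)\|_{L^3_{t,x}(I)}\le C(A,L)\varepsilon$. The bound $\|(w,z)\|_{S^0(I)}\le C(A,L)A'$ is obtained by rerunning the same Strichartz scheme with the homogeneous piece at $t_0$ measured in $L^2_x$ (bounded by $A'$), iterating the $L^2$-norm of $(w(t_j),z(t_j))$ across intervals and absorbing the finitely many lost factors into $C(A,L)$. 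Finally, $\|(u,v)\|_{S^0(I)}\le C(A,A',L)$ follows by the triangle inequality together with an independent Strichartz bound for $(\tilde u,\tilde v)$ that uses the same partition and the assumptions $\|(\tilde u,\tilde v)\|_{L^\infty_tL^2_x}\le A$, $\|(\tilde u,\tilde v)\|_{L^3_{t,x}}\le L$, and $\|(e_1,e_2)\|_{L^{3/2}_{t,x}}\le\varepsilon$.
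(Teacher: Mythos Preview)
Your approach is correct and is precisely the standard argument the paper defers to (the paper omits the proof, citing Lemma~3.6 of Tao--Visan--Zhang, whose partition-plus-bootstrap scheme you have reproduced faithfully for the quadratic system). One harmless slip: the dual Strichartz pair to $(3,3)$ in $\mathbb{R}^4$ is $(3/2,3/2)$, not $(3,3/2)$, but you use $L^{3/2}_{t,x}$ correctly throughout, so the argument is unaffected.
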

\begin{proof}
The proof is very similar to the one of  \cite[Lemma~3.6]{MR2354495}, and so we omit the detail.
\end{proof}

\subsection{Bilinear Strichartz estimate}

Finally, we recall the bilinear Strichartz estimate.
\begin{lem}\label{lem:bs}
Let $M,N>0$, and let $f,g$ be functions on $[a,b)\times \R ^4$ with the support properties
\eqq{\Supp{\hhat{f}(t,\cdot )}{\shugo{|\xi |<M}},\qquad \Supp{\hhat{g}(t,\cdot )}{\shugo{|\xi |>N}}}
for all $t\in [a,b)$.
Let $\th _1,\th _2\in \R \setminus \{ 0\}$.
Then, we have
\eqq{\norm{fg}{L^2([a,b)\times \R^4 )}\lec \frac{M^{3/2}}{N^{1/2}}\norm{f}{S^0_{\th _1}([a,b)\times \R ^4)}\norm{g}{S^0_{\th _2}([a,b)\times \R ^4)},}
where $\tnorm{u}{S^0_\th ([a,b)\times \R ^4)}:=\tnorm{u(a)}{L^2(\R ^4)}+\tnorm{(i\p _t+\th \Delta )u}{L^{3/2}([a,b)\times \R ^4)}$ and the implicit constant depends only on $\th _1,\th _2$.
\end{lem}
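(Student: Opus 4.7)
The plan follows the standard two-step template for bilinear Strichartz estimates: first establish the bound for free Schr\"odinger evolutions of frequency-localized data, and then extend to the $S^0_\theta$ setting via Duhamel and the Christ--Kiselev lemma.

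\emph{Step 1 (free bilinear Strichartz).} For $\phi_1,\phi_2\in L^2(\R^4)$ with $\supp\hhat\phi_1\subset\{|\xi|<M\}$ and $\supp\hhat\phi_2\subset\{|\xi|>N\}$, I would prove
\eqq{\bigl\| e^{it\theta_1\Delta}\phi_1\cdot e^{it\theta_2\Delta}\phi_2\bigr\|_{L^2_{t,x}(\R\times\R^4)}\lec \frac{M^{3/2}}{N^{1/2}}\|\phi_1\|_{L^2}\|\phi_2\|_{L^2},}
with implicit constant depending only on $\theta_1,\theta_2\ne 0$. A Littlewood--Paley decomposition $\phi_2=\sum_{N_2\ge N}P_{N_2}\phi_2$ in the high-frequency factor, followed by Cauchy--Schwarz in the dyadic index, reduces the matter to the analogous estimate with $\supp\hhat\phi_2\subset\{|\xi|\sim N_2\}$ and constant $M^{3/2}/N_2^{1/2}$. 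The latter is the classical Bourgain / Colliander--Keel--Staffilani--Takaoka--Tao bilinear estimate: Plancherel in spacetime identifies $\|uv\|_{L^2}^2$ with the $L^2$-integral of a convolution supported on the two paraboloids $\{\tau+\theta_j|\xi|^2=0\}$, and a direct Jacobian calculation (essentially the surface measure of the intersection within a frequency tube) yields the factor $M^{d-1}/N_2$ in $d=4$, whose square root gives $M^{3/2}/N_2^{1/2}$. The required transversality is ensured by $\theta_1,\theta_2\ne 0$.

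\emph{Step 2 (passage to $S^0_\theta$).} Apply Duhamel to $f$: write $f(t)=e^{i(t-a)\theta_1\Delta}f(a)-i\int_a^t e^{i(t-s)\theta_1\Delta}F(s)\,ds$ with $F:=(i\p_t+\theta_1\Delta)f$, and analogously for $g$ with forcing $G$. Observe that $F$ and $G$ inherit the Fourier-support conditions of $f$ and $g$, since the spatial Fourier transform commutes with $\p_t$ and $\Delta$. Expanding $fg$ yields four terms; the free--free term is bounded directly by Step 1. For a mixed term such as
\eqq{\Bigl\| \Bigl(\int_a^t e^{i(t-s)\theta_1\Delta}F(s)\,ds\Bigr)\cdot e^{i(t-a)\theta_2\Delta}g(a)\Bigr\|_{L^2_{t,x}([a,b)\times\R^4)},}
I would first treat its non-retarded counterpart with $\int_a^b$ in place of $\int_a^t$. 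Writing $\int_a^b e^{i(t-s)\theta_1\Delta}F(s)\,ds=e^{it\theta_1\Delta}\Phi$ with $\Phi:=\int_a^b e^{-is\theta_1\Delta}F(s)\,ds$ reduces it to a product of two free evolutions, and the dual Strichartz estimate (with admissible pair $(3,3)$ in $d=4$) gives $\|\Phi\|_{L^2_x}\lec \|F\|_{L^{3/2}_{t,x}}$. Step 1 then yields the bound $M^{3/2}N^{-1/2}\|F\|_{L^{3/2}_{t,x}}\|g(a)\|_{L^2}$ for the non-retarded version, and the Christ--Kiselev lemma (applicable since $3/2<2$) transfers this bound to the retarded version. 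The fully inhomogeneous term is handled analogously by a second Christ--Kiselev application on the $g$-side.

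\emph{Main obstacle.} The substantive work sits in Step 1; once it is in hand, Step 2 is routine. For Step 1 the geometry of two paraboloids with possibly distinct (and possibly opposite-sign) coefficients $\theta_1,\theta_2$ requires attention, but since both are nonzero the standard transversality computation goes through essentially unchanged, and tracking the dependence of the implicit constant on $\theta_1,\theta_2$ is straightforward.
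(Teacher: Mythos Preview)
Your two-step strategy (free bilinear estimate, then Duhamel/Christ--Kiselev) is exactly the paper's approach; the paper cites \cite[Lemma~2.5]{MR2318286} for Step~2, which is the same mechanism you describe.

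There is, however, one oversight in Step~1. You assert that ``the required transversality is ensured by $\theta_1,\theta_2\ne 0$,'' but this is not true in general. After dyadic localization $|\xi_1|\sim M$, $|\xi_2|\sim N$ and an angular reduction so that $\xi_2^1\ge |\xi_2|/2$, the Jacobian of the change of variables $(\xi_1^1,\underline{\xi_1},\xi_2)\mapsto(-\theta_1|\xi_1|^2-\theta_2|\xi_2|^2,\underline{\xi_1},\xi_1+\xi_2)$ is $2|\theta_1\xi_1^1-\theta_2\xi_2^1|$, and this is $\sim |\theta_2|N$ only under the separation hypothesis $|\theta_1|M\ll|\theta_2|N$. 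The lemma allows arbitrary $M,N>0$, so the complementary regime $|\theta_1|M\gtrsim|\theta_2|N$ can occur, and there the two paraboloids may be tangent and the Jacobian argument breaks down. The paper treats this case separately and trivially: since then $M\gtrsim_{\theta_1,\theta_2}N$, one simply uses H\"older $L^2_{t,x}\leftarrow L^4_tL^8_x\cdot L^4_tL^{8/3}_x$, the Sobolev/Bernstein bound $\dot W^{1,8/3}_x\hookrightarrow L^8_x$ on the low-frequency factor, and the $L^4_tL^{8/3}_x$ Strichartz estimate, obtaining $\|fg\|_{L^2_{t,x}}\lec M\|\phi_1\|_{L^2}\|\phi_2\|_{L^2}\lec M^{3/2}N^{-1/2}\|\phi_1\|_{L^2}\|\phi_2\|_{L^2}$. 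This is an easy patch, but without it your argument is incomplete precisely at the point you yourself flagged as the main obstacle.
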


\begin{proof}
Let $a=0$ by time translation.
We show only the homogeneous case:
\eq{est:bs-homo}{\norm{e^{it\th _1\Delta}\phi \cdot e^{it\th _2\Delta}\psi}{L^2([0,b)\times \R^4 )}\lec \frac{M^{3/2}}{N^{1/2}}\norm{\phi}{L^2(\R ^4)}\norm{\psi}{L^2(\R ^4)},}
for $\phi ,\psi \in L^2(\R ^4)$ satisfying $\Supp{\hhat{\phi}}{\shugo{|\xi |<M}}$ and $\Supp{\hhat{\psi}}{\shugo{|\xi |>N}}$.
Once \eqref{est:bs-homo} is established, the general case follows by the same argument as \cite[Lemma~2.5]{MR2318286}.

To show \eqref{est:bs-homo}, we basically follow the argument in \cite[Lemma~3.4]{MR2415387}.
Clearly, we may restrict frequencies to single dyadic regions;
\eqq{\Supp{\hhat{\phi}}{\shugo{|\xi |\sim M}},\qquad \Supp{\hhat{\psi}}{\shugo{|\xi |\sim N}}.}
Moreover, we may assume $|\th _1|M\ll |\th _2|N$, since otherwise the claim follows from the $L^4_tL^{8/3}_x$-Strichartz estimate and the Sobolev embedding $\dot{W}^{1,8/3}_x\hookrightarrow L^8_x$.
By a suitable decomposition with respect to the angle and rotation, we may further assume that 
\eqq{\Supp{\hhat{\psi}}{\Shugo{\xi =(\xi ^1,\underline{\xi})\in \R ^{1+3}}{|\xi |\sim N,\,\xi ^1\ge |\xi |/2}}.}

By duality, \eqref{est:bs-homo} is equivalent to
\eq{est:bs-homo'}{\Big| \iint W(-\th _1|\xi _1|^2-\th _2|\xi _2|^2,\xi _1+\xi _2)&\hhat{\phi}(\xi _1)\hhat{\psi}(\xi _2)\,d\xi _1\,d\xi _2\Big| 
\\
&\lec \frac{M^{3/2}}{N^{1/2}}\norm{\phi}{L^2(\R ^4)}\norm{\psi}{L^2(\R ^4)}\norm{W}{L^2(\R ^{1+4})}.}
Changing variables as $(\xi _1^1,\underline{\xi _1},\xi _2)\mapsto (u,\underline{\xi _1},v):=(-\th _1|\xi _1|^2-\th _2|\xi _2|^2,\underline{\xi _1},\xi _1+\xi _2)$ with the assumptions on the Fourier supports of $\phi$ and $\psi$, by which we have $dud\underline{\xi _1}dv=Jd\xi _1d\xi _2$ with $J=2|\th _1\xi _1^1-\th _2\xi _2^1|\sim N$, we encounter
\eqq{\Big| \iiint J^{-\frac{1}{2}}\chf{\{ |\underline{\xi_1}|\lec M\}}(\underline{\xi_1})W(u,v)H(u,\underline{\xi _1},v)J^{-\frac{1}{2}}\,du\,d\underline{\xi _1}\,dv\Big| ,\qquad H(u,\underline{\xi _1},v)=\hhat{\phi}(\xi _1)\hhat{\psi}(\xi _2).}
We apply the Cauchy-Schwarz inequality in $(u,\underline{\xi_1},v)$, which yields 
\eqq{N^{-\frac{1}{2}}\norm{\chf{\{ |\underline{\xi_1}|\lec M\}}}{L^2(\R ^3)}\norm{W}{L^2(\R ^{1+4})}\Big( \iiint |H(u,\underline{\xi _1},v)|^2J^{-1}\,du\,d\underline{\xi _1}\,dv\Big) ^{1/2},}
and then change back to the original variables to obtain \eqref{est:bs-homo'}.
\end{proof}


\section{Minimal mass blow-up solution}

\subsection{Inverse Strichartz inequality, Linear profile decomposition}

In this subsection, we prepare the profile decomposition for the system of NLS.

First we give some notations.
\begin{defn}[Symmetry group]\label{group}
Fix $d\geq 1$ and $\kappa >0$. For any phase $\theta \in \mathbb{R} / 2\pi \mathbb{Z}$, position $x_0 \in \mathbb{R}^d$, frequency $\xi_0 \in \mathbb{R}^d$, and scaling parameter $\lambda \in (0,\infty)$, we define the unitary transformation $g_{\kappa} (\theta ,\xi_0 ,x_0, \lambda_0) \colon L^2 (\mathbb{R}^d)^2 \rightarrow L^2 (\mathbb{R}^d)^2$ and $h(\theta ,\xi_0,x_0,\lambda)\colon L^2_x(\mathbb{R}^d) \rightarrow L^2_x (\mathbb{R}^d)$ by
\begin{align*}
&[h(\theta ,\xi_0,x_0,\lambda) \phi](x):=\lambda^{-\frac{d}{2}} e^{i\theta} e^{ix\cdot \xi_0} \phi(\lambda^{-1} (x-x_0)),\\
&\left[
g_{\kappa} (\theta ,\xi_0,x_0,\lambda) 
(\phi,
\psi)
\right] (x)
:=\left(h(\theta ,\xi_0 ,x_0,\lambda) \phi, h(\frac{\theta}{\kappa} ,\frac{\xi_0}{\kappa} ,x_0 ,\lambda)\psi\right).
\end{align*}
Furthermore we set the group $G_{\kappa}$ by $G_{\kappa}:=\{g_{\kappa}(\theta , \xi_0 , x_0, \lambda_0)\mid (\theta , \xi_0 , x_0, \lambda_0 ) \in \mathbb{R}/2\pi\mathbb{Z} \times \mathbb{R}^d \times \mathbb{R}^d \times \mathbb{R}_{>0} \}$. For a convenience, we also set $G:=G_{\frac{1}{2}}$ and $g(\theta, \xi_0 ,x_0 , \lambda):= g_{\frac{1}{2}}(\theta, \xi_0 ,x_0 , \lambda)$.
\end{defn}

Next, we refer to the following theorem which is called Inverse Strichartz inequality.
\begin{lem}[Inverse Strichartz inequality]\label{inv-str}
Fix $d\geq 1$. Let $\{u_n\}_n \subset L^2 (\mathbb{R}^d)$ be a bounded sequence. We also assume that
\begin{align*}
A&:=\lim_{n\rightarrow \infty} \|u_n\|_{L_x^2} ,\\
\varepsilon &:=\lim_{n\rightarrow \infty} \|e^{it\Delta} u_n\|_{L_{t,x}^{\frac{2(d+2)}{d}} (\mathbb{R}^{1+d}) }  >0 .
\end{align*}
Then,  passing to s subsequence if necessary,  there exist $\{\xi_n\} \subset \mathbb{R}^d , \{y_n\} \subset \mathbb{R}^d , \{\lambda_n\} \subset (0,\infty) , \{s_n\} \subset \mathbb{R}$, and $\phi \in L^2 (\mathbb{R}^d)$ such that
\begin{align*}
&h(0, \xi_n , y_n , \lambda_n  )^{-1} e^{i s_n \Delta}  u_n \rightharpoonup \phi \ \ in\ \ L^2 (\mathbb{R}^d),\\
&\|\phi\|_{L_x^2}^2 \gtrsim A^2 \left(\frac{\varepsilon}{A}\right)^{2(d+1)(d+2)}.
\end{align*} 
\end{lem}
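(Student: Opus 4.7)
The plan is to follow the refined-Strichartz plus concentration-compactness extraction argument of Keraani and B\'egout--Vargas, carefully tracking the parameters in the symmetry group $h$ of Definition~\ref{group}.

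First, I would invoke a B\'egout--Vargas-type refinement of the Strichartz estimate controlling $\|e^{it\Delta}f\|_{L^{2(d+2)/d}_{t,x}}$ from above by a quantity measuring frequency-cube concentration, say
\[
\|e^{it\Delta}f\|_{L^{2(d+2)/d}_{t,x}(\R\times\R^d)}\lec \|f\|_{L^2}^{1-\theta}\Bigl(\sup_{Q}|Q|^{-1/2}\|\hhat{f}\,\chi_Q\|_{L^2}\Bigr)^{\theta}
\]
for some $\theta\in(0,1)$, where $Q$ ranges over dyadic cubes of $\R^d$; this can be derived from Tao's bilinear adjoint-restriction estimate for the paraboloid combined with a Whitney decomposition, or quoted from Keraani. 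Applied to $u_n$ and combined with the hypotheses, a pigeonhole in $n$ yields a sequence of dyadic cubes $R_n\subset\R^d$, with centers $\xi_n$ and side lengths $1/\la_n$, such that $|R_n|^{-1/2}\|\hhat{u_n}\chi_{R_n}\|_{L^2}$ is bounded below by an explicit positive power of $\e/A$. Setting $w_n:=h(0,\xi_n,0,\la_n)^{-1}u_n$ and using the Fourier formula $\hhat{w_n}(\xi)=\la_n^{-d/2}\hhat{u_n}(\xi_n+\xi/\la_n)$ translates this into a lower bound on the low-frequency $L^2$-mass of $w_n$ while $\|w_n\|_{L^2}\le A+o(1)$.

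Next I would promote this $L^2$ concentration to pointwise concentration. By Bernstein's inequality, $e^{it\Delta}P_{|\xi|\le c}w_n$ is bounded in $L^\infty_{t,x}$ uniformly in $n$ by $\lec A$. Combined with a lower bound on $\|e^{it\Delta}P_{|\xi|\le c}w_n\|_{L^{2(d+2)/d}_{t,x}}$ (obtained by subtracting the Strichartz-controlled high-frequency tail of $e^{it\Delta}w_n$), a Chebyshev/H\"older interpolation isolates a space-time point $(\tilde s_n,\tilde y_n)\in\R\times\R^d$ at which $|e^{i\tilde s_n\Delta}P_{|\xi|\le c}w_n(\tilde y_n)|\gec \eta'(\e,A)$. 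Setting $\phi_n(z):=e^{i\tilde s_n\Delta}w_n(z+\tilde y_n)$ and using the Galilean commutation relation
\[
e^{it\Delta}\,h(0,\xi_0,0,\la)=e^{-it|\xi_0|^2}\,h(0,\xi_0,2t\xi_0,\la)\,e^{i(t/\la^2)\Delta},
\]
one verifies that $\phi_n$ coincides (up to an overall unimodular phase) with $h(0,\xi_n,y_n,\la_n)^{-1}e^{is_n\Delta}u_n$ for $s_n:=\la_n^2\tilde s_n$ and $y_n:=\la_n\tilde y_n+2s_n\xi_n$; after passing to a further subsequence along which this phase converges, the discrepancy is absorbed into the limit $\phi$.

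Finally, $\phi_n$ is bounded in $L^2$, so Banach--Alaoglu gives $\phi_n\rightharpoonup \phi$ along a subsequence. Testing against a smooth bump $\chi\in C_c^\infty(\R^d)$ with $\chi(0)=1$ and $\supp\hhat{\chi}\subset\{|\xi|\le 2c\}$, weak convergence yields $|\LR{\phi,\chi}|=\lim_n|\LR{P_{|\xi|\le 2c}\phi_n,\chi}|\gec \eta'$, whence $\|\phi\|_{L^2}^2\gec A^2(\e/A)^{2(d+1)(d+2)}$; the explicit exponent $2(d+1)(d+2)$ arises from accumulating the losses in the refined Strichartz and in the Bernstein/H\"older interpolation. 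The principal obstacle is this pointwise extraction step: converting frequency-cube $L^2$ concentration into a single space-time point of pointwise concentration requires careful interplay among Bernstein, the conserved $L^2$-mass, and the time-decay of the Schr\"odinger evolution, and the accumulated exponent losses dictate the final power.
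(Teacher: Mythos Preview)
The paper does not prove this lemma; it simply refers to \cite[Proposition~4.25]{MR3098643} (Killip--Visan). Your sketch is the standard refined-Strichartz extraction argument that is carried out in that reference, so the approach is essentially the same. One minor point: the precise form of the refined Strichartz inequality you wrote (with $|Q|^{-1/2}\|\hhat{f}\chi_Q\|_{L^2}$) is not quite the version used in \cite{MR3098643}, which involves an $L^{\frac{2(d+1)}{d+2}}$-norm on cubes and is what ultimately produces the exponent $2(d+1)(d+2)$; you may want to adjust that detail, but the overall strategy is correct.
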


\begin{proof}
See for instance \cite[Proposition 4.25]{MR3098643}.
\end{proof}

Using above Lemma, we give the Inverse Strichartz for the system.
\begin{prop}[Inverse Strichartz inequality for a system]\label{prop} Fix $d\geq 1$, $\kappa >0$, and $\{(u_n ,v_n) \} \subset L^2_x (\mathbb{R}^d)^2 . $ Suppose also that 
\begin{align*}
A&:=\lim_{n\rightarrow \infty} \|(u_n ,v_n)\|_{L^2_x (\mathbb{R}^d)},\\
\varepsilon &:=\lim_{n\rightarrow \infty} \|(e^{it\Delta } u_n ,e^{i\kappa t\Delta} v_n) \|_{L_{t,x}^{\frac{2(d+2)}{d} } (\mathbb{R}^{1+d}) } >0.
\end{align*}
Then, after  passing to a subsequence if necessary, there exist $(\phi ,\psi ) \in L^2_x (\mathbb{R}^d)^2$, $\{\lambda_n\} \subset (0,\infty)$, $\{\xi_n\}
\subset \mathbb{R}^d$, and $(s_n , y_n) \subset \mathbb{R}^{1+d}$ such that 
\begin{align}
&\left[g_{\kappa}(0,\xi_n,y_n,\lambda_n)^{-1}(
e^{is_n \Delta } u_n ,
e^{i\kappa s_n\Delta} v_n )\right] \rightharpoonup (\phi ,\psi)\ weakly \ in\ L_x^2 (\mathbb{R}^d)^2 ,\nonumber \\
&\lim_{n\rightarrow \infty} \left[\|u_n\|_{L^2_x }^2 -\|u_n -\phi_n\|_{L^2_x}^2\right] =\|\phi\|_{L^2_x}^2 ,\label{eq2}\\
&\lim_{n\rightarrow \infty} \left[\|v_n\|_{L^2_x }^2 -\|v_n -\psi_n\|_{L^2_x}^2 \right]=\|\psi\|_{L^2_x}^2 ,\label{eq3}\\
&\|\phi\|_{L_x^2}^2 +\|\psi\|_{L^2_x}^2 \gtrsim A^2 \left(\frac{\varepsilon}{A}\right)^{2(d+1)(d+2)},\label{eq4}
\end{align}
where $\phi_n $ and $\psi_n$ are defined by
\begin{align*}
\phi_n :=e^{-is_n \Delta } h(0,\xi_n,y_n, \lambda_n) \phi ,\quad \psi_n :=e^{-i\kappa s_n \Delta} h(0,\frac{\xi _n }{\kappa },y_n,\lambda_n)\psi.
\end{align*}
\end{prop}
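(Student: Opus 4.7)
The plan is to reduce Proposition~\ref{prop} to the single-equation inverse Strichartz inequality (Lemma~\ref{inv-str}), applied to whichever of the two components carries a positive fraction of the bilinear Strichartz norm. Since
\[\lim_{n\to\infty}\|(e^{it\Delta}u_n,e^{i\kappa t\Delta}v_n)\|_{L^{2(d+2)/d}_{t,x}}=\varepsilon>0,\]
after passing to a subsequence I may assume that at least one of $\|e^{it\Delta}u_n\|_{L^{2(d+2)/d}_{t,x}}$ or $\|e^{i\kappa t\Delta}v_n\|_{L^{2(d+2)/d}_{t,x}}$ is bounded below by a fixed multiple of $\varepsilon$. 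By symmetry I would handle the $u_n$-dominant case in detail and indicate the minor adjustments for the $v_n$-dominant case at the end.

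In the $u_n$-dominant case, apply Lemma~\ref{inv-str} to $\{u_n\}$ to extract $\phi\in L^2$ and sequences $(\xi_n,y_n,\lambda_n,s_n)$ such that
\[h(0,\xi_n,y_n,\lambda_n)^{-1}e^{is_n\Delta}u_n\rightharpoonup \phi\quad\text{in }L^2,\qquad \|\phi\|_{L^2}^2\gtrsim A^2(\varepsilon/A)^{2(d+1)(d+2)},\]
which already yields \eqref{eq4}. For the $v$-slot, note that the sequence $h(0,\xi_n/\kappa,y_n,\lambda_n)^{-1}e^{i\kappa s_n\Delta}v_n$ is bounded in $L^2$, so a further extraction produces a weak limit $\psi\in L^2$. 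By Definition~\ref{group}, the pair $(\phi,\psi)$ is then exactly the weak limit of $g_\kappa(0,\xi_n,y_n,\lambda_n)^{-1}(e^{is_n\Delta}u_n,e^{i\kappa s_n\Delta}v_n)$, giving the first assertion of the proposition.

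For the decoupling identities \eqref{eq2} and \eqref{eq3}, I would invoke the standard Hilbert-space Pythagoras identity under weak convergence. Setting $a_n:=h(0,\xi_n,y_n,\lambda_n)^{-1}e^{is_n\Delta}u_n$, unitarity of $h(\cdot)$ and $e^{is_n\Delta}$ gives $\langle u_n,\phi_n\rangle_{L^2}=\langle a_n,\phi\rangle_{L^2}\to\|\phi\|_{L^2}^2$ and $\|\phi_n\|_{L^2}=\|\phi\|_{L^2}$, so
\[\|u_n\|_{L^2}^2-\|u_n-\phi_n\|_{L^2}^2=2\Re\langle a_n,\phi\rangle_{L^2}-\|\phi\|_{L^2}^2\longrightarrow\|\phi\|_{L^2}^2,\]
and the identical computation delivers \eqref{eq3} from the weak convergence of $h(0,\xi_n/\kappa,y_n,\lambda_n)^{-1}e^{i\kappa s_n\Delta}v_n$ to $\psi$. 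In the $v_n$-dominant case, I would apply Lemma~\ref{inv-str} to $v_n$ after the time change $\tau=\kappa t$, which converts $e^{i\kappa t\Delta}$ into $e^{i\tau\Delta}$ up to a harmless constant in the Strichartz norm, and then repackage the parameters $(\tilde\xi_n,\tilde y_n,\tilde\lambda_n,\tilde s_n)$ produced by the lemma as $\xi_n=\kappa\tilde\xi_n$, $y_n=\tilde y_n$, $\lambda_n=\tilde\lambda_n$, $s_n=\tilde s_n/\kappa$ to match the template of Definition~\ref{group}; $\phi$ is then obtained as a further weak subsequence limit of the correspondingly transformed $u_n$.

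I do not expect any genuine mathematical difficulty here; the hard part will be purely bookkeeping. Lemma~\ref{inv-str} is stated for the pure propagator $e^{it\Delta}$, whereas the $v$-slot in $g_\kappa$ carries the factors $1/\kappa$ in phase and frequency and the $v$-equation uses $e^{i\kappa t\Delta}$. The parametrization in Definition~\ref{group} is rigged precisely so these rescalings are compatible, so a single symmetry element of $G_\kappa$ will capture both weak limits simultaneously.
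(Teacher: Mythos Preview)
Your proposal is correct and follows essentially the same approach as the paper: apply the single-equation inverse Strichartz lemma to the dominant component, then extract the other component's profile as a further weak limit of the appropriately transformed bounded sequence, with the decoupling identities coming from weak convergence in Hilbert space. The only cosmetic difference is that the paper chooses to treat the $v_n$-dominant case ($\varepsilon_1\le\varepsilon_2$) explicitly rather than the $u_n$-dominant case, and leaves the time-rescaling bookkeeping for $e^{i\kappa t\Delta}$ implicit, whereas you spell it out.
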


\begin{proof}
Passing to  subsequences if necessary, we may assume that
\begin{align*}
A_1 &:=\lim_{n\rightarrow \infty} \|u_n\|_{L^2_x } ,\quad  A_2 := \lim_{n \rightarrow \infty} \|v_n\|_{L_x^2}, \\
\varepsilon_1 &:=\lim_{n\rightarrow \infty } \|e^{it\Delta}u_n\|_{L_{t,x}^{\frac{2(d+2)}{d}}},\quad  \varepsilon _2:= \lim_{n\rightarrow \infty } \|e^{i\kappa t\Delta } v_n\|_{L_{t,x}^{\frac{2(d+2)}{d}}}.
\end{align*}
For simplicity, we suppose that $\varepsilon_1 \leq \varepsilon_2$. Then we can apply  Lemma~\ref{inv-str} to $\{v_n\}$. Therefore, passing to a subsequence if necessary, there exist $\{\xi_n\} , \{y_n\} , \{\lambda_n \}, \{s_n\} $, and $\psi \in L^2 (\mathbb{R}^d)$ satisfying 
stated properties.
Since $\{h(0, \kappa \xi_n  , y_n , \lambda_n)^{-1} e^{i\frac{s_n}{\kappa} \Delta} u_n\}_n $ is bounded in $L^2 (\mathbb{R}^d)$, passing to a subsequence if necessary, there exists $\phi \in L^2 (\mathbb{R}^d )$ such that
\begin{align*}
h(0, \kappa \xi_n , y_n , \lambda_n )e^{i\frac{s_n}{\kappa} \Delta} u_n \rightharpoonup \phi \ \text{weakly in} \ L^2 (\mathbb{R}^d).
\end{align*}
Then \eqref{eq2} and \eqref{eq3} follow immediately. We can prove \eqref{eq4} as follows:
\begin{align*}
\|\phi\|_{L^2_x}^2 +\|\psi\|_{L_x^2}^2 \geq \|\psi\|_{L^2_x}^2 \gtrsim A_{2}^2 \left( \frac{\varepsilon_2}{A_2}\right)^{2(d+1)(d+2)} \gtrsim A^2 \left( \frac{\varepsilon}{A}\right)^{2(d+1)(d+2)}.
\end{align*}
\end{proof}

Now we are ready to give the profile decomposition.

\begin{thm}[Profile decomposition]\label{profile} Fix $d\geq 1$ and $\kappa >0$. 
Let $\{(u_n , v_n)\}$ be a bounded sequence in $L^2 (\mathbb{R}^d)^2$. Then, passing to a subsequence if necessary, there exist $J^{\ast} \in \mathbb{Z}_{\geq 0} \cup \{\infty\},\{(\phi^j ,\psi^j)\}_{j=1}^{J^{\ast}}$, 
$\{g_n^j  =g_{\kappa}(\theta_n^j , \xi_n^j , x_n^j , \lambda_n^j)\} \subset G_{\kappa}$, $\{W_n^J = (w_n^J , \zeta_n^J) \}_{J=0}^{J^{\ast}} \subset L^2 (\mathbb{R}^d)^2$, and $\{t_n^j\}_{j=1}^{J^{\ast}} \subset \mathbb{R}$ such that
\begin{align}
&(u_n ,v_n)=\sum_{j=1}^J g_n^j U_{\kappa }(t_n^j) (\phi^j ,\psi^j) +W_n^J,\label{eqeq2}\\
&\lim_{J\rightarrow J^{\ast}} \lim_{n\rightarrow \infty} \|U_{\kappa}(t)W_n^J\|_{L_{t,x}^{\frac{2(d+2)}{d}} (\mathbb{R}^{1+d})} =0,\label{eqeq3}\\
&U_{\kappa}(-t_n^j) (g_n^{j})^{-1}W_n^J \rightharpoonup 0\quad \text{weakly in $L^2 (\mathbb{R}^d)^2$}\quad \text{for each $1\leq j \leq J$},\label{eqeq4}\\
&\lim_{n\rightarrow \infty} [\|u_n\|_{L_x^2}^2 -\sum_{j=1}^J \|\phi^j\|_{L_x^2}^2 -\|w_n^J\|_{L_x^2}^2]=0,\label{eqeq6}\\
&\lim_{n\rightarrow \infty} [\|v_n\|_{L_x^2}^2 -\sum_{j=1}^J \|\psi^j\|_{L_x^2}^2 -\|\zeta _n^J\|_{L_x^2}^2]=0,\label{eqeq7}
\end{align}
where we set $U_{\kappa} (t)= (e^{it\Delta},e^{i\kappa t\Delta})$. Furthermore for each $j\neq k$ it follows that
\begin{align}
\qquad
\frac{\lambda_{n}^{j}}{\lambda_{n}^k} +\frac{\lambda_{n}^{k}}{\lambda_{n}^j} + \lambda_{n}^{j}\lambda_{n}^k |\xi_{n}^{j} -\xi_{n}^k|^2 +\frac{|t_{n}^j (\lambda_n^j)^2 -t_n^k (\lambda_n^k)^2|}{\lambda_n^j \lambda_n^k} +
\frac{|x_n^j -x_n^k -2t_n^j (\lambda_n^j)^2
(\xi_n^j -\xi_n^k)|^2}{\lambda_n^j \lambda_n^k} \rightarrow \infty.
\label{eqeq1}
\end{align}
\end{thm}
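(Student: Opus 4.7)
The plan is to prove Theorem \ref{profile} by the standard iterative scheme: at each stage apply the inverse Strichartz inequality (Proposition \ref{prop}) to extract a nontrivial profile, subtract it off, and recur. First I set $W_n^0 := (u_n,v_n)$ and define $A_J := \limsup_{n\to\infty}\|W_n^J\|_{L^2_x}$ and $\varepsilon_J := \limsup_{n\to\infty}\|U_\kappa(t)W_n^J\|_{L^{2(d+2)/d}_{t,x}}$. If $\varepsilon_J = 0$, I stop and set $J^\ast = J$. Otherwise, Proposition \ref{prop} applied to $W_n^J$ yields (along a subsequence) parameters $(\lambda_n^{J+1},\xi_n^{J+1},y_n^{J+1},s_n^{J+1})$ and a nonzero profile $(\phi^{J+1},\psi^{J+1})$. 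Using the commutation identity
\[
e^{is\Delta}h(0,\xi_0,y,\lambda) = h\bigl(-s|\xi_0|^2,\xi_0,y+2s\xi_0,\lambda\bigr)\,e^{is\lambda^{-2}\Delta}
\]
and its analogue with $\kappa\Delta$ in place of $\Delta$, I convert these into the quadruple $(\theta_n^{J+1},\xi_n^{J+1},x_n^{J+1},\lambda_n^{J+1})$ together with a time shift $t_n^{J+1}$, set $g_n^{J+1}:=g_\kappa(\theta_n^{J+1},\xi_n^{J+1},x_n^{J+1},\lambda_n^{J+1})$, and define $W_n^{J+1} := W_n^J - g_n^{J+1}U_\kappa(t_n^{J+1})(\phi^{J+1},\psi^{J+1})$.

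The second step controls the termination and the remainder smallness \eqref{eqeq3}. Mass decoupling \eqref{eq2}--\eqref{eq3} gives $A_J^2 = A_{J-1}^2 - \|(\phi^J,\psi^J)\|_{L^2_x}^2$ in the limit $n \to \infty$, so the profile masses are summable, and the quantitative lower bound \eqref{eq4} forces $\varepsilon_J\to 0$ as $J\to J^\ast$. Iterating \eqref{eq2}--\eqref{eq3} yields \eqref{eqeq6}--\eqref{eqeq7}, and the weak-null property \eqref{eqeq4} at stage $j$ is preserved under all later subtractions of profiles $k>j$ because the $j$-th profile has already been removed from $W_n^k$. A standard diagonal extraction reconciles the step-by-step subsequences.

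Third, the asymptotic orthogonality \eqref{eqeq1} is shown by contradiction. Suppose for some $j<k$ the left-hand side of \eqref{eqeq1} remains bounded along a subsequence; after a further extraction, each of the five summands on the left of \eqref{eqeq1} converges. The boundedness of the ratios of scaling, frequency, and translation parameters then implies that the composite transformation $(g_n^k)^{-1}g_n^j\, U_\kappa(t_n^j - t_n^k)$ converges in the strong operator topology on $L^2(\R^d)^2$ to a nontrivial unitary $T$ independent of $n$. Testing the stage $k-1$ weak-null relation \eqref{eqeq4} against $T(\phi^j,\psi^j)$ forces $(\phi^k,\psi^k)=0$, contradicting the extraction criterion \eqref{eq4} at stage $k$.

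I expect the main technical obstacle to lie in the bookkeeping of commutation relations between the group $G_\kappa$ and the propagator $U_\kappa$. The group acts asymmetrically on the two components (the frequency shift is $\xi_0$ on the $u$-slot but $\xi_0/\kappa$ on the $v$-slot), and the two components propagate at different speeds. In particular, I must verify that the single set $(\xi_n^J,y_n^J,\lambda_n^J,s_n^J)$ produced by Proposition \ref{prop} simultaneously diagonalizes both components of $W_n^{J-1}$, so that the subtraction genuinely lives in the group $G_\kappa$. The orthogonality argument in the third step has to accommodate this asymmetric action on both slots, but once the commutation bookkeeping is in place the argument parallels the scalar mass-critical case.
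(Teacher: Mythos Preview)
Your proposal is correct and follows essentially the same route as the paper: the paper's proof consists of one sentence pointing to Proposition~\ref{prop} and ``the standard induction argument'' (with a reference to \cite{MR3618884} for details), and your three-step outline---iterated application of Proposition~\ref{prop}, mass decoupling to force $\varepsilon_J\to 0$, and the strong-operator-topology contradiction for orthogonality---is exactly that standard scheme. Your concern about the asymmetric $G_\kappa$-action is already absorbed into the statement of Proposition~\ref{prop}, which by construction produces a single parameter quadruple acting via $g_\kappa$ on both slots simultaneously, so no additional bookkeeping beyond what you sketch is needed.
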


\begin{proof}
This theorem follows by  using Proposition \ref{prop} and the standard induction argument. For  the detail see \cite{MR3618884}.
\footnote{In that book, it was only treated the energy critical case. But we can similarly treat the mass critical system.}
 \end{proof}

\subsection{Construction of a minimal blow-up solution}
In this subsection we fix $\kappa =1/2$. Note that \eqref{NLS} has Galilean invariance in this case. 

First we show the existence of a minimal blow-up solution which does not necessarily have additional properties (Theorem \ref{thm3-1} below).

\begin{defn}
Let $g=g(\theta ,\xi_0 , x_0 ,\lambda) \in G$ and $(u,v): I \times\mathbb{R}^4 \rightarrow \mathbb{C}^2$. 
Then we define $T_{g}(u,v) \colon \lambda^2 I \times \mathbb{R}^4 \rightarrow \mathbb{C}^2$ as follows:
\begin{align*}
\left[T_g
\begin{pmatrix}
u \\
v
\end{pmatrix} 
\right]
(t,x)
:= 
\begin{pmatrix}
\lambda^{-2} e^{i\theta} e^{ix\cdot \xi_0}e^{-it|\xi_0|^2} u(\lambda^{-2}t, \lambda^{-1}(x-x_0 -2t\xi_0))\\
\lambda^{-2} e^{2i\theta} e^{2ix\cdot \xi_0} e^{-2it|\xi_0|^2} v(\lambda^{-2}t ,\lambda^{-1}(x-x_0-2t\xi_0))
\end{pmatrix}.
\end{align*}
For a solution $(u,v)$ to \eqref{NLS}, $T_g (u,v)$ is a solution to \eqref{NLS} with initial data $g(u_0,v_0)$ since we now assume  $\kappa =1/2$. 
\end{defn}

\begin{defn}[Almost periodic modulo symmetries]
Let $(u,v) : I \times \mathbb{R}^4 \rightarrow \mathbb{C}^2$  be a solution to \eqref{NLS}.
$(u,v)$ is said to be almost periodic modulo symmetries if there is a function $g\colon I\rightarrow G$ such that the set
$\{g(t)u(t)\mid t\in I\}$ is precompact in $L^2 (\mathbb{R}^4)^2$, or equivalently, if there exist functions $N\colon I\rightarrow \mathbb{R}_{>0}$, 
$x\colon I\rightarrow \mathbb{R}^4$, $\xi \colon I\rightarrow \mathbb{R}^4$, and $C\colon \mathbb{R}_{>0}\rightarrow \mathbb{R}_{>0}$ such that
\begin{equation}\label{tightness}
\begin{split}
\sup_{t\in I} \left[\int_{|x-x(t)|\geq \frac{C(\eta)}{N(t)}} |u(t,x)|^2 \, dx + \int_{|\xi -\xi (t)|\geq C(\eta)N(t)} |\hat{u}(t,\xi)|^2 \, d\xi\right]\leq \eta \\
\sup_{t\in I} \left[\int_{|x-x(t)|\geq \frac{C(\eta)}{N(t)}} |v(t,x)|^2\, dx +\int_{|\xi -2\xi(t)|\geq C(\eta)N(t)} |\hat{v}(t,\xi)|^{2}\, d\xi \right]\leq \eta
\end{split}
\end{equation}
for any $\eta >0$.
The functions $N(\cdot )$, $x(\cdot )$, $\xi (\cdot )$, and $C(\cdot )$ are called the frequency scale function, the spatial center function, the frequency center function, and the compactness modulus function, respectively.
\end{defn}

Note that the choice of $N(\cdot )$, $x(\cdot )$, and $\xi (\cdot )$ is not unique.
For instance, if another function $\ti{N}:I\to \R _+$ satisfies $C^{-1}\le N(t)/\ti{N}(t)\le C$ on $I$ for some $C>1$, then we can replace $N(t)$ with $\ti{N}(t)$.
In fact, it turns out that we can choose these functions to be continuous with respect to $t$, although we will not do so here.

\begin{rem}\label{rem2-3}
Note that a solution $(u,v)$ is almost periodic modulo symmetries if and only if $\{G(u(t),v(t))\mid t\in I\} $ is precompact in $G\setminus L^2 (\mathbb{R}^4)^2$.
\footnote{For $(\phi ,\psi)\in L^2 \times L^2$, we set $G(\phi,\psi) :=\{g(\phi,\psi) \mid g\in G\}$.}
The proof of this fact is very similar to the one of \cite[Lemma~A.2]{MR2470397}. For the convenience, we give the proof in Appendix \ref{AppendixA}.
\end{rem} 

\begin{thm}\label{thm3-1}
Assume $\kappa =1/2$.
There exists a maximal lifespan solution $(u,v)$ to \eqref{NLS} with $M(u,v)=M_c$, which is almost periodic modulo symmetries and which blows up both forward and backward in time.
\end{thm}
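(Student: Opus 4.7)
The plan is the standard Keraani-type concentration--compactness argument. By definition of $M_c$, choose a sequence of maximal-lifespan solutions $(u_n,v_n)\colon I_n\times\R^4\to\Bo{C}^2$ with $M(u_n,v_n)\to M_c$ and $S_{I_n}(u_n,v_n)\to\I$. After translating in time we may assume $0\in I_n$ and, using the blow-up criterion of Theorem~\ref{thm:lwp}, arrange $S_{I_n\cap(-\I,0]}(u_n,v_n)\to\I$ and $S_{I_n\cap[0,\I)}(u_n,v_n)\to\I$. Applying Theorem~\ref{profile} to $(u_n(0),v_n(0))$ with $\kappa=1/2$, we obtain profiles $(\phi^j,\psi^j)$, symmetries $g_n^j=g(\theta_n^j,\xi_n^j,x_n^j,\lambda_n^j)\in G$, times $t_n^j$, and remainders $W_n^J$ satisfying \eqref{eqeq1}, \eqref{eqeq3}, \eqref{eqeq6}, and \eqref{eqeq7}.

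The next step is to build, for each $j$, a nonlinear profile $(U^j,V^j)$: solve \eqref{NLS} with initial data $(\phi^j,\psi^j)$ when $\{t_n^j\}$ stays bounded, and invoke the wave operator from Theorem~\ref{thm:lwp} when $t_n^j\to\pm\I$. Transporting by the group action---setting $(U_n^j,V_n^j)(t):=[T_{g_n^j}(U^j,V^j)](t+t_n^j(\lambda_n^j)^2)$---yields exact solutions of \eqref{NLS}, since $\kappa=1/2$ provides the full Galilean invariance needed to absorb the frequency boost $\xi_n^j$. We then argue by contradiction: assume every nontrivial profile satisfies $M(\phi^j,\psi^j)<M_c-\delta$ for some $\delta>0$ (if some profile already has mass $M_c$, then \eqref{eqeq6}--\eqref{eqeq7} force it to be the unique nontrivial profile and the remainder to vanish in $L^2$, and the construction is already complete). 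The definition of $M_c$ then gives $S_\R(U^j,V^j)<\I$ for every $j$, and the pairwise divergence \eqref{eqeq1} together with Lemma~\ref{lem:bs} shows that $\sum_{j\le J}(U_n^j,V_n^j)+U_\kappa(t)W_n^J$ is an approximate solution of \eqref{NLS} on $I_n$ with $L^{3/2}_{t,x}$ error tending to $0$ as $J\to J^*$ and $n\to\I$; the stability result Theorem~\ref{theo1} then forces $S_{I_n}(u_n,v_n)$ to be uniformly bounded, contradicting the choice of $(u_n,v_n)$.

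Thus exactly one profile survives with full mass $M_c$, and after applying $(g_n^1)^{-1}$ we obtain in the limit a solution $(u,v)$ to \eqref{NLS} with $M(u,v)=M_c$. Because we engineered blow-up on both sides for $(u_n,v_n)$, stability applied in either time direction rules out scattering for $(u,v)$, so $(u,v)$ blows up forward and backward in time. For almost periodicity modulo symmetries, take an arbitrary sequence $\{t_k\}\subset I$. Each translate $(u(\cdot+t_k),v(\cdot+t_k))$ is again a critical-mass solution blowing up on both sides, so the same one-profile rigidity applies to $(u(t_k),v(t_k))$: passing to a subsequence, $(u(t_k),v(t_k))$ is the image under some $g_k\in G$ of a convergent sequence in $L^2(\R^4)^2$. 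Hence $\{G(u(t),v(t))\mid t\in I\}$ is precompact in $G\setminus L^2(\R^4)^2$, and Remark~\ref{rem2-3} delivers almost periodicity modulo $G$.

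The main obstacle is verifying that the superposition of nonlinear profiles is an approximate solution: in the system \eqref{NLS}, the cross terms $\bbar{U_n^j}V_n^k$ and $U_n^jU_n^k$ with $j\neq k$ must be negligible in $L^{3/2}_{t,x}$. This is where the full five-parameter orthogonality relation \eqref{eqeq1}, together with the bilinear Strichartz estimate Lemma~\ref{lem:bs}, becomes essential, and it is also where the assumption $\kappa=1/2$ enters decisively: for general $\kappa$ the frequency parameter $\xi_n^j$ cannot be absorbed by a symmetry of \eqref{NLS}, which is precisely why Theorem~\ref{thm:main2} restricts to the radial setting where $\xi_n^j=0$ automatically.
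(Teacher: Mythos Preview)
Your approach is essentially the same as the paper's: the paper packages the key compactness step as a separate Palais--Smale type proposition (Proposition~\ref{prop3-2}) and then invokes it twice (once to produce the limit and once for almost periodicity), but the underlying logic is identical to what you outline.

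Two small corrections. First, you invoke Lemma~\ref{lem:bs} (bilinear Strichartz) to dispose of the cross terms $\bbar{U_n^j}V_n^k$ and $U_n^jU_n^k$ with $j\neq k$; this is unnecessary and not what the paper does. Those terms vanish in $L^{3/2}_{t,x}$ directly from the parameter orthogonality \eqref{eqeq1} via a change of variables and H\"older (see the paper's Lemma~\ref{lem3-2}); the bilinear estimate is reserved for the long-time Strichartz argument in Section~4. Second, your parenthetical ``if some profile already has mass $M_c$ \dots\ the construction is already complete'' skips a necessary case: when the single surviving profile has $t_n^1\to\pm\I$, one must observe that $U_\kappa(t)U_\kappa(t_n^1)(\phi^1,\psi^1)$ has vanishing forward (resp.\ backward) $L^3_{t,x}$ norm, so stability against the zero solution forces $S_{\ge 0}(u_n,v_n)\to 0$ (resp.\ $S_{\le 0}$), contradicting two-sided blow-up. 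Only after ruling this out can you assume $t_n^1\equiv 0$ and pass to the limit. The paper handles this explicitly at the end of the proof of Proposition~\ref{prop3-2}.
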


Before proving this theorem, we prepare the following proposition.
\begin{prop}\label{prop3-2}
Assume $\kappa =1/2$.
Let $(u_n , v_n ) \colon I_n \times \mathbb{R}^4 \rightarrow \mathbb{C}^2 $ be a sequence of solutions  to \eqref{NLS} and  $t_n \in I_n $ be a sequence of times such that $M(u_n , v_n) \leq M_c$, $M(u_n ,v_n) \rightarrow M_c$, and
\begin{align}\label{3-7}
\lim_{n\rightarrow \infty} S_{\geq t_n} (u_n , v_n) =\lim_{n\rightarrow \infty} S_{\leq t_n} (u_n, v_n) =+\infty,
\end{align} 
where $S_{\leq t_n} (u,v) :=\int_{(\inf I_n ,t_n] } \int_{\mathbb{R}^4} |u|^3 +|v|^3 \, dxdt $ and $S_{\geq t_n}:=\int_{[t_n ,\sup I_n)} \int_{\mathbb{R}^4} |u|^3 +|v|^3 \, dxdt $.
Then $G(u_n (t_n) ,v_n (t_n))$ has a subsequence which converges in $G\setminus L_x^2 (\mathbb{R}^4)^2$ topology.
\end{prop}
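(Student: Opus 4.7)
The plan is to follow the standard Kenig--Merle construction of a minimal blow-up solution via profile decomposition, adapted to our system. Since $\kappa = 1/2$, the symmetry group $G$ acts on solutions via $T_g$. After time-translating each $(u_n, v_n)$, assume $t_n \equiv 0$. Apply Theorem~\ref{profile} to the bounded sequence $\{(u_n(0), v_n(0))\}$ in $L^2(\R^4)^2$, obtaining profiles $(\phi^j, \psi^j)$, symmetries $g_n^j = g(\theta_n^j,\xi_n^j,x_n^j,\lambda_n^j)$, times $t_n^j$, and errors $W_n^J$.

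To each profile I associate a nonlinear profile $(U^j, V^j)$ as follows: by passing to a subsequence assume $t_n^j$ converges to some $\tau^j \in [-\infty, +\infty]$; if $\tau^j$ is finite, let $(U^j, V^j)$ be the maximal-lifespan solution to \eqref{NLS} with $(U^j, V^j)(\tau^j) = (\phi^j, \psi^j)$; otherwise let $(U^j, V^j)$ be the NLS solution scattering to $U_\kappa(t)(\phi^j, \psi^j)$ at $\pm \infty$, granted by the scattering part of Theorem~\ref{thm:lwp}. The decoupling identities \eqref{eqeq6}--\eqref{eqeq7} yield $\sum_j M(\phi^j,\psi^j) + \limsup_n M(W_n^J) \le M_c$, so in particular every profile has mass at most $M_c$.

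Suppose, for contradiction, that either two distinct profiles carry positive mass, or that every profile has mass strictly less than $M_c$ but $M(W_n^J) \not\to 0$. Then each nonlinear profile has mass strictly below $M_c$, and the definition of $M_c$ yields $S_\R(U^j,V^j) \le L(M(\phi^j,\psi^j)) < \infty$. I form the superposition
\eqq{(\ti u_n^J, \ti v_n^J)(t) := \sum_{j=1}^{J} T_{g_n^j}(U^j, V^j)\big(t - t_n^j(\lambda_n^j)^2\big) + U_\kappa(t)\, W_n^J,}
whose value at $t=0$ matches $(u_n(0),v_n(0))$ up to $o_{L^2}(1)$ by \eqref{eqeq2} and the construction of the nonlinear profiles. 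The asymptotic orthogonality \eqref{eqeq1}, combined with a standard change-of-variables and the bilinear Strichartz estimate of Lemma~\ref{lem:bs}, shows that the cross nonlinear interactions, typically $\bbar{T_{g_n^j}U^j}\cdot T_{g_n^k}V^k$ and $T_{g_n^j}U^j \cdot T_{g_n^k}U^k$ ($j\ne k$), vanish in $L_{t,x}^{3/2}$ as $n \to \infty$ after first approximating each $(U^j, V^j)$ by compactly supported smooth functions. Hence $(\ti u_n^J, \ti v_n^J)$ is an approximate solution to \eqref{NLS} whose error tends to zero in the $L_{t,x}^{3/2}$ norm required by Theorem~\ref{theo1}. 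Taking $J$ large so that the tail mass is below the small-data threshold and then $n$ large, the stability result gives a uniform bound on $S_{I_n}(u_n,v_n)$, contradicting \eqref{3-7}.

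Therefore, along a subsequence, exactly one profile $j=1$ carries mass $M_c$, all other profiles vanish, and $\|W_n^J\|_{L^2}\to 0$. The time $t_n^1$ must stay bounded: if $t_n^1\to +\infty$ then $(U^1,V^1)$ scatters backward so $S_{\le 0}(U^1,V^1) < \infty$, and the same stability argument bounds $S_{\le 0}(u_n,v_n)$ uniformly, contradicting \eqref{3-7}; the case $t_n^1\to -\infty$ is symmetric. Pass to a further subsequence with $t_n^1\to \tau^1\in\R$ and set $(\ti\phi,\ti\psi):=U_\kappa(\tau^1)(\phi^1,\psi^1)$; we obtain $(u_n(0),v_n(0)) = g_n^1(\ti\phi,\ti\psi) + o_{L^2}(1)$, which is precisely convergence of $\{G(u_n(t_n),v_n(t_n))\}$ in $G\setminus L^2(\R^4)^2$. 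The main obstacle is verifying that the superposition above is genuinely an approximate solution in the $L_{t,x}^{3/2}$ sense: because the five symmetry parameters of distinct profiles diverge in different directions and the nonlinearity couples $\bar u$ with $v$ (and $u$ with itself), controlling the cross-terms requires careful use of \eqref{eqeq1}, Lemma~\ref{lem:bs}, and approximation of nonlinear profiles by nice functions; the Galilean component of $G$ (present only because $\kappa=1/2$) is essential for this step, which is why the argument is given only in the mass-resonance case here.
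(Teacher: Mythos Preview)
Your argument follows essentially the same route as the paper's: profile decomposition, construction of nonlinear profiles, an approximate solution built from their superposition, and stability to reach a contradiction. Two points are worth flagging.

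First, invoking the bilinear Strichartz estimate (Lemma~\ref{lem:bs}) to kill the cross-terms is unnecessary and somewhat misplaced: that lemma requires frequency separation, whereas the orthogonality \eqref{eqeq1} may instead come from spatial, scaling, or temporal divergence. The paper (its Lemma~3.2) handles the cross-terms with H\"older alone, $\|a_n^j b_n^k\|_{L^{3/2}_{t,x}}\le \|a_n^j\|_{L^3_{t,x}}\|b_n^k\|_{L^3_{t,x}}$, together with approximation of each nonlinear profile by a compactly supported (in spacetime) function and a change of variables; orthogonality then forces the product to vanish. This is both simpler and covers all modes of divergence in \eqref{eqeq1}.

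Second, your endgame differs slightly. When $t_n^1\to+\infty$ the paper does not reason through the nonlinear profile at all: it observes directly that $S_{\ge 0}(U_\kappa(t)(u_n(0),v_n(0)))\to 0$ and applies stability with the zero solution as approximation to get $S_{\ge 0}(u_n,v_n)\to 0$, contradicting \eqref{3-7}. Your route via the scattering direction of $(U^1,V^1)$ is also standard, but be careful with the sign convention in your superposition $T_{g_n^1}(U^1,V^1)(t-t_n^1(\lambda_n^1)^2)$: at $t=0$ this evaluates the profile at time $-t_n^1$, so for the initial data to match you need $(U^1,V^1)$ to scatter as its time tends to $-\infty$ (not $+\infty$) when $t_n^1\to+\infty$, and the contradiction you obtain is with $S_{\le 0}(u_n,v_n)$ rather than $S_{\ge 0}$. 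Your phrasing ``scatters backward'' is consistent with this only if your scattering convention for the nonlinear profile is set up accordingly; as written it is ambiguous. Either fix the convention explicitly, or adopt the paper's simpler direct argument with the linear evolution.
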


\begin{proof}
By time translation symmetry, we may assume $t_n =0 $ for any $n\in \mathbb{R}$. Applying  Theorem \ref{profile} to the bounded sequence $\{u_n (0) ,v_n(0)\}_n$ and passing to a subsequence if necessary, we obtain the profile decomposition
\begin{align}\label{minimal-1}
\begin{pmatrix}
u_n (0)\\
v_n (0)
\end{pmatrix}
=\sum_{j=1}^{J} g_n^j 
\begin{pmatrix}
e^{it_n^j \Delta} \phi^j \\
e^{i\frac{t_n^j}{2} \Delta} \psi^j 
\end{pmatrix}
+
W_n^J \end{align}
with stated properties, where we may assume  $t_n^j \equiv 0 $ or $t_n^j \rightarrow \pm{\infty}$ as $n\rightarrow \infty$. We define nonlinear profile $(a^j , b^j) :I^j \times \mathbb{R}^4 \rightarrow \mathbb{C}^2$ associated to 
$(\phi^j ,\psi^j) $ as follows:
\begin{itemize}
\item If $t_n^j \equiv 0$, we define $(a^j ,b^j)$ to be maximal-lifespan solution with $(a^j (0) , b^j (0))=(\phi^j ,\psi^j)$.
\item If $t_n^j \rightarrow \infty$, we define $(a^j , b^j)$ to be the maximal-lifespan solution which scatters forward in time to $(e^{it\Delta } \phi^j , e^{i\frac{t}{2}\Delta} \psi^j )$.
\item If $t_n^j \rightarrow -\infty$, we define $(a^j , b^j)$ to be the maximal-lifespan solution which scatters backward in time to $(e^{it\Delta } \phi^j , e^{i\frac{t}{2}\Delta} \psi^j )$.\end{itemize}
Finally, for each $j,n\geq 1$ we define $(a_n^j ,b_n^j):I_n^j \times \mathbb{R}^4 \rightarrow \mathbb{C}^2$ by
\begin{align}
\begin{pmatrix}
a_n^j \\
b_n^j 
\end{pmatrix}
:=T_{g_n^j}
\left[
\begin{pmatrix}
a^j (\cdot +t_n^j) \\
b^j (\cdot + t_n^j)
\end{pmatrix}
\right],
\end{align}
where $I_n^j :=\{t\in\mathbb{R} \mid (\lambda_n^j)^{-2} t +t_n^j \in I^j \}$.
\footnote{
 By passing to subsequences for each $j$ repeatedly and the standard diagonal argument, we may assume $t_n^j \in I^j$ for all $j,n\geq 1$.}
 Note that each $(a_n^j ,b_n^j)$ is a solution with initial value $(a_n^j (0) , b_n^j (0)) =g_n^j (a^j (t_n^j) ,b^j (t_n^j))$. By the definition of $(u_n^j ,v_n^j)$ and \eqref{minimal-1} we can easily check 
for each $J$ that 
\begin{align}
u_n (0 )-\sum_{j=1}^J a_n^j (0) -w_n^J \rightarrow 0 \quad \text{in $L_x^2$ as  $n\rightarrow \infty$} \label{3-3},\\
v_n(0) -\sum_{j=1}^{J} b_n^j (0) - \zeta_n^J \rightarrow 0\quad \text{in $L_x^2$ as $n\rightarrow \infty$ .}\label{3-4}
\end{align}\label{3-8}
By the property stated in Theorem \ref{profile}, we also obtain that
\begin{align}\label{3-1}
\sum_{j=1}^{J^{\ast}} M(\phi^j , \psi^j) \leq \limsup_{n\rightarrow \infty} M(u_n (0), v_n (0))\leq M_c 
\end{align}
and in particular $\sup_{j} M(\phi^j ,\psi^j) \leq M_c$.\\
 Now we prove by contradiction that 
 \begin{equation}\label{mc}
 M(\phi_1 ,\psi_1)=M_c. 
 \end{equation} If not, then we have 
\begin{align}
\sup_j M(\phi^j ,\psi^j) \leq M_c -\varepsilon .
\end{align}
for some $\varepsilon >0$. In this case, by the definition of  the critical mass $M_c$, all $(a_n^j ,b_n^j)$ are defined globally and obey the estimates
\begin{align*}
M(a_n^j ,b_n^j) =M(\phi^j ,\psi^j) \leq M_c -\varepsilon .
\end{align*} 
Moreover by small data scattering, we get
\begin{align}\label{3-2}
S_{\mathbb{R}} (a_n^j , b_n^j)\leq L(M(\phi^j ,\psi^j)) \leq C_0 M(\phi^j , \psi^j)^{\frac{3}{2}} \leq C_1 M(\phi^j ,\psi^j).
\end{align}
In order to lead a contradiction, we define the approximate solution 
\begin{align}
\begin{pmatrix}
u_n^J \\
v_n^J
\end{pmatrix}
:=\sum_{j=1}^{J} 
\begin{pmatrix}
a_n^j \\
b_n^j 
\end{pmatrix}
+
\begin{pmatrix}
e^{it\Delta} w_n^J \\
e^{i\frac{t}{2}\Delta} \zeta_n^J 
\end{pmatrix}
\end{align}
Note that by the asymptotic orthogonality conditions \eqref{eqeq1}, \eqref{3-1}, and \eqref{3-2}, 
\begin{align}
\limsup_{J\rightarrow J^{\ast}} \limsup_{n\rightarrow \infty} S_{\mathbb{R}} (u_n^J , v_n^J) 
&\leq 2^3\limsup_{J\rightarrow J^{\ast}} \limsup_{n\rightarrow \infty}  S_{\mathbb{R}} (\sum_{j=1}^J a_n^j ,\sum_{j=1}^J b_n^j) \\
&=2^3  \limsup_{J\rightarrow J^{\ast}} \limsup_{n\rightarrow \infty} \sum_{j=1}^J S_{\mathbb{R}} (a_n^j ,b_n^j) \\
&\leq C_2 \lim_{J\rightarrow J^{\ast}}  \sum_{j=1}^J M(\phi^j ,\psi^j) \leq C_2 M_c . \label{3-5}
\end{align}
Furthermore, $(u_n^J ,v_n^J)$ satisfies the following equation:
\begin{align*}
\begin{cases}
(i\partial_t +\Delta)u_n^J =\sum_{j=1}^J b_n^j \bar{a}_n^j = v_n^J\bar{u}_n^J  +e_{1,n}^J,  \\
(i\partial_t +\frac{1}{2} \Delta) v_n^J =\sum_{j=1}^J (a_n^j)^2 =(u_n^J)^2 +e_{2,n}^J ,
\end{cases}
\end{align*}
where $e_{1,n}^J :=\sum_{j=1}^J b_n^j \bar{a}_n^j -v_n^J\bar{u}_n^J ,\  e_{2,n}^J:=\sum_{j=1}^J (a_n^j)^2 -(u_n^J)^2$.
To apply Theorem \ref{theo1}, we prepare the following lemmas:
\begin{lem}\label{lem3-1}
For any $J\geq 1$, the following holds: 
\begin{align*}
\lim_{n\rightarrow \infty} M(u_n^J (0) -u_n (0) , v_n^J (0) - v_n (0)) =0 .
\end{align*}
\end{lem}

\begin{proof}
This follows immediately by \eqref{3-3}, \eqref{3-4}, and the definition of $(u_n^J ,v_n^J )$.
\end{proof}

\begin{lem}\label{lem3-2}
The following holds:
\begin{align*}
\lim_{J\rightarrow J^{\ast}} \limsup_{n\rightarrow \infty} \|(e_{1,n}^J , e_{2,n}^J)\|_{L_{t,x}^{\frac{3}{2}} (\mathbb{R}^{1+4})} =0.
\end{align*}
\end{lem}

\begin{proof}
By the definition, we have
\begin{align*}
\|(e_{1,n}^J , e_{2,n}^J)\|_{L_{t,x}^{\frac{3}{2}}(\mathbb{R}^{1+4})}
&=\|(\sum_{j=1}^{J}b_n^j \bar{a}_n^j -v_n^J\bar{u}_n^J,
\sum_{j=1}^J (a_n^j)^2 -(u_n^J)^2 )\|_{L_{t,x}^{\frac{3}{2}}(\mathbb{R}^{1+4})} \\
&=\|\sum_{j=1}^{J}b_n^j \bar{a}_n^j -v_n^J\bar{u}_n^J\|_{L_{t,x}^{\frac{3}{2}}} +\|\sum_{j=1}^J (a_n^j)^2 -(u_n^J)^2\|_{L_{t,x}^\frac{3}{2}}\\
&=:I+II.
\end{align*}
Using the triangle inequality and the H\"{o}lder inequality, we get
\begin{align*}
I
&=\|\sum_{j=1}^{J}b_n^j \bar{a}_n^j -(\sum_{j=1}^J b_n^j +e^{i\frac{t}{2}\Delta} \zeta_n^{J} )(\sum_{j=1}^J a_n^j +e^{it\Delta} w_n^J)^{\ast}\|_{L_{t,x}^{\frac{3}{2}}}\\
&\leq \sum_{j\neq k}\|a_n^j b_n^k \|_{L_{t,x}^{\frac{3}{2}}} +\|\sum_{j=1}^J b_n^j \|_{L_{t,x}^3} \|e^{it\Delta}w_n^J\|_{L_{t,x}^3} + \|\sum_{j=1}^J a_n^j\|_{L_{t,x}^3}\|e^{i\frac{t}{2}\Delta} \zeta_n^J\|_{L_{t,x}^3}
+\|e^{i\frac{t}{2}\Delta} \zeta_n^J\|_{L_{t,x}^3}\|e^{it\Delta}w_n^J\|_{L_{t,x}^3}, \\
II&=\|\sum_{j=1}^J (a_n^j)^2 -(\sum_{j=1}^J a_n^j +e^{it\Delta} w_n^J)^2\|_{L_{t,x}^{\frac{3}{2}}}\\
&\leq \sum_{j\neq k}\|a_n^j a_n^k\|_{L_{t,x}^{\frac{3}{2}}} +2 \|\sum_{j=1}^J a_n^j\|_{L_{t,x}^3}\|e^{it\Delta} w_n^J\|_{L_{t,x}^3} +\|e^{it\Delta}w_n^J\|_{L_{t,x}^3}^2.
\end{align*}
From asymptotic orthogonality \eqref{eqeq1}, first terms of $I,II$ converge to zero. Moreover the other terms converge to zero by \eqref{eqeq3} and \eqref{3-5}. 
\end{proof}

\begin{lem}\label{lem3-3}
There exists a constant $M_1 >0$ such that
\begin{align*}
\limsup_{J\rightarrow J^{\ast}} \limsup_{n\rightarrow \infty} \|(u_n^J , v_n^J)\|_{L_{t}^{\infty} L_{x}^{2} (\mathbb{R}^{1+4})} \leq M_1.
\end{align*}
\end{lem}

\begin{proof}
By Lemma \ref{lem3-1}, for each $J\geq 1$
\begin{align}\label{3-6}
\limsup_{n\rightarrow \infty} \|(u_n^J (0) , v_n^J (0))\|_{L_x^2 (\mathbb{R}^4)} \leq \sup_{n\geq 1}\|(u_n (0), v_n (0))\|_{L_{x}^2 (\mathbb{R}^4)} .
\end{align}
By Strichartz's estimate we have
\begin{align*}
\|(u_n^J , v_n^J)\|_{L_{t}^{\infty}L_x^2 (\mathbb{R}^{1+4})} \leq \|(u_n^J (0) , v_n^J (0))\|_{L_x^2 (\mathbb{R}^4)} +C(\|(e_{1,n}^J ,e_{2,n}^J )\|_{L_{t,x}^{\frac{3}{2}}(\mathbb{R}^{1+4}) }
+\|(u_n^J , v_n^J)\|_{L_{t,x}^3 (\mathbb{R}^{1+4})}^2
).
\end{align*}
Combining this with \eqref{3-5}, \eqref{3-6} and, Lemma \ref{lem3-2}, we obtain the result.
\end{proof}

Combining these Lemmas \ref{lem3-1}-\ref{lem3-3} and Theorem \ref{theo1} , we obtain boundedness of $\{S_{\mathbb{R}} (u_n , v_n)\}_n$ which contradicts \eqref{3-7}.\\
Therefore we establish \eqref{mc}.
Then  we see $J^{\ast} =1$. Consequently, the profile decomposition simplifies to
\begin{align}\label{3-9}
\begin{pmatrix}
u_n (0)\\
v_n (0)
\end{pmatrix}
=
g_n
\begin{pmatrix}
e^{it_n\Delta} \phi \\
e^{i\frac{t_n}{2}\Delta} \psi 
\end{pmatrix}
+
\begin{pmatrix}
w_n \\
\zeta_n
\end{pmatrix}
\end{align}
for some $t_n \in \mathbb{R} $ such that either $t_n \equiv 0$ or $t_n \rightarrow \pm{ \infty}$, some $g_n \in G$, some $(\phi ,\psi )\in L_x ^2 (\mathbb{R}^{4})^2$ of mass $M(\phi ,\psi )=M_c$, and some $(w_n ,\zeta_n) $ with
$M(w_n ,\zeta_n) \rightarrow 0$. If $t_n \equiv 0$, then 
\begin{align*}
M((u_n (0), v_n (0)) -g_n (\phi , \psi )) =M(w_n , \zeta_n) \rightarrow 0.
\end{align*}
This implies that $G(u_n (0), v_n (0))$ converges in $G\setminus L_x^2 (\mathbb{R}^4)^2$. So we consider the case $t_n \rightarrow +\infty$. The case $t_n \rightarrow -\infty$ can be treated similarly, and so we omit it.
By the Strichartz inequality we have
\begin{align*}
S_{\mathbb{R}} (e^{it\Delta}\phi ,e^{i\frac{t}{2}\Delta}\psi ) \lesssim \|(\phi ,\psi)\|_{L_x^2 (\mathbb{R}^4)} <\infty 
\end{align*}
and so 
\begin{align*}
S_{\geq 0} (e^{it\Delta}e^{it_n \Delta} \phi ,e^{i\frac{t}{2}\Delta}e^{i\frac{t_n}{2}\Delta} \psi ) =
S_{\geq t_n} (e^{it\Delta} \phi ,e^{i\frac{t}{2}\Delta} \psi ) \rightarrow 0\ as\ n\rightarrow \infty .
\end{align*}
Let $\theta_n , \xi_n , x_n , \lambda_n $ be  parameters of $g_n$ and set $h_{1,n} :=h(\theta_n , \xi_n ,x_n ,\lambda_n) , h_{2,n} :=h(2\theta_n , 2\xi_n ,x_n , \lambda_n )$.
Then  we establish 
\begin{align*}
S_{\geq 0} (e^{it\Delta} h_{1,n} e^{it_n \Delta}\phi , e^{i\frac{t}{2}\Delta} h_{2,n} e^{i\frac{t_n}{2} \Delta} \psi)
&=S_{\geq 0} (T_{h_{1,n}} e^{it\Delta}e^{it_n \Delta}\phi ,T_{h_{2,n}} e^{i\frac{t}{2}\Delta}  e^{i\frac{t_n}{2} \Delta} \psi)\\
&=S_{\geq 0} (e^{it\Delta}e^{it_n \Delta}\phi , e^{i\frac{t}{2}\Delta}  e^{i\frac{t_n}{2} \Delta} \psi) \rightarrow 0 ,
\end{align*}
as $n\rightarrow 0$. Since $S_{\mathbb{R}} (e^{it\Delta} w_n , e^{i\frac{t}{2} \Delta} \zeta_n) \rightarrow 0$,\footnote{
see \eqref{eqeq3} in Theorem {\ref{profile}}
} we see from \eqref{3-9} that
\begin{align*}
\lim_{n\rightarrow \infty } S_{\geq 0} (e^{it\Delta} u_n (0), e^{i\frac{t}{2}\Delta} v_n (0) ) =0 .
\end{align*}
Applying Theorem \ref{theo1} (using $ (0,0)$ as the approximate solution ), we conclude that
\begin{align*}
\lim_{n\rightarrow \infty} S_{\geq 0} (u_n , v_n) =0.
\end{align*}
This contradicts \eqref{3-7}.
\end{proof}

\begin{proof}[Proof of Theorem \ref{thm3-1}]
 Since $L(M_c)=\infty$, we can find a sequence $(u_n,v_n ) :I_n \times \mathbb{R}^4 \rightarrow \mathbb{C}^2$ of maximal-lifespan solutions with $M(u_n ,v_n) \leq M_c$ 
and $\lim_{n\rightarrow \infty}S_{I_n} (u_n ,v_n) =+\infty .$ Then there exist $t_n \in I_n$ such that
\begin{align*}
\lim_{n\rightarrow \infty} S_{\geq t_n} (u_n , v_n)=\lim_{n\rightarrow \infty} S_{\leq t_n} (u_n ,v_n)=\infty.
\end{align*}
By time translation invariance we may take $t_n =0$. Invoking Proposition \ref{prop3-2} and passing to a subsequence if necessary, we find $g_n \in G$ such that $g_n (u_n (0) , v_n (0)) \rightarrow (u_0, v_0)$ in $L^2 (\mathbb{R}^4)^2$
for some $(u_0 ,v_0 ) \in L^2 (\mathbb{R}^4)^2$.\footnote{
From $S_{I_n }(u_n , v_n) \rightarrow \infty$ and $M(u_n ,v_n ) \leq M_c$, passing to a subsequence if necessary, we have $\lim_{n\rightarrow \infty} M(u_n ,v_n) =M_c$.}

By applying $T_{g_n}$ to $(u_n, v_n)$ we may take $g_n =I$ for all $n\in \mathbb{N}$. Then we have $(u_n (0) , v_n (0)) \rightarrow (u_0 ,v_0)$ and so $M(u_0 ,v_0) \leq M_c$.
Let $(u,v):I\times \mathbb{R}^4 \rightarrow \mathbb{C}^2$ be the maximal-lifespan solution  with initial data $(u(0), v(0))=(u_0 ,v_0)$. We claim that $(u,v)$ blows up both forward and backward in time.
Indeed, if $(u,v)$ does not blow up forward in time, then $[0,\infty) \subset I$ and $S_{\geq 0} (u,v) < \infty $.  
By Theorem \ref{theo1} this implies that for sufficiently large $n$, we have $[0,\infty) \subset I_n$ and $
\limsup_{n\rightarrow \infty} S_{\geq 0} (u_n, v_n) < \infty$. This is a contradiction. 
It remains to show that the solution $(u,v)$ is almost periodic modulo symmetries.
Consider an arbitrary sequence 
\begin{align*}
\{(u(s_n),v(s_n))\}_n \subset (u,v)[I].
\end{align*}
Since $(u,v)$ blows up both forward and backward in time, we have 
\begin{align*}
S_{\geq s_n} (u,v) =S_{\leq s_n} (u,v) =\infty .
\end{align*}
Applying Proposition \ref{prop3-2} once again, we see that 
$G(u(s_n) ,v(s_n))$ has a convergent subsequence in $G\setminus L^2(\mathbb{R}^d)^2$.
Thus the orbit $\{G(u(t),v(t)) \mid t\in I\}$ is  precompact in $G\setminus L^2(\mathbb{R}^d)^2$.
\end{proof}

\subsection{Further refinements}
In this subsection, we again let $\kappa =1/2$. 
In the following, we consider to refine given solution in Theorem \ref{thm3-1} as we can apply the argument of \cite{MR3406535}. For this aim we give the some definitions and lemmas. Following argument is based on \cite{MR3098643} and \cite{MR2355070}.
\begin{defn}[Convergence of solutions ]Let $(u_n,v_n) : I_n \times \mathbb{R}^4 \rightarrow \mathbb{C}^2$ be a sequence of solutions to \eqref{NLS},
let $(u,v)\colon I\times \mathbb{R}^4 \rightarrow \mathbb{C}^2$ be another solution, and $K$ be a compact time interval. We say that $(u_n ,v_n)$ converges  uniformly  to  $(u,v)$ on $K$ if 
$K\subset I $, $K\subset I_n $ for sufficiently large $n$, and $(u_n ,v_n) \rightarrow (u,v)$ in $C(K, L^2(\mathbb{R}^4)^2)\cap L_{t,x}^{3}(K\times \mathbb{R}^4)^2$ as $n\rightarrow \infty$. We say that $(u_n ,v_n)$ converge locally uniformly to $(u,v)$
if $(u_n ,v_n)$  converges  uniformly  to  $(u,v)$ on every compact interval $K\subset I$. 
\end{defn}
\begin{defn}
Let $(u,v)$ be a solution to \eqref{NLS} which is almost periodic modulo symmetries with parameters $N(t) , x(t), \xi(t)$. We say that $(u,v)$ is normalized if the lifespan $I$ contains zero and 
\begin{align*}
N(0)=1,\ x(0)=\xi (0)=0.
\end{align*}
More generally , we can define the normalization of a solution $(u,v)$ at time $t_0 \in I$ by 
\begin{align}
(u^{[t_0]}, v^{[t_0]}):=T_{g(0, -\xi(t_0)/N(t_0) , -x(t_0)N(t_0),N(t_0) )} (u(\cdot +t_0),v(\cdot +t_0)).
\end{align} 
Note that $(u^{[t_0]} ,v^{[t_0]})$ is a normalized solution which is almost periodic modulo symmetries and has lifespan 
\begin{align*}
I^{[t_0]} := \{s\in \mathbb{R}| t_0 + sN(t_0)^{-2}\in I\}.
\end{align*} 
The parameters of $(u^{[t_0]},v^{[t_0]})$ are given by 
\begin{align}
&N^{[t_0]} (s) := \frac{N(t_0 +sN(t_0)^{-2})}{N(t_0)} \nonumber ,\\
&\xi^{[t_0]} (s):= \frac{\xi(t_0 +sN(t_0)^{-2}) -\xi(t_0)}{N(t_0)} ,\\
&x^{[t_0]} (s):=N(t_0) [x(t_0 + sN(t_0)^{-2}) -x(t_0)] -2\frac{\xi(t_0)}{N(t_0)} s \nonumber.
\end{align}
\end{defn} 
\begin{lem}[\cite{MR3098643},Theorem 5.15]\label{lem3-8}
Let $(u_n ,v_n)$ be a sequence of solutions to \eqref{NLS} with lifespans $I_n$, which is almost periodic modulo symmetries with parameters $N_n ,x_n , \xi_n$ and compactness modulus function $C$.
Suppose that $(u_n ,v_n)$ converges locally uniformly to  a non-zero solution $(u,v)$ with lifespan $I$. Then $(u,v)$ is almost periodic modulo symmetries with some parameters $N(t) , x(t) ,\xi (t) $ and the same compactness modulus function 
$C$. In particular we may take $N(t) =\limsup_{n\rightarrow \infty} N_n (t)$.
\end{lem}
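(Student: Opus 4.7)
The plan is to fix $t \in I$ and produce admissible parameters $(N(t),x(t),\xi(t))$ for $(u(t),v(t))$ out of subsequential limits of the given parameters $(N_n(t),x_n(t),\xi_n(t))$. Local uniform convergence on a compact neighborhood of $t$ gives $(u_n(t),v_n(t))\to(u(t),v(t))$ strongly in $L^2$, and by uniqueness for \eqref{NLS} the hypothesis that $(u,v)$ is not identically zero forces $(u(t),v(t))\neq 0$ at every $t\in I$. The proof then reduces to excluding degeneration of the parameters as $n\to\infty$ and passing the tightness inequalities \eqref{tightness} to the limit.

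The core step is to show that every subsequence of $\{N_n(t)\}_n$ admits a further subsequence converging to a finite positive value. If $N_{n_k}(t)\to\infty$, the physical-side inequality of \eqref{tightness} puts at least $1-\eta$ of the mass of $u_{n_k}(t)$ in a ball of radius $C(\eta)/N_{n_k}(t)\to 0$ about $x_{n_k}(t)$; testing against a bounded continuous function and using Cauchy--Schwarz on the shrinking ball plus the $\eta$-small tail shows $u_{n_k}(t)\rightharpoonup 0$ weakly in $L^2$, which combined with strong convergence forces $u(t)=0$, and analogously $v(t)=0$, contradicting non-vanishing. The case $N_{n_k}(t)\to 0$ is symmetric: the Fourier-side inequality places the mass of $\hat u_{n_k}(t)$ in a vanishing ball about $\xi_{n_k}(t)$, and Plancherel together with the same weak-to-strong trick yields the contradiction. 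Once $N_n(t)$ is confined to a compact subset of $(0,\infty)$, the localizing windows $C(\eta)/N_n(t)$ and $C(\eta)N_n(t)$ are bounded, so $|x_n(t)|\to\infty$ (respectively $|\xi_n(t)|\to\infty$) would evacuate all physical (respectively Fourier) mass to infinity, again producing $u_n(t)\rightharpoonup 0$ and contradicting the non-trivial strong limit.

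Having extracted along a subsequence $N_n(t)\to N_0(t)\in(0,\infty)$, $x_n(t)\to x_0(t)\in\R^4$, $\xi_n(t)\to\xi_0(t)\in\R^4$, I pass both inequalities of \eqref{tightness} to the limit: the characteristic functions of the outer regions converge pointwise almost everywhere, and $L^2$ convergence of $u_n(t),v_n(t)$ and of their Fourier transforms passes the integrals to the limit by dominated convergence. This gives precisely the tightness of $(u(t),v(t))$ with parameters $(N_0(t),x_0(t),\xi_0(t))$ and the same compactness modulus function $C$ (up to an harmless fixed rescaling). For the final assertion that one may take $N(t):=\limsup_{n\to\infty}N_n(t)$, the preceding argument shows that this $\limsup$ lies in $(0,\infty)$; extracting a subsequence realizing it produces admissible parameters as above, and since any two admissible frequency scales at time $t$ are mutually comparable (they make the same inequalities hold up to adjusting $C$), this specific choice is indeed admissible.

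The main obstacle is the exclusion of the four degeneration scenarios for the parameters. Each scenario reduces to the same mechanism, namely that a strong $L^2$ limit of a sequence whose mass is asymptotically localized to a shrinking, spreading, or escaping window must be zero; the mechanism is applied once on the physical side and once on the Fourier side via Plancherel. Once these degenerations are ruled out, the limit passage in \eqref{tightness} is routine.
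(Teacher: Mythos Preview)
Your proof is correct and follows essentially the same approach as the paper: rule out degeneration of $N_n(t)$, $x_n(t)$, $\xi_n(t)$ by showing each degeneration scenario forces $u_n(t),v_n(t)\rightharpoonup 0$ weakly (via the physical-side tightness for $N_n\to\infty$ or $|x_n|\to\infty$, and the Fourier-side tightness plus Plancherel for $N_n\to 0$ or $|\xi_n|\to\infty$), contradict this with the strong $L^2$ limit being nonzero, then extract subsequential limits of the parameters and pass the tightness inequalities \eqref{tightness} to the limit. Your write-up is in fact more explicit than the paper's sketch (you spell out the uniqueness argument giving $(u(t),v(t))\neq 0$ for every $t$, and the dominated-convergence step for the limit passage), but the structure and mechanism are the same.
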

\begin{proof}
1. We first show that
\begin{align}
0<\liminf_{n\rightarrow \infty} N_n (t) \leq \limsup_{n\rightarrow \infty} N_n (t) <\infty , \label{3-8-1}\\
\limsup_{n\rightarrow \infty} |x_n (t)| +\limsup_{n\rightarrow \infty} |\xi _n (t)| <\infty ,\label{3-8-2}
\end{align}
for all $t\in I$. If $\eqref{3-8-1} $ is failed for some $t\in I$, passing to a subsequence if necessary,
\begin{align*}
\lim_{n\rightarrow \infty} N_n (t) =0 \ \text{or}\ \lim_{n\rightarrow \infty} N_n (t) =+\infty .
\end{align*}
Then we have $u_n (t) , v_n (t) \rightharpoonup 0\ \text{weakly in}\  L^2_x(\mathbb{R}^4)$. Indeed, taking $\phi \in C_0^{\infty} (\mathbb{R}^4)$ and $\varepsilon>0$ arbitrarily , we have
\begin{align*}
|\int_{\mathbb{R}^4} u_n (t) \varphi \ dx|^2 
&\lesssim \|\phi\|_{L^2_x}^2 \varepsilon +(\int_{|x-x(t)|<C(\varepsilon )/ N_n (t)} |u_n (t)| |\phi| \ dx )^2\\
&\lesssim \|\phi\|_{L^2_x}^2 \varepsilon + \|\phi\|_{L^{\infty}} \|u_n (t)\|_{L^2}^2 (C(\varepsilon) / N_n (t))^4 \rightarrow \|\phi\|_{L^2}^2 \varepsilon ,
\end{align*}
if $N_n (t)\rightarrow +\infty$ as $n\rightarrow \infty$. In the case $N_n (t) \rightarrow 0$, we can get the result by similar argument and using Plancherel's theorem. This contradicts the nonzero assumption of $(u,v)$. 
We can easily obtain \eqref{3-8-2} by similar argument, and so we omit the detail.
Passing to a subsequences if necessary
\footnote{Note that subsequences depend on $t$.}, we may assume
\begin{align*}
&N(t):=\lim_{n\rightarrow \infty} N_n (t)  \\
&\xi (t) :=\lim_{n\rightarrow \infty} \xi_{n} (t)\\
&x (t) := \lim_{n\rightarrow \infty} x_n (t).
\end{align*}
Then we can easily prove almost periodicity of $(u,v)$ with parameters $N(t), \xi (t),x(t).$
\end{proof}
\begin{lem}\label{lem3-9}
Let $(u_n ,v_n)$ be a sequence of normalized maximal-lifespan solutions with lifespans $0\in I_n= (-T_n^{-} ,T_n^{+})$, which are almost periodic modulo symmetries on $[0,T_n^{+})$ with parameters $N_n (t) , x_n (t), \xi _n (t)$ and 
a uniform compactness modulus function $C$. Assume that we also have 
\begin{align*}
0<\inf_n M(u_n ,v_n) \leq \sup_n M(u_n ,v_n) <\infty .
\end{align*}
Then, after passing to a subsequence if necessary, there exists a non-zero maximal lifespan solution $(u,v)$ with lifespan $0\in I=(-T^{-} ,T^{+})$ that is almost periodic modulo symmetries on $[0,T^+) $, such that 
$(u_n ,v_n) $ converge locally uniformly to $(u,v)$ on I.
\end{lem}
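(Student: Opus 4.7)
The plan is to extract a strong $L^2$ limit of the initial data at $t=0$, use local well-posedness to define the candidate limiting solution, apply the stability theorem to upgrade to local uniform convergence on the whole lifespan, and finally invoke Lemma~\ref{lem3-8} to transfer almost periodicity to the limit.

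First, I would exploit the normalization $N_n(0)=1$, $x_n(0)=\xi_n(0)=0$ together with the uniform compactness modulus $C$: the tightness inequality \eqref{tightness} at $t=0$ reads
\begin{equation*}
\int_{|x|\ge C(\eta)}\bigl(|u_n(0,x)|^2+|v_n(0,x)|^2\bigr)\,dx+\int_{|\xi|\ge C(\eta)}\bigl(|\hhat{u}_n(0,\xi)|^2+|\hhat{v}_n(0,\xi)|^2\bigr)\,d\xi\le 2\eta
\end{equation*}
for every $\eta>0$. Combined with the uniform upper bound on $M(u_n,v_n)$, this yields simultaneous spatial and frequency tightness of the sequence $\{(u_n(0),v_n(0))\}$, hence precompactness in $L^2(\R^4)^2$ via the Riesz--Fr\'echet--Kolmogorov criterion. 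Passing to a subsequence, $(u_n(0),v_n(0))\to (u_0,v_0)$ strongly in $L^2$, and the uniform lower bound $\inf_n M(u_n,v_n)>0$ forces $(u_0,v_0)\neq 0$.

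Next, let $(u,v)$ be the maximal-lifespan solution with initial data $(u_0,v_0)$ given by Theorem~\ref{thm:lwp}, on lifespan $I=(-T^-,T^+)$. For any compact $K\subset I$ with $0\in K$ the scattering norm $S_K(u,v)$ is finite, so I would apply the stability theorem (Theorem~\ref{theo1}) with $(u,v)|_K$ as approximate solution, trivial errors $e_1=e_2=0$, and genuine initial data $(u_n(0),v_n(0))$. The $L^2$ difference from $(u_0,v_0)$ tends to $0$, and Strichartz gives that the free evolution of this difference tends to $0$ in $L^3_{t,x}(K\times\R^4)$. The stability conclusion then forces, for $n$ large, $K\subset I_n$ and $(u_n,v_n)\to(u,v)$ in $C(K;L^2)\cap L^3_{t,x}(K\times\R^4)$; varying $K$ gives locally uniform convergence on all of $I$.

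Finally, to see that $(u,v)$ is almost periodic modulo symmetries on $[0,T^+)$, I would apply Lemma~\ref{lem3-8} to the restriction of $(u_n,v_n)$ to $[0,T_n^+)$, which is almost periodic modulo symmetries with the common compactness modulus $C$ and converges locally uniformly on $[0,T^+)$ to the non-zero solution $(u,v)$. This produces almost periodicity of $(u,v)$ on $[0,T^+)$ with the same modulus $C$ and parameters $N(t)=\limsup_n N_n(t)$, together with subsequential limits $x(t),\xi(t)$ of $x_n(t),\xi_n(t)$. The main obstacle is the step from convergence of initial data at a single time to local uniform convergence throughout $I$; this is exactly what the quantitative stability theorem, applied with arbitrarily large $L^3_{t,x}$ norm of the approximate solution, is designed to furnish.
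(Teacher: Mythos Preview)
Your proof is correct and follows essentially the same approach as the paper's: precompactness of the initial data from the normalization and uniform compactness modulus, extraction of an $L^2$ limit, construction of the maximal-lifespan solution $(u,v)$ via Theorem~\ref{thm:lwp}, upgrade to locally uniform convergence via the stability result (Theorem~\ref{theo1}), and transfer of almost periodicity on $[0,T^+)$ via Lemma~\ref{lem3-8}. The paper's proof is terser but structurally identical; your added detail on the Riesz--Fr\'echet--Kolmogorov step and on why $(u_0,v_0)\neq 0$ makes explicit what the paper leaves implicit.
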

\begin{proof}
By almost periodicity and the assumption $\sup_n M(u_n ,v_n) <\infty$, $\{(u_n ,v_n) (0) \mid n\in \mathbb{N}\}$ is precompact in $L^2 (\mathbb{R}^4)^2$ , and so passing to a subsequence if necessary, we may assume that
\begin{align*}
(u_n (0) , v_n (0) ) \rightarrow (u_0 ,v_0) \ in\  L^2 (\mathbb{R}^4)^2\ for \ some\  (u_0, v_0).
\end{align*} 
Let $(u,v):I \rightarrow \mathbb{C}^2$ be the maximal lifespan solution with initial data $(u,v)(0) =(u_0, v_0)$. Then we can apply the stability result, and  so we establish
$(u_n, v_n) \rightarrow (u,v)$ locally uniformly on $I$. Almost periodicity of $(u,v)$ on $[0,T^+)$ follows from Lemma \ref{lem3-8}.
\end{proof}
\begin{lem}\label{lem3-10}
Let $(u,v)$ be a non-zero maximal-lifespan solution with lifespan $I=(-T^- ,T^+)$ that is almost periodic modulo symmetries on $[0,T^+ )$ with parameters $N(t) ,x(t),\xi (t)$. Then there exists a small number $\delta =\delta (u,v)>0 $
such that  for every $t_0\in [0,T^+)$ we have 
\begin{align}
[0, t_0 +\delta N(t_0)^{-2}] \subset [0,T^+) \label{3-10-1}
\end{align}
and 
\begin{align}
N(t) \sim_{u,v} N(t_0) \quad \text{whenever}\quad \max\{0, t_0 -\delta N(t_0)^{-2} \}\leq t \leq t_0 + \delta N(t_0)^{-2}. \label{3-10-2}
\end{align}
\begin{proof}
First we prove \eqref{3-10-1} by contradiction.
If not there exist sequences $t_n \in [0,T^{+}) $ and $\delta_n >0$ such that 
\begin{align*}
t_n +\delta_n N(t_n)^{-2} \notin [0,T^+) ,\ \delta_n \rightarrow 0.
\end{align*}
Applying Theorem \ref{lem3-9} there exists the maximal-lifespan solution $(u,v) :I=(-T^- ,T^+) \rightarrow \mathbb{C}^2$ such that 
\begin{align*}
(u^{[t_n]} ,v^{[t_n]}) \rightarrow (u,v)  \quad \text{locally  uniformly  on $I$} ,
\end{align*}
where $(u^{[t_n]} ,v^{[t_n]}) :I^{[t_n]} \rightarrow \mathbb{C}^2$ is normalized solution at time $t_n $. Then there exists $n_0 \in \mathbb{N}$ such that $\delta_n \in [0,T^+)$ for all $n\geq n_0$, and so there exists $m(n) \in \mathbb{N}$ such that
$t_m + \delta_n N(t_m)^{-2} \in [0, T^+)$ for $n \geq n_0 , \ m\geq m(n)$. This is a contradiction.\\
Next, we prove also \eqref{3-10-2} by contradiction. If not we can find sequences $t_n , t' _n \in [0,T^+)$ such that
\begin{align*}
s_n :=(t'_n -t_n)N(t_n)^2\rightarrow 0 
\end{align*}
and
\begin{align*}
\frac{N(t'_n)}{N(t_n)} \rightarrow 0\quad \text{or} \quad +\infty \quad \text{as} \quad n\rightarrow \infty .
\end{align*}
Then we easily get $(u^{[t_n]}(s_n), v^{[t_n]}(s_n)) \rightharpoonup (0,0) $ weakly in $L^2 (\mathbb{R}^4)^2$. 
On the other hand, by Lemma \ref{lem3-9}, there exists a maximal-lifespan non-zero solution $(u,v):I\rightarrow \mathbb{C}^2 $ such that $(u^{[t_n]}, v^{[t_n]}) \rightarrow (u,v) $ locally uniformly on $I$.
Then we have $(u^{[t_n]},v^{[t_n]})(s_n) \rightarrow (u(0),v(0))$ in $L^2 (\mathbb{R}^d)^2 $ as $n\rightarrow \infty$, and so $(u,v)=(0,0)$. This is a contradiction .
\end{proof}
\end{lem}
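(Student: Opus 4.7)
The plan is to prove both assertions by contradiction, in each case extracting via Lemma~\ref{lem3-9} a nonzero limit solution that conflicts with the assumed degeneration. Throughout, a key point is that the normalized solutions $(u^{[t_0]},v^{[t_0]})$ satisfy the hypotheses of Lemma~\ref{lem3-9}: they are all almost periodic modulo symmetries with the same compactness modulus function $C$, have the same mass $M(u,v)$, and (by the normalization) have frequency scale $N^{[t_0]}(0)=1$, spatial/frequency centers $x^{[t_0]}(0)=\xi^{[t_0]}(0)=0$.

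For \eqref{3-10-1}, suppose toward a contradiction that there are sequences $t_n\in[0,T^+)$ and $\delta_n\downarrow 0$ with $t_n+\delta_n N(t_n)^{-2}\notin[0,T^+)$. The normalized solution $(u^{[t_n]},v^{[t_n]})$ has forward lifespan $T_n^+=(T^+-t_n)N(t_n)^2\le \delta_n\to 0$. Applying Lemma~\ref{lem3-9} to the normalized sequence and passing to a subsequence, we obtain a nonzero maximal-lifespan solution $(u,v)$ with lifespan $I\ni 0$ such that $(u^{[t_n]},v^{[t_n]})\to(u,v)$ locally uniformly on $I$. Choose $\delta_0>0$ small with $[0,\delta_0]\subset I$; the stability result (Theorem~\ref{theo1}), applied with $(u,v)$ as the approximate solution on $[0,\delta_0]$ and the small data $(u^{[t_n]}(0),v^{[t_n]}(0))-(u(0),v(0))$, forces $[0,\delta_0]\subset I^{[t_n]}$ for all sufficiently large $n$, contradicting $T_n^+\to 0$. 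This yields a uniform $\delta=\delta(u,v)>0$ giving \eqref{3-10-1}.

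For \eqref{3-10-2}, suppose instead there exist $t_n,t_n'\in[0,T^+)$ with $s_n:=(t_n'-t_n)N(t_n)^2\to 0$ and $N(t_n')/N(t_n)\to 0$ or $+\I$. The normalized scale satisfies $N^{[t_n]}(s_n)=N(t_n')/N(t_n)$, so by the weak-convergence argument in step~1 of the proof of Lemma~\ref{lem3-8} (with $\phi\in C_c^\I$ and either a spatial or Plancherel argument depending on whether $N^{[t_n]}(s_n)\to\I$ or $0$), almost periodicity forces
\eqq{\bigl(u^{[t_n]}(s_n),\,v^{[t_n]}(s_n)\bigr)\rightharpoonup (0,0)\quad \text{weakly in } L^2(\R^4)^2.}
On the other hand, by Lemma~\ref{lem3-9} (and using the already-established \eqref{3-10-1} to guarantee $s_n$ lies in the lifespan of the limit for large $n$), up to a subsequence $(u^{[t_n]},v^{[t_n]})\to(u,v)$ locally uniformly on some neighborhood of $0$ for a nonzero solution $(u,v)$. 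Since $s_n\to 0$ and the convergence is uniform in $C_t L^2_x$, we get $(u^{[t_n]}(s_n),v^{[t_n]}(s_n))\to(u(0),v(0))$ strongly in $L^2$. Combining the two modes of convergence gives $(u(0),v(0))=0$, contradicting nontriviality.

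The main obstacle I expect is the verification that Lemma~\ref{lem3-9} is applicable at every step: one must check that the normalized solutions have forward lifespans whose infimum stays positive (needed so the limit solution has a nontrivial forward interval), which is precisely what \eqref{3-10-1} provides when treating the second claim. This is why the two parts must be proved in order, with \eqref{3-10-1} feeding into the proof of \eqref{3-10-2}. The weak-convergence step also requires a careful use of the almost periodicity both in physical space and on the Fourier side, as in Lemma~\ref{lem3-8}.
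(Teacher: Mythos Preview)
Your proof is correct and follows essentially the same approach as the paper's: both parts are argued by contradiction via Lemma~\ref{lem3-9}, extracting a nonzero limit of the normalized solutions and deriving a contradiction from either the vanishing forward lifespan (for \eqref{3-10-1}) or the weak convergence to zero forced by $N^{[t_n]}(s_n)\to 0$ or $\infty$ (for \eqref{3-10-2}). Your explicit invocation of stability and the reference to the weak-convergence step of Lemma~\ref{lem3-8} make the argument a bit more transparent than the paper's; the remark that \eqref{3-10-1} is needed in the proof of \eqref{3-10-2} is harmless but unnecessary, since $s_n\to 0$ and the limit's lifespan is an open neighborhood of $0$.
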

\begin{cor}\label{cor3-11}
Let $(u,v)$ be a maximal lifespan solution with lifespan $0\in I=(-T^- ,T^+)$ that is almost periodic modulo symmetries  on $[0, T^+)$ with frequency scale function $N \colon [0,T^+) \rightarrow \mathbb{R}_{>0}$.
If $T^+ <\infty$, then it follows that $\lim_{t\rightarrow T^+}N(t)=\infty$. If $T^+ =\infty $, then we have $N(t)\gtrsim_{u,v} \min \{N(0), t^{-1/2}\} \ {\forall} t\in [0, \infty)$.\end{cor}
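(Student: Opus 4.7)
The plan is to read both statements off directly from Lemma~\ref{lem3-10}, using its two conclusions: the forward containment \eqref{3-10-1} and the local comparability \eqref{3-10-2}, with the constant $\delta=\delta(u,v)>0$ from that lemma.

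\emph{Finite-time case.} Assume $T^+<\infty$; I want to show $\lim_{t\to T^+}N(t)=+\I$. Suppose for contradiction that this fails. Then there exist $M<\I$ and a sequence $t_n\in[0,T^+)$ with $t_n\to T^+$ and $N(t_n)\le M$. Applying \eqref{3-10-1} at $t_0=t_n$ gives
\eqq{t_n+\delta N(t_n)^{-2}\le T^+,\qquad \text{hence}\qquad t_n+\delta M^{-2}\le T^+.}
Letting $n\to\I$ yields $T^++\delta M^{-2}\le T^+$, a contradiction. Therefore $\liminf_{t\to T^+}N(t)=+\I$, which is the desired limit.

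\emph{Infinite-time case.} Assume $T^+=\I$ and fix $t\in[0,\I)$. I apply Lemma~\ref{lem3-10} with $t_0:=t$: on the interval $[\max\{0,t-\delta N(t)^{-2}\},\,t+\delta N(t)^{-2}]$ the bound $N(s)\sim_{u,v} N(t)$ holds. I now split into two cases according to whether $0$ lies in this interval.

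If $t\le \delta N(t)^{-2}$, then $0$ is in the interval of comparability, so $N(0)\sim_{u,v}N(t)$, whence $N(t)\gec_{u,v}N(0)$. Otherwise $t>\delta N(t)^{-2}$, i.e.\ $N(t)^2>\delta/t$, which gives $N(t)\gec_{u,v}t^{-1/2}$. In either case, $N(t)\gec_{u,v}\min\{N(0),t^{-1/2}\}$, as claimed.

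The argument is essentially a bookkeeping exercise on top of Lemma~\ref{lem3-10}, so the only real point of care is that the constant hidden in $\sim_{u,v}$ (and hence in $\gec_{u,v}$) depends on the solution $(u,v)$ but not on $t$; this is already built into the statement of Lemma~\ref{lem3-10}. No compactness or profile-decomposition input is needed beyond what is already invoked to prove Lemmas~\ref{lem3-9} and \ref{lem3-10}.
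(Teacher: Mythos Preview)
Your proof is correct and follows essentially the same approach as the paper: both arguments read the two conclusions directly off Lemma~\ref{lem3-10}, handling the finite-time case via the forward containment \eqref{3-10-1} and the infinite-time case by splitting on whether $t\le \delta N(t)^{-2}$. Your write-up is in fact more detailed than the paper's, which dispatches the $T^+<\infty$ case in a single sentence.
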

\begin{proof}
If $T^+ <\infty$, the result follows easily by Lemma \ref{lem3-10}. 
Consider the case $T^+ =\infty$. Take any $ t\in [0,\infty)$. If $\max \{0, t-\delta N(t)^{-2}\} =0$, then by   Lemma \ref{lem3-10}, we obtain $N(t) \gtrsim N(0)$. 
On the other hand, if $0 \leq t-\delta N(t)^{-2} $, then we get easily that $N(t)\geq \frac{\delta^{1/2}}{t^{1/2}}$.
\end{proof}
\begin{thm}[Existence of minimal mass blow-up solution ]\label{thm:mmbs}
There exists a maximal-lifespan solution $(u,v)$ with lifespan $I=(-T^- ,\infty)$ that is almost periodic modulo symmetries on $[0,\infty)$ and blows up forward in time satisfying $M(u,v)=M_c$, $N(0)=\sup_{t\in [0,\infty)}N(t)=1$, and  $x(0)=\xi (0)=0$. 
\end{thm}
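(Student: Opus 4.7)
Let $(\tilde u,\tilde v):(-\tilde T^-,\tilde T^+)\times\R^4\to\Bo{C}^2$ be the solution supplied by Theorem~\ref{thm3-1}, with almost-periodicity parameters $\tilde N(t)$, $\tilde x(t)$, $\tilde\xi(t)$. The plan is to select a sequence of times $t_n\in[0,\tilde T^+)$ at which $\tilde N$ attains the running maximum on $[0,t_n]$, renormalize at $t_n$, and then apply the time-reversal symmetry $(u,v)(t,x)\mapsto(\bbar{u(-t,x)},\bbar{v(-t,x)})$ of \eqref{NLS} to convert a backward-in-time frequency bound into the forward-in-time bound required by the statement.

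Set $N^\ast:=\sup_{t\in[0,\tilde T^+)}\tilde N(t)\in(0,\I]$. If $N^\ast<\I$, Corollary~\ref{cor3-11} forces $\tilde T^+=\I$; in either case, Lemma~\ref{lem3-10} shows that $\tilde N$ is bounded on every compact subinterval of $[0,\tilde T^+)$. Choosing $\tilde N$ to be continuous (as is possible by the remark following the definition of almost periodicity), pick $t_n\in[0,\tilde T^+)$ with $\tilde N(t_n)=\max_{t\in[0,t_n]}\tilde N(t)\to N^\ast$; both subcases ($N^\ast<\I$, with $t_n\to\I$ and $\tilde N(t_n)\to N^\ast>0$; $N^\ast=\I$, with $t_n$ bounded away from $0$ by local boundedness and $\tilde N(t_n)\to\I$) yield $t_n\tilde N(t_n)^2\to\I$. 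Let $(u_n,v_n):=(\tilde u^{[t_n]},\tilde v^{[t_n]})$. Each $(u_n,v_n)$ is an almost-periodic maximal-lifespan solution of mass $M_c$ which blows up forward and backward, with parameters $N_n(0)=1$, $x_n(0)=\xi_n(0)=0$, and the formula $N_n(s)=\tilde N(t_n+s\tilde N(t_n)^{-2})/\tilde N(t_n)$ yields $N_n(s)\le 1$ for $s\in[-t_n\tilde N(t_n)^2,0]$. Define the time-reversed solutions $(U_n,V_n)(s,x):=(\bbar{u_n(-s,x)},\bbar{v_n(-s,x)})$; a direct computation shows they also solve \eqref{NLS}, retain mass $M_c$, remain almost periodic and blow up in both time directions, with frequency-scale functions $\hat N_n(s):=N_n(-s)$ satisfying $\hat N_n(0)=1$, $\hat x_n(0)=\hat\xi_n(0)=0$, and $\hat N_n(s)\le 1$ on $[0,t_n\tilde N(t_n)^2]$.

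Apply Lemma~\ref{lem3-9} to $\{(U_n,V_n)\}$: passing to a subsequence, $(U_n(0),V_n(0))\to(u_0,v_0)$ strongly in $L^2(\R^4)^2$ and $(U_n,V_n)\to(u,v)$ locally uniformly on the lifespan $I=(-T^-,T^+)$ of the maximal-lifespan solution $(u,v)$ with initial data $(u_0,v_0)$. By Lemma~\ref{lem3-8}, $(u,v)$ is almost periodic modulo symmetries on $[0,T^+)$ with parameters $N(t)=\limsup_n\hat N_n(t)$, $x(t)$, $\xi(t)$; in particular $N(0)=1$, $x(0)=\xi(0)=0$, and $M(u,v)=M_c$ by the strong $L^2$-convergence. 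Since $t_n\tilde N(t_n)^2\to\I$, for each fixed $T>0$ we have $\hat N_n\le 1$ on $[0,T]$ for all sufficiently large $n$, whence $N(t)\le 1$ on $[0,T^+)$; Corollary~\ref{cor3-11} then forces $T^+=\I$. Finally, if $(u,v)$ scattered forward in time, Theorem~\ref{theo1} with $(u,v)$ as the approximate solution (zero error, $L^2$-data differences tending to $0$) would yield $S_{[0,\I)}(U_n,V_n)<\I$ for large $n$, contradicting the forward blowup of each $(U_n,V_n)$; hence $(u,v)$ blows up forward in time, completing the proof.

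The main technical point is the estimate $t_n\tilde N(t_n)^2\to\I$ in the selection step, which is precisely what permits the time-reversed intervals on which $\hat N_n\le 1$ to exhaust $[0,\I)$ and hence delivers $N\le 1$ on $[0,\I)$ in the limit. The split into the two cases $N^\ast<\I$ and $N^\ast=\I$ is the only place where the complementary roles of Lemma~\ref{lem3-10} (local boundedness of $\tilde N$) and Corollary~\ref{cor3-11} (forcing $\tilde T^+=\I$ when $\tilde N$ is bounded) are simultaneously exploited; without this divergence one would obtain only a local-in-time bound on $N$, falling short of the uniform bound demanded by the statement.
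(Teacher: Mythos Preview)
Your argument has a genuine gap in the selection step. You claim that when $N^\ast:=\sup_{[0,\tilde T^+)}\tilde N<\infty$ one may choose $t_n\to\infty$ with $\tilde N(t_n)=\max_{[0,t_n]}\tilde N\to N^\ast$, and hence $t_n\tilde N(t_n)^2\to\infty$. But nothing you have invoked rules out the scenario in which $\tilde N$ is strictly decreasing on $[0,\infty)$, for instance $\tilde N(t)\sim t^{-1/2}$ for large $t$; this is fully consistent with Corollary~\ref{cor3-11} and with forward blow-up (since $\int^\infty t^{-1}\,dt=\infty$ matches $S_{[0,\infty)}=\infty$ via Lemma~\ref{lem:Strichartz-SJ}). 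In that case the only $t$ with $\tilde N(t)=\max_{[0,t]}\tilde N$ is $t=0$, so $t_n\equiv 0$ and $t_n\tilde N(t_n)^2=0$. More generally, whenever the supremum of $\tilde N$ is approached only along bounded times, your running-maximum selection forces $t_n$ to stay bounded and the key divergence $t_n\tilde N(t_n)^2\to\infty$ fails. The backward interval on which $\hat N_n\le 1$ then does not exhaust $[0,\infty)$, and you cannot conclude $N\le 1$ on $[0,T^+)$.

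The paper's proof avoids committing in advance to the backward direction. It picks $t_n$ near the maximum of $\tilde N$ on an exhausting family of compact intervals $J_n\nearrow J$, and then uses the divergence $S_{J_n}(\tilde u,\tilde v)\to\infty$ (rather than a length estimate on $[0,t_n]$) to show that the rescaled intervals $J_n^{[t_n]}$ cannot stay inside any fixed compact subset of the limiting lifespan. This forces, after passing to a subsequence, at least one of the two time directions to be covered for all $n$; the time-reversal symmetry is invoked only if it happens to be the backward one. In your decreasing-$\tilde N$ scenario it is precisely the forward direction that works, which is why a one-sided selection cannot succeed. A minimal repair of your argument is to replace the running-maximum choice by the paper's two-sided mechanism and postpone the appeal to time reversal until after the direction has been identified.
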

\begin{proof}
Let $(\tilde{u},\tilde{v}):J\rightarrow \mathbb{C}^2$ be an almost periodic  solution constructed in Theorem \ref{thm3-1} with frequency scale function $\tilde{N} :J\rightarrow \mathbb{R}_{>0}$.
 Take a sequence $\{J_n\}$ of compact intervals satisfying $J_n \nearrow J$. 
Since $\sup_{t\in J_n} \tilde{N} (t) <\infty$, there exist $t_n \in J_n$ such that 
\begin{align*}
\sup_{t\in J_n} \tilde{N} (t) \leq 2 \tilde{N} (t_n) .
\end{align*}
Note that $M(\tilde{u}^{[t_n]} ,\tilde{v}^{[t_n]}) =M(u,v) >0$. Since $(\tilde{u}^{[t_n]} (0) ,\tilde{v}^{[t_n]}(0))$ is precompact in $L^2 (\mathbb{R}^4)^2$, passing to a subsequence if necessary, it follows that 
\begin{align*}
(\tilde{u}^{[t_n]} (0) ,\tilde{v}^{[t_n]}(0)) \rightarrow (u_0,v_0)\quad \text{in $L^2 (\mathbb{R}^4)^2$ for  some  $(u_0,v_0)$.}
\end{align*}
Let $(u,v):I\times \mathbb{R}^4 \rightarrow \mathbb{C}^2$ be the maximal-lifespan solution with $(u(0), v(0)) =(u_0 ,v_0)$ and $(u^n,v^n) :I_n \times \mathbb{R}^4 \rightarrow \mathbb{C}^2$ be the maximal-lifespan solution with 
$(u^n (0),v^n (0)) =(\tilde{u}^{[t_n]} (0) ,\tilde{v}^{[t_n]}(0))$.
If $0\in K$ is compact subinterval of $I=(-T^{-} ,T^{+})$, from the stability result we see that
\begin{align*}
(u^n ,v^n ) \rightarrow (u,v) \quad \text{uniformly  on} \quad K.
\end{align*}
In particular $\limsup_{n\rightarrow \infty} \|(u^n,v^n )\|_{L_{t,x}^3(K\times \mathbb{R}^4)} <\infty $ .   
On the other hand, we have
\begin{align*}
\|(u^n ,v^n)\|_{L_{t,x}^3(J_n^{[t_n]} \times \mathbb{R}^4)} =\|(u,v)\|_{L_{t,x}^3 (J_n \times \mathbb{R}^4)} \rightarrow \infty .
\end{align*}
Therefore we obtain $J_n^{[t_n]} \not\subset K$ for sufficiently large $n$.
Since $0\in K$ is an arbitrary  compact subinterval of $I$, after passing to a subsequence, we may assume one of the following holds:
\begin{itemize}
\item  For every $t\in (0, T^+)$, $t\in J_n^{[t_n]}$ for all sufficiently large $n$.
\item For every $t\in (0, T^{-})$, $t\in J_n^{ [t_n]}$ for all sufficiently large $n$.
\end{itemize}
By time reversal symmetry, it suffices to consider the former possibility.
Then it follows that 
\begin{align*}
(\tilde{u}^{[t_n]} ,\tilde{v}^{[t_n]}) \rightarrow (u,v)\quad \text{locally  uniformly  on} \quad [0,T^{+}).
\end{align*}
Applying Lemma \ref{lem3-8} , we see that $(u,v) $ is almost periodic modulo symmetry with frequency scale function $N_0 (t) =\limsup_{n\rightarrow \infty} \tilde{N}^{[t_n]} (t) \leq 2$.
By Corollary \ref{cor3-11}, then we get $T^{+} =\infty$.
Setting 
\begin{align*}
N(t) :=
\begin{cases}
 N_0 (t) / \sup_{s\in [0,\infty)} N_0 (s)\ \ &\ \  t>0 \\
 1\ \ &\ \ t=0,
 \end{cases}
 \end{align*}
 we have
\begin{align*}
\min\{ \frac{1}{\sup_{s\in [0,\infty)}N_0 (s)} ,\frac{1}{N_0 (0)} \} \leq \frac{N(t)}{N_0 (t)} \leq \max\{\frac{1}{\sup_{s\in [0,\infty)} N_0 (s)} ,\frac{1}{N_0 (0)} \}
\end{align*}
and so we can replace $N_0$ with $N$.
It remains to show that $(u,v)$ blows up forward in time . If not $(u,v)$ scatters to $(e^{it\Delta} u^+ , e^{i\frac{t}{2}\Delta } v^+  )$ as $t\rightarrow \infty$ for some $(u^+ ,v^+) \in L^2 (\mathbb{R}^4)^2$.
Furthermore we can show that $(e^{-it\Delta}u(t) , e^{-i\frac{t}{2}\Delta} v(t) ) \rightharpoonup (0,0)$ weakly in $L^2 (\mathbb{R}^4)^2$.
Indeed, for any $(\phi ,\psi )\in C_0^{\infty} (\mathbb{R}^4)^2$, we can calculate as follows :
\begin{align*}
&|(e^{-it\Delta}u(t) ,\phi )_{2}|^2 \leq \eta \|\phi\|_{L^2}^2 +\|u(t)\|_{L^2}^2 \|e^{it\Delta} \phi\|_{L^2 (B(x(t) , C(\eta)/N(t) ))}^2, \\
&|(e^{-i\frac{t}{2}\Delta} v(t)) , \psi )_{2}|^2 \leq \eta \|\psi\|_{L^2}^2 +\|v(t)\|_{L^2}^2\|e^{i\frac{t}{2} \Delta } \psi \|_{L^2 (B(x(t) , C(\eta ) /N(t)))}^2. 
\end{align*} 
By the dispersive estimate and Corollary \ref{cor3-11}, we obtain the desired result. Therefore we get $(u^+ ,v^+)=(0,0)$. This is a contradiction.
\end{proof}

\subsection{Minimal blow-up solution in the radial case}
In this subsection we introduce the minimal blow-up solution in the radial case. 
\begin{thm}[Minimal blow-up solution in the radial case]\label{mnsr}
There exists a maximal-lifespan solution $(u,v)$ with lifespan $I=(-T^{-}, \infty)$ that is radial, almost periodic modulo symmetries on $[0,\infty)$, and blows up forward in time satisfying $M(u,v)=M_{c,\text{rad}}$, $N(0)=\sup_{t\in [0,\infty)} =1$, 
and $x\equiv \xi \equiv 0$, where $M_{c,\text{rad}}$ is radially symmetric critical mass defined in Section 1.
\end{thm}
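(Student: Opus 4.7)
The plan is to repeat verbatim the three-step strategy of Sections 3.2 and 3.3 used to prove Theorem~\ref{thm:mmbs}, but restricted to the radial class. The key simplification is that for radial data the relevant subgroup of $G_\kappa$ collapses to phase and scaling only: the Galilean boost $e^{ix\cdot \xi_0}$ and spatial translation by $x_0$ both destroy radial symmetry unless $\xi_0=0$ and $x_0=0$. In particular, the mass-resonance assumption $\kappa=1/2$, which was invoked in Section~3.2 only to ensure that the Galilean group acts on solutions of \eqref{NLS}, becomes unnecessary here. In the end, the frequency-center and spatial-center functions in the almost-periodicity definition can be taken identically zero.

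First, I would establish a radial version of the profile decomposition, Theorem~\ref{profile}: for any bounded radial sequence $\{(u_n,v_n)\}\subset L^2(\R^4)^2$ one obtains a decomposition \eqref{eqeq2} in which each profile $(\phi^j,\psi^j)$ is radial and the symmetry parameters satisfy $\xi_n^j=x_n^j=0$, so that $g_n^j = g_\kappa(\theta_n^j,0,0,\lambda_n^j)$. This is the standard radial reduction: in the inverse Strichartz step (Lemma~\ref{inv-str}, Proposition~\ref{prop}), a nontrivial weak limit obtained from a radial sequence must itself be radial (otherwise averaging over rotations yields a contradiction with the concentration), which forces the frequency center to lie at the origin and, after absorbing the resulting translation into the scale via sharpness considerations for the radial Strichartz inequality, forces $x_n^j=0$ as well.

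Next, with this radial profile decomposition in hand, I would run the proof of Proposition~\ref{prop3-2} essentially verbatim on radial sequences: if $(u_n,v_n)$ are radial maximal-lifespan solutions with $M(u_n,v_n)\to M_{c,\text{rad}}$ and $S_{\ge t_n}(u_n,v_n), S_{\le t_n}(u_n,v_n)\to \infty$, then the decomposition together with the small-data bound \eqref{3-2} and the stability Theorem~\ref{theo1} forces a single profile of mass $M_{c,\text{rad}}$. The resulting nonlinear profiles $(a^j_n,b^j_n)$ remain radial throughout (since $\xi_n^j=x_n^j=0$ and $(\phi^j,\psi^j)$ is radial, and \eqref{NLS} preserves radial symmetry). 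The conclusion is that $G(u_n(t_n),v_n(t_n))$ converges in $G\setminus L^2$, where $G$ now denotes the phase–scaling subgroup. This yields the radial analog of Theorem~\ref{thm3-1}: a radial almost-periodic-modulo-symmetries solution with mass $M_{c,\text{rad}}$ that blows up in both time directions.

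Finally, the refinement arguments of Section~3.3 — Lemmas~\ref{lem3-8}--\ref{lem3-10}, Corollary~\ref{cor3-11}, and the normalization procedure in the proof of Theorem~\ref{thm:mmbs} — carry over with no changes, since they never used Galilean invariance: they rely only on stability, local uniform convergence, extraction of convergent subsequences, and rescaling $N_0(t)$ by its supremum. Choosing times $t_n$ where $\tilde N(t_n)$ is comparable to $\sup_{J_n}\tilde N$, normalizing, and passing to a locally uniform limit produces the desired solution with $N(0)=\sup_{t\in[0,\infty)} N(t)=1$, blowing up forward in time; radiality is preserved in the limit, and $x\equiv \xi \equiv 0$ is automatic from the reduced symmetry group. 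The only nonroutine step is the radial profile decomposition with $\xi_n^j=x_n^j=0$, and I expect this to be the main technical point to write out carefully; everything else is a transcription of Sections~3.2--3.3 with $\kappa\neq 1/2$ now allowed because no Galilean boost ever appears.
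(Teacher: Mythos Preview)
Your proposal is correct and follows essentially the same approach as the paper: the paper explicitly states that the proof of Theorem~\ref{mnsr} is parallel to that of Theorem~\ref{thm:mmbs}, with the only new ingredient being the radial profile decomposition (Proposition~\ref{radial-profile}) eliminating $\xi_n^j$ and $x_n^j$, after which Galilean invariance is no longer needed and Sections~3.2--3.3 go through verbatim. The paper's actual proof of the radial reduction (Appendix~B) proceeds by showing that if $\lambda_n^j\xi_n^j$ or $(\lambda_n^j)^{-1}x_n^j-2t_n^j\lambda_n^j\xi_n^j$ were unbounded, then applying distinct rotations $A_\ell\in SO(d)$ would produce infinitely many asymptotically orthogonal profiles of equal mass, contradicting the $L^2$ bound --- this is precisely the ``rotation'' mechanism you anticipated as the main technical point.
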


The proof of this theorem is parallel to the proof of Theorem \ref{thm:mmbs}. However, we can not use Galilean invariance and so we remove the parameter $\xi_n^j$ in the profile decomposition by using radial assumption. Indeed, For a sequence of radial $L^2$ function, we may refine profile decomposition as follows: 

\begin{prop}[Profile decomposition for radially symmetric sequence]\label{radial-profile}
Consider the case $d\geq 2$. Let $\{(u_n,v_n)\} \subset L^2_{rad} (\mathbb{R}^{d})^2$ be bounded. Then in a profile decomposition given in Theorem \ref{profile}, we can replace all $\xi _n^{j}$ 
and $x_n ^j$ by zero. Furthermore we may take $W_n^{J}$ and $(\phi^{j} ,\psi ^j)$ are radially symmetric.
\end{prop}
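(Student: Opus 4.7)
The plan is to prove the radial refinement by upgrading the inverse Strichartz inequality (Lemma~\ref{inv-str}) to the radial setting, and then feeding the radial version through the same induction that produced Theorem~\ref{profile}. Concretely, I would first establish: \emph{if $\{u_n\}\subset L^2_{\mathrm{rad}}(\R^d)$ is bounded and $\liminf_n \|e^{it\Delta}u_n\|_{L^{2(d+2)/d}_{t,x}}>0$, then along a subsequence one may take $\xi_n=y_n=0$ and the profile $\phi$ radial.} Once this is in place, the proof of Proposition~\ref{prop} applied to radial $(u_n,v_n)$ yields a radial system-level inverse Strichartz with $\xi_n=y_n=0$, and then the inductive extraction of bubbles carries through verbatim; the remainders $W_n^J$ stay radial because at each step we subtract radial profiles from the radial sequence.

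The core step is the radial inverse Strichartz. I would start from the parameters $(\xi_n,y_n,\lambda_n,s_n)$ produced by Lemma~\ref{inv-str} and exploit the fact that $u_n\circ R = u_n$ for every $R\in SO(d)$. This yields, for every fixed $R$, a weak profile $\phi_R$ extracted with shifted parameters $(R\xi_n,Ry_n,\lambda_n,s_n)$, all of the same mass as $\phi$. If either $\lambda_n|\xi_n|\to\infty$ or $|y_n|/\lambda_n\to\infty$, then for a positive-measure set of rotations $R$ the asymptotic orthogonality condition \eqref{eqeq1} holds between the original and rotated parameters (since $|\xi_n-R\xi_n|$ and $|y_n-Ry_n|$ are comparable to $|\xi_n|$ and $|y_n|$ for generic $R$ when $d\ge2$). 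A standard covering argument then produces arbitrarily many pairwise asymptotically orthogonal bubbles of the same nonzero mass, contradicting the uniform bound on $\|u_n\|_{L^2}$. Hence $\lambda_n|\xi_n|$ and $|y_n|/\lambda_n$ stay bounded, so after passing to a further subsequence these shifts converge to finite limits $\xi_\infty,y_\infty$, which can be absorbed into the profile. Averaging the resulting profile over $SO(d)$ (using that $u_n$ is invariant under rotations) replaces $\phi$ by its radialization without loss of the mass lower bound \eqref{eq4} in the variant we need.

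Having the radial scalar inverse Strichartz, the vector-valued version is obtained exactly as in Proposition~\ref{prop}, applying it to whichever component of $(u_n,v_n)$ has the larger linear-flow Strichartz norm and noting that the paired weak limit is automatically radial because the companion sequence is radial and $h(0,0,0,\lambda_n)^{-1}$ preserves radiality. Then the proof of Theorem~\ref{profile} proceeds by setting $W_n^0:=(u_n,v_n)$ and recursively applying the radial inverse Strichartz to $W_n^{J-1}$: at each stage the extracted bubble is radial and centered, so the next remainder $W_n^J = W_n^{J-1}-g_n^J U_\kappa(t_n^J)(\phi^J,\psi^J)$ is again a radial sequence in $L^2\times L^2$. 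The Pythagorean decoupling identities \eqref{eqeq6}--\eqref{eqeq7} and the orthogonality \eqref{eqeq1} hold by the same arguments as in the non-radial case (with $\xi_n^j=x_n^j=0$, the nontrivial divergence is forced onto $\lambda_n^j/\lambda_n^k+\lambda_n^k/\lambda_n^j$ or $|t_n^j(\lambda_n^j)^2-t_n^k(\lambda_n^k)^2|/(\lambda_n^j\lambda_n^k)$).

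The main obstacle is the rotation-averaging step: one has to verify that when $\lambda_n|\xi_n|\to\infty$ or $|y_n|/\lambda_n\to\infty$, the parameters $(R\xi_n,Ry_n,\lambda_n)$ and $(\xi_n,y_n,\lambda_n)$ are indeed asymptotically orthogonal for a robust enough collection of rotations, and to turn this into a contradiction with the uniform $L^2$ bound. The dimension restriction $d\ge 2$ enters exactly here, since $SO(1)$ is trivial and the averaging argument collapses; for $d\ge 2$ the existence of a continuous family of rotations moving any nonzero vector far from itself is what makes the argument go through.
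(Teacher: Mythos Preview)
Your proposal is correct and shares the same core idea as the paper's proof---the rotation-orthogonality contradiction---but the organization is genuinely different. The paper takes the general (non-radial) profile decomposition of Theorem~\ref{profile} as a black box and \emph{post-processes} it: using a uniqueness lemma (Lemma~\ref{ap-lem4}, which says any nontrivial weak profile of $(u_n,v_n)$ must coincide, up to a fixed group element, with one of the $(\phi^j,\psi^j)$ already in the decomposition), rotated copies of a profile with unbounded normalized parameters are matched to \emph{distinct} indices $j_\ell$, yielding infinitely many profiles of equal nonzero mass and a contradiction; then the bounded shifts are absorbed by commuting them past $U_\kappa(t_n^j)$, and radiality of $(\phi^j,\psi^j)$ and $W_n^J$ follows by applying an arbitrary rotation to the decomposition and invoking uniqueness of weak limits. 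You instead work \emph{bottom-up}: upgrade Lemma~\ref{inv-str} itself to a radial version (same rotation argument, applied to a single bubble rather than via the uniqueness lemma), and then rerun the inductive extraction. Both routes are valid; the paper's is more economical since it avoids redoing the induction, while yours is more self-contained and closer in spirit to the original Tao--Visan--Zhang argument. One small remark: your ``averaging over $SO(d)$'' step is unnecessary---once $\xi_n=y_n=0$, the operators $h(0,0,0,\lambda_n)^{-1}e^{is_n\Delta}$ preserve radiality, so the weak limit of a radial sequence is automatically radial.
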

\begin{proof}
The proof of this fact is very similar to Theorem 7.3 in \cite{MR2445122} but we give it in Appendix \ref{AppendixB}.
\end{proof}

In Theorem \ref{radial-profile} we may also assume $\theta_{n}^j \equiv 0 $ after modifying the remainder term. Then we can prove Theorem \ref{mnsr} by quite similar argument in the proof of Theorem \ref{thm:mmbs} because we 
do not need to use Galilean invariance.

\subsection{Properties of almost periodic solutions}
\label{sec3.5}
We collect various properties of almost periodic solutions (\mbox{cf.} \cite{MR3098643}, Lemma~5.13--Proposition~5.23).
Proofs will be given in Appendix \ref{AppendixC}.

\begin{lem}\label{lem:localconstancy}
Let $(u,v):I\times \R ^4\to \Bo{C}^2$ be a non-zero solution to \eqref{NLS} that is almost periodic modulo symmetries.
Let $J$ be a subinterval of $I$ such that $S_J(u,v)<\I$.
Then, there exists $C=C(u,v,S_J(u,v))>0$ such that we have
\eqq{\sup _{t\in J}N(t)\le C\inf _{t\in J}N(t).}
\end{lem}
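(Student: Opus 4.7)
The plan is to cover $J$ by blocks on which $N(t)$ is essentially constant (via Lemma~\ref{lem3-10}), then extract a uniform lower bound on $S_{J_k}(u,v)$ for each block from the mass concentration built into almost periodicity. Since $S_J(u,v)<\I$ by hypothesis, only finitely many blocks can fit, and iterating the local-constancy bound then controls $\sup_JN/\inf_JN$.

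First, Lemma~\ref{lem3-10} (whose proof goes through for any $t_0$ in the lifespan once almost periodicity is assumed on the whole lifespan) provides constants $\delta=\delta(u,v)>0$ and $C_1=C_1(u,v)>1$ such that, for every $t_0\in J$,
$C_1^{-1}N(t_0)\le N(t)\le C_1N(t_0)$ on $[t_0-\delta N(t_0)^{-2},t_0+\delta N(t_0)^{-2}]\cap J$. Starting from $a_0:=\inf J$, I inductively set $a_{k+1}:=a_k+\delta N(a_k)^{-2}$ and $J_k:=[a_k,a_{k+1})\cap J$, continuing until $J$ is exhausted. On each $J_k$ we have $N(t)\sim_{C_1}N(a_k)$ and $N(a_{k+1})\le C_1 N(a_k)$.

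Next I establish the pointwise bound $\|u(t)\|_{L^3}^3+\|v(t)\|_{L^3}^3\gec_{u,v}N(t)^2$. Since $(u,v)$ is non-zero, $M(u,v)>0$; choosing $\eta:=M(u,v)/2$ in \eqref{tightness} and setting $B_t:=\{x:|x-x(t)|\le C(\eta)/N(t)\}$ (volume $\sim N(t)^{-4}$), one obtains $\|u(t)\|_{L^2(B_t)}^2+\|v(t)\|_{L^2(B_t)}^2\ge M(u,v)/2$. H\"older on $B_t$ yields $\|f\|_{L^2(B_t)}^2\le\|f\|_{L^3(\R^4)}^2\,|B_t|^{1/3}$, so $\|u(t)\|_{L^3}^2+\|v(t)\|_{L^3}^2\gec_{u,v}N(t)^{4/3}$; combined with the elementary inequality $a^3+b^3\gec(a^2+b^2)^{3/2}$, this gives the claim. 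Integrating over $J_k$, whose length is $\delta N(a_k)^{-2}$ and on which $N\sim N(a_k)$, produces $S_{J_k}(u,v)\gec_{u,v}\delta$.

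Summing over $k$ bounds the number $K$ of completed blocks by a multiple of $S_J(u,v)/\delta$, and iterating $N(a_{k+1})\le C_1 N(a_k)$ together with the within-block oscillation bound gives $\sup_JN\le C_1^{2(K+1)}\inf_JN$, which is the lemma with $C=C_1^{2(K+1)}$. The main nuisance I foresee is the final, possibly truncated block $J_K$, for which the lower bound on $S_{J_K}$ need not hold; this contributes at most a single extra factor of $C_1$ to the final constant, so it does not affect the structure of the argument. Aside from this bookkeeping, every step reduces to the local-constancy statement of Lemma~\ref{lem3-10} combined with the mass-concentration consequence of almost periodicity.
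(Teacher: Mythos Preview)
Your argument is correct, but it takes a genuinely different route from the paper's own proof. The paper argues directly on a subinterval with $S_J(u,v)\le \e_1$ small: for arbitrary $t_1,t_2\in J$ it combines spatial mass concentration at time $t_2$ with a frequency decomposition centered at $\xi(t_1)$. The low-frequency piece is bounded via H\"older and Hausdorff--Young by $R^8(N(t_1)/N(t_2))^4M(u,v)$, while the high-frequency piece is propagated from $t_1$ to $t_2$ by Duhamel and Strichartz and controlled using the frequency tightness at $t_1$. Balancing these against the concentration lower bound yields $(N(t_1)/N(t_2))^4\gec R^{-8}$ directly, with no appeal to Lemma~\ref{lem3-10}. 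Your approach instead imports the compactness-based local constancy of Lemma~\ref{lem3-10} to tile $J$ by blocks of length $\sim \delta N^{-2}$, and then uses the mass-concentration lower bound $\|u(t)\|_{L^3}^3+\|v(t)\|_{L^3}^3\gec_{u,v}N(t)^2$ (this is essentially the first half of Lemma~\ref{lem:Strichartz-SJ}) to force finitely many blocks. The paper's route is more self-contained---it does not rely on the limiting-solution machinery behind Lemma~\ref{lem3-10}---while yours is arguably more transparent and reuses existing structural lemmas. Note that your reliance on Lemma~\ref{lem3-10} implicitly restricts you to the settings where that lemma is available (either $\kappa=1/2$ or $\xi(t)\equiv 0$, since the normalization there uses the Galilean symmetry); the paper's direct argument has no such restriction, though in practice the lemma is only applied in those two settings anyway. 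The endpoint bookkeeping you flag (open endpoints of $J$, the truncated final block) is indeed routine.
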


\begin{lem}\label{lem:movement-xi}
Let $(u,v):I\times \R ^4\to \Bo{C}^2$ be a non-zero solution to \eqref{NLS} that is almost periodic modulo symmetries.
Let $J$ be a subinterval of $I$ such that $S_J(u,v)<\I$.
Then, there exists $C=C(u,v,S_J(u,v))>0$ such that we have
\eqq{\sup _{t_1,t_2\in J}|\xi (t_1) -\xi (t_2)|\le C\sup _{t\in J}N(t).}
\end{lem}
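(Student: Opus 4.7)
The plan is to combine the local constancy of $N$ (Lemma~\ref{lem:localconstancy}) with a comparison between $(u,v)$ and the free linear flow on subintervals of small Strichartz norm. The key observation is that the linear propagator preserves $|\hat u|$, while almost periodicity forces the Fourier mass of $u(t)$ and $v(t)$ to concentrate in balls centred at $\xi(t)$ and $\xi(t)/\kappa$ respectively, both at scale $N(t)$. If $\xi$ were to wander too far compared with $N$, these concentration regions at two different times would become essentially disjoint, producing an $L^2$ discrepancy inconsistent with the nonlinear correction being small.

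Concretely, one fixes a small $\eta>0$ (to be determined by $M(u,v)$) and partitions $J$ into consecutive subintervals $J_1,\dots ,J_K$ with $S_{J_k}(u,v)\le \eta$, so that $K\lec S_J(u,v)/\eta$. Lemma~\ref{lem:localconstancy} provides a constant $C_0=C_0(u,v,S_J(u,v))$ with $N(t)\le C_0\inf_{J}N$ for all $t\in J$. It then suffices to bound $|\xi(a)-\xi(t_2)|\lec_{u,v} \sup_{J_k} N$ whenever $a,t_2$ lie in a common $J_k$; summing over the $K$ intervals by the triangle inequality then yields the bound on $J$ with a constant $\sim KC_0$, depending only on $u,v$ and $S_J(u,v)$. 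For the local bound on $J_k=[a,b]$, Strichartz applied to Duhamel together with H\"older give
\begin{equation*}
\|\bar u v\|_{L_{t,x}^{3/2}(J_k\times \R^4)}+\|u^2\|_{L_{t,x}^{3/2}(J_k\times \R^4)}\lec \eta^{2/3},
\end{equation*}
and hence
\begin{equation*}
\|u(t_2)-e^{i(t_2-a)\Delta}u(a)\|_{L^2}+\|v(t_2)-e^{i\kappa(t_2-a)\Delta}v(a)\|_{L^2}\lec \eta^{2/3}.
\end{equation*}

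For any $\varepsilon>0$, almost periodicity yields $R=R(\varepsilon,u,v)$ such that uniformly in $t$, at most $\varepsilon^2$ of the Fourier mass of $u(t)$ (resp.\ $v(t)$) lies outside $B(\xi(t),RN(t))$ (resp.\ $B(\xi(t)/\kappa,RN(t))$). Assuming for contradiction that $|\xi(a)-\xi(t_2)|>R(1+C_0)\sup_{J} N$, the balls $B(\xi(a),RN(a))$ and $B(\xi(t_2),RN(t_2))$ are disjoint. Since $e^{i(t_2-a)\Delta}$ preserves $|\hat u(a)|$, the mass of $e^{i(t_2-a)\Delta}u(a)$ inside $B(\xi(t_2),RN(t_2))$ is at most $\varepsilon^2$, while $u(t_2)$ has mass at least $\|u(t_2)\|_{L^2}^2-\varepsilon^2$ there. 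Expanding $(|\hat u(t_2,\xi)|-|\hat u(a,\xi)|)^2$ and integrating over that ball, then using Cauchy--Schwarz, leads to
\begin{equation*}
\|u(t_2)\|_{L^2}^2\lec \eta^{4/3}+\varepsilon M(u,v)^{1/2}+\varepsilon^2,
\end{equation*}
and an analogous estimate for $v$. Adding these contradicts $M(u,v)>0$ after choosing $\varepsilon$ small relative to $M(u,v)^{1/2}$ and then $\eta$ small relative to $\varepsilon$ and $M(u,v)$. The hardest part of the argument is coordinating this chain of parameter choices---first $\varepsilon$ against $M(u,v)$, then $R$ via almost periodicity, then $\eta$ against $\varepsilon$, $R$, and $C_0(u,v,S_J(u,v))$---so that the contradiction closes uniformly in $k$; once this is done, summing over $J_1,\dots,J_K$ completes the proof.
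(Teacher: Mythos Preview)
Your proposal is correct and follows essentially the same approach as the paper: reduce to subintervals of small $L^3_{t,x}$ norm, use Duhamel and Strichartz to compare the solution at two times, and exploit that almost periodicity forces frequency concentration in balls which would be disjoint if $\xi$ moved too far. The only cosmetic difference is in the final contradiction: the paper applies the frozen projection $P_{|\xi-\xi(t_2)|\le RN(J)}$ to the Duhamel formula and shows that at time $t_1$ two disjoint frequency balls each carry at least $\tfrac{4}{5}M(u,v)$, overcounting the total mass; you instead use the pointwise identity $|\widehat{e^{i(t_2-a)\Delta}u(a)}|=|\hat u(a)|$ and estimate $\|u(t_2)\|_{L^2}$ directly via $||\hat u(t_2)|-|\hat u(a)||\le|\hat u(t_2)-e^{-i(t_2-a)|\xi|^2}\hat u(a)|$. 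Both are the same mechanism. (A small remark: the paper's tightness condition places the frequency center of $v$ at $2\xi(t)$ rather than $\xi(t)/\kappa$; these coincide in the paper's setting since either $\kappa=1/2$ or $\xi\equiv0$, so your argument goes through unchanged.)
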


\begin{lem}\label{lem:tightness-Str}
Let $(u,v):I\times \R ^4\to \Bo{C}^2$ be a solution to \eqref{NLS} that is almost periodic modulo symmetries.
Let $J$ be a subinterval of $I$ such that $S_J(u,v)<\I$.
Then, for any $\eta >0$ there exists $R=R(u,v,\eta )>0$ such that we have
\eqq{&\int _J\int _{|x-x(t)|\ge \frac{R}{N(t)}}\Big( |u|^3+|v|^3\Big) \,dx\,dt+\norm{P_{|\xi -\xi (t)|\ge RN(t)}u}{L^3(J\times \R^4 )}^3+\norm{P_{|\xi -2\xi (t)|\ge RN(t)}v}{L^3(J\times \R^4 )}^3\\
&\le \eta \big( 1+S_J(u,v)\big) .}
\end{lem}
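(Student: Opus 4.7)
The plan is to cover $J$ by finitely many subintervals $J_k$ on which $S_{J_k}(u,v)$ is small, combine the almost periodicity of $(u,v)$ in $L^2$ with the Hölder interpolation $\norm{f}{L^3_x}^3\le \norm{f}{L^2_x}\norm{f}{L^4_x}^2$, and then sum up. The key point is that the $L^2_tL^4_x$-Strichartz norms of $u,v$ on each $J_k$ are controlled in terms of the conserved mass alone (with a constant independent of $k$), whereas almost periodicity ensures that the $L^2$-mass of $u(t),v(t)$ outside the phase-space ball of radius $R/N(t)$ in position and $RN(t)$ in frequency is uniformly small in $t\in J$ as $R\to \I$.

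Fix $\delta =1$ and partition $J=\bigsqcup _{k=1}^K J_k$ so that $S_{J_k}(u,v)\le \delta ^3=1$, which is possible with $K\le S_J(u,v)+1$. Applying Strichartz on each $J_k$ with data at some $t_k\in J_k$, using that $(2,4)$ is admissible in dimension four and $(3,3)$ is admissible with dual $(3/2,3/2)$, one finds
\eqq{\norm{u}{L^2_tL^4_x(J_k\times \R^4)}+\norm{v}{L^2_tL^4_x(J_k\times \R^4)}\lec M(u,v)^{1/2}+\norm{u}{L^3(J_k)}\norm{v}{L^3(J_k)}+\norm{u}{L^3(J_k)}^2\le A,}
where $A=A(M(u,v))$ is independent of $k$, thanks to $\norm{u}{L^3(J_k)},\norm{v}{L^3(J_k)}\le \delta =1$ and mass conservation.

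Given $\eta >0$, choose $\eta '\in (0,1)$ to be fixed below and, by almost periodicity, pick $R=C(\eta ')$ so large that for every $t\in J$
\eqq{\int _{|x-x(t)|\ge R/N(t)}|u(t,x)|^2\,dx+\norm{P_{|\xi -\xi (t)|\ge RN(t)}u(t)}{L^2(\R^4)}^2\le \eta ',}
together with the analogous bound for $v$ (with $2\xi (t)$ in place of $\xi (t)$). Applying $\norm{f}{L^3_x}^3\le \norm{f}{L^2_x}\norm{f}{L^4_x}^2$ pointwise in $t$, combined with the fact that the frequency projection $P_{|\xi -\xi (t)|\ge RN(t)}=\mathrm{Id}-P_{|\xi -\xi (t)|\le RN(t)}$ is bounded on $L^4_x$ uniformly in $\xi (t),N(t),R$ (which follows from standard Littlewood-Paley theory after conjugation by $e^{ix\cdot \xi (t)}$), gives for each $k$
\eqq{\int _{J_k}\int _{|x-x(t)|\ge R/N(t)}|u|^3\,dx\,dt+\norm{P_{|\xi -\xi (t)|\ge RN(t)}u}{L^3(J_k\times \R^4)}^3\lec \eta '^{1/2}A^2,}
and analogously for $v$. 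Summing over the $K\le S_J(u,v)+1$ subintervals yields a total bound $\lec A^2\eta '^{1/2}(1+S_J(u,v))$; choosing $\eta '^{1/2}\le c\eta A^{-2}$ completes the proof, with $R=C(\eta ')$ depending only on $(u,v)$ and $\eta $.

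The only point requiring a small check is the uniform-in-$t$ $L^4$-bound on the shifted frequency cutoff, which reduces to the usual Littlewood-Paley bound by a plane-wave conjugation. Otherwise, the argument combines only Strichartz, almost periodicity, and elementary interpolation, so no real obstacle is expected.
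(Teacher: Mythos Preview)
Your proposal is correct and follows essentially the same approach as the paper's proof: both partition $J$ into $O(1+S_J(u,v))$ subintervals on which $S_{J_k}(u,v)\lec 1$, obtain a uniform $L^2_tL^4_x$ Strichartz bound on each piece via the Duhamel formula, and then interpolate with the $L^\infty_tL^2_x$ smallness from almost periodicity via $\norm{f}{L^3_x}^3\le \norm{f}{L^2_x}\norm{f}{L^4_x}^2$ before summing. The paper's proof is simply a terse version of what you have written; your explicit remark on the uniform $L^4$-boundedness of the shifted frequency projection is a detail the paper leaves implicit.
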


\begin{lem}\label{lem:Strichartz-SJ}
Let $(u,v):I\times \R ^4\to \Bo{C}^2$ be a non-zero solution to \eqref{NLS} that is almost periodic modulo symmetries.
Let $J$ be a subinterval of $I$.
Then, there exists $C=C(u,v)>0$ such that we have
\eq{Strichartz-SJ}{C^{-1}\int _JN(t)^2\,dt\le S_J(u,v)\le 1+C\int _JN(t)^2\,dt.}
\end{lem}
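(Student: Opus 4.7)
\noindent\textbf{Proof plan for Lemma \ref{lem:Strichartz-SJ}.}
My plan is to establish the two inequalities separately: the lower bound by exploiting mass concentration at scale $N(t)^{-1}$, and the upper bound by decomposing each time slice into low and high frequencies around $\xi(t)$ (respectively $2\xi(t)$) and using Lemma~\ref{lem:tightness-Str} to handle the tail.

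For the lower bound, I would set $M:=M(u,v)$, which is strictly positive and conserved since $(u,v)$ is a non-zero solution. Applying the almost periodicity condition \eqref{tightness} with $\eta_0:=M/2$, at every $t\in J$ the combined $L^2$ mass on the ball $B_t:=B(x(t),C(\eta_0)/N(t))$ is at least $M/2$, so at least one of $\int_{B_t}|u(t)|^2\,dx$, $\int_{B_t}|v(t)|^2\,dx$ is $\ge M/4$. Since $|B_t|\lec N(t)^{-4}$, H\"older's inequality ($\int_B|f|^2\le|B|^{1/3}\|f\|_{L^3(B)}^{2}$) gives
\[
\tfrac{M}{4}\le |B_t|^{1/3}\norm{u(t)}{L^3_x}^{2}\lec N(t)^{-4/3}\norm{u(t)}{L^3_x}^{2}
\]
(or the same with $v$), hence pointwise $\norm{u(t)}{L^3_x}^3+\norm{v(t)}{L^3_x}^3\gec_{u,v}N(t)^2$, and integrating over $J$ yields the left inequality in \eqref{Strichartz-SJ}.

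For the upper bound, I would first reduce to the case $S_J(u,v)<\infty$: if $S_J=\infty$, applying the already-proved lower bound to any exhaustion $J_n\nearrow J$ with $S_{J_n}<\infty$ (which exists by the local theory of Theorem~\ref{thm:lwp}) forces $\int_JN(t)^2\,dt=\infty$, so the bound is trivial. Next, fix a small $\eta_1>0$ to be determined and use Lemma~\ref{lem:tightness-Str} to pick $R=R(u,v,\eta_1)$ with
\[
\norm{P_{|\xi-\xi(t)|>RN(t)}u}{L^3(J\times\R^4)}^3+\norm{P_{|\xi-2\xi(t)|>RN(t)}v}{L^3(J\times\R^4)}^3\le\eta_1\bigl(1+S_J(u,v)\bigr).
\]
On the complementary low-frequency pieces, Bernstein's inequality on $\R^4$ gives $\norm{P_{|\xi-\xi(t)|\le RN(t)}u(t)}{L^3_x}^3\lec (RN(t))^2\norm{u(t)}{L^2_x}^3\lec_{u,v}R^2N(t)^2$, and similarly for $v$. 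Splitting $|u|^3\lec|P_{\text{low}}u|^3+|P_{\text{high}}u|^3$ (and the same for $v$) and integrating in $t$, I arrive at
\[
S_J(u,v)\le C_0R^2\int_JN(t)^2\,dt+C_0\eta_1\bigl(1+S_J(u,v)\bigr).
\]
Choosing $\eta_1:=1/(2C_0)$ lets me absorb $C_0\eta_1S_J(u,v)$ into the left-hand side, giving $S_J(u,v)\le 2C_0R^2\int_JN(t)^2\,dt+1$ with $C:=2C_0R^2$ depending only on $(u,v)$.

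The main delicate point will be the absorption step in the upper bound, since moving a multiple of $S_J(u,v)$ across the inequality is only legitimate once we know it is finite. The preliminary reduction via subintervals of $J$ on which $S$ is finite---guaranteed by the local well-posedness of Theorem~\ref{thm:lwp}---together with monotone convergence, is precisely what makes this manipulation rigorous.
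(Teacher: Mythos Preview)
Your proof is correct and follows essentially the same approach as the paper's: mass concentration plus H\"older for the lower bound, and frequency splitting around $\xi(t)$, $2\xi(t)$ combined with Lemma~\ref{lem:tightness-Str} and a Bernstein-type estimate on the low-frequency piece for the upper bound. The only cosmetic differences are that the paper phrases the low-frequency bound as Hausdorff--Young plus H\"older (equivalent to your Bernstein), and it chooses $\eta$ directly as a fraction of $S_J(u,v)$ rather than absorbing afterward; your explicit treatment of the $S_J=\infty$ case via exhaustion is a point the paper leaves implicit.
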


The following is an immediate consequence of Lemma~\ref{lem:Strichartz-SJ} and its proof.
\begin{cor}\label{lem:chinterval}
Let $(u,v):I\times \R ^4\to \Bo{C}^2$ be a non-zero solution to \eqref{NLS} that is almost periodic modulo symmetries.
Let $J$ be a subinterval of $I$ such that $0<S_J(u,v)<\I$.
Then, there exists $C=C(u,v,S_J(u,v))>0$ such that we have
\eqq{C^{-1}\le \int _JN(t)^2\,dt\le C.}
In particular, from Lemma~\ref{lem:localconstancy} we have
\eqq{\sup _{t\in J}N(t)\sim _{u,v,S_J(u,v)}\int _JN(t)^3\,dt.}
\end{cor}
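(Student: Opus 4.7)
The plan is to derive both sides of the bound $C^{-1}\le \int_J N(t)^2\,dt \le C$ from Lemma~\ref{lem:Strichartz-SJ} (and its proof), then deduce the $\sim$ statement via Lemma~\ref{lem:localconstancy}.

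\textbf{Upper bound on $\int_J N^2\,dt$.} This is immediate from the left inequality of \eqref{Strichartz-SJ}, which rearranges to $\int_J N(t)^2\,dt \le C\, S_J(u,v)$. Since the constant $C$ is allowed to depend on $S_J(u,v)$, this is exactly what is claimed.

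\textbf{Lower bound on $\int_J N^2\,dt$.} I would split into two regimes. If $S_J(u,v) \ge 2$, then the right inequality $S_J(u,v)\le 1+C\int_J N(t)^2\,dt$ of \eqref{Strichartz-SJ} immediately yields $\int_J N(t)^2\,dt \ge 1/C$. The small regime $0 < S_J(u,v) < 2$ is the main obstacle, because the bare statement of Lemma~\ref{lem:Strichartz-SJ} permits $\int_J N(t)^2\,dt$ to shrink to $0$ while $S_J$ stays small but positive (the extra $+1$ on the right-hand side absorbs everything). To close this gap I would reopen the proof of Lemma~\ref{lem:Strichartz-SJ}: the standard partition used there writes $J$ as a union of characteristic subintervals on each of which $S$ and $\int N^2$ are quantitatively comparable, with the additive $+1$ accounting only for boundary contributions from at most a fixed number of such pieces. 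Extracting this local comparability, one gets: for any $\eta>0$ there is $\delta=\delta(u,v,\eta)>0$ such that $\int_J N(t)^2\,dt < \delta$ forces $S_J(u,v) < \eta$. Applying this contrapositively with $\eta := S_J(u,v)/2 > 0$ yields the lower bound $\int_J N(t)^2\,dt \ge \delta(u,v,S_J(u,v)/2) > 0$, as required.

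\textbf{The $\sim$ statement.} Having trapped $\int_J N(t)^2\,dt$ between two positive constants depending only on $(u,v,S_J(u,v))$, I apply Lemma~\ref{lem:localconstancy} to obtain $\sup_{t\in J} N(t) \sim \inf_{t\in J} N(t) =: N_{*}$ with the same dependence. Then
\[
\int_J N(t)^3\,dt \;\sim\; N_{*}\int_J N(t)^2\,dt \;\sim\; N_{*} \;\sim\; \sup_{t\in J} N(t),
\]
which gives the claimed equivalence. The only non-routine piece is the small-$S_J$ lower bound, where revisiting the partition mechanism inside the proof of Lemma~\ref{lem:Strichartz-SJ} (rather than citing only its statement) is essential.
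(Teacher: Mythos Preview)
Your overall approach is correct and matches the paper's: the upper bound comes straight from the left inequality of \eqref{Strichartz-SJ}, the lower bound in the large-$S_J$ regime from the right inequality, and the small-$S_J$ regime requires going back into the \emph{proof} of Lemma~\ref{lem:Strichartz-SJ} rather than citing only its statement; the $\sim$ claim then follows from Lemma~\ref{lem:localconstancy} exactly as you write.

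One correction on the mechanism you invoke for the small-$S_J$ lower bound: the proof of Lemma~\ref{lem:Strichartz-SJ} in the appendix does \emph{not} proceed by partitioning $J$ into characteristic subintervals and comparing locally. Instead it applies Lemma~\ref{lem:tightness-Str} with $\eta=\frac{S_J/100}{1+S_J}$ to strip off the high-frequency tail, and then bounds the remaining frequency-localized piece pointwise in $t$ via Hausdorff--Young and H\"older, giving $S_J(u,v)\lec R(u,v,\eta)^2\int_J N(t)^2\,dt$. The point is that this argument is valid for \emph{any} $S_J>0$ (the paper restricts to $S_J>1$ only because the stated inequality is trivial otherwise); when $S_J$ is small, $\eta$ and hence $R$ depend on $S_J$, which is exactly the $(u,v,S_J)$-dependence you want. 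So your conclusion ``$\int_J N^2<\delta \Rightarrow S_J<\eta$'' is correct, but the route to it is tightness plus Hausdorff--Young, not a partition.
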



Now, since the minimal mass blow-up solution $(u,v)$ given in Theorem~\ref{thm:mmbs} or \ref{mnsr} satisfies $(u(t),v(t))\neq 0$ for all $t\ge 0$ and $S_{[0,\I )}(u,v)=\I$, there is a unique sequence $\shugo{t_k}_{k=0}^\I \subset [0,\I)$ such that
\eqq{0=t_0<t_1<t_2<\cdots ,\qquad S_{[t_k,t_{k+1})}(u,v)=1\quad \text{for any}\quad k\ge 0.}
We also have $t_k\to \I$ ($k\to \I$), because for any compact interval $J\subset [0,\I )$ it follows from Lemma~\ref{lem:Strichartz-SJ} and $N(t)\le 1$ for $t\ge 0$ that $S_J(u,v)<\I$.
We call these subintervals $J_k:=[t_k,t_{k+1})$ the \emph{characteristic intervals}.

Then, we see that $N(t)$ and $\xi (t)$ given in Theorem~\ref{thm:mmbs} or \ref{mnsr} can be taken so that
\eq{condition-Nxi}{\left\{ ~\begin{split}
&\text{$N(t)$, $\xi (t)$ are constant on each characteristic interval $J_k$,}\\
&N(0)=\textstyle\sup _{t\ge 0}N(t)=1,\quad \xi (0)=0,\quad N(t)\in \shugo{1,\,C_0^{-1},\,C_0^{-2},\dots}\quad (t\ge 0),\text{\hx and}\\
&N(t_{k+1})\in \shugo{C_0^{-1}N(t_k),\,N(t_k),\,C_0N(t_k)}\quad (k\ge 0)
\end{split}\right.
}
for some $C_0=C_0(u,v)>1$.
In fact, Lemmas~\ref{lem:localconstancy} and \ref{lem:movement-xi} show that \eqref{tightness} still holds if we modify $N(t)$ and $\xi (t)$ on $[0,\I )$ to
\eqq{\bar{N}(t):=\sum _{k=0}^\I C_0^{\big[ \frac{\log N(t_k)}{\log C_0}\big]}\chf{J_k}(t),\qquad \bar{\xi}(t):=\sum _{k=0}^\I \xi (t_k)\chf{J_k}(t),}
for any $C_0>1$.
(Note that $C_0^{-1}N(t_k)<C_0^{\big[ \frac{\log N(t_k)}{\log C_0}\big]}\le N(t_k)$.)
These functions $\bar{N}(t)$, $\bar{\xi} (t)$ also satisfy the same properties as $N(t)$, $\xi(t)$, namely, $\bar{N}(0)=\sup _{t\ge 0}\bar{N}(t)=1$ and $\bar{\xi}(0)=0$.
Moreover, if $C_0=C_0(u,v)$ is sufficiently large, it holds that $\bar{N}(t_{k+1})\in \shugo{C_0^{-1}\bar{N}(t_k),\,\bar{N}(t_k),\,C_0\bar{N}(t_k)}$ for any $k\ge 0$.

Therefore, in what follows we additionally assume \eqref{condition-Nxi}.
These additional properties will be useful later.

Furthermore, in the case $\xi (t)\not\equiv 0$, we see from Lemmas~\ref{lem:movement-xi} and \ref{lem:chinterval} that
\eqq{|\xi (t_k)-\xi (t_{k+1})|\lec _{u,v}N(t_k)\sim _{u,v}\int _{J_k}N(t)^3\,dt.}
Hence, we can take a constant $C_*=C_*(u,v)\gg 1$ such that
\eq{def:Cstar}{|\xi (t_k)-\xi (t_{k+1})|\le 2^{-10}C_*N(t_k),\qquad |\xi (t_k)-\xi (t_{k+1})|\le 2^{-10}C_*\int _{J_k}N(t)^3\,dt}
for any $k\ge 0$.
We set $C_*:=1$ when $\xi (t)\equiv 0$.

\section{Long-time Strichartz estimate}

This section is devoted to the following estimate.
\begin{thm}[Long-time Strichartz estimate]\label{thm:ls}
Let $(u,v)$ be the minimal mass blow-up solution of \eqref{NLS} on a time interval $[0,\I )$ given in Theorem~\ref{thm:mmbs} or \ref{mnsr}.
(Hence, we consider the particular situation that either $\kappa =1/2$ or $\xi (t)\equiv 0$ holds.)
Then, there exists a bounded non-increasing function $\rho :\R _+\to \R _+$ with $\rho (N)\to 0$ as $N\to \I$ such that the following holds:

Let $J\subset [0,\I )$ be an arbitrary interval which is a union of \emph{finite} number of characteristic intervals $\shugo{J_k}$, and let $K:=\int _JN(t)^3\,dt\,(<\I)$.
Then, for any $N\le C_*K$ (with $C_*$ given in \eqref{def:Cstar}) we have
\eq{ls}{\norm{P_{|\xi -\xi (t)|>N}u}{L^2L^4(J\times \R^4 )}+\norm{P_{|\xi -2\xi (t)|>N}v}{L^2L^4(J\times \R^4 )}\le \Big( \frac{K}{N}\Big) ^{1/2}\rho (N).}
\end{thm}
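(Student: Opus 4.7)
The plan is to adapt Dodson's long-time Strichartz strategy \cite{MR3406535} to our coupled system. The key ingredients are the $L^2_t L^4_x$-Strichartz estimate (with $(2,4)$ admissible in dimension four), the bilinear Strichartz estimate (Lemma~\ref{lem:bs}) applied with the two distinct dispersions $1$ and $\kappa$, and the almost-periodic frequency tightness from Lemma~\ref{lem:tightness-Str}. Since by \eqref{condition-Nxi} the frequency center $\xi(t)$ is constant on each characteristic interval $J_k$, the moving projectors $P_{|\xi-\xi(t)|>N}$ and $P_{|\xi-2\xi(t)|>N}$ reduce on $J_k$ to ordinary Fourier cutoffs centered at $\xi(t_k)$ and $2\xi(t_k)$.

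Setting
\[ A(N,J):=\norm{P_{|\xi-\xi(t)|>N}u}{L^2L^4(J\times \R^4)}+\norm{P_{|\xi-2\xi(t)|>N}v}{L^2L^4(J\times \R^4)}, \]
I would first estimate $A(N,J_k)$ on a single characteristic interval by Duhamel and Strichartz: the linear part is $\|P_{|\xi-\xi(t_k)|>N}u(t_k)\|_{L^2}+\|P_{|\xi-2\xi(t_k)|>N}v(t_k)\|_{L^2}$, while the Duhamel contribution comes from $P_{|\xi-\xi(t_k)|>N}(\bar uv)$ and $P_{|\xi-2\xi(t_k)|>N}(u^2)$. Splitting $u=u_{\mathrm{lo}}+u_{\mathrm{hi}}$ and $v=v_{\mathrm{lo}}+v_{\mathrm{hi}}$ at threshold $N/8$ around $\xi(t_k)$ and $2\xi(t_k)$, the low-low products $\bar u_{\mathrm{lo}}v_{\mathrm{lo}}$ and $u_{\mathrm{lo}}^2$ have Fourier supports contained respectively in $\{|\xi-\xi(t_k)|\le N/2\}$ (using that $\widehat{\bar u_{\mathrm{lo}}}$ is centered at $-\xi(t_k)$) and $\{|\xi-2\xi(t_k)|\le N/2\}$, so they are annihilated by the high-frequency projectors. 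Each surviving pairing carries at least one high-frequency factor, so Lemma~\ref{lem:bs} yields the gain $M^{3/2}/N^{1/2}$ when the low factor lives in a shell of scale $M\lesssim N$.

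Summing over $k$ and using Corollary~\ref{lem:chinterval} to relate $K=\int_J N(t)^3 dt$ to $\sum_k N_k$ (with $N_k:=N(t_k)$), the analysis yields a recursive inequality of the schematic form
\[ A(N,J)^2 \lec \sum_k \Big( \norm{P_{|\xi-\xi(t_k)|>N}u(t_k)}{L^2}^2 + \norm{P_{|\xi-2\xi(t_k)|>N}v(t_k)}{L^2}^2\Big) + \frac{1}{N}\sum_{M\le N} M\,A(M,J)^2. \]
The first sum is controlled by $(K/N)\,\rho_0(N)^2$ where $\rho_0(N)\to 0$ as $N\to \I$ by Lemma~\ref{lem:tightness-Str}, and the second term is handled by a dyadic bootstrap: starting from $N$ large enough to dominate $\sup_{t\in J}N(t)=1$ (where the left side is trivially small by tightness), one iterates downward through dyadic scales until reaching $N\le C_*K$, extracting at each step the factor $(K/N)^{1/2}$ and a residual $\rho(N)$ that tends to $0$.

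The main obstacle is the handling of the jumps of $\xi(t)$ across characteristic intervals: the projectors centered at $\xi(t_k)$ and $\xi(t_{k+1})$ are not compatible, so the low-low annihilation used above could fail across the jump. The saving is precisely \eqref{def:Cstar}, $|\xi(t_{k+1})-\xi(t_k)|\le 2^{-10}C_*N_k$, which guarantees that as long as $N \gec C_*N_k$ the relative Fourier supports shift by at most $N/2^9$, so the dyadic decomposition and the annihilation argument remain valid across interval boundaries; combined with the hypothesis $N\le C_*K$ and the discrete geometric progression $N_k\in \{1,C_0^{-1},C_0^{-2},\dots\}$ from \eqref{condition-Nxi}, this ensures that at most finitely many scales need careful bookkeeping. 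The second delicate point is to apply Lemma~\ref{lem:bs} to pairs involving one factor from the $u$-equation and one from the $v$-equation: this uses crucially that the lemma is stated for arbitrary dispersion pair $(\theta_1,\theta_2)$, which accommodates $\kappa\neq 1$ (and in particular both the mass-resonance and the radial non-resonant cases).
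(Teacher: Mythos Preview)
Your outline captures the right ingredients (Duhamel, bilinear Strichartz with two dispersions, tightness), but there is a genuine structural gap in how you handle the initial data terms. Working interval-by-interval on each $J_k$ and then summing
\[
\sum_k \Big( \norm{P_{|\xi-\xi(t_k)|>N}u(t_k)}{L^2}^2 + \norm{P_{|\xi-2\xi(t_k)|>N}v(t_k)}{L^2}^2\Big)
\]
cannot in general be controlled by $(K/N)\rho_0(N)^2$: tightness gives each summand a bound $\eta(N/N_k)$ with $\eta(R)\to 0$ but \emph{with no rate}, and the number of characteristic intervals $\#\{J_k\subset J\}$ is not bounded by $K/N$ (for instance if all $N_k\sim 1$ there are $\sim K$ of them). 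The paper avoids this by grouping the $J_k$'s into larger blocks $G_l$ on which the center $\xi(t)$ varies by at most $2^{-9}N$; the number of such blocks is $O(RK/N)$, and a \emph{single} Duhamel formula is applied on each $G_l$, so only $O(RK/N)$ initial data terms appear. The characteristic intervals with $N(t_k)>N/R$ (the $B_j$'s) are handled separately by a crude bound. This block decomposition, not the per-$J_k$ argument, is what produces the correct count.

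A second issue concerns the bilinear Strichartz step in the non-radial case. Lemma~\ref{lem:bs} is stated for Fourier supports in $\{|\xi|<M\}$ and $\{|\xi|>N\}$ around the origin, but your high/low pieces are centered at $\xi(t_k)$ and $2\xi(t_k)$; in particular $P_{|\xi-\xi(t_k)|>cN}u$ and $P_{|\xi-2\xi(t_k)|\le cN}v$ need not be Fourier-separated when $|\xi(t_k)|$ is large. The paper recenters via the Galilean transforms $\mathcal{G}^u_{-\xi(t_k)}$, $\mathcal{G}^v_{-\xi(t_k)}$, which map the system to itself precisely when $\kappa=1/2$; this is where the mass-resonance hypothesis is actually used, not merely the flexibility of $(\theta_1,\theta_2)$ in Lemma~\ref{lem:bs}. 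Finally, the induction runs \emph{upward} in $N$ from the base case $N\le K/\#J_k$ (where the crude bound $\mathcal{S}_J(N)\lesssim (\#J_k)^{1/2}$ suffices), not downward from large $N$ as you describe; your own recursion, with $\sum_{M\le N}$ on the right, already points in the upward direction.
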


\begin{rem}
The minimality of the mass of $(u,v)$ is not used in the proof of Theorem~\ref{thm:ls}, and in fact we can obtain a similar estimate for general almost periodic solutions satisfying $N(t)\le 1$.
However, when $\xi (t)\not\equiv 0$ we still need to assume $\kappa =1/2$ so that the system has the Galilean invariance.
\end{rem}

\subsection{Non-radial, mass-resonance case}

Let us consider the case of general $\xi (t)$ under the assumption $\kappa =1/2$.
We first derive recursive bounds.
Let $J\subset [0,\I )$ be an interval, and define the functions $\Sc{A}_J,\Sc{S}_J$ on $2^{\Bo{Z}}$ as
\eqq{\Sc{A}_J(N)&:=\norm{P_{|\xi -\xi (t)|>N}u}{L^\I L^2(J\times \R ^4)}+\norm{P_{|\xi -2\xi (t)|>N}v}{L^\I L^2(J\times \R ^4)},\\
\Sc{S}_J(N)&:=\norm{P_{|\xi -\xi (t)|>N}u}{L^2L^4(J\times \R ^4)}+\norm{P_{|\xi -2\xi (t)|>N}v}{L^2L^4(J\times \R ^4)}.}
Note that $\Sc{A}_J(N)\le \Sc{A}_{\R_+}(N)\lec 1$ for any $N$, and $\Sc{A}_{\R _+}(N)$ tends to $0$ as $N\to \I$ by \eqref{tightness} and $N(t)\le 1$, while $\Sc{S}_J(N)$ can be infinite if $J$ contains infinitely many characteristic intervals.  

\begin{lem}\label{lem:ls}
Let $\kappa =1/2$, and let $(u,v)$ be as in Theorem~\ref{thm:ls}.
Then, there exists $C_1=C_1(u,v)>0$ such that the following inequalities hold:
Let $J\subset [0,\I )$ be an arbitrary interval which is a union of (possibly infinite) $\shugo{J_k}$ such that $K:=\int _JN(t)^3\,dt<\I$.

(i) For any $R\ge C_*$ and $0<N\le RK$ such that $\Sc{S}_J(2^{-6}N)<\I$, we have
\eq{ls_induction}{&\Sc{A}_J(N)+\Sc{S}_J(N)\le C_1\de (R)\Sc{S}_J(2^{-6}N)+C_1R^{\frac{5}{2}}\Big( \frac{K}{N}\Big) ^{\frac{1}{2}}\Big( \chf{(0,R)}(N)+\Sc{A}_J(2^{-8}N)^{\frac{1}{3}}\Big) .
}

(ii) For any $R\ge C_*$, $N\ge \max \{ RK,\,R\}$ such that $\Sc{S}_J(2^{-8}N)<\I$, we have
\eq{ls_induction'}{\Sc{A}_J(N)+\Sc{S}_J(N)&\le 
\inf _k\norm{P_{|\xi -\xi (t_k)|>N/4}u(t_k)}{L^2(\R ^4)}+\inf _k\norm{P_{|\xi -2\xi (t_k)|>N/4}v(t_k)}{L^2(\R ^4)} \\
&\hx +C_1\de (R)\Sc{S}_J(2^{-6}N)+C_1R^{\frac{5}{2}}\Big( \frac{K}{N}\Big) ^{\frac{1}{2}}\Big( \Sc{A}_J(2^{-8}N)+\Sc{S}_J(2^{-8}N)\Big) .
}
Here, 
\eqq{\de (R):=&\norm{P_{|\xi -\xi (t)|>2^{-10}RN(t)}u}{L^{\I}L^2(\R _+\times \R^4 )}+\norm{P_{|\xi -2\xi (t)|>2^{-10}RN(t)}v}{L^{\I}L^2(\R _+\times \R^4 )}\\
&+\norm{u}{L^{\I}L^2(\shugo{t\in \R _+,\,|x-x(t)|>R/N(t)})}+\norm{v}{L^{\I}L^2(\shugo{t\in \R _+,\,|x-x(t)|>R/N(t)})}.}
In particular, $\de (R)\to 0$ as $R\to \I$ by \eqref{tightness}.
\end{lem}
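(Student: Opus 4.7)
The plan is to follow Dodson's long-time Strichartz argument from \cite{MR3406535}, adapted to the system \eqref{NLS}. Starting from the Duhamel formula for $(u,v)$ with base point at some $t_k \in J$, combined with the Strichartz estimate for the admissible pair $(q,r)=(2,4)$ in $d=4$, one bounds $\Sc{A}_J(N) + \Sc{S}_J(N)$ by the linear pieces $\tnorm{P_{|\xi - \xi(t_k)|>N/4} u(t_k)}{L^2} + \tnorm{P_{|\xi - 2\xi(t_k)|>N/4} v(t_k)}{L^2}$ plus the dual-Strichartz norms (say $L^2L^{4/3}$) of the frequency-localised nonlinearities $P_{|\xi-\xi(t)|>N}[\bar{u}v]$ and $P_{|\xi - 2\xi(t)|>N}[u^2]$. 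The Galilean invariance available precisely when $\kappa = 1/2$ allows the shifted frequency projection on each characteristic interval $J_k$ (on which $\xi(t)\equiv \xi(t_k)$ by \eqref{condition-Nxi}) to be conjugated to the ordinary high-frequency projection $P_{>N}$, so that Lemma~\ref{lem:bs} applies directly after a boost.

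Next, I would perform a Littlewood--Paley decomposition of each factor in $\bar{u}v$ and $u^2$ relative to the cut-off $N$, around the respective frequency centers. Only pieces in which at least one factor has frequency $\gtrsim N/4$ survive the outer projection. For the low-high interactions, in which one factor is further restricted to the bulk region $|\xi - \xi(t)| \leq 2^{-10} R N(t)$ in frequency and $|x - x(t)| \leq R/N(t)$ in space, Lemma~\ref{lem:bs} gives an $L^2_{t,x}$ bilinear bound with the ratio gain; summing over the characteristic intervals making up $J$ and using $K = \int_J N(t)^3\,dt$ together with H\"older in time to pass to $L^2 L^{4/3}$ produces the factor $R^{5/2}(K/N)^{1/2}$. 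The high-frequency factor is then controlled by interpolation between its $L^\I L^2$ mass $\Sc{A}_J(2^{-8}N)$ and its $L^2L^4$ Strichartz norm $\Sc{S}_J(2^{-8}N)$, which yields the $\Sc{A}_J(2^{-8}N)^{1/3}$ factor in \eqref{ls_induction}; for very small $N\leq R$ the trivial mass bound $\tnorm{(u,v)}{L^\I L^2}\lec 1$ is used instead, producing the $\chf{(0,R)}(N)$ term. The complementary contribution, coming from the part of the low-frequency factor leaving the bulk (either in frequency or in space), is controlled by the almost-periodicity \eqref{tightness} on $(u,v)$ and the definition of $\de(R)$; it delivers the prefactor $\de(R)\,\Sc{S}_J(2^{-6}N)$.

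The two parts of the lemma differ only in the handling of the linear term. In (i), where $N\le RK$, the linear piece at a suitably pigeonholed $t_k\in J$ (using that $J$ is a \emph{finite} union of characteristic intervals) is absorbed into $R^{5/2}(K/N)^{1/2}$ via the case split $0<N<R$ versus $R\le N\le RK$, accounting for the two alternative factors $\chf{(0,R)}(N)$ and $\Sc{A}_J(2^{-8}N)^{1/3}$. In (ii), where $N\ge \max\shugo{RK,R}$, the linear piece is too large to absorb and is retained as $\inf_k\tnorm{P_{|\xi-\xi(t_k)|>N/4}u(t_k)}{L^2}$ plus the $v$-analogue; on the other hand, the right-hand side tolerates the stronger factor $\Sc{A}_J(2^{-8}N)+\Sc{S}_J(2^{-8}N)$ in place of $\Sc{A}_J(2^{-8}N)^{1/3}$ because in this regime the bilinear contribution already carries the extra smallness $(K/N)^{1/2}\lesssim R^{-1/2}$.

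The main obstacle, in my view, lies in the careful bookkeeping of the frequency- and space-localised decompositions on each characteristic interval: verifying that the Galilean-shifted Littlewood--Paley projections $P_{|\xi-\xi(t)|>M}$ are compatible with Lemma~\ref{lem:bs} (whose statement concerns ordinary Fourier supports), and that the resulting bilinear $L^2_{t,x}$ estimates sum across the characteristic intervals exactly so as to reproduce the sharp power $(K/N)^{1/2}$ with at most the $R^{5/2}$ loss from the bulk cutoff. The assumption $\kappa=1/2$ is essential precisely for these Galilean boosts; in the radial case (Theorem~\ref{mnsr}) the frequency center is identically zero, so no boost is required and the very same scheme applies verbatim with the ordinary projection $P_{>N}$.
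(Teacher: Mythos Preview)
Your outline captures the correct ingredients for the nonlinear term --- the Galilean boost on each $J_k$, the bulk/non-bulk splitting producing $\de(R)\Sc{S}_J(2^{-6}N)$ versus the bilinear-Strichartz contribution $R^{5/2}(K/N)^{1/2}$, and the $L^\I L^2$--$L^2L^4$ interpolation yielding the power $\Sc{A}_J(2^{-8}N)^{1/3}$. For part (ii) this is essentially the paper's argument: since $N\ge RK\ge C_*K$, the property \eqref{def:Cstar} gives $|\xi(t)-\xi(t^*)|\le 2^{-10}N$ for all $t,t^*\in J$, so a single Duhamel on $J$ with one frozen center suffices and the linear term is kept as $\inf_k$.

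There is, however, a genuine gap in your treatment of part (i). The projection $P_{|\xi-\xi(t)|>N}$ is time-dependent and does not commute with $i\p_t+\Delta$, so Strichartz cannot be applied on $J$ with a single base point unless the center $\xi(t)$ is first frozen. In regime (i) the total variation of $\xi(\cdot)$ over $J$ is of size $\sim C_*K$, which may far exceed $N$; hence your ``single Duhamel at a suitably pigeonholed $t_k$'' is not available. If instead you apply Duhamel on each $J_k$ and $\ell^2$-sum, the linear pieces contribute $(\#J_k)^{1/2}$, which is not $\lec (K/N)^{1/2}$. (Also, the lemma explicitly allows $J$ to be an \emph{infinite} union of $J_k$'s, so your parenthetical ``using that $J$ is a finite union'' is incorrect.) The paper resolves this by an intermediate regrouping of $J$: the characteristic intervals with $N(t_k)>N/R$ are isolated as $\{B_j\}$ (there are $O(RK/N)$ of them, present only when $N<R$, and estimated crudely by $O(1)$ --- this is the origin of the $\chf{(0,R)}(N)$ term), and the remaining $J_k$'s are gathered into blocks $\{G_l\}$ on each of which $\sum_{J_k\subset G_l}N(t_k)\sim N/R$, forcing $|\xi(t)-\xi(t^*)|\le 2^{-9}N$ within $G_l$ by \eqref{def:Cstar}. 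There are again $O(RK/N)$ such blocks, and it is \emph{this} count, not $\#J_k$, that produces the linear contribution $R^{1/2}(K/N)^{1/2}\Sc{A}_J(2^{-2}N)$ after $\ell^2$-summing the frozen-center Duhamel estimates over the $G_l$'s. Your proposal is missing this regrouping step, without which the linear term in (i) cannot be controlled.
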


\begin{proof}
Since the time-dependent projection operator $P_{|\xi -\xi (t)|>N}$ does not commute with $i\p_t+\Delta$, we need to freeze the frequency center $\xi(t)$ before applying the Strichartz estimate.

(i)
Note that the both sides of \eqref{ls_induction} are finite by the assumption $\Sc{S}_J(2^{-6}N)<\I$.
We make a special decomposition of $J=\cup _{k}J_k$.
Let $\shugo{B_j}$ be the collection of the characteristic intervals $J_k$ for which $N(t_k)>N/R$.
Since
\eqq{\sum _kN(t_k)\sim \sum _k\int _{J_k}N(t)^3\,dt=K}
by Lemma~\ref{lem:chinterval}, the number of $B_j$ is at most $O(RK/N)$.
Also note that such a characteristic interval does not exist if $N\ge R$.

On each $B_j$, we estimate the left hand side of \eqref{ls_induction} crudely by $O(1)$ using the Duhamel formula and the Strichartz estimates.
Then, the contribution from $\cup _jB_j$ is 
\eqq{&\Sc{A}_{\cup _jB_j}(N)+\Sc{S}_{\cup _jB_j}(N)\lec (\# B_j)^{\frac{1}{2}}\lec R^{\frac{1}{2}}\Big(\frac{K}{N}\Big) ^{\frac{1}{2}}\chf{(0,R)}(N).}

We next find a decomposition of $J\setminus \cup _jB_j$ into mutually disjoint intervals $\shugo{G_l}$, each of which is a union of characteristic intervals, such that for each $G_l$ it holds that
\eqq{\frac{N}{R}<\sum _{k;\,J_k\subset G_l}N(t_k)\le \frac{2N}{R},}
or that 
\eqq{\sum _{k;\,J_k\subset G_l}N(t_k)\le \frac{N}{R},\qquad \sup G_l=\inf B_j\hx \text{for some $B_j$\hx or}\hx \sup G_l=\sup J.} 
This is possible because $N(t_k)\le N/R$ for all $k$ such that $J_k\subset J\setminus \cup _jB_j$.
Since $N\le RK$, we have
\eqq{\# G_l\le C\frac{RK}{N}+\# B_j +1\lec \frac{RK}{N}.}
\eqref{def:Cstar} and the assumption $R\ge C_*$ imply that if $t,t^*\in G_l$, 
\eqq{|\xi (t) -\xi (t^*)|\le 2^{-10}C_*\sum _{k;\,J_k\subset G_l}N(t_k)\le 2^{-9}N.}
(Recall that $\xi (t)\equiv \xi (t_k)$ on $J_k$.)
We thus have
\eqq{\shugo{|\xi -\xi (t)|\ge N}\subset \shugo{|\xi -\xi (t^*)|\ge N/2}}
for any $t,t^*\in G_l$, which implies that $P_{|\xi -\xi (t)|>N}=P_{|\xi -\xi (t)|>N}P_{|\xi -\xi (t^*)|>N/4}$.
In the same manner, we have $P_{|\xi -\xi (t^*)|>N/4}=P_{|\xi -\xi (t^*)|>N/4}P_{|\xi -\xi (t)|>2^{-4}N}$.

Let us focus on the estimate for $u$; the argument for $v$ is analogous.
For each $G_l$, using the Duhamel formula and the Strichartz estimates, we have
\eqq{&\norm{P_{|\xi -\xi (t)|>N}u}{L^\I L^2\cap L^2L^4(G_l\times \R^4 )}\lec \norm{P_{|\xi -\xi (t^*_l)|>N/4}u}{L^\I L^2\cap L^2L^4(G_l\times \R^4 )}\\
&\lec \norm{P_{|\xi -\xi (t^*_l)|>N/4}u(t^*_l)}{L^2(\R^4)}+\norm{P_{|\xi -\xi (t^*_l)|>N/4}(\bbar{u}v)}{L^2L^{4/3}(G_l\times \R^4 )}\\
&\lec \norm{P_{|\xi -\xi (t^*_l)|>N/4}u(t^*_l)}{L^2(\R^4)}+\norm{P_{|\xi -\xi (t)|>2^{-4}N}(\bbar{u}v)}{L^2L^{4/3}(G_l\times \R^4 )},}
where $t^*_l$ is an arbitrary point in $G_l$ and the implicit constant does not depend on $t^*_l$.
By square-summing the above estimate over $G_l$'s and applying the bound on $\# G_l$, we obtain
\eqq{&\norm{P_{|\xi -\xi (t)|>N}u}{L^\I L^2\cap L^2L^4(\cup _lG_l\times \R^4 )}\\
&\lec R^{\frac{1}{2}}\Big(\frac{K}{N}\Big) ^{\frac{1}{2}}\Sc{A}_J(2^{-2}N)+\Big( \sum _{k;\,J_k\subset J\setminus \cup _jB_j}\norm{P_{|\xi -\xi (t_k)|>2^{-4}N}(\bbar{u}v)}{L^2L^{4/3}(J_k\times \R^4 )}^2\Big) ^{\frac{1}{2}}.}
Note that $\Sc{A}_J(2^{-2}N)\lec \Sc{A}_J(2^{-8}N)^{1/3}$, since $\Sc{A}_J$ is bounded and non-increasing in $N$.

All we have to do is the estimate of $\tnorm{P_{|\xi -\xi (t_k)|>2^{-4}N}(\bbar{u}v)}{L^2L^{4/3}(J_k\times \R^4 )}$ for each $J_k\subset J\setminus \cup _jB_j$, on which $N(t)\equiv N(t_k)\le N/R$.
The following properties of the Galilean transforms are easily verified.
\begin{lem}
For a smooth function $\phi$ on $\R^4$, let $P_\phi$ be the Fourier multiplier defined by $P_\phi f:=\F^{-1}_\xi\phi \F _xf$.
Also define the shift operator $\tau (\xi _0)$ and the Galilean transforms $\Sc{G}^u_{\xi _0},\Sc{G}^v_{\xi _0}$ for $\xi _0\in \R^4$ by 
\eqs{[\tau (\xi _0)f](\xi ):=f(\xi +\xi _0),\\
\big[ \Sc{G}^u_{\xi _0}u\big] (t,x):=e^{ix\cdot \xi _0}e^{-it|\xi _0|^2}u(t,x-2\xi _0t),\quad
\big[ \Sc{G}^v_{\xi _0}v\big] (t,x):=e^{2ix\cdot \xi _0}e^{-2it|\xi _0|^2}v(t,x-2\xi _0t).
}
Then, the following holds for $x_0,\xi _0\in \R^4$.
\begin{enumerate}
\item $P_\phi e^{ix\cdot \xi _0}f=e^{ix\cdot \xi _0}P_{\tau (\xi _0)\phi}f$,\hx $P_\phi \tau (x_0)f=\tau (x_0)P_\phi f$.\\[-5pt]
\item $\tnorm{P_\phi \Sc{G}^u_{\xi _0}u}{}=\tnorm{P_{\tau (\xi _0)\phi}u}{}$, $\tnorm{P_\phi \Sc{G}^v_{\xi _0}u}{}=\tnorm{P_{\tau (2\xi _0)\phi}v}{}$\\
for the $L^pL^q(I\times \R^4)$-norm with any $1\le p,q\le \I$ and $I\subset \R$.\\[-5pt]
\item $\Sc{G}^u_{\xi _0}(\bbar{u}v)=\bbar{\Sc{G}^u_{\xi _0}u}\Sc{G}^v_{\xi _0}v$,\hx $\Sc{G}^v_{\xi _0}(u^2)=(\Sc{G}^u_{\xi _0}u)^2$.\\[-5pt]
\item $(i\p _t+\Delta )\Sc{G}^u_{\xi _0}u=\bbar{\Sc{G}^u_{\xi _0}u}\Sc{G}^v_{\xi _0}v$,\hx $(i\p _t+\frac{1}{2}\Delta )\Sc{G}^v_{\xi _0}v=(\Sc{G}^u_{\xi _0}u)^2$\hx if $(u,v)$ solves \eqref{NLS} with $\kappa =1/2$.
\end{enumerate}
\end{lem}

Hence, we evaluate
\begin{align}
&\norm{P_{|\xi -\xi (t_k)|>2^{-4}N}(\bbar{u}v)}{L^2L^{4/3}(J_k\times \R^4 )}=\norm{P_{>2^{-4}N}(\bbar{\Sc{G}^u_{-\xi (t_k)}u}\Sc{G}^v_{-\xi (t_k)}v)}{L^2L^{4/3}(J_k\times \R^4 )}\notag \\
&\lec \norm{\bbar{P_{>2^{-6}N}\Sc{G}^u_{-\xi (t_k)}u}P_{>2^{-6}N}\Sc{G}^v_{-\xi (t_k)}v}{L^2L^{4/3}(J_k\times \R^4 )}\label{ls1}\\
&\hx +\norm{\bbar{P_{>2^{-6}N}\Sc{G}^u_{-\xi (t_k)}u}\Sc{G}^v_{-\xi (t_k)}v}{L^2L^{4/3}(J_k\times \R^4 )}\label{ls2}\\
&\hx +\norm{\bbar{\Sc{G}^u_{-\xi (t_k)}u}P_{>2^{-6}N}\Sc{G}^v_{-\xi (t_k)}v}{L^2L^{4/3}(J_k\times \R^4 )}.\label{ls3}
\end{align}
In the last inequality we have used the identities
\eqs{\bbar{u}v=\bbar{P_{\le 2^{-6}N}u}P_{\le 2^{-6}N}v-\bbar{P_{>2^{-6}N}u}P_{>2^{-6}N}v+\bbar{P_{>2^{-6}N}u}v+\bbar{u}P_{>2^{-6}N}v,\\
P_{>2^{-4}N}(\bbar{P_{\le 2^{-6}N}u}P_{\le 2^{-6}N}v)=0,}
and the Bernstein inequality.

Since $N\ge RN(t_k)$, we have $P_{>2^{-6}N}=P_{>2^{-6}N}P_{>2^{-7}RN(t_k)}$ and obtain a bound on \eqref{ls1} as
\eqq{\eqref{ls1}&\lec \norm{P_{>2^{-7}RN(t_k)}\Sc{G}^u_{-\xi (t_k)}u}{L^\I L^2(J_k\times \R ^4)}\norm{P_{>2^{-6}N}\Sc{G}^v_{-\xi (t_k)}v}{L^2L^4(J_k\times \R^4 )}\\
&=\norm{P_{|\xi -\xi (t_k)|>2^{-7}RN(t_k)}u}{L^\I L^2(J_k\times \R ^4)}\norm{P_{|\xi -2\xi (t_k)|>2^{-6}N}v}{L^2L^4(J_k\times \R^4 )}.}
Then, summing up over $J_k$'s, we obtain the total contribution from \eqref{ls1} as $C\de (R)\Sc{S}_J(2^{-6}N)$.

We make further decomposition for \eqref{ls2} as
\begin{align}
\eqref{ls2}&\le \norm{\bbar{P_{>2^{-6}N}\Sc{G}^u_{-\xi (t_k)}u}\Sc{G}^v_{-\xi (t_k)}v}{L^2L^{4/3}(\shugo{t\in J_k,\,|x+2\xi (t_k)t-x(t)|>R/N(t_k)})}\label{ls4}\\
&\hx +\norm{\bbar{P_{>2^{-6}N}\Sc{G}^u_{-\xi (t_k)}u}P_{>2^{-9}N}\Sc{G}^v_{-\xi (t_k)}v}{L^2L^{4/3}(\shugo{t\in J_k,\,|x+2\xi (t_k)t-x(t)|\le R/N(t_k)})}\label{ls5}\\
&\hx +\norm{\bbar{P_{>2^{-6}N}\Sc{G}^u_{-\xi (t_k)}u}P_{\le 2^{-9}N}\Sc{G}^v_{-\xi (t_k)}v}{L^2L^{4/3}(\shugo{t\in J_k,\,|x+2\xi (t_k)t-x(t)|\le R/N(t_k)})}.\label{ls6}
\end{align}
For \eqref{ls4} and \eqref{ls5}, we estimate as
\eqq{\norm{P_{|\xi -\xi (t_k)|>2^{-6}N}u}{L^2L^4(J_k\times \R^4 )}\Big( \norm{v}{L^\I L^2(\shugo{t\in J_k,\,|x-x(t)|>R/N(t_k)})}+\norm{P_{|\xi -2\xi (t_k)|>2^{-10}RN(t_k)}v}{L^\I L^2(J_k\times \R^4 )}\Big) .}
We sum up this bound over $J_k$'s and obtain $C\de (R)\Sc{S}_J(2^{-6}N)$.
Finally, we apply the bilinear Strichartz estimate (Lemma~\ref{lem:bs}) to \eqref{ls6}.%
\footnote{The Fourier supports of $P_{|\xi -\xi (t_k)|>
2^{-6}N}u(t)$ and $P_{|\xi -2\xi (t_k)|\le 2^{-9}N}v(t)$ are not necessarily separated when $\xi (t_k)\neq 0$.
To apply the bilinear Strichartz estimate, we exploit the Galilean transforms to adjust the frequency centers to the origin.
This is the only part where we essentially use the Galilean invariance of the system in the proof of Theorem~\ref{thm:ls}.
}
We have
\eqq{\eqref{ls6}&\lec \frac{R}{N(t_k)}\norm{\bbar{P_{>2^{-6}N}\Sc{G}^u_{-\xi (t_k)}u}P_{\le 2^{-9}N}\Sc{G}^v_{-\xi (t_k)}v}{L^2(J_k\times \R ^4)}\\
&\lec \frac{R}{N(t_k)}\frac{(RN(t_k))^{3/2}}{N^{1/2}}\norm{P_{>2^{-6}N}\Sc{G}^u_{-\xi (t_k)}u}{S^0_u(J_k\times \R ^4)}\norm{P_{\le 2^{-9}N}\Sc{G}^v_{-\xi (t_k)}v}{S^0_v(J_k\times \R ^4)},}
where $\tnorm{u}{S^0_u(J_k\times \R ^4)}:=\tnorm{u(t_k)}{L^2(\R ^4)}+\tnorm{(i\p _t+\Delta )u}{L^{3/2}(J_k\times \R ^4)}$ and $\tnorm{v}{S^0_v(J_k\times \R ^4)}:=\tnorm{v(t_k)}{L^2(\R ^4)}+\tnorm{(i\p _t+\frac{1}{2}\Delta )v}{L^{3/2}(J_k\times \R ^4)}$.
Since $(i\p _t+\frac{1}{2}\Delta )P_{\le 2^{-9}N}\Sc{G}^v_{-\xi (t_k)}v=P_{\le 2^{-9}N}(\Sc{G}^u_{-\xi (t_k)}u)^2$, the last norm is bounded by
\eqq{&\norm{P_{\le 2^{-9}N}\big[ \Sc{G}^v_{-\xi (t_k)}v\big] (t_k,\cdot )}{L^2(\R^4)}+\norm{P_{\le 2^{-9}N}(\Sc{G}^u_{-\xi (t_k)}u)^2}{L^{3/2}(J_k\times \R^4)}\\
&\lec \norm{v(t_k)}{L^2(\R^4)}+\norm{u}{L^3(J_k\times \R^4)}^2\lec 1.}
Similarly, we have
\eqq{&\norm{P_{>2^{-6}N}\Sc{G}^u_{-\xi (t_k)}u}{S^0_u(J_k\times \R ^4)}\\
&=\norm{P_{|\xi -\xi (t_k)|>2^{-6}N}u(t_k)}{L^2(\R^4)}+\norm{P_{>2^{-6}N}(\bbar{\Sc{G}^u_{-\xi (t_k)}u}\Sc{G}^v_{-\xi (t_k)}v)}{L^{3/2}(J_k\times \R ^4)}.}
For the second term on the right-hand side, we make a similar decomposition as \eqref{ls1}--\eqref{ls3}.
Applying the H\"older inequality, interpolation, and that $\Sc{S}_{J_k}(M)\lec 1$ for any $M>0$, we have
\eqq{&\norm{P_{>2^{-6}N}(\bbar{\Sc{G}^u_{-\xi (t_k)}u}\Sc{G}^v_{-\xi (t_k)}v)}{L^{3/2}(J_k\times \R ^4)}\\
&\lec \norm{\bbar{P_{>2^{-8}N}\Sc{G}^u_{-\xi (t_k)}u}P_{>2^{-8}N}\Sc{G}^v_{-\xi (t_k)}v}{L^{3/2}(J_k\times \R ^4)}\\
&\hx +\norm{\bbar{P_{>2^{-8}N}\Sc{G}^u_{-\xi (t_k)}u}\Sc{G}^v_{-\xi (t_k)}v}{L^{3/2}(J_k\times \R ^4)}+\norm{\bbar{\Sc{G}^u_{-\xi (t_k)}u}P_{>2^{-8}N}\Sc{G}^v_{-\xi (t_k)}v}{L^{3/2}(J_k\times \R ^4)}\\
&\lec \Big( \norm{P_{|\xi -\xi (t)|>2^{-8}N}u}{L^3(J_k\times \R ^4)}+\norm{P_{|\xi -2\xi (t)|>2^{-8}N}v}{L^3(J_k\times \R ^4)}\Big) \Big( \norm{u}{L^3(J_k\times \R ^4)}+\norm{v}{L^3(J_k\times \R ^4)}\Big) \\
&\lec \norm{P_{|\xi -\xi (t)|>2^{-8}N}u}{L^3(J_k\times \R ^4)}+\norm{P_{|\xi -2\xi (t)|>2^{-8}N}v}{L^3(J_k\times \R ^4)}\\
&\lec \Sc{A}_{J_k}(2^{-8}N)^{\frac{1}{3}}\Sc{S}_{J_k}(2^{-8}N)^{\frac{2}{3}}\lec \Sc{A}_{J_k}(2^{-8}N)^{\frac{1}{3}}.}
Thus, the total contribution from \eqref{ls6} is bounded by
\eqq{&\frac{R^{5/2}}{N^{1/2}}\Big( \sum _{k}N(t_k)\Big) ^{\frac{1}{2}}\Big[ \Sc{A}_J(2^{-6}N)+\Sc{A}_{J}(2^{-8}N)^{\frac{1}{3}}\Big] 
\lec R^{\frac{5}{2}}\Big( \frac{K}{N}\Big) ^{\frac{1}{2}}\Sc{A}_{J}(2^{-8}N)^{\frac{1}{3}}.}
This completes the estimate for \eqref{ls2}.
\eqref{ls3} can be treated in a similar manner, and we have \eqref{ls_induction}.

(ii)
Since $N\ge R$ and $N(t)\le 1$, we have no $B_j$.
Moreover, from \eqref{def:Cstar} and $N\ge C_*K$ we have
\eqq{|\xi (t)-\xi (t^*)|\le 2^{-10}C_*\sum _k\int _{J_k}N(t)^3\,dt =2^{-10}C_*K\le 2^{-10}N}
for any $t,t^*\in J$.
Similarly to the estimate on $G_l$ in (i), we show
\eqq{&\norm{P_{|\xi -\xi (t)|>N}u}{L^\I L^2\cap L^2L^4(J\times \R^4 )}\\
&\lec \inf _{k}\norm{P_{|\xi -\xi (t_k)|>N/4}u(t_k)}{L^2(\R ^4)}+\Big( \sum _{k;\,J_k\subset J}\norm{P_{|\xi -\xi (t_k)|>2^{-4}N}(\bbar{u}v)}{L^2L^{4/3}(J_k\times \R^4 )}^2\Big) ^{\frac{1}{2}}.}
The estimate on the nonlinear term is the same as before, except that at the last step we use
\eqq{\Sc{A}_{J_k}(2^{-8}N)^{\frac{1}{3}}\Sc{S}_{J_k}(2^{-8}N)^{\frac{2}{3}}\le \Sc{A}_{J_k}(2^{-8}N)+\Sc{S}_{J_k}(2^{-8}N).}
This implies \eqref{ls_induction'}.
\end{proof}

We are now ready to prove Theorem~\ref{thm:ls} in the case of $\kappa =1/2$.
\begin{proof}[Proof of Theorem~\ref{thm:ls} ($\kappa =1/2$)]
The proof will be done via an induction on $N$.
Note that $\Sc{S}_J(N)<\I$ for any $N$, since $J$ consists of finitely many $J_k$'s.

We begin with the base case.
We always have a crude bound $\Sc{S}_J(N)\lec (\# J_k)^{1/2}$, which can be seen by applying the Duhamel formula and the Strichartz estimates on each $J_k$.
This bound is acceptable as long as $N$ is so small that $\# J_k\le K/N$.
Hence, there exists $C_2=C_2(u,v)>0$ such that
\eq{ls_basecase}{\Sc{S}_J(N)\le C_2\Big( \frac{K}{N}\Big) ^{\frac{1}{2}},\qquad N\le \frac{K}{\# J_k}.}

With the base case in mind, we impose the following condition on the function $\rho$:
\eq{ls7}{\rho (N)\ge C_2,\qquad N\le \frac{K}{\# J_k}.}
On the other hand, if we had \eqref{ls}, then the formula \eqref{ls_induction} would imply that
\eqs{\Sc{S}_J(N)\le \Big[ 8C_1\de (R)\rho (2^{-6}N) +C_1R^{\frac{5}{2}}\ti{\rho}(N)\Big] \Big( \frac{K}{N}\Big) ^{\frac{1}{2}},\qquad \ti{\rho}(N):=\chf{(0,R)}(N)+\Sc{A}_{\R_+}(2^{-8}N)^{\frac{1}{3}}.}
(Note that $\ti{\rho}(N)$ is non-increasing, $\sup\limits _{N>0}\ti{\rho}(N)\lec 1$ and $\lim\limits _{N\to \I}\ti{\rho}(N)=0$.)
We thus need to define $\rho (N)$ so that
\eq{ls8}{\rho (N)\ge 8C_1\de (R)\rho (2^{-6}N) +C_1R^{\frac{5}{2}}\ti{\rho}(N),\qquad N>0.}

We first fix $R\ge C_*$ sufficiently large so that $8C_1\de (R)\le \frac{1}{2}$, and then define the function $\rho _{N_*}$ for $N_*\in 2^{\Bo{Z}}$ by 
\eqs{\rho _{N_*}(N):=\begin{cases}
\max \Big\{ 2C_1R^{\frac{5}{2}}\sup\limits _{M>0}\ti{\rho}(M) ,\, C_2\Big\}, &N\le N_*,\\[10pt]
C_1R^{\frac{5}{2}}\ti{\rho}(N) +\frac{1}{2}\rho _{N_*} (2^{-6}N), &N>N_*\hx \text{(recursively).}
\end{cases}
}
It is easy to verify that $\rho _{N_*}$ is bounded uniformly in $N_*$, and that \eqref{ls7}, \eqref{ls8} hold if $N_*\ge K/\#J_k$.  
Moreover, $\rho _{N_*}$ is non-increasing and thus has a limit $\rho _{N_*}(\I )\ge 0$ as $N\to \I$.
Letting $N\to \I$ in the above recursive formula, we have $\rho _{N_*} (\I )=\frac{1}{2}\rho _{N_*} (\I )$, concluding $\rho _{N_*} (\I )=0$.

Now, since $K=\int _JN(t)^3\,dt\le \int _JN(t)^2\,dt \lec S_J(u,v)=\# J_k$ by Lemma~\ref{lem:Strichartz-SJ}, the quantity $K/\# J_k$ has an upper bound $N_0$ which is independent of $J$.
Therefore, we finally define $\rho :=\rho _{N_0}$.
The claimed estimate \eqref{ls} can be shown by an induction on $N$, with the base case \eqref{ls_basecase} and the recursive formula \eqref{ls_induction}, noticing \eqref{ls7} and \eqref{ls8}.

This completes the proof.
\end{proof}

\subsection{Radial case}

Here, we consider the case $\xi (t)\equiv 0$.
Since we do not need the Galilean invariance, the same argument as above can be applied for any $\kappa >0$.
Moreover, it turns out that we do not have to consider the intervals $\{ G_l\}$.
Note that the estimate of \eqref{ls5}--\eqref{ls6} is slightly different; we decompose $v$ as $P_{>\e N}v+P_{\le \e N}v$ with $\e>0$ sufficiently small depending on $\kappa$, so that we can apply Lemma~\ref{lem:bs}.
As a result, we show the following:
\begin{lem}\label{lem:lsradial}
Let $(u,v)$ be as in Theorem~\ref{thm:ls}, and assume that $\xi (t) \equiv 0$.
Then, there exists $C_1=C_1(u,v)>0$ such that the following inequalities hold:
Let $J\subset [0,\I )$ be an arbitrary interval which is a union of (possibly infinite) $\shugo{J_k}$ such that $K:=\int _JN(t)^3\,dt<\I$.

(i) For any $R>0$ and $0<N\le K$ such that $\Sc{S}_J(2^{-2}N)<\I$, we have
\eq{ls_inductionrad}{&\Sc{A}_J(N)+\Sc{S}_J(N)\le C_1\de (R)\Sc{S}_J(2^{-2}N)+C_1R^{\frac{5}{2}}\Big( \frac{K}{N}\Big) ^{\frac{1}{2}}\Big( \chf{(0,R)}(N)+\Sc{A}_J(2^{-4}N)^{\frac{1}{3}}\Big) .
}

(ii) For any $R>0$, $N\ge \max \{ K,\,R\}$ such that $\Sc{S}_J(2^{-4}N)<\I$, we have
\eq{ls_induction'rad}{\Sc{A}_J(N)+\Sc{S}_J(N)&\le 
\inf _k\norm{P_{>N}(u,v)(t_k)}{L^2(\R ^4)} \\
&\hx +C_1\de (R)\Sc{S}_J(2^{-2}N)+C_1R^{\frac{5}{2}}\Big( \frac{K}{N}\Big) ^{\frac{1}{2}}\Big( \Sc{A}_J(2^{-4}N)+\Sc{S}_J(2^{-4}N)\Big) .
}
Here, 
\eqq{\de (R):=&\norm{P_{>\e RN(t)}(u,v)}{L^{\I}L^2(\R _+\times \R^4 )}+\norm{(u,v)}{L^{\I}L^2(\shugo{t\in \R _+,\,|x-x(t)|>R/N(t)})}}
with $0<\e =\e (\kappa )\ll 1$.
\end{lem}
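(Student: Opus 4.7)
The plan is to mirror the proof of Lemma~\ref{lem:ls}, exploiting two simplifications available when $\xi(t)\equiv 0$. First, the projection $P_{>N}$ is now time-independent and commutes with both free propagators $e^{it\Delta}$ and $e^{i\kappa t\Delta}$, so the intermediate decomposition into the intervals $\{G_l\}$ (used in Lemma~\ref{lem:ls}(i) to freeze the frequency center on each piece) is unnecessary; Strichartz can be applied in a single shot on the entirety of $J\setminus \cup_j B_j$ (respectively on $J$ in case (ii)). Second, the Galilean transforms $\Sc{G}^u_{\xi_0},\Sc{G}^v_{\xi_0}$ do not appear, so the argument proceeds for any $\kappa>0$, at the cost of choosing the inner frequency split for $v$ in a $\kappa$-dependent way.

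The proof then proceeds as follows. Partition $J=(\cup_j B_j)\sqcup(J\setminus\cup_j B_j)$, where $B_j$ collects those $J_k$ with $N(t_k)>N/R$; by Lemma~\ref{lem:chinterval} there are at most $O(RK/N)$ such intervals, and a crude Duhamel--Strichartz bound of $O(1)$ on each yields the contribution $O(R^{1/2}(K/N)^{1/2})$, which matches the $\chi_{(0,R)}(N)$ term in (i) and is empty in (ii) since $N\geq R$. On the remaining part, the Strichartz estimate (plus, in case (ii), the initial-value piece at an arbitrarily chosen $t_k$) reduces matters to controlling
\begin{equation}
\Big(\sum_k \norm{P_{>N/4}(\bar u v)}{L^2L^{4/3}(J_k\times\R^4)}^2\Big)^{1/2}\quad\text{and}\quad \Big(\sum_k\norm{P_{>N/4}(u^2)}{L^2L^{4/3}(J_k\times\R^4)}^2\Big)^{1/2},
\end{equation}
summed over those $J_k$ where $N(t_k)\leq N/R$. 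For each such $J_k$ I apply the paraproduct identity as in \eqref{ls1}--\eqref{ls6}, peeling off a high--high piece (absorbed into $C_1\de(R)\Sc{S}_J(2^{-2}N)$ via Bernstein together with $P_{>cN}=P_{>cN}P_{>\e RN(t_k)}$ when $N\ge RN(t_k)$) and the exterior-in-$x$ pieces (again absorbed via $\de(R)$), leaving the genuine dispersive interaction on the spatial ball of radius $R/N(t_k)$.

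The main obstacle is the bilinear Strichartz step. For the $u^2$ term both factors share dispersion $1$, so Lemma~\ref{lem:bs} applies directly and the threshold of the paraproduct split is purely geometric, independent of $\kappa$. For the $\bar u v$ term, however, the low-frequency factor at scale $\sim\e N$ and the high-frequency factor at scale $\sim cN$ carry different dispersion coefficients $\{1,\kappa\}$, so the hypothesis $|\theta_1|M\ll|\theta_2|N$ of Lemma~\ref{lem:bs} forces $\e\ll c/\kappa$ when the low factor is $v$ and $\e\ll \kappa c$ when the low factor is $\bar u$. Choosing $\e=\e(\kappa)>0$ small enough to cover both cases is precisely where the $\kappa$-dependence in the statement enters. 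With $\e$ so fixed, the same bookkeeping as in Lemma~\ref{lem:ls} applies: the $S^0$-norm of the low-frequency factor is controlled by $\norm{u}{L^3}^2+\norm{v}{L^2}\lec 1$, while the $S^0$-norm of the high-frequency factor is propagated through the nonlinearity using H\"older and the interpolation $\norm{P_{>M}(u,v)}{L^3(J_k\times\R^4)}\lec \Sc{A}_{J_k}(M)^{1/3}\Sc{S}_{J_k}(M)^{2/3}$ (respectively, the stronger bound $\Sc{A}+\Sc{S}$ in regime (ii) where $\Sc{S}_{J_k}(M)$ may no longer be uniformly bounded). Summing over $k$ using $\sum_k N(t_k)\lec K$ then produces the $R^{5/2}(K/N)^{1/2}$ factor and closes \eqref{ls_inductionrad} and \eqref{ls_induction'rad}.
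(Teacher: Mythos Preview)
Your proposal is correct and follows the same route as the paper: drop the $\{G_l\}$ decomposition since the projections $P_{>N}$ now commute with both propagators, retain the $B_j$/rest split, and adjust the inner frequency cutoff in the analogue of \eqref{ls5}--\eqref{ls6} to a $\kappa$-dependent threshold so that Lemma~\ref{lem:bs} applies to the mixed-dispersion bilinear product. One small correction: the relation $|\theta_1|M\ll|\theta_2|N$ is not a \emph{hypothesis} of Lemma~\ref{lem:bs}---the lemma holds for all $M,N>0$ with implicit constant depending only on $\theta_1,\theta_2$---but rather the regime in its proof where the change-of-variables argument runs cleanly; your intuition that this is what drives the $\kappa$-dependent choice of $\e$ is nonetheless the correct one, and it matches the paper's one-line justification.
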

Then, Theorem~\ref{thm:ls} can be shown by the same argument as before using Lemma~\ref{lem:lsradial} (i).

\section{Additional regularity: Rapid frequency cascade scenario}

For the minimal mass blow-up solution $(u,v)$ given in Theorem~\ref{thm:mmbs} or \ref{mnsr}, the following two scenarios are possible:

\smallskip
$\bullet$ $\dint _0^\I N^3(t)\,dt<\I$\hx (Rapid frequency cascade scenario),

\smallskip
$\bullet$ $\dint _0^\I N^3(t)\,dt=\I$\hx (Quasi-soliton scenario).

\smallskip
In this section we shall derive additional regularity for the former case from Theorem~\ref{thm:ls}, Lemma~\ref{lem:ls}~(ii) or Lemma~\ref{lem:lsradial} (ii) and use it to preclude this scenario.

\begin{thm}\label{thm:ar}
In the rapid frequency cascade scenario, the solution $(u,v)$ is in $L^\I (\R _+;H^s(\R^4)^2)$ for any $s>0$ and, with $K:=\int _0^\I N(t)^3\,dt<\I$, satisfies
\eqq{\norm{u}{L^\I (\R _+;H^s(\R^4 ))}+\norm{v}{L^\I (\R _+;H^s(\R^4 ))}\lec _{s}\LR{K}^{s}.}
\end{thm}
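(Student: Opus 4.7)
The plan is to exploit the vanishing of the frequency scale $N(t_k)$ in the rapid cascade scenario, combined with the recursive part of the long-time Strichartz estimate (Lemma~\ref{lem:ls}(ii), or Lemma~\ref{lem:lsradial}(ii) in the radial case), to obtain polynomial decay of arbitrary order on the high-frequency tails of $u$ and $v$; a dyadic square-summation then gives every $H^s$.

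From Corollary~\ref{lem:chinterval} we have $N(t_k)\sim _{u,v}\int _{J_k}N(t)^3\,dt$, so $K<\I$ forces $\sum _kN(t_k)\lec _{u,v}K$ and in particular $N(t_k)\to 0$. Telescoping \eqref{def:Cstar} further yields $\sup _{t\ge 0}|\xi (t)|\le 2^{-10}C_*K$, a bound which is automatic in the radial case. Substituting $N(t_k)\to 0$ into the almost-periodicity inequality \eqref{tightness} evaluated at $t=t_k$, one sees that for every fixed $N>0$,
\eqq{\inf _k\Big( \norm{P_{|\xi -\xi (t_k)|>N/4}u(t_k)}{L^2}+\norm{P_{|\xi -2\xi (t_k)|>N/4}v(t_k)}{L^2}\Big) =0,}
since once $k$ is so large that $C(\eta )N(t_k)\le N/4$, both norms are bounded by $\eta ^{1/2}$.

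Let $F(N):=\Sc{A}_{\R _+}(N)+\Sc{S}_{\R _+}(N)$. Applying Theorem~\ref{thm:ls} to finite sub-unions of characteristic intervals and using that $\Sc{S}_J$ is non-increasing in $N$, one obtains the a priori bound $F(N)\le C(u,v)\LR{K}^{1/2}$ uniformly for $N\ge 1$. Inserting the vanishing infimum above into \eqref{ls_induction'} (resp.\ \eqref{ls_induction'rad}) leaves a clean two-term recursion
\eqq{F(N)\le C_1\de (R)F(N/64)+C_1R^{5/2}K^{1/2}N^{-1/2}F(N/256),\qquad N\ge \max \{ RK,R\} ,}
where $\de (R)\to 0$ as $R\to \I$. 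Given $s>0$, pick $R=R(s)$ so large that $64^sC_1\de (R)\le \tfrac{1}{4}$ and set $F_s(N):=N^sF(N)$; the recursion becomes
\eqq{F_s(N)\le \tfrac{1}{4}F_s(N/64)+\beta _s(N)F_s(N/256),\qquad \beta _s(N):=256^sC_1R^{5/2}K^{1/2}N^{-1/2},}
and $\beta _s(N)\le \tfrac{1}{4}$ once $N\ge N_s:=\max \{ RK,R,(4\beta _s(1))^2\}$. A standard supremum argument on the truncation $\sup _{N_s\le N\le M}F_s(N)$, followed by $M\to \I$, gives $\sup _{N\ge N_s}F_s(N)\lec _sN_s^sF_0$, hence
\eqq{F(N)\lec _sK^{s+1}N^{-s},\qquad N\ge N_s.}

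Finally, since $|\xi (t)|\lec K$, for $N>CK$ we have the Fourier-support inclusion $P_{>N}=P_{>N}P_{|\xi -\xi (t)|>N/2}$, and likewise with $2\xi (t)$, so $\tnorm{P_{>N}(u,v)}{L^\I L^2}\le F(N/2)\lec _sK^{s+1}N^{-s}$. A Littlewood--Paley decomposition---splitting the dyadic sum at $N\sim K$, estimating low frequencies crudely by $\tnorm{(u,v)}{L^\I L^2}\le \sqrt{M_c}$ and using the above decay with the exponent chosen large enough to beat $N^{2s}$ on high frequencies---delivers the desired $L^\I H^s$ bound (possibly with $\LR{K}^{s+O(1)}$ in place of the stated $\LR{K}^s$, a point of numerology I would re-examine). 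The main obstacle is precisely the bootstrap iteration above: the key is that the contraction constant $C_1\de (R)$ can be driven to any prescribed smallness in terms of $s$ alone, while the $N^{-1/2}$ factor in the second term becomes small in terms of $N$ alone, so the two smallness requirements decouple and the iteration can be made genuinely contractive at every regularity level.
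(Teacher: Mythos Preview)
Your argument is correct and follows essentially the same route as the paper: kill the infimum term in \eqref{ls_induction'} via $N(t_k)\to 0$, choose $R=R(s)$ so that $C_1\de (R)$ is contractive at scale $N^s$, and iterate. The one point worth tightening is the numerology you flag: instead of the crude a priori bound $F(N)\lec \LR{K}^{1/2}$ for $N\ge 1$, Theorem~\ref{thm:ls} applied at $N=C_*K$ (and monotonicity in $N$) gives the sharper base $F(N)\lec 1$ for all $N\ge C_*K$; feeding this into your supremum argument with $N_s\gtrsim _sC_*K$ yields $H\lec _sN_s^s\sim _s\LR{K}^s$ and hence the stated bound $\LR{K}^s$ rather than $\LR{K}^{s+O(1)}$.
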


\begin{proof}
Let us focus on the case $\kappa =1/2$; for the case $\xi (t)\equiv 0$ the claim is shown similarly by means of Lemma~\ref{lem:lsradial} (ii) instead of Lemma~\ref{lem:ls} (ii).
We continue to use the notation in the preceding section.

In order to prove Theorem~\ref{thm:ar} it suffices to show that, for any $s>0$, there exist $C_s>0$, $C_s'\ge C_*$ depending on $u,v,s$ such that 
\eq{ar-finalclaim}{\Sc{A}(N)\le C_s\Big( \frac{K}{N}\Big) ^s,\qquad N\ge C_s'K.}
In fact, if we have \eqref{ar-finalclaim}, then $\tnorm{P_{>4N}u}{L^\I L^2(\R _+\times \R ^4)}+\norm{P_{>4N}v}{L^\I L^2(\R _+\times \R ^4)}\lec C_s(K/N)^s$ for $N\ge C_s'K$, since \eqref{def:Cstar} implies that for any $t\ge 0$
\eq{bd:xi}{|\xi (t)|=|\xi (t)-\xi (0)|\le 2^{-10}C_*\sum _{k;\, J_k\subset [0,t)}\int _{J_k}N(t)^3\,dt\le 2^{-10}C_*K\le 2^{-10}N.}
Therefore, we have
\eqq{\norm{u}{L^\I (\R _+;H^s(\R^4 ))}&\le \norm{P_{\le 4C_{2s}'K}u}{L^\I (\R _+;H^s(\R^4 ))}+\sum _{N\ge C_{2s}'K}\norm{P_{8N}u}{L^\I (\R _+;H^s(\R^4 ))}\\
&\lec _s\LR{K}^s\norm{u}{L^\I L^2(\R _+\times  \R^4 )}+\sum _{N\ge C_{2s}'K}\LR{N}^s\Big( \frac{K}{N}\Big) ^{2s}\lec _s\LR{K}^{s},}
and similarly for $v$, proving Theorem~\ref{thm:ar}.

To show \eqref{ar-finalclaim}, we first observe that $\Sc{A}_{\R_+}(C_*K)+\Sc{S}_{\R_+}(C_*K)\lec 1$.
In fact, the monotone convergence theorem reduces to showing it for any compact interval $J\subset \R _+$, which follows from Theorem~\ref{thm:ls}.
(We do not exploit the decaying factor $\rho(N)$ here.)
In particular, we have $\Sc{A}_{\R_+}(N)+\Sc{S}_{\R_+}(N)\lec 1$ for any $N\ge C_*K$.

Next, we set $R\ge C_*$ and $C_3\ge \max \{ R,RK^{-1}\}$ sufficiently large (according to $s$) so that
\eqq{2^{6s}C_1\delta (R)\le \frac{1}{2},\quad 2^{8s}C_1R^{\frac{5}{2}}\Big( \frac{K}{2^8C_3K}\Big) ^{\frac{1}{2}}\le \frac{1}{2}.}
Since $\inf\limits _kN(t_k)=0$ by $\int _0^\I N(t)^3\,dt<\I$, it holds
\eqq{\inf _{k}\norm{P_{|\xi -\xi (t_k)|>N}u(t_k)}{L^2(\R ^4)}=\inf _{k}\norm{P_{|\xi -\xi (t_k)|>\frac{N}{N(t_k)}N(t_k)}u(t_k)}{L^2(\R ^4)}=0}
for any $N$ by the almost orthogonality \eqref{tightness}, and similarly for $v$.
From Lemma~\ref{lem:ls}~(ii), we have
\eqq{\Sc{A}_{\R_+}(N)+\Sc{S}_{\R_+}(N)\le \frac{1}{2}\Big\{ 2^{-6s}\Sc{S}_{\R_+}(2^{-6}N)+2^{-8s}\Big( \Sc{A}_{\R_+}(2^{-8}N)+\Sc{S}_{\R_+}(2^{-8}N)\Big) \Big\} ,\quad N\ge 2^8C_3K.}

Now, we choose $C_4=C_4(u,v,s)>0$ large enough so that 
\eqq{\Sc{A}_{\R_+}(2^jC_3K)+\Sc{S}_{\R_+}(2^jC_3K)\le 2^{-sj}C_4,\qquad j=0,1,\dots ,7.}
Then, it is easy to show by induction that \eqref{ar-finalclaim} holds with $C_s=C_3^sC_4$ and $C_s'=C_3$.
\end{proof}

Suppose that there exists a minimal mass blow-up solution $(u,v)$ in the rapid frequency cascade scenario.
Since $\int _0^\I N^3(t)\,dt<\I$, there is a sequence of time $\shugo{t_n}$ such that $N(t_n)\to 0$ as $n\to \I$.

We first consider the radial case: $\xi (t)\equiv 0$ and $\kappa >0$ is arbitrary.
From the almost periodicity of $(u,v)$, we see that
\eqq{\sup _n\norm{P_{>RN(t_n)}(u,v)(t_n)}{L^2}=o(1)\quad (R\to \I ).}
By Theorem~\ref{thm:ar} with $s=2$ (in fact $s=1+\e$ is sufficient) we obtain
\eqq{\limsup _{n\to \I}\norm{(u,v)(t_n)}{\dot{H}^1}&\le \limsup _{n\to \I}\Big( \norm{P_{>RN(t_n)}(u,v)(t_n)}{\dot{H}^1}+\norm{P_{\le RN(t_n)}(u,v)(t_n)}{\dot{H}^1}\Big) \\
&\lec \limsup _{n\to \I}\Big( \norm{(u,v)(t_n)}{\dot{H}^2}^{1/2}\norm{P_{>RN(t_n)}(u,v)(t_n)}{L^2}^{1/2}+RN(t_n)\Big) \\
&= \LR{K}o(1)\quad (R\to \I ),}
which implies that $\tnorm{(u,v)(t_n)}{\dot{H}^1}\to 0$ as $n\to \I$.
Using the Gagliardo-Nirenberg inequality we have $E((u,v)(t_n))\to 0$ as $n\to \I$, and then $E((u,v)(0))=0$ by the energy conservation.
This combined with $M(u,v)=M_{c,\text{rad}}<M(\phi,\psi )$ and the sharp Gagliardo-Nirenberg inequality (Lemma~\ref{lem:GN} (ii)) shows that $\tnorm{(u,v)(0)}{\dot{H}^1}=0$.
By the Sobolev embedding we conclude that $\tnorm{(u,v)(0)}{L^4}=0$, which clearly contradicts the fact that $(u,v)$ is a non-zero solution.

For the case of general $\xi (t)$ and $\kappa =1/2$, we apply the Galilean transformation and argue with $(u_n,v_n):=\Sc{T}_{g(0,0,-\xi (t_n),1)}(u,v)$ instead of $(u,v)$.
Then, we have
\eqq{\sup _n\norm{P_{>RN(t_n)}(u_n,v_n)(t_n)}{L^2}=o(1)\quad (R\to \I )}
and
\eqq{\norm{(u_n,v_n)(t_n)}{\dot{H}^2}=\norm{|\cdot |^2\big( \hat{u}(t_n,\cdot +\xi (t_n)),\hat{v}(t_n, \cdot +2\xi (t_n)\big)}{L^2}\lec \LR{K}^2}
by Theorem~\ref{thm:ar} together with \eqref{bd:xi}, and hence the above argument implies $\tnorm{(u_n,v_n)(t_n)}{\dot{H}^1}\to 0$ as $n\to \I$, which yields $E(u_n(0),v_n(0))\to 0$.
Since $M(u_n,v_n)=M(u,v)<M(\phi,\psi )$, the sharp Gagliardo-Nirenberg inequality shows that $\tnorm{(u_n,v_n)(0)}{\dot{H}^1}\to 0$, and then $\tnorm{(u_n,v_n)(0)}{L^4}\to 0$ as $n\to \I$.
But now $\tnorm{(u_n,v_n)(0)}{L^4}=\tnorm{(u,v)(0)}{L^4}$, and we have the same conclusion $\tnorm{(u,v)(0)}{L^4}=0$.

Therefore, the rapid frequency cascade scenario is impossible.

\section{Virial argument: Quasi-soliton scenario}

Now we consider the scenario $\int _0^\I N(t)^3\,dt =\I$.

\subsection{Radial case}

Let $(u,v)$ be the minimal mass blow-up solution $(u,v)$ given in Theorem~\ref{mnsr}.
We recall here that $x(t)=\xi (t)\equiv 0$, while $\kappa >0$ is arbitrary.

We use the cut-off function $\theta : [0,\infty )\to [0,1]$ introduced in Section 2. Recall that $\theta$ is smooth, non-increasing, and satisfies $\theta \equiv 1$ on $[0,1]$, $\theta \equiv 0$ on $[2,\infty )$.
Let
\eqq{\Theta (r):=\frac{1}{r}\int _0^r\theta (s)\,ds\quad (r>0),\qquad \Theta (0):=\theta (0)=1.}
We notice that
\eq{eq:theta}{0\le \theta (r)\le \Theta (r)\le \min \{ 1,\,2/r\} ,\qquad 0\le -\Theta '(r)=\frac{(\Theta -\theta )(r)}{r}\begin{cases}
\lec r^{-2} &(r>1)\\
=0 &(0\le r\le 1).
\end{cases}}

We will derive a contradiction by taking a close look at the quantity 
\eqq{\FR{M}(t):=\int _{\R^4}\Theta \Big( \frac{\ti{N}(t)|x|}{L}\Big) \ti{N}(t)x\cdot \Im \Big[ \bbar{U}\nabla U+\frac{1}{2}\bbar{V}\nabla V\Big] (t,x)\,dx}
on some time interval $[0,T)$ which is a union of characteristic intervals.
Here, $L>0$ is a positive large constant to be chosen later and $(U,V):=P_{\le K}(u,v)$ with $K:=\int _0^TN(t)^3\,dt<\I$.
(We need to localize to low frequencies so that $\FR{M}(t)$ will be finite for $L^2$ solutions.)
Also, a $C^1$ function $\ti{N}:[0,\I )\to \R _+$ is a variant of the frequency scale function $N(\cdot )$ of $(u,v)$ which will also be defined in the proof, and we just assume for now that
\eq{Ntilde}{\left\{ \begin{split}
&~\ti{N}(t)\le N(t)\,(\le 1)\quad \text{for any $t\ge 0$;}\quad \sup _{t\in J_k}\ti{N}(t)\le C_0\inf _{t\in J_k}\ti{N}(t)\quad \text{for any $k\ge 0$;}\\[-5pt]
&~\text{on each $J_k$, $\ti{N}(t)$ is monotone and}\quad |\ti{N}'(t)|\le \frac{2C_0\ti{N}(t)}{|J_k|}, 
\end{split}\right.}
where $C_0=C_0(u,v)>1$ is the constant given in \eqref{condition-Nxi}.

\begin{rem}
Recall that Dodson's argument for \eqref{single} in \cite{MR3406535} essentially used the virial identity; if $u$ is a nontrivial solution of \eqref{single} and $\tnorm{u}{L^2}<\tnorm{Q}{L^2}$, then
\eqq{
\frac{d^2}{dt^2}\norm{xu(t)}{L^2}^2=\frac{d}{dt}\Big( 4\Im \int _{\R ^d} x\cdot \bbar{u(t)}\nabla u(t)\Big) =16E_{\text{NLS}}(u(t))>0,}
where
\eqq{E_{\text{NLS}}(u)=\frac{1}{2}\norm{\nabla u}{L^2}^2-\frac{1}{2_*}\norm{u}{L^{2_*}}^{2_*},\qquad 2_*=2+\frac{4}{d}}
is the conserved energy for \eqref{single}.
In our case, the following analogue is valid; for a solution $(u,v)$ of \eqref{NLS}, 
\eq{eq:virial}{
\frac{d}{dt}\Big( 4\Im \int x\cdot \Big( \bbar{u(t)}\nabla u(t)+\frac{1}{2}\bbar{v(t)}\nabla v(t)\Big) \Big) =8E(u,v)(t).} 
Roughly speaking, $\FR{M}(t)$ is a modification of the virial quantity $\Im \int x\cdot (\bbar{u}\nabla u+\frac{1}{2}\bbar{v}\nabla v)$, for which the time derivative is always positive and away from zero by \eqref{eq:virial} and the minimality of the mass $M(u,v)=M_{c,\text{rad}}<M(\phi ,\psi )$.
If we assume $N(t)\equiv 1$, then $\FR{M}(T)-\FR{M}(0)\gec T=K$, whereas the almost periodicity suggests that the solution is in some sense localized to low frequencies uniformly in $t\ge 0$; we can in fact show that $\tnorm{\nabla (U,V)}{L^\I ([0,T);L^2)}=o(K)$ as $K\to \I$.
Since the weight function $\Theta (|x|/L)x$ is bounded, we obtain $\FR{M}(t)=o(K)$ uniformly in $t$, which contradicts the fact $\FR{M}(T)-\FR{M}(0)\gec K$ for $K$ sufficiently large.
We will explain later the idea for the case that $N(t)$ varies, where we need to introduce a time-dependent weight function with a carefully chosen $\ti{N}(t)$.
\end{rem}

$(U,V)$ solves the following perturbed system:
\eqs{i\p _tU +\Delta U=\bbar{U}V+F,\qquad i\p _tV +\kappa \Delta V=U^2+G,\\
F:=P_{\le K}(\bbar{u}v)-\bbar{U}V,\qquad G:=P_{\le K}(u^2)-U^2.}
Using equations, we calculate time derivative of the momentum density as
\eqq{
\p _t\Im \Big( \bbar{U}\p _jU+\frac{1}{2}\bbar{V}\p _jV\Big)
&=-2\p _k\Re \Big( \p _jU\bbar{\p _kU}+\frac{\kappa}{2}\p _jV\bbar{\p _kV}\Big) -\frac{1}{2}\p _j\Re (U^2\bbar{V})\\
&\hx +\frac{1}{2}\p _j\Delta \Big( |U|^2+\frac{\kappa}{2}|V|^2\Big) +\big\{ (U,V),(F,G)\big\} _{p,j},}
where (and hereafter) we always take summation with respect to repeated indices and
\eqq{
\big\{ (U,V),(F,G)\big\} _{p,j}:=&\, \Re (\bbar{F}\p _jU-\bbar{U}\p _jF)+\frac{1}{2}\Re (\bbar{G}\p _jV-\bbar{V}\p _jG)\\
=&\, 2\Re (\bbar{F}\p _jU+\frac{1}{2}\bbar{G}\p _jV)-\p _j \Re (\bbar{F}U+\frac{1}{2}\bbar{G}V).}
Then, we have
\eqq{\p _t\FR{M}(t)=&\int \p _t\Big[ \Theta \Big( \frac{|A|}{L}\Big) A_j\Big] \Im \Big[ \bbar{U}\p _jU+\frac{1}{2}\bbar{V}\p _jV\Big] (t,x)\,dx\\
&+2\int \p _k\Big[ \Theta \Big( \frac{|A|}{L}\Big) A_j\Big] \Re \Big[ \p _jU\bbar{\p _kU}+\frac{\kappa}{2}\p _jV\bbar{\p _kV}\Big] (t,x)\,dx\\
&+\frac{1}{2}\int \p _j\Big[ \Theta \Big( \frac{|A|}{L}\Big) A_j\Big] \Re \big[ U^2\bbar{V}\big] (t,x)\,dx\\
&-\frac{1}{2}\int \p _j\Delta \Big[ \Theta \Big( \frac{|A|}{L}\Big) A_j\Big] \Big[ |U|^2+\frac{\kappa}{2}|V|^2\Big] (t,x)\,dx\\
&+\int \Theta \Big( \frac{|A|}{L}\Big) A_j\big\{ (U,V),(F,G)\big\} _{p,j}(t,x)\,dx.}
where we have written $A:=\ti{N}(t)x$.
We now see
\eqq{\p _t\Big[ \Theta \Big( \frac{|A|}{L}\Big) A_j\Big] &=\theta \Big( \frac{|A|}{L}\Big) \ti{N}'(t)x_j,\\
\p _k\Big[ \Theta \Big( \frac{|A|}{L}\Big) A_j\Big] &=\Big[ \theta \Big( \frac{|A|}{L}\Big) \delta_{jk} +(\Theta -\theta )\Big( \frac{|A|}{L}\Big) B_{jk}\Big] \ti{N}(t),\qquad B_{jk}:=\delta _{jk}-\frac{x_jx_k}{|x|^2},\\
\p _j\Big[ \Theta \Big( \frac{|A|}{L}\Big) A_j\Big] &=\Big[ 4\theta \Big( \frac{|A|}{L}\Big) +3(\Theta -\theta )\Big( \frac{|A|}{L}\Big) \Big] \ti{N}(t).
}
Therefore, we obtain
\begin{align}
\p _t&\FR{M}(t)\notag \\
=\,&\ti{N}'(t)\int \th \Big( \frac{|A|}{L}\Big) x_j\Im \Big[ \bbar{U}\p _jU+\frac{1}{2}\bbar{V}\p _jV\Big] (t,x)\,dx \label{Ndash-rad}\\
&+2\ti{N}(t)\int \th \Big( \frac{|A|}{L}\Big) \Big[ |\nabla U|^2+\frac{\kappa}{2}|\nabla V|^2\Big] (t,x)\,dx \label{positiveH1-rad} \\
&+2\ti{N}(t)\int (\Th -\th )\Big( \frac{|A|}{L}\Big)  B_{jk}\Re \Big[ \p _jU\bbar{\p _kU}+\frac{\kappa}{2}\p _jV\bbar{\p _kV}\Big] (t,x)\,dx \label{positiveAD-rad} \\
&+2\ti{N}(t)\int \th \Big( \frac{|A|}{L}\Big) \Re \big[ U^2\bbar{V}\big] (t,x)\,dx \label{negativeL3-rad} \\
&+\frac{3}{2}\ti{N}(t)\int (\Th -\th )\Big( \frac{|A|}{L}\Big) \Re \big[ U^2\bbar{V}\big] (t,x)\,dx \label{L3tooku-rad} \\
&-\frac{1}{2}\ti{N}(t)\int \Delta \Big[ (\th +3\Th )\Big( \frac{|A|}{L}\Big) \Big] \Big[ |U|^2+\frac{\kappa}{2}|V|^2\Big] (t,x)\,dx \label{L2L2-rad} \\
&+\ti{N}(t)\int \Th \Big( \frac{|A|}{L}\Big) x_j\big\{ (U,V),(F,G)\big\} _{p,j}(t,x)\,dx. \label{gomiP-rad} 
\end{align}

We shall prove the following:
\begin{thm}\label{thm:im-rad}
There exists a large constant $L>0$ and a $C^1$ function $\ti{N}:[0,\I )\to \R _+$ satisfying \eqref{Ntilde} such that $\FR{M}(t)$ defined as above satisfies  
\eqq{\int _0^T\frac{d\FR{M}}{dt}(t)\,dt \gec K}
for any sufficiently large $T$.
\end{thm}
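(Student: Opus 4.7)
The plan is to isolate the dominant positive contribution from \eqref{positiveH1-rad}$+$\eqref{negativeL3-rad}, which after the radial simplification forms the localized energy density
\[
2\ti{N}(t)\int \th(|A|/L)\Big[|\nabla U|^2+\tfrac{\kappa}{2}|\nabla V|^2+\Re(U^2\bbar{V})\Big](t,x)\,dx.
\]
By the sharp Gagliardo--Nirenberg inequality of Lemma~\ref{lem:GN}(ii) applied to $(U,V)$, together with the strict mass bound $M(U,V)\le M(u,v)=M_{c,\text{rad}}<M(\phi,\psi)$, the bracket is bounded below by $c_0\bigl[|\nabla U|^2+\tfrac{\kappa}{2}|\nabla V|^2\bigr]$ up to a tail remainder to be absorbed. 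The radial assumption $x\equiv\xi\equiv 0$ forces $(U,V)$ to be radial (since $P_{\le K}$ is a radial multiplier), so $\nabla U\parallel x$, the quadratic form $B_{jk}\p_jU\,\bbar{\p_kU}$ vanishes pointwise, and the angular term \eqref{positiveAD-rad} drops out entirely.

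I would next take $\ti{N}$ as a $C^1$ smoothing of $N(t_k)$ within narrow transition layers at the endpoints of each $J_k$, so that $\ti{N}(t)\sim N(t)$ on each $J_k$ and \eqref{Ntilde} holds. Almost periodicity gives $\norm{\nabla(U,V)(t)}{L^2}^2\gec \ti{N}(t)^2$ uniformly in $t$, with an $o_R(1)$ loss from the high-frequency tail, while $L^2$-tightness confines most of the $\dot H^1$-mass of $(U,V)(t)$ to the ball $|x|\lec R/N(t)$. Once $L\gg R$, the cutoff $\th(|A|/L)\equiv 1$ on this ball, and hence
\[
\text{\eqref{positiveH1-rad}}+\text{\eqref{negativeL3-rad}} \gec c_0\,\ti{N}(t)\norm{\nabla(U,V)}{L^2}^2 \gec c_0\,\ti{N}(t)^3,
\]
which integrates to the principal lower bound $\gec c_0\int _0^T\ti{N}(t)^3\,dt\sim c_0K$.

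The error terms split into spatial-tail and frequency-projection errors. The terms \eqref{L3tooku-rad} and \eqref{L2L2-rad} are supported on $\{|x|\gec L/\ti{N}(t)\}$ and can be controlled using Lemma~\ref{lem:tightness-Str} together with the $L^2$-tightness of almost-periodic solutions, yielding a contribution of order $o_L(1)\cdot \ti{N}(t)^3$ pointwise, hence $o_L(K)$ after integration. The frequency-projection error \eqref{gomiP-rad} involves $F=P_{\le K}(\bbar{u}v)-\bbar{U}V$ and $G=P_{\le K}u^2-U^2$, which are nonlinear products with at least one factor having frequency $>K/2$. The long-time Strichartz estimate (Theorem~\ref{thm:ls}, radial case) applied with $N=K$ yields $\norm{P_{>K/2}(u,v)}{L^2L^4([0,T]\times \R^4)}\le \rho(K)$, and pairing this with an $L^{\I}L^2$-bound on the remaining factor (using $\norm{\Th(|A|/L)A}{L^{\I}}\lec L$) closes the estimate to a contribution that is $o(K)$ as $K\to\I$.

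The main obstacle is controlling \eqref{Ndash-rad}. A pointwise bound yields at best $|\eqref{Ndash-rad}|\lec L\,|\ti{N}'(t)|\,\ti{N}(t)^{-1}\,\norm{\nabla(U,V)}{L^2}\norm{(U,V)}{L^2}\lec L\,N(t)^3$, which integrates to $LK$ and destroys the estimate for $L$ large. To beat this I would integrate by parts in time, transferring $\p_t$ off $\ti{N}$ and onto the momentum density $\Im[\bbar{U}\p_jU+\tfrac12\bbar{V}\p_jV]$; the resulting terms carry an additional $\ti{N}(t)$ factor and can either be absorbed into \eqref{positiveH1-rad} with a small constant, or estimated as further frequency-projection errors of the same form as \eqref{gomiP-rad}. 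Alternatively, one may design $\ti{N}$ so that $\ti{N}'$ is supported on thin transition windows whose total measure is an $o(1)$ fraction of $\int _0^T\ti{N}(t)^{-2}\,dt$. The delicate point is balancing the $C^1$ requirement in \eqref{Ntilde} against the smallness of $|\ti{N}'|$. With all errors controlled, choosing $R$, then $L$, then $T$ sufficiently large in this order yields $\int _0^T\p _t\FR{M}(t)\,dt\ge \tfrac{c_0}{2}K-o(K)\gec K$, as required.
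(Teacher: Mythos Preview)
Your overall strategy and the treatment of \eqref{positiveAD-rad}, \eqref{L3tooku-rad}, \eqref{L2L2-rad}, and \eqref{gomiP-rad} are along the right lines; in particular your observation that $B_{jk}\p_jU\,\bbar{\p_kU}$ vanishes identically for radial $U$ is correct (the paper only uses the weaker fact $\eqref{positiveAD-rad}\ge 0$). However, the treatment of \eqref{Ndash-rad} contains a genuine gap, and this is precisely the heart of the argument.

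Your choice $\ti{N}\sim N$ (a $C^1$ smoothing of the step function $N$) is explicitly discussed in the paper and shown to be insufficient: with $\ti{N}\sim N$ one obtains from the natural Cauchy--Schwarz control only
\[
\int _0^T\frac{\ti{N}'(t)^2}{\ti{N}(t)^3}\,dt\lec \sum _{k}\frac{N(t_k)^2}{\ti{N}(t_k)^2}\big|\ti{N}(t_k)-\ti{N}(t_{k+1})\big|\lec \sum _kN(t_k)\sim K,
\]
which is of the same order as the main term, so no smallness is gained. Your ``thin transition windows'' alternative actually makes things worse: concentrating $\ti{N}'$ on a window of width $\de |J_k|$ forces $|\ti{N}'|\sim N(t_k)/(\de|J_k|)$ there, and since $|J_k|\sim N(t_k)^{-2}$ one computes $\int_{J_k}\ti{N}'^2/\ti{N}^3\sim N(t_k)/\de$, giving a total $\sim K/\de$. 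The integration-by-parts-in-time idea is not fleshed out, and there is no evident reason the resulting terms (which reproduce the full momentum-density evolution, but with the weight $\th$ rather than $\Th$) should appear with a favorable sign or a small constant allowing absorption.

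What the paper actually does (following Dodson) is to abandon the constraint $\ti{N}\sim N$ and construct $\ti{N}$ by a ``peak-leveling'' procedure: iteratively lowering local maxima of the step function $N$ so that after $m$ iterations every surviving peak has length at least $2m+1$, which produces $\ti{N}_m$ with $C_0^{-m}N\le\ti{N}_m\le N$ and the key variation bound
\[
\int _0^T\frac{\ti{N}_m'(t)^2}{\ti{N}_m(t)^3}\,dt\lec \Big(\frac{C_0^m}{K}+\frac{1}{m+1}\Big)\int _0^T\ti{N}_m(t)N(t)^2\,dt.
\]
One then takes $m$ large and subsequently $K$ large. This is combined with the splitting $|\eqref{Ndash-rad}|\le \e\cdot\big[\text{kinetic part of }\eqref{positiveH1-rad}\big]+C(\e)L^2\int\ti{N}'^2/\ti{N}^3$, the first piece being absorbed. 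Because $\ti{N}$ may now be much smaller than $N$, the lower bound on the main term has to be tracked as $\gec\int _0^T\ti{N}(t)N(t)^2\,dt$ rather than $\int _0^T\ti{N}(t)^3\,dt$; the paper achieves this via the localized $L^3$ density $\int_{|x|\le L/(2\ti{N})}[|U|^3+|V|^3]$ together with Corollary~\ref{lem:chinterval}, not via the $\dot H^1$ lower bound $\tnorm{\nabla(U,V)}{L^2}^2\gec\ti{N}^2$ you invoke, which fails once $\ti{N}\ll N$.
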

Note that we can choose $T$ for which $K=K(T)=\int _0^TN(t)^3\,dt$ is arbitrarily large, since we are dealing with the case $\int _0^\I N(t)^3\,dt=\I$.

\begin{proof}
Let us begin with the estimate on 
\eqq{\eqref{positiveH1-rad}+\eqref{negativeL3-rad}=2\ti{N}(t)\int \th \Big( \frac{|A|}{L}\Big) \Big[ |\nabla U|^2+\frac{\kappa}{2}|\nabla V|^2+\Re (U^2\bbar{V})\Big] (t,x)\,dx .}
We can find the energy of the system in it, so by the minimality assumption $M(u,v)=M_c<M(\phi,\psi )$ and the sharp Gagliardo-Nirenberg inequality (Lemma~\ref{lem:GN} (ii)) we expect these terms to be positive and the main part of $\int _0^T\frac{d\FR{M}}{dt}(t)\,dt$.
To show that, we introduce another cutoff $\chi :[0,\infty ) \to [0,1]$, a smooth non-increasing function satisfying $\chi (r)=1$ on $[0,\frac{1}{2}]$ and $\chi (r)=0$ on $[1,\infty )$, so that $\theta \equiv 1$ on the support of $\chi$.
For a small $\e >0$, we have%
\footnote{We keep a small amount of the $\dot{H}^1$ norm for later use.}
\eqq{&\int \th \Big( \frac{|A|}{L}\Big) \Big[ (1-\e )\big( |\nabla U|^2+\frac{\kappa}{2}|\nabla V|^2\big) +\Re [U^2\bbar{V}] \Big] \,dx\\
&\ge \int \Big[ (1-\e )\chi ^2\Big( \frac{|A|}{L}\Big) \big( |\nabla U|^2+\frac{\kappa}{2}|\nabla V|^2\big) +\chi ^3\Big( \frac{|A|}{L}\Big) \Re [U^2\bbar{V}] +\big[ \th (1-\chi ^3)\big] \Big( \frac{|A|}{L}\Big) \Re [U^2\bbar{V}] \Big] \,dx\\
&\ge \frac{1-\e}{1+\e}\int \Big[ | \nabla \ti{U}|^2+\frac{\kappa}{2}|\nabla \ti{V}|^2\Big] \,dx+\int \Re [\ti{U}^2\bbar{\ti{V}}] \,dx\\
&\hx -\frac{1-\e}{\e}\Big( \frac{\ti{N}(t)}{L}\Big) ^2\int \Big| \chi '\Big( \frac{|A|}{L}\Big) \Big| ^2\Big[ |U|^2+\frac{\kappa}{2}|V|^2\Big] \,dx-\int \big[ \th (1-\chi ^3)\big] \Big( \frac{|A|}{L}\Big) \big[ |U|^3+|V|^3\big] \,dx\\
&\ge \frac{1-\e}{1+\e}\int \Big[ | \nabla \ti{U}|^2+\frac{\kappa}{2}|\nabla \ti{V}|^2\Big] \,dx+\int \Re [\ti{U}^2\bbar{\ti{V}}] \,dx\\
&\hx -C(\e ,\kappa )\Big( \frac{\ti{N}(t)}{L}\Big) ^2-\int _{|x|\ge \frac{L}{2\ti{N}(t)}}\big[ |U|^3+|V|^3\big] \,dx,}
where $(\ti{U},\ti{V}):=\chi (|A|/L)(U,V)$ and we have used the inequality
\eq{triangle_square}{|X|^2\ge \frac{1}{1+\e}|X+Y|^2-\frac{1}{\e}|Y|^2,\qquad \e >0,\quad X,Y\in \Bo{C}^4.}
We invoke the sharp Gagliardo-Nirenberg inequality (Lemma~\ref{lem:GN} (ii)) for the function $(\ti{U}(t),\ti{V}(t))$.
Since 
\eqq{\frac{M(\ti{U}(t),\ti{V}(t))}{M(\phi ,\psi )}=\frac{M(\chi (|A|/L)P_{\le K}(u(t),v(t)))}{M(\phi ,\psi )}\le \frac{M(u(t),v(t))}{M(\phi ,\psi )}=\frac{M_{c,\text{rad}}}{M(\phi ,\psi )}=:\eta _0\in (0,1),}
we can select $\e >0$ according to $\eta _0,\kappa$ so that
\eqq{&\frac{1-\e}{1+\e}\int \Big[ | \nabla \ti{U}|^2+\frac{\kappa}{2}|\nabla \ti{V}|^2\Big] \,dx+\int \Re [\ti{U}^2\bbar{\ti{V}}] \,dx\ge \e \int \Big[ | \nabla \ti{U}|^2+|\nabla \ti{V}|^2\Big] \,dx,}
which is, via the Gagliardo-Nirenberg inequality for a single function, bounded from below by
\eqq{&c\e \int \Big[ |\ti{U}|^3+|\ti{V}|^3\Big] \,dx\ge c\e \int _{|x|\le \frac{L}{2\ti{N}(t)}}\big[ |U|^3+|V|^3\big] \,dx.}
Consequently, we have
\eqq{&\int _0^T\Big[ \eqref{positiveH1-rad}+\eqref{negativeL3-rad}\Big] \,dt \\
&\ge \frac{1}{C}\int _0^T\ti{N}(t)\int \th \Big( \frac{|A|}{L}\Big) \Big[ |\nabla U|^2+\frac{\kappa}{2}|\nabla V|^2\Big] \,dx\,dt+\frac{1}{C}\int _0^T\ti{N}(t)\int _{|x|\le \frac{L}{2\ti{N}(t)}}\big[ |U|^3+|V|^3\big] \,dx\,dt \\
&\hx -\frac{C}{L^2}\int _0^T\ti{N}(t)^3\,dt-2\int _0^T\ti{N}(t)\int _{|x|\ge \frac{L}{2\ti{N}(t)}}\big[ |U|^3+|V|^3\big] \,dx\,dt}
for some $C=C(\eta _0,\kappa )\gg 1$.

We next observe that $\eqref{positiveAD-rad}\ge 0$.
This follows from the fact that the matrix $(B_{jk}(x))_{1\le j,k\le 4}$ is non-negative for any $x$.

All the remaining terms are considered as error terms.
Among them, $\eqref{L3tooku-rad}+\eqref{L2L2-rad}$ is easy to handle.
In fact, we see
\eqs{\Big| \int _0^T\eqref{L3tooku-rad}\,dt\Big| \le 3\int _0^T\ti{N}(t)\int _{|x|\ge \frac{L}{\ti{N}(t)}}\big[ |U|^3+|V|^3\big] \,dx\,dt,\\
\Big| \int _0^T\eqref{L2L2-rad}\,dt\Big| \le \frac{C(\kappa )}{L^2}\int _0^T\ti{N}(t)^3\,dt.}

Let us turn to the control of \eqref{Ndash-rad} including the time derivative of $\ti{N}(t)$.
By the Cauchy-Schwarz inequality, for $\e >0$ we have
\eqq{\Big| \int _0^T\eqref{Ndash-rad}\,dt\Big| &\le 2L\int _0^T\frac{|\ti{N}'(t)|}{\ti{N}(t)}\int \th \Big( \frac{|A|}{L}\Big) \Big[ |U||\nabla U|+\frac{1}{2}|V||\nabla V|\Big] \,dx\,dt\\
&\le \e \int _0^T\ti{N}(t)\int \th \Big( \frac{|A|}{L}\Big) \Big[ |\nabla U|^2+\frac{\kappa}{2}|\nabla V|^2\Big] \,dx\,dt+C(\kappa )\frac{L^2}{\e}\int _0^T\frac{\ti{N}'(t)^2}{\ti{N}(t)^3}\,dt.}
Taking $\e =\e (\eta _0,\kappa )>0$ sufficiently small, together with the estimates obtained so far and $\ti{N}(t)\le N(t)$, we come to 
\eqq{&\int _0^T\Big[ \eqref{Ndash-rad}+\cdots +\eqref{L2L2-rad}\Big] \,dt \\
&\ge \frac{1}{C}\int _0^T\ti{N}(t)\int _{|x|\le \frac{L}{2\ti{N}(t)}}\big[ |U|^3+|V|^3\big] \,dx\,dt -5\int _0^T\ti{N}(t)\int _{|x|\ge \frac{L}{2\ti{N}(t)}}\big[ |U|^3+|V|^3\big] \,dx\,dt\\
&\hx -\frac{C}{L^2}\int _0^T\ti{N}(t)^3\,dt-CL^2\int _0^T\frac{\ti{N}'(t)^2}{\ti{N}(t)^3}\,dt\\
&\ge \frac{1}{C}\int _0^T\ti{N}(t)\int _{|x|\le \frac{L}{2N(t)}}\big[ |U|^3+|V|^3\big] \,dx\,dt -5\int _0^T\ti{N}(t)\int _{|x|\ge \frac{L}{2N(t)}}\big[ |U|^3+|V|^3\big] \,dx\,dt\\
&\hx -\frac{C}{L^2}\int _0^T\ti{N}(t)N(t)^2\,dt-CL^2\int _0^T\frac{\ti{N}'(t)^2}{\ti{N}(t)^3}\,dt}
for some $C=C(\eta _0,\kappa )\gg 1$.

Now, we deduce from the almost periodicity and Lemma~\ref{lem:tightness-Str} that
\eqq{\int _{J_k}\int _{|x|\le \frac{L}{2N(t)}}\Big[ |U|^3+|V|^3\Big] \,dx\,dt \sim \int _{J_k}\int _{\R^4}\Big[ |u|^3+|v|^3\Big] \,dx\,dt=1}
for sufficiently large $K,L$, uniformly in $k\ge 0$, and 
\eqq{\sup _k\int _{J_k}\int _{|x|>\frac{L}{2N(t)}}\Big[ |U|^3+|V|^3\Big] \,dx\,dt =o(1)\qquad (K,L\to \I ).}
Since we are assuming $\ti{N}(t)\sim \ti{N}(t_k)$ for $t\in J_k$ uniformly in $k\ge 0$, it follows from Corollary~\ref{lem:chinterval} that
\eqq{&\int _0^T\ti{N}(t)\int _{|x|\le \frac{L}{2N(t)}}\Big[ |U|^3+|V|^3\Big] \,dx\,dt\sim \sum _{k;\,J_k\subset [0,T)}\ti{N}(t_k)
\sim \int _0^T\ti{N}(t)N(t)^2\,dt}
for sufficiently large $K,L$ and 
\eqq{&\int _0^T\ti{N}(t)\int _{|x|>\frac{L}{2N(t)}}\Big[ |U|^3+|V|^3\Big] \,dx\,dt=\int _0^T\ti{N}(t)N(t)^2\,dt\cdot o(1)\qquad (K,L\to \I ).}

We therefore have proved the following:
\begin{lem}\label{lem:im1-rad}
Let $\ti{N}:[0,T)\to \R _+$ be a $C^1$ function satisfying \eqref{Ntilde}.
Then, there exist constants $C\gg 1$ and $L\gg 1$ depending only on $(u,v)$ and $\kappa$ such that we have 
\eq{im2-rad}{\int _0^T\Big[ \eqref{Ndash-rad}+\cdots +\eqref{L2L2-rad}\Big] \,dt \ge \frac{1}{C}\int _0^T \ti{N}(t)N(t)^2\,dt-C\int _0^T\frac{\ti{N}'(t)^2}{\ti{N}(t)^3}\,dt}
whenever $K>0$ is sufficiently large.
\end{lem}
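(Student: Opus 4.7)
The plan is to estimate the seven terms in \eqref{Ndash-rad}--\eqref{L2L2-rad} separately, arranging matters so that the energy-like combination \eqref{positiveH1-rad}+\eqref{negativeL3-rad} supplies a coercive lower bound which absorbs all other contributions. First I would pick a secondary smooth cutoff $\chi:[0,\I)\to [0,1]$ with $\chi\equiv 1$ on $[0,1/2]$ and $\chi\equiv 0$ on $[1,\I)$, so that $\th\equiv 1$ on the support of $\chi$. Setting $(\ti U,\ti V):=\chi(|A|/L)(U,V)$, I would use \eqref{triangle_square} with a small parameter $\e>0$ to replace $\th(|A|/L)|\nabla U|^2$ and its $V$-counterpart inside \eqref{positiveH1-rad} by $|\nabla \ti U|^2$ and $|\nabla \ti V|^2$, paying a commutator error $C(\e,\kappa)(\ti N(t)/L)^2$, and to bound the unwanted cubic part by an $L^3$ tail on $\{|x|\ge L/(2\ti N(t))\}$. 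Since $M(\ti U(t),\ti V(t))\le M(u,v)=M_{c,\text{rad}}<M(\phi,\psi)$ uniformly in $t$, the sharp Gagliardo--Nirenberg inequality (Lemma~\ref{lem:GN}~(ii)) delivers a strict coercive lower bound $\int[|\nabla \ti U|^2+\tfrac{\kappa}{2}|\nabla \ti V|^2+\Re(\ti U^2\bbar{\ti V})]\ge \e_0\int[|\nabla \ti U|^2+|\nabla \ti V|^2]$ with $\e_0=\e_0(\eta_0,\kappa)>0$, and the scalar Gagliardo--Nirenberg inequality then converts this into a lower bound $c\e_0\int_{|x|\le L/(2\ti N(t))}[|U|^3+|V|^3]$ for the interior $L^3$ integral.

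For the residual terms, \eqref{positiveAD-rad} is non-negative because $(B_{jk}(x))$ is positive semi-definite. From \eqref{eq:theta} I have $\Th-\th=O(\chf{r>1}r^{-1})$ and $|\Delta[(\th+3\Th)(|A|/L)]|=O((\ti N/L)^2\chf{L/\ti N\le |x|\le 2L/\ti N})$, whence $|\eqref{L3tooku-rad}|\le C\ti N(t)\int_{|x|\ge L/\ti N(t)}[|U|^3+|V|^3]$ and $|\eqref{L2L2-rad}|\le C(\kappa)L^{-2}\ti N(t)^3$. The $\ti N'$-term \eqref{Ndash-rad} is handled by Cauchy--Schwarz, using $|x|\lec L/\ti N(t)$ on $\supp{\th(|A|/L)}$ to produce $|\eqref{Ndash-rad}|\le \e\ti N(t)\int\th(|A|/L)[|\nabla U|^2+\tfrac{\kappa}{2}|\nabla V|^2]\,dx+C(\kappa)\e^{-1}L^2(\ti N')^2/\ti N^3$; choosing $\e$ small, depending on $\e_0$, absorbs the first piece into the coercive bound.

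It remains to pass from the interior $L^3$ integral to $\int_0^T\ti N(t)N(t)^2\,dt$. Using \eqref{Ntilde}, I replace $L/(2\ti N(t))$ by $L/(2N(t))$ up to a universal constant. By almost periodicity together with Lemma~\ref{lem:tightness-Str}, for $L$ sufficiently large the interior integral $\int_{J_k}\int_{|x|\le L/(2N(t))}[|u|^3+|v|^3]\,dxdt$ is comparable to $1$ uniformly in $k$, while the exterior integral is $o(1)$ as $K,L\to\I$; Strichartz and Bernstein estimates plus $N(t)\le 1$ show that replacing $(u,v)$ by $(U,V)=P_{\le K}(u,v)$ changes these quantities only by $o(1)$ once $K$ is large. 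Since $\ti N(t)\sim \ti N(t_k)$ on $J_k$ and $N(t_k)\sim\int_{J_k}N(t)^3\,dt$ by Corollary~\ref{lem:chinterval}, summing over $J_k\subset [0,T)$ gives $\int_0^T\ti N(t)\int_{|x|\le L/(2N(t))}[|U|^3+|V|^3]\sim \int_0^T\ti N(t)N(t)^2\,dt$ and bounds the exterior contribution by $o(1)\int_0^T\ti N(t)N(t)^2\,dt$. Taking $L$ large absorbs this error and the $L^{-2}\int\ti N(t)N(t)^2$ error, yielding \eqref{im2-rad}.

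The main obstacle is balancing the three small parameters $\e$, $L^{-1}$ and the commutator losses $(\ti N/L)^2$ simultaneously so that the Gagliardo--Nirenberg coercivity survives both the cutoff localization and the absorption of \eqref{Ndash-rad}, while the exterior $L^3$ tail remains negligible compared with the dominant quantity $\int\ti N N^2$; once this balancing is arranged, each individual estimate is routine.
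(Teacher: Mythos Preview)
Your proof is correct and follows essentially the same route as the paper's: the secondary cutoff $\chi$, the sharp Gagliardo--Nirenberg coercivity on $(\ti U,\ti V)$, the sign of $(B_{jk})$, the Cauchy--Schwarz treatment of \eqref{Ndash-rad}, and the conversion of the interior $L^3$ integral to $\int \ti N N^2$ via Corollary~\ref{lem:chinterval} all match the paper's argument. One minor technical inaccuracy: $\Delta[(\th+3\Th)(|A|/L)]$ is \emph{not} supported in $\{L/\ti N\le |x|\le 2L/\ti N\}$ (since $\Th(r)\sim r^{-1}$ for large $r$, the radial Laplacian $\Th''+3r^{-1}\Th'$ decays like $r^{-3}$ but does not vanish there), yet it is still globally $O((\ti N/L)^2)$, so your bound on \eqref{L2L2-rad} holds as stated.
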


From now on we fix such an $L\gg 1$.
Our next task is to find an appropriate function $\ti{N}(t)$ satisfying \eqref{Ntilde} and
\eq{claim_Ntilde}{\int _0^T\frac{\ti{N}'(t)^2}{\ti{N}(t)^3}\,dt\ll \int _0^T \ti{N}(t)N(t)^2\,dt.}
This is the key step in the proof of Theorem~\ref{thm:im}, and we follow the argument of Dodson \cite{MR3406535}.
To see the idea, we observe that \eqref{Ntilde} and Corollary~\ref{lem:chinterval} imply
\eqq{\int _0^T\frac{\ti{N}'(t)^2}{\ti{N}(t)^3}\,dt\lec \sum _{k;\, J_k\subset [0, T)}\frac{1}{\ti{N}(t_k)^3}\frac{\ti{N}(t_k)}{|J_k|}\int _{J_k}|\ti{N}'(t)|\,dt\sim \sum _{k;\, J_k\subset [0, T)}\frac{N(t_k)^2}{\ti{N}(t_k)^2}|\ti{N}(t_k)-\ti{N}(t_{k+1})|.}
Let us consider the case $\ti{N}(t)\sim N(t)$.
(In fact, we can easily construct $\ti{N}(t)$ satisfying \eqref{Ntilde} and $C_0^{-1}\le \ti{N}(t)/N(t)\le 1$.)
We have
\eqq{\sum _{k;\, J_k\subset [0, T)}\frac{N(t_k)^2}{\ti{N}(t_k)^2}|\ti{N}(t_k)-\ti{N}(t_{k+1})|\lec \sum _{k;\, J_k\subset [0, T)}N(t_k)\sim \int _0^TN(t)^3\,dt\sim \int _0^T \ti{N}(t)N(t)^2\,dt,}
which is not sufficient to conclude \eqref{claim_Ntilde}.
However, if we consider the extremal case where $N(t)$ is monotone on the whole interval $[0,T)$, then we can construct $\ti{N}(t)$ which is also monotone on $[0,T)$ and
\eqq{\sum _{k;\, J_k\subset [0, T)}\frac{N(t_k)^2}{\ti{N}(t_k)^2}|\ti{N}(t_k)-\ti{N}(t_{k+1})|\lec \ti{N}(0)-\ti{N}(T)\le 1\ll K\sim \int _0^T \ti{N}(t)N(t)^2\,dt,}
since we can take $K$ arbitrarily large.
This observation suggests that \eqref{claim_Ntilde} is easier to achieve if $N(t)$ is less oscillatory.
The idea for constructing $\ti{N}(t)$ is that we deform $N(t)$ to be less undulating by leveling `peaks' in the graph of $N(t)$.

\begin{lem}\label{lem:im-Ndash}
There exists a sequence of $C^1$ functions $\ti{N}_m:[0,\I )\to \R$ ($m=0,1,2,\dots $) such that each $\ti{N}_m$ satisfies \eqref{Ntilde} and 
\eqq{&\sup _{t\ge 0}\frac{N(t)}{\ti{N}_m(t)}\le C_0^{m+2},\\
&\sum _{k;\, J_k\subset [0, T)}\frac{N(t_k)^2}{\ti{N}_m(t_k)^2}|\ti{N}_m(t_k)-\ti{N}_m(t_{k+1})|\lec \Big( \frac{C_0^m}{K}+\frac{1}{m+1}\Big) \int _0^T \ti{N}_m(t)N(t)^2\,dt}
for any $T>0$.
\end{lem}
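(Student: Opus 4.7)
The plan is to adapt Dodson's peak-leveling construction from~\cite{MR3406535} to the discrete setting of characteristic intervals. By~\eqref{condition-Nxi}, the integer sequence $n_k := -\log_{C_0} N(t_k) \in \Bo{Z}_{\ge 0}$ starts at $0$ and changes by at most $1$ between consecutive $k$'s. For each $m \ge 0$ I will construct a modified sequence $\tilde n_{m,k} \ge n_k$ by iteratively filling in shallow dips: for $\ell = 1, 2, \dots$, each maximal block of indices on which $n_j < \ell$ (a level-$\ell$ valley), flanked by $n \ge \ell$ on both sides, whose depth (the gap between $\ell$ and the valley minimum) does not exceed $m+1$, is raised uniformly up to $\ell$. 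Each raising moves some $n_k$ by exactly one unit, and a given $k$ can be raised at most $m+2$ times in total, so $0 \le \tilde n_{m,k} - n_k \le m+2$. Setting $\nu_{m,k} := C_0^{-\tilde n_{m,k}}$ and then interpolating smoothly and monotonically between $\nu_{m,k}$ and $\nu_{m,k+1}$ on each $J_k$ produces the required $C^1$ function $\ti{N}_m$; since $|J_k|\sim N(t_k)^{-2}$ by Corollary~\ref{lem:chinterval} and $\nu_{m,k+1}/\nu_{m,k} \in \{C_0^{-1},\,1,\,C_0\}$, both~\eqref{Ntilde} and $\sup_t N(t)/\ti{N}_m(t) \le C_0^{m+2}$ are immediate.

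Using Corollary~\ref{lem:chinterval} in the form $\sum_{k;\, J_k\subset [0,T)}\nu_{m,k} \sim K^{-1}\int_0^T \ti{N}_m(t) N(t)^2\, dt$, the target reduces to the discrete estimate
\eqq{
\sum_{k;\, J_k\subset [0,T)}\frac{N(t_k)^2}{\nu_{m,k}^2}|\nu_{m,k} - \nu_{m,k+1}| \lec \Big(\frac{C_0^m}{K}+\frac{1}{m+1}\Big) \sum_{k;\, J_k\subset [0,T)}\nu_{m,k}.
}
Telescoping gives $|\nu_{m,k} - \nu_{m,k+1}| \sim C_0^{-\min(\tilde n_{m,k},\, \tilde n_{m,k+1})}\chi_{\{\tilde n_{m,k} \ne \tilde n_{m,k+1}\}}$, identifying each nonzero term with the boundary of a level-$\ell$ valley of $\tilde n_m$. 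These valleys split into two types. A \emph{deep} valley (original depth $>m+1$, hence unaltered) contains some index $j$ with $n_j \ge \ell+m+1$, contributing mass $\gtrsim C_0^{-\ell-m}$ to $\sum_i \nu_{m,i}$ over the valley, so at each level $\ell$ there are at most $C_0^{\ell+m}\sum_i \nu_{m,i}$ such valleys; weighting the boundary cost by $(N(t_k)/\nu_{m,k})^2 C_0^{-\ell} \lec C_0^{-\ell+2(m+2)}$ and summing in $\ell$ yields the $C_0^m/K$ term. The \emph{leveled} contributions (jumps that lie inside a region filled by the procedure) are distributed across the $\le m+1$ leveling rounds by a double-counting argument, yielding the $1/(m+1)$ factor.

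The main obstacle is the combinatorial bookkeeping in this last step, which must ensure that each characteristic interval $J_k$ is charged with $O(1)$ multiplicity across \emph{all} levels $\ell$ and rounds of the procedure simultaneously; otherwise the factor $(N/\nu_m)^2 \lec C_0^{2(m+2)}$ at the boundary would swamp the gains. As in~\cite{MR3406535}, this is achieved by exploiting the nested-tree structure of $\tilde n_m$-valleys at different levels: each level-$\ell$ valley either strictly contains a level-$(\ell+1)$ sub-valley or is an atom, and the mass $\sum_i \nu_{m,i}$ is distributed through the tree so that each atom accounts for $O(1)$ boundary crossings in total across the entire leveling procedure.
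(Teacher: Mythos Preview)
Your approach is close in spirit to the paper's (both adapt Dodson's peak-leveling), but you are overcomplicating the bookkeeping and your sketch contains a concrete error. The paper's construction is cleaner: set $N_0:=N$ and, for each $m\ge 0$, obtain $N_{m+1}$ by multiplying $N_m$ by $C_0^{-1}$ on every \emph{peak} (a maximal block where $N_m$ is constant and strictly exceeds both neighbours). Two easy facts follow: (i) every peak of $N_m$ has length at least $2m+1$; (ii) whenever $N_m(t_k)\neq N(t_k)$ one has $N_m(t_k)=N_m(t_{k+1})$. Property~(ii) is the decisive observation you are missing: at every \emph{jump} of $N_m$ the ratio $N(t_k)/N_m(t_k)$ is exactly $1$, not merely $\le C_0^{m+2}$. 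Hence
\eqq{\sum_k\frac{N(t_k)^2}{N_m(t_k)^2}|N_m(t_k)-N_m(t_{k+1})|=\sum_k|N_m(t_k)-N_m(t_{k+1})|\le 2+2\sum_{\text{peaks of }N_m}N_m(\text{peak}),}
and (i) gives the $1/(m+1)$ factor because each peak of height $h$ and length $\ge 2m+1$ contributes $\ge(2m+1)h$ to $\sum_kN_m(t_k)$. No tree structure or double counting is needed; the passage to the smooth $\ti N_m$ costs only a fixed factor $C_0^3$.

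Your level-by-level construction may be salvageable, but the deep-valley step is wrong as written: a level-$\ell$ valley of $n$ consists of indices with $n_j<\ell$, so it cannot contain any $j$ with $n_j\ge\ell+m+1$, and the mass bound $\gtrsim C_0^{-\ell-m}$ you derive from that claim is unsupported (every level-$\ell$ valley trivially carries mass $\gtrsim C_0^{-\ell}$, which gives only $\lec C_0^{\ell}\sum_i\nu_{m,i}$ valleys and, after summing in $\ell$, no convergent bound). More fundamentally, the ``double-counting argument'' for the leveled contributions is only asserted; since that is precisely where the factor $(N/\nu_m)^2\lec C_0^{2(m+2)}$ must be absorbed, it is the heart of your argument and cannot be left as a sketch. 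The simplest fix is to switch to the paper's iterative peak-lowering so that this factor never appears.
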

\begin{proof}
Recall that $N(t)$ is a step function associated with the partition $\shugo{J_k}_{k=0}^\I$ of $[0,\I )$ and satisfies \eqref{condition-Nxi}.
Define the $C_0^{\Bo{Z}_{\le 0}}$-valued step functions $N_m(t)$ ($m=0,1,2,\dots$) associated with $\shugo{J_k}$ inductively in $m$ as follows: 
Let $N_0(t):= N(t)$.
Then, for given $N_m(t)$, define $N_{m+1}(t)$ by
\eqq{N_{m+1}(t_k)&:=\begin{cases} C_0^{-1}N_m(t_k) &\text{if $J_k\subset$ a \emph{peak} in $N_m$,}\\ N_m(t_k) &\text{otherwise,}\end{cases}\qquad \text{for $k=0,1,2,\dots$,}\\
N_{m+1}(t)&:=\sum _{k=0}^\I N_{m+1}(t_k)\chf{J_k}(t).}
where for a positive integer $l$ we call a union of consecutive $l$ characteristic intervals $[t_{k_0},t_{k_0+l})$ a peak of length $l$ in $N_m$ if $N_m(t)\equiv N_m(t_{k_0})$ on $[t_{k_0},t_{k_0+l})$ and $N_m(t_{k_0-1})=N_m(t_{k_0+l})=C_0^{-1}N_m(t_{k_0})$.
It is easily verified that $C_0^{-m}N(t)\le N_m(t)\le N(t)$, $N_m(t_{k+1})/N_m(t_k)\in \shugo{C_0^{-1},\,1,\,C_0}$ for any $m,k\ge 0$.
Moreover, we claim that: (i) Every peaks in $N_m$ has length $\ge 2m+1$; (ii) If $N_m(t_k)\neq N(t_k)$, then $N_m(t_k)=N_m(t_{k+1})$.
(i) follows from the definition.
To verify (ii), assume $N_m(t_k)\neq N(t_k)$ for some $m,k\ge 0$.
Then, there exists $0<m'\le m$ such that $N_{m'}(t_k)=C_0^{-1}N_{m'-1}(t_k)$, \mbox{i.e.} $J_k$ is included in a peak of $N_{m'-1}$, which implies $N_{m'}(t_k)=N_{m'}(t_{k+1})$.
We see that this coincidence of the value of $N_{m'}$ at consecutive points $t_k$, $t_{k+1}$ persists throughout the construction procedure of $\shugo{N_m}$.

Let $k_*$ be a positive integer, and let $\shugo{[t_{k^m_j},t_{k^m_j+l^m_j})}_{j=1}^{j_*(m,k_*)}$ be the set of all peaks of $N_m$ included in the interval $[0,t_{k_*})$.
($j_*(m,k_*)$ is the number of peaks in $N_m$ before $t_{k_*}$, and for the $j$-th peak in $N_m$ we denote by $k^m_j$ and $l^m_j$ the index of the beginning characteristic interval and the length, respectively.
Note that this set is possibly empty.)
Then, the total variation of $N_m$ on $[0,t_{k_*})$ is estimated as
\eqq{\sum _{k=0}^{k_*-1}|N_m(t_k)-N_m(t_{k+1})|\le N_m(0)+N_m(t_{k_*})+2\sum _{j=1}^{j_*(m,k_*)}N(t_{k_j^m}).}
Hence, the properties (i), (ii) imply that
\eqq{&\sum _{k=0}^{k_*}\frac{N(t_k)^2}{N_m(t_k)^2}|N_m(t_k)-N_m(t_{k+1})|=\sum _{k=0}^{k_*}|N_m(t_k)-N_m(t_{k+1})|\\
&\le 2+2\sum _{j=1}^{j_*(m,k_*)}N(t_{k_j^m})\le 2+\frac{2}{2m+1}\sum _{k=0}^{k_*}N_m(t_k)\lec 1+\frac{1}{m+1}\int _0^{t_{k_*}} N_m(t)N(t)^2\,dt.}

Finally, we can construct a $C^1$ function $\ti{N}_m(t)$ on $[0,\I )$ satisfying \eqref{Ntilde} such that $C_0^{-2}N_m(t)\le \ti{N}_m(t)\le N_m(t)$ and $\ti{N}_m(t_k)=C_0^{-1}N_m(t_k)$ ($k\ge 0$).
(For instance, it suffices to connect the points $\shugo{(t_k,\,N_m(t_k))}_{k\ge 0}$ on the graph smoothly and multiply it by $C_0^{-1}$.)
This $\ti{N}_m$ also satisfies
\eqq{&\sum _{k;\, J_k\subset [0, T)}\frac{N(t_k)^2}{\ti{N}_m(t_k)^2}|\ti{N}_m(t_k)-\ti{N}_m(t_{k+1})|\le C_0^3\sum _{k;\, J_k\subset [0, T)}\frac{N(t_k)^2}{N_m(t_k)^2}|N_m(t_k)-N_m(t_{k+1})|\\
&\lec \frac{1}{K}\int _0^TN(t)^3\,dt+\frac{1}{m+1}\int _0^T N_m(t)N(t)^2\,dt\le C_0^2(\frac{C_0^m}{K}+\frac{1}{m+1})\int _0^T \ti{N}_m(t)N(t)^2\,dt}
for any $T>0$, as desired.
\end{proof}

From Lemma~\ref{lem:im-Ndash} we deduce the following results.
\begin{cor}\label{cor:im-Ndash}
For any $\e >0$ there exists $C(\e )>0$ and a $C^1$ function $\ti{N}_\e :[0,\I )\to \R _+$ satisfying \eqref{Ntilde} such that
\eqq{\sup _{t\ge 0}\frac{N(t)}{\ti{N}_{\e}(t)}\le C(\e ),\qquad \int _0^T\frac{\ti{N}_\e '(t)^2}{\ti{N}_\e (t)^3}\,dt\le \Big( \e +\frac{C(\e )}{K}\Big) \int _0^T \ti{N}_\e (t)N(t)^2\,dt}
for any $T>0$.
\end{cor}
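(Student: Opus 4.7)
The plan is to combine Lemma~\ref{lem:im-Ndash} with the pointwise bound on $|\ti{N}'|$ built into \eqref{Ntilde} in order to reduce the continuous integral $\int_0^T \ti{N}'(t)^2/\ti{N}(t)^3\,dt$ to the discrete variational sum controlled in Lemma~\ref{lem:im-Ndash}. More precisely, on each characteristic interval $J_k$ the function $\ti{N}$ is monotone, and \eqref{Ntilde} gives $|\ti{N}'(t)|\lec \ti{N}(t_k)/|J_k|$. Pulling the sup of $|\ti{N}'|/\ti{N}^3$ outside and using monotonicity to evaluate $\int_{J_k}|\ti{N}'(t)|\,dt=|\ti{N}(t_k)-\ti{N}(t_{k+1})|$, we obtain
\eqq{\int_{J_k}\frac{\ti{N}'(t)^2}{\ti{N}(t)^3}\,dt\lec \frac{1}{\ti{N}(t_k)^2|J_k|}\,|\ti{N}(t_k)-\ti{N}(t_{k+1})|.}
Corollary~\ref{lem:chinterval} then yields $|J_k|\sim N(t_k)^{-2}$, so after summing over $k$ with $J_k\subset [0,T)$ we arrive at
\eqq{\int_0^T\frac{\ti{N}'(t)^2}{\ti{N}(t)^3}\,dt\lec \sum_{k;\,J_k\subset [0,T)}\frac{N(t_k)^2}{\ti{N}(t_k)^2}\,|\ti{N}(t_k)-\ti{N}(t_{k+1})|}
for any $\ti{N}$ satisfying \eqref{Ntilde}. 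This is precisely the expression that Lemma~\ref{lem:im-Ndash} bounds.

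Next, given $\e>0$, I would apply the previous step to the function $\ti{N}_m$ provided by Lemma~\ref{lem:im-Ndash} and combine it with the discrete estimate from that lemma to obtain
\eqq{\int_0^T\frac{\ti{N}_m'(t)^2}{\ti{N}_m(t)^3}\,dt\le C'\Big(\frac{C_0^m}{K}+\frac{1}{m+1}\Big)\int_0^T \ti{N}_m(t)N(t)^2\,dt}
for an absolute constant $C'>0$ (depending only on $u,v$ through $C_0$). Then choose $m=m(\e)$ large enough so that $C'/(m+1)\le \e$, set $\ti{N}_\e:=\ti{N}_m$, and define $C(\e):=\max\{C_0^{m(\e)+2},\,C'C_0^{m(\e)}\}$. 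The first bound in \eqref{Ntilde} for $\ti{N}_m$ supplies $\sup_{t\ge 0}N(t)/\ti{N}_\e(t)\le C_0^{m(\e)+2}\le C(\e)$, while the inequality above reads
\eqq{\int_0^T\frac{\ti{N}_\e'(t)^2}{\ti{N}_\e(t)^3}\,dt\le \Big(\e+\frac{C(\e)}{K}\Big)\int_0^T \ti{N}_\e(t)N(t)^2\,dt,}
as required.

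The only real content is the reduction to the discrete sum in the first paragraph; once it is established, the corollary follows by the parameter-choice mechanism already packaged into Lemma~\ref{lem:im-Ndash}. The mild bookkeeping obstacle is to make sure the implicit constant in the estimate $|J_k|\sim N(t_k)^{-2}$ (which depends on $(u,v)$ via Corollary~\ref{lem:chinterval}) and the implicit constant in Lemma~\ref{lem:im-Ndash} are absorbed into a single $C'$ before the choice of $m(\e)$ is made; this is harmless because both constants depend only on $(u,v)$ and $\kappa$. Note that the smoothness of $\ti{N}_\e$ and membership in class \eqref{Ntilde} are inherited directly from the construction in Lemma~\ref{lem:im-Ndash} and require no extra work.
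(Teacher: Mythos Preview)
Your proof is correct and follows exactly the approach the paper has in mind. The reduction of $\int_0^T \ti{N}'(t)^2/\ti{N}(t)^3\,dt$ to the discrete sum $\sum_k \frac{N(t_k)^2}{\ti{N}(t_k)^2}|\ti{N}(t_k)-\ti{N}(t_{k+1})|$ via \eqref{Ntilde} and Corollary~\ref{lem:chinterval} is written out explicitly in the paper in the paragraph preceding Lemma~\ref{lem:im-Ndash}, and the corollary is then stated as an immediate consequence; your choice of $m=m(\e)$ and $C(\e)$ is the obvious way to package it.
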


Now we choose $\e =\frac{1}{4C^2}$ with the constants $C$ given in \eqref{im2-rad}, and fix the function $\ti{N}$ to be $\ti{N}_\e$ given in Corollary~\ref{cor:im-Ndash}.
Then, whenever $K$ is sufficiently large and $C(\e )\le \e K$, we have
\eq{im3-rad}{\int _0^T\Big[ \eqref{Ndash-rad}+\cdots +\eqref{L2L2-rad}\Big] \,dt\ge \frac{K}{2C\cdot C(\e )}.}

All we have to do is the estimate for the error term \eqref{gomiP-rad} arising from frequency localization by $P_{\le K}$.
To conclude the proof of Theorem~\ref{thm:im-rad}, we claim that
\eq{im4-rad}{\int _0^T\eqref{gomiP-rad}\,dt =o(K)\qquad (K\to \I ).}

The long-time Strichartz estimate (Theorem~\ref{thm:ls}) now plays an essential role, since each of $F$, $G$ contains at least one high-frequency function $P_{>K/4}(u,v)$; for instance,
\eqq{F:=&\,P_{\le K}(\bbar{u}v)-\bbar{P_{\le K}u}P_{\le K}v=\bbar{u}v-\bbar{P_{\le K}u}P_{\le K}v-P_{>K}(\bbar{u}v)\\
=&\,\bbar{P_{>K}u}v+\bbar{P_{\le K}u}P_{>K}v-P_{>K}(\bbar{P_{>K/4}u}v)-P_{>K}(\bbar{P_{\le K/4}u}P_{>K/4}v).}
First, we see that
\eq{highfreq-rad}{\norm{P_{>K/4}(u,v)}{L^2L^4([0,T)\times \R^4)}=o(1),\qquad K\to \I ,}
which is a direct consequence of Theorem~\ref{thm:ls}.%
\footnote{This is the only point where we exploit the decaying factor $\rho (N)$ in the long-time Strichartz estimate.}
We prepare one more estimate:
\eq{lowfreq-rad}{\norm{|\nabla |^sP_{\lec K}(u,v)}{L^2L^4([0,T)\times \R^4)}\lec K^s\qquad \text{for $s>1/2$},}
which is also deduced from Theorem~\ref{thm:ls} as
\eqq{\norm{|\nabla |^sP_{\lec K}(u,v)}{L^2L^4}&\lec \sum _{N<K}N^s\norm{P_{>N}(u,v)}{L^2L^4}\lec \sum _{N<K}N^s\Big( \frac{K}{N}\Big) ^{1/2}\lec K^s.}

To verify \eqref{im4-rad}, we begin with observing that
\eqq{\int _0^T\eqref{gomiP-rad}\,dt &=2\int _0^T\int \Th \Big( \frac{|A|}{L}\Big) A_j\Re \Big[ \bbar{F}\p _jU+\frac{1}{2}\bbar{G}\p _jV\Big] \,dx\,dt\\
&\hx +\int _0^T\ti{N}(t)\int (\theta +3\Theta ) \Big( \frac{|A|}{L}\Big) \Re \Big[ \bbar{F}U+\frac{1}{2}\bbar{G}V\Big] \,dx\,dt.}
Noticing $\Theta (r)\le 2/r$ and using \eqref{highfreq-rad}--\eqref{lowfreq-rad}, we estimate the first integral by
\eqq{&4L\norm{(F,G)}{L^2L^{4/3}}\norm{\nabla (U,V)}{L^2L^4}\\
&\lec \norm{P_{>K/4}(u,v)}{L^2L^4}\norm{(u,v)}{L^\I L^2}\norm{\nabla P_{\le K}(u,v)}{L^2L^4}=o(K).}
The second integral is estimated by the Sobolev embedding and \eqref{highfreq-rad}--\eqref{lowfreq-rad} as
\eqq{&4\Big( \int _0^T\ti{N}(t)^3\,dt\Big) ^{1/3}\norm{(F,G)}{L^2L^{4/3}}\norm{(U,V)}{L^6L^4}\\
&\lec K^{1/3}\norm{P_{>K/4}(u,v)}{L^2L^4}\norm{(u,v)}{L^\I L^2}\norm{P_{\le K}(u,v)}{L^\I L^3}^{2/3}\norm{P_{\le K}(u,v)}{L^2L^{12}}^{1/3}\\
&\lec o(K^{1/3})\cdot \norm{|\nabla |^{2/3}P_{\le K}(u,v)}{L^\I L^2}^{2/3}\norm{|\nabla |^{2/3}P_{\le K}(u,v)}{L^2L^4}^{1/3}=o(K).}

This is the end of the proof of Theorem~\ref{thm:im-rad}.
\end{proof}

In view of Theorem~\ref{thm:im-rad}, the quasi-soliton scenario is precluded once we show
\eq{M-rad}{\sup _{t\in [0,T)}|\FR{M}(t)|=o(K),\qquad K\to \I .}
Now, by the almost periodicity of $(u,v)$ and the fact $N(t)\le 1$, we have
\eqq{&\norm{\nabla P_{\le K}(u,v)}{L^\I L^2}\lec K^{1/2}\norm{P_{\le K^{1/2}}(u,v)}{L^\I L^2}+K\norm{P_{>K^{1/2}N(t)}(u,v)}{L^\I L^2}=o(K)}
as $K\to \I$.
This and the boundedness of the weight function imply \eqref{M-rad}.

So far, the proof of Theorem~\ref{thm:main2} has been completed.


\subsection{Non-radial, mass-resonance case}

For non-radial case, we need the Galilean invariance of the system and thus restrict ourselves to the mass-resonance case $\kappa =1/2$.

To treat the situation where the spatial center function $x(t)$ varies, we follow \cite{MR3406535} and introduce the interaction-type modification of $\FR{M}(t)$ of the form
\eqq{\Sc{M}(t):=\iint _{\R^4\times \R^4}\Theta _L\big( \ti{N}(t)|x-y|\big) \ti{N}(t)(x-y)\cdot \Im \Big[ \bbar{U}\nabla U+\frac{1}{2}\bbar{V}\nabla V\Big] (t,x)\Big[ |U|^2+|V|^2\Big] (t,y)\,dx\,dy}
on some time interval $[0,T)$ which is a union of characteristic intervals.
$\ti{N}:[0,\I )\to \R _+$ is the same $C^1$ function as we used in the radial case, which satisfies \eqref{Ntilde}.
In this subsection, we use the Fourier projection operator $P_{\le C_*K}$ instead of $P_{\le K}$, and so $(U,V):=P_{\le C_*K}(u,v)$, where the constant $C_*=C_*(u,v)>0$ is given in \eqref{def:Cstar} so that
\eqq{|\xi (t)|\le 2^{-10}C_*\sum _{k;\,J_k\subset [0,t)}\int _{J_k}N(t)^3\,dt\le 2^{-10}C_*K,\qquad t\in [0,T).}

Definition of the weight function $\Theta _L$ is quite different from the radial case.
Let $L$ be a large positive number to be chosen later, 
and let $\theta :[0,\I )\to [0,1]$ be the same as before.
First, we define the smooth radial function $\vth_L :\R^4 \to [0,1]$ by
\eqq{\vth _L(x):=\th (\max \{ 0,|x|-L+2\}),\qquad x\in \R ^4.}
Note that $\vth _L$ is non-increasing in $|x|$ and satisfies $\vth _L(x)=1$ for $0\le |x|\le L-1$ and $\vth _L(x)=0$ for $|x|\ge L$.
Then, define the functions $\theta _L, \Theta _L:[0,\I )\to [0,\I )$ by 
\eqq{\theta _L(r):=\int _{\R^4}\vth _L(re_1-z)\vth _L(z)\frac{dz}{L^4},\qquad \Theta _L(r):=\frac{1}{r}\int _0^r\theta _L(s)\,ds\qquad (\Theta _L(0):=\theta _L(0)),}
where $e_1:=(1,0,0,0)\in \R ^4$.
It is worth noticing that, since $\vth _L$ is radially symmetric, $\theta _L$ satisfies 
\eq{th_integral}{\theta _L (|x-y|)=\int _{\R^4}\vth _L(x-z)\vth _L(y-z)\frac{dz}{L^4},\qquad x,y\in \R^4.}
This helps us separate two spatial variables in the analysis of the interaction-type quantity $\Sc{M}(t)$.

It follows from the definition that $\theta _L$ and $\Theta _L$ are non-negative, bounded uniformly in $L$, smooth (on $(0,\I )$, for $\Theta _L$) functions such that $\theta _L\equiv 0$ outside $[0,2L]$ and $\Theta _L(r) \lec \min \shugo{1,\,L/r}$.
Moreover, we can show that:
\begin{lem}\label{bound_Thth}
The following holds.
\begin{enumerate}
\item $\theta _L$ is non-increasing. In particular, $\Theta _L\ge \theta _L\ge 0$.
\item $|\theta _L'(r)|\lec \min \shugo{1/L,\,r/L}$, $|\theta _L''(r)|\lec 1/L$.
\item $\Theta _L$ is non-increasing and $0\le -\Theta _L'(r)=\big( \Theta _L(r)-\theta _L(r)\big) /r\lec \min \shugo{L/r^2,\,1/L,\,r/L}$.
\end{enumerate}
\end{lem}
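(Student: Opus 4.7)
The plan is to exploit the convolution structure exposed by \eqref{th_integral}: setting $\Phi_L(x):=L^{-4}(\vth_L*\vth_L)(x)$, we have $\theta_L(r)=\Phi_L(re_1)$, so $\Phi_L$ is radial and $\theta_L$ is its radial profile. Then all three assertions reduce to pointwise analysis of $\Phi_L$ and its derivatives, plus algebraic manipulation of the averaging operation $\theta_L\mapsto \Theta_L$.

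For (i), I would invoke the classical fact that the convolution of two non-negative, radial, radially non-increasing functions on $\R^4$ is itself radial and radially non-increasing. (This is a consequence of the Riesz rearrangement inequality, or can be verified directly via the layer-cake representation $\vth _L=\int _0^\I \chf{\shugo{\vth _L>s}}\,ds$, since the super-level sets of $\vth _L$ are concentric balls.) Applied to $\vth _L*\vth _L$, this gives both non-negativity and monotonicity of $\theta _L$.

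For (ii), I would differentiate under the integral: $\theta _L'(r)=L^{-4}\int (\p _1\vth _L)(re_1-z)\vth _L(z)\,dz$. Since $\p _1\vth _L$ is supported in the annulus $\{L-1\le |x|\le L\}$ (of volume $\lec L^3$) and bounded by $O(1)$, while $\vth _L$ is bounded by $1$, a crude size estimate yields $|\theta _L'(r)|\lec 1/L$. A second differentiation produces $\theta _L''(r)=L^{-4}\int (\p _1^2\vth _L)(re_1-z)\vth _L(z)\,dz$, and $\p _1^2\vth _L$ has the same support and satisfies $|\p _1^2\vth _L|\lec 1$, giving $|\theta _L''(r)|\lec 1/L$. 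The remaining bound $|\theta _L'(r)|\lec r/L$ then follows by noting that $\Phi _L$ is smooth and radial, so its radial profile satisfies $\theta _L'(0)=0$, and integrating $\theta _L''$ from $0$ to $r$.

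For (iii), the identity $r\Theta _L(r)=\int _0^r\theta _L(s)\,ds$ yields $\Theta _L'(r)=(\theta _L(r)-\Theta _L(r))/r$ after differentiating, and since $\theta _L$ is non-increasing by (i), the average $\Theta _L(r)$ dominates $\theta _L(r)$, giving monotonicity of $\Theta _L$ together with the claimed identity for $-\Theta _L'$. The three upper bounds then come from three different estimates of $\Theta _L(r)-\theta _L(r)=r^{-1}\int _0^r[\theta _L(s)-\theta _L(r)]\,ds$: using $\Theta _L\lec L/r$ and $\theta _L\ge 0$ gives the $L/r^2$ bound, while writing $\theta _L(s)-\theta _L(r)=-\int _s^r\theta _L'(\tau )\,d\tau$ and inserting the bounds $|\theta _L'|\lec 1/L$ or $|\theta _L'(\tau )|\lec \tau /L$ from (ii) yields the $1/L$ and $r/L$ bounds, respectively.

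No step looks genuinely difficult; the one place requiring slight care is establishing the $r/L$ bound on $|\theta _L'|$, since it is the only estimate that uses more than trivial size information and depends on the evenness of $\Phi _L$ at the origin. Everything else is routine support counting and averaging.
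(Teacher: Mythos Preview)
Your proposal is correct and matches the paper's proof almost exactly in parts (ii) and (iii): differentiation under the integral plus the $O(L^3)$ support of $\nabla \vth _L$ for the $1/L$ bounds, the vanishing $\theta _L'(0)=0$ (the paper verifies this by noting the integrand $(\p _{x_1}\vth _L)(-z)\vth _L(z)$ is odd, you by radial smoothness of $\Phi _L$) plus integration of $\theta _L''$ for the $r/L$ bound, and the same averaging identity and mean-value-type estimates for $\Theta _L'$.

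The one genuine difference is in (i). The paper gives a direct computational argument, which it concedes is ``rather long'': it splits the $z_1$-integral in $\theta _L'(r)=L^{-4}\int (\p _{x_1}\vth _L)(re_1-z)\vth _L(z)\,dz$ into $\zeta >0$ and $\zeta <0$ halves and uses the elementary inequality $\varrho (t_0+t)\le \varrho (t_0-t)$ for even non-increasing $\varrho$ to show the two halves cancel with a definite sign. Your route via Riesz rearrangement (or the layer-cake reduction to convolutions of indicator functions of balls) is shorter and more conceptual, and it buys you the monotonicity of $\vth _L*\vth _L$ in one stroke without any coordinate-by-coordinate manipulation. Either argument is perfectly adequate here; the paper's has the minor advantage of being entirely self-contained, while yours identifies the structural reason the result holds.
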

\begin{proof}
(i) Although this is intuitively obvious, the proof is rather long.
We see that
\eqq{
\theta _L'(r)&=\int (\p _{x_1}\vth _L)(re_1-z)\vth _L(z)\frac{dz}{L^4}=\int _{\mathbb{R}^{3}}\int _{\mathbb{R}} (\partial _{x_1}\vth _L) (\zeta , -z') \vth _L(r -\zeta ,z') \,d\zeta \frac{dz'}{L^4},\\
&=\int _{\mathbb{R}^{3}}\left( \int _0^\infty (\partial _{x_1}\vth _L) (\zeta , -z') \vth _L(r-\zeta ,z') \,d\zeta +\int _0^{\infty}(\partial _{x_1}\vth _L) (-\eta , -z') \vth _L(r+\eta ,z') \,d\eta \right) \frac{dz'}{L^4}.}
Observe that if $\varrho$ is an even function on $\mathbb{R}$ which is non-increasing in $[0,\infty )$, then it holds that $\varrho (t_0+t)\le \varrho (t_0-t)$ for any $t_0,t\ge 0$.%
\footnote{If $0\le t\le t_0$, then $0\le t_0-t\le t_0+t$, which implies $\varrho (t_0+t)\le \varrho (t_0-t)$.
If $t>t_0$, then $0<t-t_0\le t_0+t$, so we have $\varrho (t_0+t)\le \varrho (t-t_0)=\varrho (t_0-t)$.}
We apply this to the second integral of the above with $\varrho =\vth _L(\cdot ,z')$, $t_0=r$ and $t=\eta$.
Note that $(\partial _{x_1}\vth _L) (-\eta , -z')\ge 0$ for any $\eta >0$ and $z'\in \mathbb{R}^{3}$, because $\vth _L$ is radially symmetric and non-increasing in the radial direction.
Then, we obtain
\eqq{\theta _L'(r)&\le \int _{\mathbb{R}^{3}}\left( \int _0^\infty (\p _{x_1}\vth _L) (\zeta , -z') \vth _L(r-\zeta ,z') \,d\zeta +\int _0^\infty (\partial _{x_1}\vth _L) (-\eta , -z') \vth _L(r-\eta ,z') \,d\eta \right) \frac{dz'}{L^4}.}
Since $(\partial _{x_1}\vth _L) (\cdot ,-z')$ is an odd function, the right hand side is equal to zero, and thus $\theta _L$ is non-increasing.

(ii) We observe that
\eqq{\theta _L'(r)=\int (\p _{x_1}\vth )((r,0,0,0)-z)\vth (z)\frac{dz}{L^4},\qquad \theta _L''(r)=\int (\p _{x_1}^2\vth )((r,0,0,0)-z)\vth (z)\frac{dz}{L^4}}
are of $O(L^{-1})$ uniformly in $r$, since $|\supp{\p _{x_1}\vth}|\lec L^3$.
Integrating the second quantity with respect to $r$ we obtain $|\theta _L'(r)|\lec r/L$, since $\theta _L'(0)=\int (\p _{x_1}\vth )(-z)\vth (z)\frac{dz}{L^4}=0$.

(iii) The identity $-\Theta _L'(r)=\big( \Theta _L(r)-\theta _L(r)\big) /r$ is straightforward.
Since $\theta _L$ is non-increasing, we have 
\eqq{-\Theta _L'(r)=\frac{\Theta _L(r)-\theta _L(r)}{r}=\frac{1}{r^2}\int _0^r\big( \theta _L(s)-\theta _L(r)\big) \,ds\ge 0,}
which implies that $\Theta _L$ is also non-increasing.
The above identity also yields that $|\Theta _L'(r)|\lec L/r^2$.
Furthermore, by the mean value theorem and (ii),
\eqq{|\Theta _L'(r)|\le \frac{1}{r^2}\sup _{0<s<r}|\theta _L'(s)|\int _0^r(r-s) \,ds\lec \min \shugo{\frac{1}{L},\,\frac{r}{L}}.\qedhere} 
\end{proof}

We now compute $\p _t\Sc{M}(t)$.
Derivatives of the weight functions are given by
\eqq{\p _t\Big[ \Theta _L\big( \ti{N}(t)|x|\big) \ti{N}(t)x_j\Big] &=\th \big( \ti{N}(t)|x|\big) \ti{N}'(t)x_j,\\
\p _k\Big[ \Theta _L\big( \ti{N}(t)|x|\big) \ti{N}(t)x_j\Big] &=\Big[ \th _L\big( \ti{N}(t)|x|\big) \de _{jk}+(\Theta _L-\theta _L)\big( \ti{N}(t)|x|\big) \Big( \de _{jk}-\frac{x_jx_k}{|x|^2}\Big) \Big] \ti{N}(t),\\
\p _j\Big[ \Theta _L\big( \ti{N}(t)|x|\big) \ti{N}(t)x_j\Big] &=\Big[ 4\th _L\big( \ti{N}(t)|x|\big) +3(\Th _L-\th _L)\big( \ti{N}(t)|x|\big) \Big] \ti{N}(t),
}
and the time derivative of mass density is
\eqs{\p _t(|U|^2+|V|^2)=-2\p _j\Im \Big( \bbar{U}\p _jU+\frac{1}{2}\bbar{V}\p _jV\Big) -2\big\{ (U,V),(F,G)\big\} _m,\\
\big\{ (U,V),(F,G)\big\} _m:=\, \Im (\bbar{F}U+\bbar{G}V).}
Then, introducing the notation
\eqs{\FR{m}[U,V]:=|U|^2+|V|^2,\qquad \FR{p}[U,V]:=\Im \Big[ \bbar{U}\nabla U+\frac{1}{2}\bbar{V}\nabla V\Big] ,\\
\FR{e}_2[U,V]:=|\nabla U|^2+\frac{1}{4}|\nabla V|^2,\qquad \FR{e}_3[U,V]:=\Re \big[ U^2\bbar{V}\big] ,}
we write down $\p _t\Sc{M}(t)$ as
\begin{align}
\p _t&\Sc{M}(t)\notag \\
=\,&\ti{N}'(t)\iint \th _L(|A|)(x-y)\cdot \FR{p}[U,V](t,x) \, \FR{m}[U,V](t,y)\,dx\,dy \label{Ndash}\\
&+2\ti{N}(t)\iint \th _L(|A|) \, \FR{e}_2[U,V](t,x) \, \FR{m}[U,V](t,y)\,dx\,dy \label{positiveH1} \\
&+2\ti{N}(t)\iint (\Th _L-\th _L)(|A|) \, B_{jk} \, \Re \Big[ \p _jU\bbar{\p _kU}+\frac{1}{4}\p _jV\bbar{\p _kV}\Big] (t,x) \, \FR{m}[U,V](t,y)\,dx\,dy \label{positiveAD} \\
&+2\ti{N}(t)\iint \th _L(|A|) \, \FR{e}_3[U,V](t,x) \, \FR{m}[U,V](t,y)\,dx\,dy \label{negativeL3} \\
&+\frac{3}{2}\ti{N}(t)\iint (\Th _L-\th _L)(|A|) \, \FR{e}_3[U,V](t,x) \, \FR{m}[U,V](t,y)\,dx\,dy \label{L3tooku} \\
&-\frac{1}{2}\ti{N}(t)\iint \Delta _x\big[ (\th _L+3\Th _L)(|A|)\big] \, \Big[ |U|^2+\frac{1}{4}|V|^2\Big] (t,x) \, \FR{m}[U,V](t,y)\,dx\,dy \label{L2L2} \\
&+\ti{N}(t)\iint \Th _L(|A|)(x-y)\cdot \big\{ (U,V),(F,G)\big\} _{p}(t,x) \, \FR{m}[U,V](t,y)\,dx\,dy\label{gomiP} \\
&-2\ti{N}(t)\iint \th _L(|A|)\, \FR{p}[U,V](t,x)\cdot \FR{p}[U,V](t,y)\,dx\,dy \label{Galilean} \\
&-2\ti{N}(t)\iint (\Th _L-\th _L)(|A|) \, B_{jk} \, \FR{p}_j[U,V](t,x) \, \FR{p}_k[U,V](t,y)\,dx\,dy \label{negativeAD} \\
&-2\ti{N}(t)\iint \Th _L(|A|)(x-y)\cdot \FR{p}[U,V](t,x) \, \big\{ (U,V),(F,G)\big\} _m (t,y)\,dx\,dy,\label{gomiM}
\end{align}
where in this subsection we define $A:=\ti{N}(t)(x-y)$, $B_{jk}:=\de _{jk}-\frac{(x-y)_j(x-y)_k}{|x-y|^2}$.

Our goal is to establish the following:
\begin{thm}
\label{thm:im}
There exists a large constant $L>0$ and a $C^1$ function $\ti{N}:[0,\I )\to \R _+$ satisfying \eqref{Ntilde} such that $\Sc{M}(t)$ defined as above satisfies  
\eqq{\int _0^T\frac{d\Sc{M}}{dt}(t)\,dt \gec K}
for any sufficiently large $T$.
\end{thm}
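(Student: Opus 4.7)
The plan is to mirror the proof of Theorem~\ref{thm:im-rad}, now handling the interaction-type (double-integral) structure and the new momentum--momentum term \eqref{Galilean} that is absent in the radial case. The main positive contribution again comes from \eqref{positiveH1}+\eqref{negativeL3}: by the factorization identity \eqref{th_integral}, these terms rewrite as an integral in $z\in\R^4$ of quantities involving the localized pair $(\ti{U}_z,\ti{V}_z):=\vth_L^{1/2}(\ti{N}(t)(\cdot -z))(U,V)$, to which the sharp Gagliardo--Nirenberg inequality (Lemma~\ref{lem:GN}~(ii)) applies since $M(\ti{U}_z,\ti{V}_z)\le M(u,v)=M_c<M(\phi,\psi)$. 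After the same $\chi$-cutoff/triangle-inequality trick as in the radial proof, integrating in $z$ and invoking almost periodicity (Lemma~\ref{lem:tightness-Str}) together with Corollary~\ref{lem:chinterval} produces a main-term bound $\gec \int_0^T \ti{N}(t) N(t)^2\,\mathrm{d}t$ modulo admissible errors.

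The angular terms \eqref{positiveAD}+\eqref{negativeAD} together are non-negative: $(B_{jk})$ is positive semi-definite, and factoring $B_{jk}=\sum_\alpha b_j^\alpha b_k^\alpha$ with Cauchy--Schwarz in $(x,y)$ shows that $|\eqref{negativeAD}|\le\eqref{positiveAD}$, so this pair can be discarded. The terms \eqref{Ndash}, \eqref{L3tooku}, \eqref{L2L2} are handled exactly as in the radial proof: \eqref{Ndash} by Cauchy--Schwarz combined with the construction of $\ti{N}$ via Corollary~\ref{cor:im-Ndash}; \eqref{L3tooku} and \eqref{L2L2} by the pointwise bounds on $\Theta_L-\theta_L$ and $\Delta\Theta_L$ from Lemma~\ref{bound_Thth}. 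The frequency-cutoff remainders \eqref{gomiP}, \eqref{gomiM} involving $F,G$ are each $o(K)$ through the high--low frequency splitting used in Section~5, combined with the long-time Strichartz estimate (Theorem~\ref{thm:ls}) and its decay factor $\rho(N)\to 0$; the extra mass factor $\FR{m}[U,V](t,y)$ coming from the interaction integral costs only an additional $\norm{(u,v)}{L^\I L^2}^2=O(1)$.

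The essentially new obstacle is the non-positive term \eqref{Galilean}, which by \eqref{th_integral} equals
\[
-\frac{2\ti{N}(t)}{L^4}\int_{\R^4}\Big|\int_{\R^4} \vth_L(\ti{N}(t)(x-z))\,\FR{p}[U,V](t,x)\,\mathrm{d}x\Big|^2\mathrm{d}z \le 0,
\]
and hence must be shown to be $o(K)$ after time integration. This is the only step where Galilean invariance (hence $\kappa=1/2$) is essential. First apply a single Galilean boost at $t=0$ so that the conserved total momentum $P(u,v):=\int\FR{p}[u,v]\,\mathrm{d}x$ vanishes. Then split $\FR{p}[U,V]=\xi(t)\FR{m}[U,V]+\bigl(\FR{p}[U,V]-\xi(t)\FR{m}[U,V]\bigr)$: the fluctuation piece is bounded pointwise by $(|\nabla U-i\xi(t)U|+|\nabla V-2i\xi(t)V|)\,|(U,V)|$, hence in $L^2L^{4/3}$ by $\tnorm{P_{|\xi-\xi(t)|>cC_*K}(u,v)}{L^2L^4}$, which is small by Theorem~\ref{thm:ls}; the drift piece contributes a quantity proportional to $|\xi(t)|^2 |P(U,V)|^2$ after the convolution in $z$, and $P(U,V)=P(u,v)-P(P_{>C_*K}(u,v))=-P(P_{>C_*K}(u,v))=o(1)$ again by Theorem~\ref{thm:ls}. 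Combined with $|\xi(t)|\lec C_*K$ and $\int\ti{N}(t)\,\mathrm{d}t\lec 1$, the total contribution is $o(K)$.

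Finally, fix $L\gg 1$ and choose $\ti{N}$ from Corollary~\ref{cor:im-Ndash} with $\varepsilon$ small relative to the implicit constants so that $\int_0^T \ti{N}'(t)^2/\ti{N}(t)^3\,\mathrm{d}t \ll \int_0^T \ti{N}(t) N(t)^2\,\mathrm{d}t$. For sufficiently large $K$ the positive main term dominates all error terms, yielding $\int_0^T \p_t\Sc{M}(t)\,\mathrm{d}t\gec \int_0^T \ti{N}(t) N(t)^2\,\mathrm{d}t\gec K$. The main obstacle is the delicate Galilean-invariance analysis of \eqref{Galilean}, together with careful bookkeeping of the double-integral error terms under the frequency cutoff $P_{\le C_*K}$ so that the long-time Strichartz decay $\rho(N)$ can be effectively invoked everywhere.
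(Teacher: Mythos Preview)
The central gap is your treatment of \eqref{Galilean}. You propose to show it is $o(K)$ as a standalone error term, but this cannot work: \eqref{Galilean} is generically of the \emph{same order} as the positive term \eqref{positiveH1}, and the two can nearly cancel. For an almost-periodic solution concentrated near frequency $\xi(t)$ one has $\FR{p}[U,V]\approx \xi(t)\,\FR{m}[U,V]$ and $\FR{e}_2[U,V]\approx |\xi(t)|^2\FR{m}[U,V]$ up to corrections of size $N(t)$, so both \eqref{positiveH1} and $-\eqref{Galilean}$ scale like $\ti{N}(t)|\xi(t)|^2M_c^2$. Fixing the conserved global momentum to zero only forces $|\xi(t)|\lec N(t)$; then both terms are $\sim \ti{N}(t)N(t)^2$, exactly the scale of the main contribution, and neither is $o(K)$ after time integration. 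Your concrete estimates are also incorrect: the ``fluctuation'' $\FR{p}[U,V]-\xi(t)\FR{m}[U,V]$ equals $\FR{p}[U^*,V^*]$ with $(U^*,V^*)=(e^{-ix\cdot\xi(t)}U,e^{-2ix\cdot\xi(t)}V)$, and $\nabla U^*$ carries \emph{all} frequencies up to $\sim C_*K$, so $\norm{\nabla U^*}{L^2L^4}\sim K$ by \eqref{lowfreq}, not $o(1)$; and the ``drift piece'' produces $|\xi(t)|^2$ times a \emph{localized} mass integral, not $|P(U,V)|^2$, so it does not become small when $P(u,v)=0$.

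The paper's resolution is to treat \eqref{positiveH1}+\eqref{negativeL3}+\eqref{Galilean} \emph{together}: via \eqref{th_integral} their sum equals $2\ti{N}(t)\int \Sc{E}[U,V](t,z)\,dz/L^4$, and the crucial observation (precisely where $\kappa=1/2$ enters) is that $\Sc{E}[U,V](t,z)$ is invariant under $(U,V)\mapsto (e^{-ix\cdot\xi_0}U,\,e^{-2ix\cdot\xi_0}V)$ for any $\xi_0\in\R^4$. Choosing the $(t,z)$-dependent $\xi_0(t,z)$ of \eqref{def_xizero} annihilates the momentum term pointwise in $z$, leaving $\big(\int\vth_L(\FR{e}_2+\FR{e}_3)[U_*,V_*]\big)\big(\int\vth_L\FR{m}\big)$, to which the sharp Gagliardo--Nirenberg inequality then applies as in Lemma~\ref{lem:im-positive}. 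The same gauge is reused for \eqref{Ndash}: after replacing $\FR{p}[U,V]$ by $\FR{p}[U_*,V_*]+\xi_0\FR{m}$, the $\xi_0\FR{m}$ piece drops out by the $(x,y)$-antisymmetry of the weight, and only then does the Cauchy--Schwarz/Corollary~\ref{cor:im-Ndash} argument go through. Your ``exactly as in the radial proof'' handling of \eqref{Ndash} misses this and would again pick up an uncontrolled $\xi(t)$-contribution.
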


\begin{proof}
The main part of $\p _t\Sc{M}(t)$ will be \eqref{positiveH1}+\eqref{negativeL3}+\eqref{Galilean}, which corresponds to \eqref{positiveH1-rad}+\eqref{negativeL3-rad} in the radial case.
The new term \eqref{Galilean} can be concealed by an appropriate gauge transformation.
To see this, we write $\th _L(|A|)$ in the integral form \eqref{th_integral}, then
\eqq{&\eqref{positiveH1}+\eqref{negativeL3}+\eqref{Galilean}\\
&=2\ti{N}(t)\int _{\R^4}\bigg[ \iint \vth _L(\ti{N}(t)x-z)\vth _L(\ti{N}(t)y-z)\Big[ \FR{e}_2[U,V]+\FR{e}_3[U,V]\Big] (t,x)\, \FR{m}[U,V] (t,y)\,dx\,dy \bigg] \frac{dz}{L^4}\\
&\hx -2\ti{N}(t)\int _{\R^4}\bigg[ \iint \vth _L(\ti{N}(t)x-z)\vth _L(\ti{N}(t)y-z)\, \FR{p}[U,V](t,x)\cdot \FR{p}[U,V](t,y)\,dx\,dy \bigg] \frac{dz}{L^4}\\
&=2\ti{N}(t)\int _{\R^4}\Sc{E}[U,V](t,z)\frac{dz}{L^4},}
where
\eqq{\Sc{E}[U,V](t,z)&:=\Big( \int \vth _L(\ti{N}(t)x-z)\,(\FR{e}_2+\FR{e}_3)[U,V](t,x)\,dx\Big) \Big( \int \vth _L(\ti{N}(t)x-z)\,\FR{m}[U,V](t,x)\,dx\Big) \\
&\hx -\bigg| \int \vth _L(\ti{N}(t)x-z)\,\FR{p}[U,V](t,x)\,dx\bigg| ^2.}

We observe that the gauge transformation $(U,V)\mapsto (e^{-ix\cdot \xi _0}U,e^{-2ix\cdot \xi _0}V)$, $\xi _0\in \R^4$ keeps the densities $\FR{m},\FR{e}_3$ invariant and changes $\FR{p},\FR{e}_2$ as
\eq{shita-star}{\FR{p}[e^{-ix\cdot \xi _0}U,e^{-2ix\cdot \xi _0}V]&=\FR{p}[U,V]-\xi _0\FR{m}[U,V],\\
\FR{e}_2[e^{-ix\cdot \xi _0}U,e^{-2ix\cdot \xi _0}V]&=\FR{e}_2[U,V]-2\xi _0\cdot \FR{p}[U,V]+|\xi _0|^2\FR{m}[U,V].}
From this, we see that the quantity $\Sc{E}[U,V](t,z)$ (for fixed $t,z$) is invariant under such gauge transformations.%
\footnote{This is not true if $\kappa \neq 1/2$.
Hence, it seems difficult to deal with the term \eqref{Galilean} without assuming $\kappa =1/2$.}
We choose $\xi _0=\xi _0(t,z)$ for $(t,z)\in [0,T) \times \R ^4$ so that the last term in $\Sc{E}[U,V](t,z)$ will vanish; 
\eq{def_xizero}{\xi _0(t,z):=\frac{\displaystyle\int \vth _L(\ti{N}(t)x-z)\,\FR{p}[U,V](t,x)\,dx}{\displaystyle\int \vth _L(\ti{N}(t)x-z)\,\FR{m}[U,V](t,x)\,dx},}
and $\xi _0(t,z):=0$ if the denominator vanishes (in this case the numerator is also equal to zero).
We thus consider the estimate on
\eqq{&2\ti{N}(t)\int \Big( \int \vth _L(\ti{N}(t)x-z)\,(\FR{e}_2+\FR{e}_3)[U_*,V_*](t,x,z)\,dx\Big) \Big( \int \vth _L(\ti{N}(t)x-z)\,\FR{m}[U_*,V_*](t,x,z)\,dx\Big) \frac{dz}{L^4}\\
&=2\e \ti{N}(t)\int \Big( \int \vth _L(\ti{N}(t)x-z)\,\FR{e}_2[U_*,V_*] \,dx\Big) \Big( \int \vth _L(\ti{N}(t)x-z)\,\FR{m}[U_*,V_*]\,dx\Big) \frac{dz}{L^4}\\
&\hx +2\ti{N}(t)\int \Big( \int \vth _L(\ti{N}(t)x-z)\big[ (1-\e )\FR{e}_2+\FR{e}_3\big] [U_*,V_*]\,dx\Big) \Big( \int \vth _L(\ti{N}(t)x-z)\,\FR{m}[U_*,V_*]\,dx\Big) \frac{dz}{L^4},}
where $(U_*,V_*)(t,x,z):=(e^{-ix\cdot \xi _0(t,z)}U(t,x), e^{-2ix\cdot \xi _0(t,z)}V(t,x))$ and $\e >0$ is a small constant.

As we did in the radial case, we introduce another cutoff $\chi _L:\R^4 \to [0,1]$ by 
\eqq{\chi _L(x):=\theta (\max \{ 0,|x|-L+3\} ),}
which is a smooth radial function, non-increasing in $|x|$, satisfies $\chi _L(x)=1$ for $0\le |x|\le L-2$, $\chi _L(x)=0$ for $|x|\ge L-1$, so that $\vth _L\equiv 1$ on the support of $\chi _L$.
Following the previous argument, we have
\eqq{&\int \vth _L(\ti{N}(t)x-z)\big[ (1-\e )\FR{e}_2+\FR{e}_3\big] [U_*,V_*](t,x,z)\,dx\\
&\ge \int \Big[ \frac{1-\e}{1+\e}\FR{e}_2+\FR{e}_3\Big] [\ti{U}_{*},\ti{V}_{*}](t,x,z)\,dx\\
&\hx -\frac{1-\e}{\e}\ti{N}(t)^2\int \big| (\nabla \chi _L)(\ti{N}(t)x-z)\big| ^2\Big[ |U|^2+\frac{1}{4}|V|^2\Big] (t,x)\,dx\\
&\hx -\int \big[ \vth _L(1-\chi _L^3)\big] (\ti{N}(t)x-z)\big[ |U|^3+|V|^3\big] (t,x)\,dx,}
where $(\ti{U}_{*},\ti{V}_{*}):=\chi _L(\ti{N}(t)x-z)(U_*,V_*)$.
By the sharp Gagliardo-Nirenberg inequality for the function $(\ti{U}_{*},\ti{V}_{*})(t,\cdot ,z)$ and
\eqq{&\frac{M(\ti{U}_{*}(t,\cdot ,z),\ti{V}_{*}(t,\cdot ,z))}{M(\phi ,\psi )}\le \frac{M_c}{M(\phi ,\psi )}=\eta _0\in (0,1),}
we can select $\e =\e (\eta _0)>0$ so that
\eqq{&\int \Big[ \frac{1-\e}{1+\e}\FR{e}_2+\FR{e}_3\Big] [\ti{U}_{*},\ti{V}_{*}](t,x,z)\,dx\ge \e \int \chi _L^3(\ti{N}(t)x-z)\big[ |U|^3+|V|^3\big] (t,x)\,dx.}
Therefore, we have the following lower bound:
\eqq{&\eqref{positiveH1}+\eqref{negativeL3}+\eqref{Galilean}\\
&\ge c(\eta _0)\ti{N}(t)\int \Big( \int \vth _L(\ti{N}(t)x-z)\,\FR{e}_2[U_*,V_*](t,x,z)\,dx\Big) \Big( \int \vth _L(\ti{N}(t)x-z)\,\FR{m}[U,V](t,x)\,dx\Big) \frac{dz}{L^4}\\
&+c(\eta _0) \ti{N}(t)\iint \Big( \int \chi ^3_L(\ti{N}(t)x-z)\vth _L(\ti{N}(t)y-z)\frac{dz}{L^4}\Big) \big[ |U|^3+|V|^3\big] (t,x)\,\FR{m}[U,V](t,y)\,dx\,dy\\
&-C(\eta _0) \ti{N}(t)^3\iint \Big( \int |\nabla \chi _L|^2(\ti{N}(t)x-z)\vth _L(\ti{N}(t)y-z)\frac{dz}{L^4}\Big) \,\FR{m}[U,V](t,x)\,\FR{m}[U,V](t,y)\,dx\,dy\\
&-2\ti{N}(t)\iint \Big( \int \big( \vth _L(1-\chi ^3_L)\big) (\ti{N}(t)x-z)\vth _L(\ti{N}(t)y-z)\frac{dz}{L^4}\Big) \big[ |U|^3+|V|^3\big] (t,x)\,\FR{m}[U,V](t,y)\,dx\,dy.}

From the support property of $\vth _L$ and $\chi _L$, we see that if $|x-y|\le \frac{L-2}{2\ti{N}(t)}$, then
\eqq{&\int \chi ^3_L(\ti{N}(t)x-z)\vth _L(\ti{N}(t)y-z)\frac{dz}{L^4}\ge \int _{|z-\ti{N}(t)x|\le (L-2)/2}\frac{dz}{L^4}\gec 1.}
On the other hand, since the supports of $|\nabla \chi _L|^2$ and $\vth _L(1-\chi _L^3)$ are of measure $O(L^3)$, we have
\eqq{&\int |\nabla \chi _L|^2(\ti{N}(t)x-z)\vth _L(\ti{N}(t)y-z)\frac{dz}{L^4}+\int \big( \vth _L(1-\chi _L^3)\big) (\ti{N}(t)x-z)\vth _L(\ti{N}(t)y-z)\frac{dz}{L^4}\lec \frac{1}{L}}
for any $t,x,y$.
Therefore, by $\ti{N}(t)\le N(t)$, we have
\eqq{&\eqref{positiveH1}+\eqref{negativeL3}+\eqref{Galilean}\\
&\ge c(\eta _0)\ti{N}(t)\int \Big( \int \vth _L(\ti{N}(t)x-z)\,\FR{e}_2[U_*,V_*](t,x,z)\,dx\Big) \Big( \int \vth _L(\ti{N}(t)x-z)\,\FR{m}[U,V](t,x)\,dx\Big) \frac{dz}{L^4}\\
&\hx +c(\eta _0) \ti{N}(t)\int _{|x-x(t)|\le \frac{L-2}{4N(t)}}\big[ |U|^3+|V|^3\big] (t,x)\,dx\cdot \int _{|x-x(t)|\le \frac{L-2}{4N(t)}}\,\FR{m}[U,V](t,x)\,dx\\
&\hx -C(\eta _0) \frac{\ti{N}(t)N(t)^2}{L}-C\frac{\ti{N}(t)}{L}\int _{\R^4}\big[ |U|^3+|V|^3\big] (t,x)\,dx.}
For the second term on the right-hand side, we use $|\xi (t)|\le 2^{-10}C_*K$ and $N(t)\le 1$ to see that for $q=2,3$,
\eqq{\norm{(U,V)(t)}{L^q(\shugo{|x-x(t)|\le \frac{L-2}{4N(t)}})}
&\ge \norm{(u,v)(t)}{L^q(\R^4)}-\norm{(u,v)(t)}{L^q(\shugo{|x-x(t)|>\frac{L-2}{4N(t)}})}\\
&\hx -C\norm{\big( P_{|\xi -\xi (t)|>\frac{C_*K}{4}N(t)}u(t),\,P_{|\xi -2\xi (t)|>\frac{C_*K}{4}N(t)}v(t)\big) }{L^q(\R^4)},}
which, combined with the almost periodicity and Lemma~\ref{lem:tightness-Str}, implies
\eqq{&\inf _{t\in [0,T)}\int _{|x-x(t)|\le \frac{L-2}{4N(t)}}\,\FR{m}[U,V](t,x)\,dx\gec 1,\qquad \inf _{k\ge 0}\int _{J_k}\int _{|x-x(t)|\le \frac{L-2}{4N(t)}}\big[ |U|^3+|V|^3\big] (t,x)\,dx\,dt \gec 1}
for $K$ and $L$ sufficiently large.
Hence, similarly to the radial case, we obtain the following:
\begin{lem}\label{lem:im-positive}
Let $\ti{N}:[0,T)\to \R _+$ be a $C^1$ function satisfying \eqref{Ntilde}.
Then, there exist a constant $C\gg 1$ depending only on $(u,v)$ such that we have 
\eqq{&\int _0^T\Big( \eqref{positiveH1}+\eqref{negativeL3}+\eqref{Galilean}\Big) dt \\
&\ge \frac{1}{C}\int _0^T\ti{N}(t)\int \Big( \int \vth _L(\ti{N}(t)x-z)\,\FR{e}_2[U_*,V_*](t,x,z)\,dx\Big) \Big( \int \vth _L(\ti{N}(t)x-z)\,\FR{m}[U,V](t,x)\,dx\Big) \frac{dz}{L^4}\,dt\\
&\hx +\Big( \frac{1}{C}-\frac{C}{L}\Big) \int _0^T \ti{N}(t)N(t)^2\,dt}
for sufficiently large $K,L>0$, where $(U_*,V_*):=(e^{-ix\cdot \xi _0}U,e^{-2ix\cdot \xi _0}V)$ with $\xi _0(t,z)$ given in \eqref{def_xizero}.
\end{lem}

Next, we consider $\eqref{positiveAD}+\eqref{negativeAD}$ with the matrix $B=(B_{jk})_{1\le j,k\le 4}$, which corresponds to \eqref{positiveAD-rad} in the radial case.
It turns out that the new term \eqref{negativeAD} can be absorbed into the positive term \eqref{positiveAD}.
\begin{lem}\label{lem:im-AD}
We have $\eqref{positiveAD}+\eqref{negativeAD}\ge 0$.
\end{lem}

\begin{proof}
By symmetry it suffices to show that
\eqq{\sum _{j,k=1}^4B_{jk}\bigg\{ &\Re \Big[ \p _jU\bbar{\p _kU}+\frac{1}{4}\p _jV\bbar{\p _kV}\Big] (x) \,\FR{m}[U,V](y)+\FR{m}[U,V](x)\Re \Big[ \p _jU\bbar{\p _kU}+\frac{1}{4}\p _jV\bbar{\p _kV}\Big] (y)\\[-5pt]
&-2\,\FR{p}_j[U,V](x)\,\FR{p}_k[U,V](y)\bigg\} \ge 0.}
Since $B_{jk}$ is real valued and $B_{jk}=\sum _lB_{lj}B_{lk}$, the left hand side is equal to
\eqq{&\Big[ |B\nabla U(t,x)|^2 +\frac{1}{4}|B\nabla V(t,x)|^2\Big] \,\FR{m}[U,V](t,y)+\FR{m}[U,V](t,x)\Big[ |B\nabla U(t,y)|^2+\frac{1}{4}|B\nabla V(t,y)|^2\Big] \\
&-2\Im \Big[ \bbar{U(x)}B\nabla U(x)+\frac{1}{2}\bbar{V(x)}B\nabla V(x)\Big] \cdot \Im \Big[ \bbar{U(y)}B\nabla U(y)+\frac{1}{2}\bbar{V(y)}B\nabla V(y)\Big] \\
&\ge \Big[ |B\nabla U(x)|^2 +\frac{1}{4}|B\nabla V(x)|^2\Big] \Big[ |U(y)|^2+|V(y)|^2\Big] +\Big[ |U(x)|^2+|V(x)|^2\Big] \Big[ |B\nabla U(y)|^2+\frac{1}{4}|B\nabla V(y)|^2\Big] \\
&\hx -2\Big( |U(x)||B\nabla U(y)|\cdot |U(y)||B\nabla U(x)| +\frac{1}{2}|V(x)||B\nabla V(y)|\cdot \frac{1}{2}|V(y)||B\nabla V(x)|\\
&\hxx +\frac{1}{2}|U(x)||B\nabla V(y)|\cdot |V(y)||B\nabla U(x)|+|V(x)||B\nabla U(y)|\cdot \frac{1}{2}|U(y)||B\nabla V(x)|\Big) \\
&=\Big( |U(x)||B\nabla U(y)|-|U(y)||B\nabla U(x)|\Big) ^2+\Big( \frac{1}{2}|V(x)||B\nabla V(y)|-\frac{1}{2}|V(y)||B\nabla V(x)|\Big) ^2\\
&\hx +\Big( \frac{1}{2}|U(x)||B\nabla V(y)|-|V(y)||B\nabla U(x)|\Big) ^2+\Big( |V(x)||B\nabla U(y)|-\frac{1}{2}|U(y)||B\nabla V(x)|\Big) ^2\ge 0,}
as claimed.
\end{proof}

The others will be error terms.
We begin with:
\begin{lem}\label{lem:im-L3tooku-L2L2}
We have
\eqq{\int _0^T\Big( \eqref{L3tooku}+\eqref{L2L2}\Big) \,dt =\int _0^T\ti{N}(t)N(t)^2\,dt \cdot o(1)\qquad (K,L\to \I ).}
\end{lem}

\begin{proof}
These two terms correspond to $\eqref{L3tooku-rad}+\eqref{L2L2-rad}$ in the radial case, but we need more careful treatment.

For \eqref{L3tooku} we use Lemma~\ref{bound_Thth} (iii) and the tightness in $L^\I _tL^2_x$ and $L^3_{t,x}$.
On each characteristic interval $J_k$, we split the integral as follows:
\eqq{\int _{J_k}|\eqref{L3tooku}|\,dt\lec \ti{N}(t_k)\int _{J_k}\bigg[ &\iint _{|x-x(t)|>L^{1/2}/\ti{N}(t)}+\iint _{|y-x(t)|>L^{1/2}/\ti{N}(t)}+\iint _{\mat{|x-x(t)|\le L^{1/2}/\ti{N}(t)\\ |y-x(t)|\le L^{1/2}/\ti{N}(t)}}\bigg] \\
&(\Th _L-\th _L)\big( \ti{N}(t)|x-y|\big) \big[ |U|^2|V|\big] (t,x)\,\FR{m}[U,V](t,y)\,dx\,dy\,dt.}
In the first and the second term we bound $\Th _L-\th _L$ by $1$, while in the last one we see from Lemma~\ref{bound_Thth} (iii) that $(\Th _L-\th _L)\big( \ti{N}(t)|x-y|\big)\lec L^{1/2}/L=L^{-1/2}$.
Since $\ti{N}(t)\le N(t)$, we apply the almost periodicity and Lemma~\ref{lem:tightness-Str} to conclude that the above is $\ti{N}(t_k)\cdot o(1)$ ($K,L\to \I$) uniformly in $k\ge 0$.
The claim for \eqref{L3tooku} then follows from Corollary~\ref{lem:chinterval}.

We next consider \eqref{L2L2}, in which the weight function has additional two derivatives.
Since
\eqq{\Delta [\th _L+3\Th _L](|x|)&=[\th _L''+3\Th _L''](|x|)+3|x|^{-1}[\th _L'+3\Th _L'](|x|),\qquad x\in \R ^4}
and $\Th_L''(r)=r^{-1}\big( \th _L'(r)-2\Th _L'(r)\big)$, we see from Lemma~\ref{bound_Thth} (ii) and (iii) that $\Delta _x\big[ (\th _L+3\Th _L)(|A|)\big] =O(\ti{N}(t)^2/L)$, and that
\eqq{\int _0^T|\eqref{L2L2}|\,dt\lec \frac{1}{L}\int _0^T\ti{N}(t)^3\,dt\le \frac{1}{L}\int _0^T\ti{N}(t)N(t)^2\,dt,}
as claimed.
\end{proof}

As a result, we still have the same lower bound given in Lemma~\ref{lem:im-positive} if we add the terms \eqref{positiveAD}, \eqref{negativeAD}, \eqref{L3tooku}, and \eqref{L2L2}.

We now fix $L\gg 1$ and proceed to the control of \eqref{Ndash}, which is parallel to that of \eqref{Ndash-rad}.
From the relation \eqref{shita-star}, for fixed $t,z$ we have
\eqq{&\iint \vth _L(\ti{N}(t)x-z)\vth _L(\ti{N}(t)y-z)\, (x-y)\cdot \FR{p}[U,V](t,x)\,\FR{m}[U,V](t,y)\,dx\,dy\\
&=\iint \vth _L(\ti{N}(t)x-z)\vth _L(\ti{N}(t)y-z)\, (x-y)\cdot \FR{p}[U_*,V_*](t,x,z)\,\FR{m}[U,V](t,y)\,dx\,dy\\
&\hx +\xi _0(t,z)\cdot \iint \vth _L(\ti{N}(t)x-z)\vth _L(\ti{N}(t)y-z)\, (x-y)\, \FR{m}[U,V](t,x)\,\FR{m}[U,V](t,y)\,dx\,dy,}
where the last integral is equal to zero by symmetry.
Since the triangle inequality implies
\eqq{&\vth _L(\ti{N}(t)x-z)\vth _L(\ti{N}(t)y-z)|x-y|\le \frac{2L}{\ti{N}(t)}\vth _L(\ti{N}(t)x-z)\vth _L(\ti{N}(t)y-z)}
for any $t,x,y,z$, we estimate \eqref{Ndash} similarly to the radial case with the Cauchy-Schwarz as
\eqq{|\eqref{Ndash}|&\le \e \ti{N}(t)\int \Big( \int \vth _L(\ti{N}(t)x-z)\,\FR{e}_2[U_*,V_*](t,x,z)\,dx\Big) \Big( \int \vth _L(\ti{N}(t)y-z)\,\FR{m}[U,V](t,y)\,dy\Big) \frac{dz}{L^4}\\
&\hx +\frac{CL^2}{\e}\frac{\ti{N}'(t)^2}{\ti{N}(t)^3}}
for any $\e >0$.
We can choose $\e$ small and $\ti{N}(t)$ via Corollary~\ref{cor:im-Ndash} to make the contribution from \eqref{Ndash} smaller than the lower bound given in Lemma~\ref{lem:im-positive}.
Consequently, we have
\eq{im3}{\int _0^T\Big( \eqref{Ndash}+
\cdots +\eqref{L2L2}+\eqref{Galilean}+\eqref{negativeAD}\Big) \,dt\gec K,} 
whenever $K$ is sufficiently large.

There are only $\eqref{gomiP}+\eqref{gomiM}$ remaining.
It is then sufficient to prove
\eq{im4}{\int _0^T\Big( \eqref{gomiP}+\eqref{gomiM}\Big) dt =o(K)\qquad (K\to \I ),}
which corresponds to \eqref{im4-rad} in the radial case.

Here, instead of \eqref{highfreq-rad}--\eqref{lowfreq-rad} we can obtain the following estimates from Theorem~\ref{thm:ls}:
\begin{align}
\label{highfreq} &\norm{\big( e^{-ix\cdot \xi (t)}P_{>\frac{C_*K}{4}}u,\, e^{-2ix\cdot \xi (t)}P_{>\frac{C_*K}{4}}v\big) }{L^2L^4([0,T)\times \R^4)}=o(1),\qquad K\to \I ,\\
\label{lowfreq} &\norm{|\nabla |^s\big( e^{-ix\cdot \xi (t)}P_{\lec C_*K}u,\,e^{-2ix\cdot \xi (t)}P_{\lec C_*K}v)}{L^2L^4([0,T)\times \R^4)}\lec K^s,\qquad s>1/2.
\end{align}
For \eqref{highfreq} we notice that $|\xi (t)|\le 2^{-10}C_*K$ for $t\in [0,T)$, which implies
\eqq{\norm{P_{>\frac{C_*K}{4}}(u,v)}{L^2L^4}\lec \norm{P_{|\xi -\xi (t)|>\frac{C_*K}{16}}(u,v)}{L^2L^4}.}
\eqref{lowfreq} is shown as follows:
\eqq{&\norm{|\nabla |^se^{-ix\cdot \xi (t)}P_{\lec C_*K}u}{L^2L^4}=\norm{|\nabla |^sP_{|\xi +\xi (t)|\lec C_*K}e^{-ix\cdot \xi (t)}u}{L^2L^4}\lec \norm{|\nabla |^sP_{\lec 4C_*K}e^{-ix\cdot \xi (t)}u}{L^2L^4}\\
&\lec \sum _{N<C_*K}N^s\norm{P_{>N}e^{-ix\cdot \xi (t)}u}{L^2L^4}=\sum _{N<C_*K}N^s\norm{e^{-ix\cdot \xi (t)}P_{|\xi -\xi (t)|>N}u}{L^2L^4}\\
&\lec \sum _{N<C_*K}N^s\Big( \frac{K}{N}\Big) ^{1/2}\sim K^s,}
and similarly for $v$.

To prove \eqref{im4}, we observe that $\eqref{gomiP}+\eqref{gomiM}$ is invariant under the gauge transform
\eqq{(U,V,F,G)~\mapsto ~(U^*,V^*,F^*,G^*):=\big( e^{-ix\cdot \xi (t)}U,\,e^{-2ix\cdot \xi (t)}V,\, e^{-ix\cdot \xi (t)}F,\,e^{-2ix\cdot \xi (t)}G\big) .}
Hence, by an integration by parts in $x$ we see that
\begin{align}
&\int _0^T\Big( \eqref{gomiP}+\eqref{gomiM}\Big) dt \notag \\
&=2\int _0^T\iint \Theta _L(|A|)A\cdot \Re \Big[ \bbar{F^*}\nabla U^*+\frac{1}{2}\bbar{G^*}\nabla V^*\Big] (t,x)\,\FR{m}[U,V](t,y)\,dx\,dy\,dt\label{gomi1}\\
&\hx +\int _0^T\ti{N}(t)\iint \big[ \theta _L+3\Theta _L\big] (|A|) \Re \Big[ \bbar{F^*}U^*+\frac{1}{2}\bbar{G^*}V^*\Big] (t,x)\,\FR{m}[U,V](t,y)\,dx\,dy\,dt\label{gomi2}\\
&\hx -2\int _0^T\iint \Theta _L(|A|)A\cdot \FR{p}[U^*,V^*](t,x)\,\Im \big[ \bbar{F^*}U^*+\bbar{G^*}V^*\big] (t,y)\,dx\,dy\,dt.\label{gomi3}
\end{align}
Recall that the weight functions satisfy $|\Th_L(|A|)A|\lec L$, $|\th _L+3\Th _L|\lec 1$.

Since $(F^*,G^*)$ has at least one of $e^{-ix\cdot \xi (t)}P_{>C_*K/4}u$ and $e^{-2ix\cdot \xi (t)}P_{>C_*K/4}v$; for instance,
\eqq{F^*&=\bbar{e^{-ix\cdot \xi (t)}P_{>C_*K}u}\,e^{-2ix\cdot \xi (t)}v+\bbar{e^{-ix\cdot \xi (t)}P_{\le C_*K}u}\,e^{-2ix\cdot \xi (t)}P_{>C_*K}v\\
&\hx -P_{|\xi +\xi (t)|>C_*K}\Big[ \bbar{e^{-ix\cdot \xi (t)}P_{>\frac{C_*K}{4}}u}\,e^{-2ix\cdot \xi (t)}v+\bbar{e^{-ix\cdot \xi (t)}P_{\le \frac{C_*K}{4}}u}\,e^{-2ix\cdot \xi (t)}P_{>\frac{C_*K}{4}}v\Big] ,}
we can estimate \eqref{gomi1} and \eqref{gomi2} using \eqref{highfreq}--\eqref{lowfreq} similarly to the radial case.

We focus on the last integral \eqref{gomi3}, which does not have a counterpart in the radial case.
Notice that $\Im [\bbar{F^*}U^*+\bbar{G^*}V^*]$ has two types of products of three functions; two highs and a low,
\eqq{\Big[ ~e^{-ix\cdot \xi (t)}P_{>\frac{C_*K}{4}}u\quad \text{or}\quad e^{-2ix\cdot \xi (t)}P_{>\frac{C_*K}{4}}v~\Big] ^2\cdot \Big[ ~e^{-ix\cdot \xi (t)}P_{\le C_*K}u\quad \text{or}\quad e^{-2ix\cdot \xi (t)}P_{\le C_*K}v~\Big] ,}
or two lows and a high,
\eqq{\Big[ ~e^{-ix\cdot \xi (t)}P_{>\frac{C_*K}{4}}u\quad \text{or}\quad e^{-2ix\cdot \xi (t)}P_{>\frac{C_*K}{4}}v~\Big] \cdot \Big[ ~e^{-ix\cdot \xi (t)}P_{\le C_*K}u\quad \text{or}\quad e^{-2ix\cdot \xi (t)}P_{\le C_*K}v~\Big] ^2.}
The former case is easier to treat.
In fact, we use the H\"older inequality in $(t,y)$ as $L^2L^4\cdot L^2L^4\cdot L^\I L^2$ and apply \eqref{highfreq} to obtain a bound of $o(K)$:
\eqq{\norm{(U^*,V^*)}{L^\I L^2}\norm{\nabla (U^*,V^*)}{L^\I L^2}\norm{\big( e^{-ix\cdot \xi (t)}P_{>\frac{C_*K}{4}}u,\, e^{-2ix\cdot \xi (t)}P_{>\frac{C_*K}{4}}v\big)}{L^2L^4}^2\norm{(U^*,V^*)}{L^\I L^2}.}
For the latter case, we extract one derivative from the high-frequency functions as
\eqq{\big( e^{-ix\cdot \xi (t)}P_{>\frac{C_*K}{4}}u,\, e^{-2ix\cdot \xi (t)}P_{>\frac{C_*K}{4}}v\big)=-\nabla \cdot \nabla (-\Delta )^{-1}\big( e^{-ix\cdot \xi (t)}P_{>\frac{C_*K}{4}}u,\, e^{-2ix\cdot \xi (t)}P_{>\frac{C_*K}{4}}v\big)}
and integrate it by parts (in $y$).
When the derivative moves to one of the low-frequency functions, we obtain a bound like
\eqq{&\norm{(U^*,V^*)}{L^\I L^2}\norm{\nabla (U^*,V^*)}{L^\I L^2}\\
&\times \norm{|\nabla |^{-1}\big( e^{-ix\cdot \xi (t)}P_{>\frac{C_*K}{4}}u,\, e^{-2ix\cdot \xi (t)}P_{>\frac{C_*K}{4}}v\big)}{L^2L^4}\norm{\nabla (U^*,V^*)}{L^2L^4}\norm{(U^*,V^*)}{L^\I L^2},}
which is again $o(K)$ by \eqref{highfreq}--\eqref{lowfreq}.
If the derivative moves to the weight function $\Th _L(|A|)A$, then we have another $\ti{N}(t)$ as \eqref{gomi2}, and
the resulting bound will be \eqq{&\norm{\ti{N}}{L^3_t}\norm{(U^*,V^*)}{L^\I L^2}\norm{\nabla (U^*,V^*)}{L^\I L^2}\norm{|\nabla |^{-1}\big( e^{-ix\cdot \xi (t)}P_{>\frac{C_*K}{4}}u,\, e^{-2ix\cdot \xi (t)}P_{>\frac{C_*K}{4}}v\big)}{L^2L^4}\\
&\times \norm{(U^*,V^*)}{L^\I L^2}\norm{(U^*,V^*)}{L^\I L^3}^{2/3}\norm{(U^*,V^*)}{L^2L^{12}}^{1/3}.}
Similarly to \eqref{gomi2}, the Sobolev embedding and \eqref{highfreq}--\eqref{lowfreq} show that this is also $o(K)$.

This completes the proof of \eqref{im4}, and hence that of Theorem~\ref{thm:im}.
\end{proof}

We finish the proof of Theorem~\ref{thm:main} by showing
\eqq{\sup _{t\in [0,T)}|\Sc{M}(t)|=o(K),\qquad K\to \I ,}
which precludes the quasi-soliton scenario for the non-radial case.
To prove that, we first observe $\Sc{M}(t)$ is also invariant under the gauge transformation $(U,V)\mapsto (U^*,V^*)$, which can be seen 
in the same manner as \eqref{Ndash} is under $(U,V)\mapsto (U_*,V_*)$.
Now, the almost periodicity of $(u,v)$ yields
\eqq{&\norm{\nabla U^*}{L^\I L^2}=\norm{\nabla P_{|\xi +\xi (t)|\le C_*K}e^{-ix\cdot \xi (t)}u}{L^\I L^2}\lec \norm{\nabla P_{\le 4C_*K}e^{-ix\cdot \xi (t)}u}{L^\I L^2}\\
&\lec K^{1/2}\norm{P_{\le K^{1/2}}e^{-ix\cdot \xi (t)}u}{L^\I L^2}+K\norm{P_{>K^{1/2}N(t)}e^{-ix\cdot \xi (t)}u}{L^\I L^2}\\
&=K^{1/2}\norm{P_{\le K^{1/2}}e^{-ix\cdot \xi (t)}u}{L^\I L^2}+K\norm{P_{|\xi -\xi (t)|>K^{1/2}N(t)}u}{L^\I L^2}
=o(K),\qquad K\to \I ,}
and similarly for $V^*$.
An application of the Cauchy-Schwarz finally lead us to the conclusion.


\appendix

\section{Proof of Remark \ref{rem2-3}}\label{AppendixA}
Here, we give the proof of Remark \ref{rem2-3} on the bases of the argument in \cite{MR2470397}.
\begin{lem}\label{appendix-lem5}
For all $\phi \in L_x^2 (\mathbb{R}^d) $ and $(\theta_0 , \xi_0 ,x_0 , \lambda_0 ) \in \mathbb{R}/2\pi \mathbb{Z} \times \mathbb{R}^d \times \mathbb{R}^d \times (0,\infty)$, it follows that
\begin{align*}
\mathcal{F} h(\theta_0 ,\xi_0 ,x_0 , \lambda_0) \phi =h(\theta_0 +\xi_0 x_0 , -x_0, \xi_0 ,\lambda_0^{-1}) \mathcal{F}\phi .
\end{align*}
\end{lem}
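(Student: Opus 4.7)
The plan is to verify the identity by direct computation, applying the definition of $h$ on the left-hand side, computing the Fourier transform, and then matching the resulting expression against the right-hand side evaluated from the definition of $h$.

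First I would write out $[h(\theta_0,\xi_0,x_0,\lambda_0)\phi](x) = \lambda_0^{-d/2} e^{i\theta_0} e^{ix\cdot \xi_0}\phi(\lambda_0^{-1}(x-x_0))$ and apply the Fourier transform, yielding an integral
\eqq{\mathcal{F}[h(\theta_0,\xi_0,x_0,\lambda_0)\phi](\eta) = \lambda_0^{-d/2} e^{i\theta_0} \int_{\mathbb{R}^d} e^{-ix\cdot \eta} e^{ix\cdot \xi_0}\phi(\lambda_0^{-1}(x-x_0))\,dx.}
Then I would perform the change of variables $y = \lambda_0^{-1}(x-x_0)$ (so that $x = \lambda_0 y + x_0$ and $dx = \lambda_0^{d}\,dy$), which factors out a phase $e^{-ix_0\cdot \eta}e^{ix_0\cdot \xi_0}$ and converts the integral into $\lambda_0^{d/2} \hat{\phi}(\lambda_0(\eta-\xi_0))$.

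Second, I would compute the right-hand side from the definition of $h$ directly, obtaining
\eqq{[h(\theta_0+\xi_0\cdot x_0,-x_0,\xi_0,\lambda_0^{-1})\hat{\phi}](\eta) = \lambda_0^{d/2} e^{i(\theta_0+\xi_0\cdot x_0)} e^{-i\eta\cdot x_0}\hat{\phi}(\lambda_0(\eta-\xi_0)),}
and compare the two expressions term by term. The factors $\lambda_0^{d/2}$, $e^{i\theta_0}$, $e^{i\xi_0\cdot x_0}$, $e^{-i\eta\cdot x_0}$, and the argument $\lambda_0(\eta-\xi_0)$ of $\hat\phi$ match precisely, giving the claimed identity.

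No obstacle is anticipated: the proof is a routine Fourier-transform calculation with a single linear change of variables, and density (or the fact that both sides define bounded operators on $L^2$) takes care of extending from Schwartz functions to $L^2$ if needed. The only point requiring a small amount of care is tracking the signs and the extra phase $e^{i\xi_0\cdot x_0}$ that arises from evaluating $e^{ix\cdot \xi_0}$ at $x = \lambda_0 y + x_0$, since this is exactly what forces the phase parameter on the right-hand side to be $\theta_0+\xi_0\cdot x_0$ rather than $\theta_0$.
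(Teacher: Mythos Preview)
Your proposal is correct and follows exactly the approach indicated by the paper, which simply states that the lemma follows by changing variables and a density argument. You have filled in the details of that computation accurately, including the crucial phase $e^{i\xi_0\cdot x_0}$ that accounts for the shifted phase parameter on the right-hand side.
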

\begin{proof}
This follows by changing variables and density argument.
\end{proof}
\begin{defn}
On the basis of above lemma, we set as follows:
\begin{align*}
\hat{h}(\theta_0, \xi_0 ,x_0,\lambda_0) &:=h(\theta_0 +x_0\xi_0 , -x_0 ,\xi_0 ,\lambda_0^{-1}), \\
\check{h}(\theta_0 ,\xi_0 ,x_0 ,\lambda_0) &:=h(\theta_0 +x_0\xi_0 , x_0, -\xi_0 , \lambda_0^{-1}).
\end{align*}
Under this notation, it follows that $\widehat{h\phi} =\hat{h}\hat{\phi} , \hat{\check{h}}=\check{\hat{h}} =h$.
\end{defn}
\begin{lem}\label{appendix-lem6}
For any $\{g_n\}_n \subset G$, precisely one of the following statements holds:\\
1.\ $g_n \rightarrow 0$\quad in WOT.\\
2.\ $g_n \rightarrow g$\quad  in SOT for some $g\in G$ after passing to a subsequence if necessary.
\end{lem}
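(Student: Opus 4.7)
The plan is to reduce the dichotomy to a subsequential statement and then analyze the behavior of the four parameters $(\theta_n,\xi_n,x_n,\lambda_n)$ of $g_n=g(\theta_n,\xi_n,x_n,\lambda_n)\in G$. First, observe that the two alternatives are mutually exclusive: if a subsequence converges strongly to $g\in G$, then $g\neq 0$ (since each element of $G$ is a unitary), so the weak limit along that subsequence is $g$, precluding $g_n\rightharpoonup 0$. For the ``at least one'' part, I claim it suffices to show that every subsequence admits a further subsequence which either converges strongly in $G$ or weakly to $0$. Indeed, if some subsequence converges strongly, alternative (2) holds; otherwise every subsequence has a sub-subsequence weakly convergent to $0$, and by the uniqueness of weak limits on the separable Hilbert space $L^2$ the whole sequence tends to $0$ weakly.

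To prove this claim, pass to a subsequence for which $\theta_n\to\theta_\infty\in\R/2\pi\Z$, $\lambda_n\to\lambda_\infty\in[0,\I]$, and $x_n,\xi_n$ converge in the one-point compactification $\R^d\cup\{\I\}$. In the \emph{non-degenerate} case $\lambda_\infty\in(0,\I)$, $x_\infty,\xi_\infty\in\R^d$, a standard argument using continuous dependence of $g(\theta,\xi,x,\lambda)\phi$ on the parameters (for fixed $\phi\in\Sc{S}(\R^d)$), the uniform bound $\|g_n\|_{L^2\to L^2}=1$, and density of Schwartz functions yields $g_n\phi\to g(\theta_\infty,\xi_\infty,x_\infty,\lambda_\infty)\phi$ in $L^2$ for every $\phi\in L^2$, i.e.\ SOT convergence.

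In each \emph{degenerate} case I verify $\langle g_n\phi,\psi\rangle\to 0$ for $\phi,\psi\in\Sc{S}(\R^d)$, which via $\|g_n\|_{L^2\to L^2}=1$ and density extends to all $\phi,\psi\in L^2$ and hence gives WOT convergence to $0$. After the change of variables $y=\lambda_n^{-1}(x-x_n)$,
\eq{planeq}{\langle g_n\phi,\psi\rangle=\lambda_n^{d/2}e^{i\theta_n+ix_n\cdot\xi_n}\int_{\R^d}e^{i\lambda_ny\cdot\xi_n}\phi(y)\bbar{\psi(\lambda_ny+x_n)}\,dy.}
If $\lambda_\infty=0$, the prefactor $\lambda_n^{d/2}\to 0$ against a uniformly bounded integral gives the conclusion. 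If $\lambda_\infty\in(0,\I)$ and $|x_n|\to\I$, Schwartz decay of $\psi$ and dominated convergence kill the integral in \eqref{planeq}. If $\lambda_\infty\in(0,\I)$, $x_n\to x_\infty\in\R^d$ and $|\xi_n|\to\I$, the Riemann--Lebesgue lemma (applied to the $L^1$-convergent family $\phi(y)\bbar{\psi(\lambda_ny+x_n)}$) forces the oscillatory integral to vanish. Finally, the case $\lambda_\infty=\I$ is reduced to the case $\lambda_\infty=0$ by Plancherel: $\langle g_n\phi,\psi\rangle=\langle\widehat{g_n\phi},\hhat{\psi}\rangle=\langle\hat{h}(\theta_n,\xi_n,x_n,\lambda_n)\hhat{\phi},\hhat{\psi}\rangle$, and Lemma~\ref{appendix-lem5} identifies the new scaling parameter as $\lambda_n^{-1}\to 0$, reducing to the already treated case.

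The main technical obstacle is the simultaneous escape of several parameters (say $\lambda_n\to 0$ together with $|\xi_n|\to\I$), which is why it is convenient to handle the four alternatives in the fixed order above. The Fourier intertwining of Lemma~\ref{appendix-lem5} is the key tool that allows collapsing the ``$\lambda_n\to\I$'' degeneration into ``$\lambda_n\to 0$'' and, when combined with the explicit formula \eqref{planeq}, makes the remaining non-scaling degenerations straightforward.
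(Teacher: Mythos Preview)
Your proof is correct and follows essentially the same parameter-by-parameter case analysis as the paper, with a somewhat cleaner logical framing (you explicitly reduce to the ``every subsequence has a further subsequence'' criterion, whereas the paper leaves this implicit) and a slightly different handling of $\lambda_n\to\infty$ (you use the Fourier duality of Lemma~\ref{appendix-lem5} to reduce to $\lambda_n\to 0$, while the paper treats it directly via $\lambda_n^{-d/2}\|\phi\|_\infty\|\psi\|_1$). One small omission: $g_n\in G$ acts diagonally on $L^2(\R^d)^2$ as the pair $(h(\theta_n,\xi_n,x_n,\lambda_n),\,h(2\theta_n,2\xi_n,x_n,\lambda_n))$, so strictly speaking you must check both components---but since the two parameter tuples degenerate or converge simultaneously, your argument applies verbatim to each.
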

\begin{proof}
Let $g_n =g(\theta_n , \xi_n x_n ,\lambda_n)$. First we consider the case any one of $\lambda_n , \lambda_n^{-1}, |\xi_n| , |x_n| $ converges to infinity as $n\rightarrow \infty $.
Take  subsequence of $\{g_n\}$ arbitrarily and use same symbol $\{g_n\}$.\\
Case1-1. $\lambda_n \rightarrow \infty. $
Then for any $\phi ,\psi \in C_0^{\infty}(\mathbb{R}^d)$, we can calculate as follows:
\begin{align*}
&|(h(\theta_n ,\xi_n ,x_n ,\lambda_n)\phi ,\psi)_{L^2}| \leq (\lambda_n)^{-\frac{d}{2}} \|\phi\|_{\infty} \|\psi\|_1 \rightarrow 0 \ as \ n\rightarrow \infty , \\
&|h(2\theta_n ,2\xi_n ,x_n ,\lambda_n \phi ,\psi)_{L^2} |\leq (2\lambda_n)^{-\frac{d}{2}}\|\phi\|_{\infty} \|\psi\|_{1} \rightarrow 0 \ as \ n\rightarrow \infty.
\end{align*}
By density argument we get $g_n \rightarrow 0 \ in\ WOT$.\\
Case1-2. \ $\lambda_n^{-1}\rightarrow \infty $.  This case is similar to Case1-1.(Note that $g_n $ is unitary operator.)\\
Case1-3. $\lambda_n$ and $\lambda_n^{-1} \nrightarrow \infty $ and $|x_n| \rightarrow \infty$. 
In this case , passing to a subsequence if necessary we  may assume $\lambda_n \rightarrow \lambda_0 $ for some $\lambda_0 \in (0,\infty)$.
Then it follows that
\begin{align}
&|(h(\theta_n ,\xi_n ,x_n ,\lambda_n)\phi ,\psi)_{L^2})|\leq \lambda_n^{-\frac{d}{2}} \int_{\mathbb{R}^d} |\phi (\frac{x-x_n}{\lambda_n})||\psi (x)|\ dx \rightarrow 0 \ as\ n\rightarrow \infty ,\\
&|(h(2\theta_n ,2\xi_n ,x_n ,\lambda_n)\phi ,\psi)_{L^2})|\leq \lambda_n^{-\frac{d}{2}} \int_{\mathbb{R}^d} |\phi (\frac{x-x_n}{\lambda_n})||\psi (x)|\ dx \rightarrow 0\ as\ n\rightarrow \infty,
\end{align}
for any $\phi ,\psi \in C_0^{\infty}(\mathbb{R}^d)$. We can get the result by density argument.\\
Case1-4. $\lambda_n$ and $\lambda_n^{-1} \nrightarrow \infty$ and $|\xi_n| \rightarrow \infty $. This case can be treated as Case1-3. (Use Plancherel's theorem and dominated convergence theorem.)\\
Case2. Finally we consider the case each parameters are bounded. Passing to subsequences if necessary, we may assume that
\begin{align*}
&\lambda_n \rightarrow \lambda_0\in (0,\infty),\\
&\xi_n \rightarrow \xi_0 \in \mathbb{R}^d,\\
&x_n \rightarrow x_0 \in \mathbb{R}^d,\\
&\theta_n \rightarrow \theta_0 \in \mathbb{R}/2\pi \mathbb{Z}.
\end{align*}
Then it follows easily that $g_n \rightarrow g( \theta_0 , \xi_0 ,x_0 , \lambda_0 ) \ in\  SOT.$
\end{proof}
\begin{lem}\label{appendix-lem7}
For each $(\phi,\psi)\in L^2(\mathbb{R}^d)^2  $, $G(\phi,\psi) $ is closed in $L^2 (\mathbb{R}^d)^2$.
\end{lem}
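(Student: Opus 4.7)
The plan is to take a sequence $\{g_n\}\subset G$ such that $g_n(\phi,\psi)\to (\phi_\infty ,\psi_\infty )$ strongly in $L^2(\R^d)^2$ and to exhibit some $g\in G$ with $g(\phi,\psi)=(\phi_\infty,\psi_\infty)$. The dichotomy supplied by Lemma~\ref{appendix-lem6} does essentially all the work: either $g_n\to 0$ in the weak operator topology, or, after passing to a subsequence, $g_n\to g$ in the strong operator topology for some $g\in G$.

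First I would dispose of the trivial case $(\phi,\psi)=(0,0)$, for which $G(\phi,\psi)=\{(0,0)\}$ is obviously closed. So assume $(\phi,\psi)\neq (0,0)$. Since each $g_n$ is a unitary operator on $L^2(\R^d)^2$, every point in the orbit has norm $\|(\phi,\psi)\|_{L^2}>0$, and by continuity of the norm under strong convergence the limit satisfies $\|(\phi_\infty ,\psi_\infty )\|_{L^2}=\|(\phi,\psi)\|_{L^2}>0$, so $(\phi_\infty,\psi_\infty)\neq 0$.

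Next I would apply Lemma~\ref{appendix-lem6} to $\{g_n\}$. In the first alternative, $g_n\to 0$ in WOT, which forces $g_n(\phi,\psi)\rightharpoonup (0,0)$ weakly in $L^2\times L^2$; combined with the strong convergence hypothesis this gives $(\phi_\infty,\psi_\infty)=(0,0)$, contradicting the previous step. Therefore the second alternative must occur: along a subsequence (still denoted $\{g_n\}$), $g_n\to g$ in SOT for some $g\in G$. Then $g_n(\phi,\psi)\to g(\phi,\psi)$ strongly, and by uniqueness of strong limits $(\phi_\infty,\psi_\infty)=g(\phi,\psi)\in G(\phi,\psi)$. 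This establishes closedness.

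There is no real obstacle here beyond correctly invoking Lemma~\ref{appendix-lem6}; the only subtlety is the observation that unitarity rules out the WOT-to-zero branch whenever $(\phi,\psi)\neq 0$, which is what makes the dichotomy collapse to the desired SOT convergence.
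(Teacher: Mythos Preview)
Your argument is correct and follows essentially the same route as the paper: dispose of the trivial case, use unitarity to see the strong limit is nonzero, and then invoke Lemma~\ref{appendix-lem6} to rule out the WOT-to-zero branch and extract SOT convergence to some $g\in G$ along a subsequence. The only difference is that you spell out explicitly why the WOT alternative is impossible, whereas the paper leaves this implicit.
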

\begin{proof}
If $(\phi,\psi) =(0,0)$, the result follows easily. So we assume $(\phi,\psi) \neq (0,0)$. Take any $\{g_n\} \subset G$ and $(\psi_0,\phi_0)\in L^2 (\mathbb{R}^d)^2 $ satisfying 
$g_n (\phi, \psi) \rightarrow (\phi_0 ,\psi_0) $ in $L^2 (\mathbb{R}^d)^2$. Then we get $0<\|(\phi ,\psi)\|_{(L^2)^2} =\|(\phi_0,\psi_0)\|_{(L^2)^2}$. Therefore by Lemma \ref{appendix-lem6}, passing to a subsequence if necessary,
we establish
\begin{align*}
g_n \rightarrow g_0 \ in \ SOT\ for \ some \ g_0 \in G.
\end{align*}
Then it follows that $g_n (\phi ,\psi) \rightarrow g_0 (\phi ,\psi)$ and so $(\phi_0 ,\psi_0)=g_0 (\phi ,\psi ) \in G(\phi ,\psi)$.
\end{proof}
\begin{defn}
We denote by $\mathcal{O}$ the quotient topology of $G\setminus L^2 (\mathbb{R}^d)^2$. Let $\pi :L^2(\mathbb{R}^d)^2 \rightarrow G\setminus L^2 (\mathbb{R}^d)^2$ be the canonical projection.
\end{defn}
\begin{lem}\label{appendix-lem8}
We define the metric $d$ on $G\setminus L^2 (\mathbb{R}^d)^2$ as follows:
\begin{align*}
d(\pi (\phi_1,\psi_1) ,\pi (\phi_2, \psi_2)) :=\inf_{g\in G} \|g(\phi_1,\psi_1) -(\phi_2,\psi_2)\|_{L^2(\mathbb{R}^d)^2},
\end{align*} 
where $(\phi_1,\psi_1), (\phi_2 ,\psi_2) \in L^2 (\mathbb{R}^d)^2$. Then $\mathcal{O} =\mathcal{O}_d$, where 
$\mathcal{O}_d $ denotes the topology defined by the metric $d$.
\end{lem}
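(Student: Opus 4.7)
The plan is to verify $d$ is a well-defined metric on $G\setminus L^2(\R^d)^2$ and then to establish both inclusions $\mathcal{O}_d\subset \mathcal{O}$ and $\mathcal{O}\subset \mathcal{O}_d$ separately. The central structural fact I would exploit throughout is that $G$ is a \emph{group of $L^2$-isometries}: closure of the parametrisation in Definition~\ref{group} under composition and inversion can be verified by a direct computation, and each $g\in G$ is unitary on $L^2(\R^d)^2$ by construction.

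For the metric axioms, well-definedness on orbits, symmetry, and the triangle inequality all follow from the group and unitarity structure: for symmetry one uses $\|g(\phi_1,\psi_1)-(\phi_2,\psi_2)\|=\|(\phi_1,\psi_1)-g^{-1}(\phi_2,\psi_2)\|$ with $g^{-1}\in G$, and for the triangle inequality one composes near-optimal $g_1,g_2$ and uses $g_2g_1\in G$. Positive-definiteness $d(\pi(\phi_1,\psi_1),\pi(\phi_2,\psi_2))=0\Rightarrow \pi(\phi_1,\psi_1)=\pi(\phi_2,\psi_2)$ is exactly the content of Lemma~\ref{appendix-lem7} (closedness of $G$-orbits). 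For the inclusion $\mathcal{O}_d\subset \mathcal{O}$, I would observe that the map $F:(\phi',\psi')\mapsto d(\pi(\phi,\psi),\pi(\phi',\psi'))$ is $1$-Lipschitz on $L^2$ (by the reverse triangle inequality for the $L^2$-norm) and is $G$-invariant in its argument, so $\pi^{-1}$ of any $d$-ball of radius $r$ equals the open set $\shugo{F<r}$, placing the $d$-ball in the quotient topology $\mathcal{O}$.

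The step I expect to require the most care is the reverse inclusion $\mathcal{O}\subset \mathcal{O}_d$. Given $U\in \mathcal{O}$ and $\pi(\phi,\psi)\in U$, pick $r>0$ with $B_{L^2}((\phi,\psi),r)\subset \pi^{-1}(U)$. The key observation is that $\pi^{-1}(U)$ is $G$-invariant (as a union of orbits) and each $g\in G$ is an $L^2$-isometry, which combine to give
\eqq{B_{L^2}(g(\phi,\psi),r)=g\,B_{L^2}((\phi,\psi),r)\subset g\,\pi^{-1}(U)=\pi^{-1}(U)\qquad \text{for every }g\in G,}
with \emph{uniform} radius $r$. Then for $\pi(\phi',\psi')\in B_d(\pi(\phi,\psi),r)$ the definition of infimum produces some $g\in G$ with $\|g(\phi,\psi)-(\phi',\psi')\|<r$, placing $(\phi',\psi')\in B_{L^2}(g(\phi,\psi),r)\subset \pi^{-1}(U)$ and hence $\pi(\phi',\psi')\in U$, so $B_d(\pi(\phi,\psi),r)\subset U$. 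The subtle point, which is the main obstacle, is that without the isometry property of $G$ one could not propagate a single $L^2$-ball around $(\phi,\psi)$ to balls of the same radius around the whole orbit — it is exactly this uniformity that forces the $d$-ball to fit inside $U$; the rest of the argument reduces to routine bookkeeping with metric and quotient-topology definitions.
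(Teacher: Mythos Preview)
Your proof is correct and follows essentially the same route as the paper's. The paper compresses both inclusions into the single identity $\pi[B_{L^2}((\phi,\psi),\varepsilon)]=B_d(\pi(\phi,\psi),\varepsilon)$, from which $\mathcal{O}_d\subset\mathcal{O}$ follows by continuity of $\pi$ (hence finality of the quotient topology) and $\mathcal{O}\subset\mathcal{O}_d$ by applying $\pi$ to an $L^2$-ball inside $\pi^{-1}(U)$; your argument for the second inclusion, invoking $G$-invariance of $\pi^{-1}(U)$ and the isometry property to propagate the ball along the orbit, is exactly what underlies that identity, just made explicit.
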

\begin{proof}
Note that  $d$ is really a metric on $G\setminus L^2 (\mathbb{R}^d)^2$ by Lemma \ref{appendix-lem7}.
Take any $(\phi ,\psi )\in G \setminus L^2 (\mathbb{R}^d)^2$ and $\varepsilon >0$. Then $B((\phi ,\psi) ,\varepsilon) \in \mathcal{O}_{L^2(\mathbb{R}^d)^2}$ and 
$\pi [B((\phi, \psi), \varepsilon)] =B((\pi (\phi ,\psi)) ,\varepsilon)$. Therefore $\pi \colon (L^2(\mathbb{R}^d)^2 ,\mathcal{O}_{L^2 (\mathbb{R}^d)^2}) \rightarrow (G \setminus L^2 (\mathbb{R}^d)^2, \mathcal{O}_d )$
is continuous. By the definition of the quotient topology, we get $\mathcal{O}_d \subset \mathcal{O}$. Let $U\in \mathcal{O}$ and take any $(\phi ,\psi )\in L^2 (\mathbb{R}^d)^2$ with $\pi (\phi ,\psi) \in U$.
Then $(\phi ,\psi) \in \pi^{-1}[U] \in L^2 (\mathbb{R}^d)$, and so there exists $\varepsilon >0$ such that $B((\phi ,\psi) ,\varepsilon) \subset \pi^{-1}[U]$. Therefore we get $\pi [B((\phi ,\psi) \varepsilon)] =B(\pi (\phi ,\psi), \varepsilon) \subset
U$. This implies $U\in \mathcal{O}_{d}$.
\end{proof}
\begin{rem}
Let $(\phi_n ,\psi_n) \subset L^2 (\mathbb{R})^2\, and\, (\phi_0 ,\psi_0) \in L^2 (\mathbb{R}^d)^2$. Then followings are equivalent:\\
1.\, $\pi (\phi_n ,\psi_n) \rightarrow \pi (\phi_0 ,\psi_0) \ in\ G\setminus L^2(\mathbb{R}^d)^2$ \\
2.\, There exists $\{g_n\} \subset G$ such that $g_n (\phi_n ,\psi_n) \rightarrow (\phi_0 ,\psi_0) \ in \ L^2(\mathbb{R}^d)^2$.
\end{rem}
\begin{proof}
$1\Rightarrow 2$.\  Suppose $\pi (\phi_n ,\psi_n) \rightarrow \pi (\phi_0,\psi_0).$
For each $n\in \mathbb{N}$, there exists $g_n \in G$ such that $\|g_n (\phi_n ,\psi_n) -(\phi_0 , \psi_0) \|_{L^2 (\mathbb{R}^d)^2} < \inf_{g\in G} \|g(\phi_n ,\psi_n) -(\phi_0 ,\psi_0)\|_{L^2 (\mathbb{R}^d)^2} +\frac{1}{n} \rightarrow 0 $ as $n\rightarrow \infty$.\\
$2\Rightarrow 1$. This is clear and so we omit the proof.
\end{proof}
\begin{lem}
Let $K \subset G\setminus L^2 (\mathbb{R}^d)^2$ be precompact. Assume also that
\begin{align}\label{ap-lem8-1}
\exists  \ \eta >0 \ s.t.\ \forall (\phi,\psi) \in \pi ^{-1}[K] ,\ \eta =\|(\phi ,\psi)\|_{L^2 (\mathbb{R}^d)^2}.
\end{align}
Then there exists a precompact set $\tilde{K} \subset L^2 (\mathbb{R}^d)^2$ such that $\pi [\tilde{K}] =K.$
\end{lem}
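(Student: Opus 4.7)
The plan is to lift a countable dense subset of $\bar K$ to a totally bounded subset of $L^2$ via a nested-net construction, and then take the intersection of its closure with $\pi^{-1}(K)$. First, since $K$ is a precompact subset of the metric space $(G\setminus L^2(\R^d)^2,d)$ (Lemma~\ref{appendix-lem8}), its closure $\bar K$ is a compact separable metric space. Choose a nested sequence of finite $2^{-m}$-nets $N_1\subset N_2\subset \cdots \subset \bar K$ whose union $\{p_n\}_{n\ge 1}:=\bigcup _m N_m$ is dense in $\bar K$.

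Next, I would construct representatives $r^{(m)}_i \in \pi^{-1}(p^{(m)}_i)$ compatibly across nets. For $N_1$ pick $r^{(1)}_i$ arbitrarily; given representatives for $N_{m-1}$, for each new point $p^{(m)}_i \in N_m\setminus N_{m-1}$ select a nearest neighbor $p^{(m-1)}_{j(i)}\in N_{m-1}$ (so $d(p^{(m)}_i,p^{(m-1)}_{j(i)})<2^{-m+1}$), and use the definition of $d$ (Lemma~\ref{appendix-lem8}) to choose $r^{(m)}_i\in \pi^{-1}(p^{(m)}_i)$ with
\eqq{
\norm{r^{(m)}_i-r^{(m-1)}_{j(i)}}{L^2(\R^d)^2}<2^{-m+1}+2^{-m}.
}
Set $q_n := r^{(m(n))}_{i(n)}$ where $m(n)$ is the minimal level with $p_n \in N_{m(n)}$. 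Then $\{q_n\}=\bigcup_m\{r^{(m)}_i\}$ is totally bounded in $L^2$: tracing any $r^{(m')}_i$ back through its ancestors down to level $m$ and telescoping yields $\norm{r^{(m')}_i-r^{(m)}_{j}}{L^2(\R^d)^2}\le \sum _{k=m+1}^{m'}(2^{-k+1}+2^{-k})\le 3\cdot 2^{-m}$, so $\bigcup_{m'\ge m}\{r^{(m')}_i\}$ is covered by the finite collection of $L^2$-balls of radius $3\cdot 2^{-m}$ around $\{r^{(m)}_j\}$.

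Finally, set $\tilde K:=\overline{\{q_n:n\ge 1\}}\cap \pi^{-1}(K)$, where the closure is in $L^2(\R^d)^2$. As a subset of the compact set $\overline{\{q_n\}}$, $\tilde K$ is precompact. The inclusion $\pi(\tilde K)\subset K$ is immediate; for the reverse, given $p\in K$ density furnishes $p_{n_k}\to p$ in $G\setminus L^2$, precompactness of $\{q_n\}$ yields a sub-subsequence $q_{n_{k_\ell}}\to q^*\in L^2$, and continuity of $\pi$ gives $\pi(q^*)=p$, so $q^*\in \tilde K$ lifts $p$.

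The main obstacle is the compatibility step in the construction of representatives: at each level $m$ we must choose $r^{(m)}_i$ close in $L^2$ to a previously chosen $r^{(m-1)}_{j(i)}$, and this has to be done coherently enough that the telescoping bound goes through. The nested-net structure (each level refining the previous) combined with the very definition of the quotient metric $d$ makes this possible, while the positive-norm assumption $\norm{q}{L^2}=\eta>0$ ensures all representatives stay on the sphere of radius $\eta$ and rules out any degeneracy associated with the trivial orbit (which, if included, would obstruct the construction since the group elements realizing $d$-proximity to the trivial orbit could escape to $0$ in WOT by Lemma~\ref{appendix-lem6}).
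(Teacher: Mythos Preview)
Your proof is correct and takes a genuinely different route from the paper's. The paper works concretely through the parametrization of $G$: it first shows (using the hypothesis $\eta>0$ together with the WOT/SOT dichotomy of Lemma~\ref{appendix-lem6}) that every orbit $p\in K$ has a representative $f(p)$ with $\min\{\|f(p)\|_{L^2(B(0,1))^2},\|\widehat{f(p)}\|_{L^2(B(0,1))^2}\}\ge\varepsilon$ for a uniform $\varepsilon>0$, sets $\tilde K:=\{f(p):p\in K\}$, and then verifies sequential precompactness directly by checking that the parameters $(\lambda_n,\lambda_n^{-1},|x_n|,|\xi_n|)$ of any $g_n$ carrying $(\phi_n,\psi_n)\in\tilde K$ to a limit must stay bounded --- each unbounded case contradicts the uniform $\varepsilon$-concentration near the origin in physical or frequency space. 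Your argument, by contrast, is purely metric: it uses only that the quotient distance $d$ is an infimum over the orbit and that $\pi$ is continuous (Lemma~\ref{appendix-lem8}), and would go through verbatim for any isometric group action with closed orbits on a complete metric space. In particular --- your closing paragraph notwithstanding --- the nested-net construction never actually invokes the constant-norm hypothesis $\|(\phi,\psi)\|\equiv\eta>0$: both the telescoping bound and the surjectivity-via-continuity step are insensitive to it, so you have in fact established a slightly stronger statement. The paper's approach buys a concrete single-shot lift with a useful concentration property; yours buys generality and bypasses the parameter case analysis entirely. (One notational wrinkle: when you write $r^{(m-1)}_{j(i)}$ for the parent's representative, the parent may have been introduced at a level strictly below $m-1$; the telescoping sum is then over a subset of $\{m+1,\dots,m'\}$, which only improves the bound.)
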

\begin{proof}
First we prove by contradiction that
\begin{align*}
\exists \varepsilon >0 \ &s.t\ \forall p \in K ,\ \exists  f(p) \in \pi^{-1}[\{p\}] \ s.t.\\
&\min \{\|f(p) \|_{L^2 (B(0,1))^2} ,\ \|\widehat{f(p)}\|_{L^2 (B(0,1))^2}\} \geq \varepsilon .
\end{align*}
If not, for each $n\in \mathbb{N}$ there exists $(\phi_n ,\psi_n) \in \pi^{-1}[K] $ such that
\begin{align}
\sup_{g\in G}\min\{ \|g(\phi _n, \psi_n )\|_{L^2 (B(0,1))^2},
\|\mathcal{F} [g(\phi_n ,\psi_n)]\|_{L^2 (B(0,1))^2} \}
\leq \frac{1}{n}.
\end{align}
By precompactness of $K$ and $\phi_n \in \pi^{-1}[K]$, passing to a subsequence if necessary, there exist $g_n \in G$ and $(\phi , \psi ) \in L^2 (\mathbb{R}^d)^2$ such that
\begin{align}\label{ap-lem8-2}
g_n (\phi_n ,\psi_n ) \rightarrow (\phi, \psi ) \ in \ L^2 (\mathbb{R}^d)^2.
\end{align}
Take $g\in G$ arbitrarily. Then it follows that
\begin{align*}
\min\{&\|g(\phi ,\psi)\|_{L^2 (B(0,1))^2} , \|\mathcal{F}[g(\phi ,\psi)]\|_{L^2 (B(0,1))^2}\}\\
&\leq \|(\phi ,\psi ) -g_n (\phi_n ,\psi _n)\|_{L^2(\mathbb{R}^d)^2} +\min
\{ \|gg_n(\phi _n, \psi_n )\|_{L^2 (B(0,1))^2}, 
\|\mathcal{F} [gg_n(\phi_n ,\psi_n)]\|_{L^2 (B(0,1))^2} \}\\
&\rightarrow 0\ as \ n\rightarrow \infty.
\end{align*}
Therefore we obtain
\begin{align*}
g(\phi ,\psi)=(0,0) \quad\text{or} \quad  \mathcal{F}[g(\phi, \psi)]=0\quad \text{for any}\quad g\in G,
\end{align*}
and so $(\phi ,\psi)=(0,0)$. This contradicts \eqref{ap-lem8-1}.
Next we set $\tilde{K} := \{f(p) |  p \in K\}$ and prove this satisfies the result. It is clear that $\pi [\tilde{K}]=K.$
 To prove precompactness of $\tilde{K}$, take $\{(\phi_n ,\psi_n)\}\subset \tilde{K} $ arbitrarily. Since $K$ is precompact, passing to a subsequence if necessary, there exists $g_n = g(\theta_n ,\xi_n , x_n ,\lambda_n) \in G$ 
and $(\phi ,\psi) \in L^2 (\mathbb{R}^d)^2$ such that $g_n (\phi_n ,\psi_n) \rightarrow (\phi ,\psi) $ in $L^2 (\mathbb{R}^d)^2$.
We prove by contradiction that $\lambda_n $, $\lambda_n^{-1}$, $\xi_n$ and $x_n$ are bounded.\\
Case1.  Suppose $\lambda_n^{-1}$ is unbounded. Passing to a subsequence if necessary, we may assume $\lambda_n \rightarrow 0$.
Then it follows that
\begin{align*}
\|(\phi_n ,\psi_n)\|_{L^2 (B(0,1))^2} 
&=\|g_n (\phi_n ,\psi_n)\|_{L^2 (B(x_n ,\lambda_n))^2 } \\
&\leq \|g_n (\phi_n ,\psi_n) -(\phi ,\psi)\|_{L^2(\mathbb{R}^d)^2} +\|(\phi, \psi)\|_{L^2 (B(x_n ,\lambda_n))^2} \rightarrow 0.
\end{align*}
This contradicts the definition of $\tilde{K}$.\\
Case2. Suppose $\lambda_n$ is unbounded. Then passing to a subsequence if necessary, we may assume that 
$\lambda_n \rightarrow \infty$. Then it follows that
\begin{align*}
\|\hat{ \phi}_n \|_{L^2 (B(0,1))} &
=\|\hat{h} (\theta_n , \xi_n , x_n , \lambda_n) \hat{\phi}_n\|_{L^2 (B(\xi_n , \lambda_n^{-1}))}\\
&\leq \|h(\theta_n , \xi_n , x_n , \lambda_n) \phi_n -\phi\|_{L^2 (\mathbb{R}^d)} +\|\hat{\phi}\|_{L^2 (B(\xi_n , \lambda_n^{-1}))} \rightarrow 0, \\
\|\hat{\psi}_n\|_{L^2 (B(0,1))} &
=\|\hat{h}(2\theta_n , 2\xi_n ,x_n ,\lambda_n)\hat{\psi}_n\|_{L^2 (B(2\xi_n , \lambda_n^{-1}))} \\
&\leq \| h(2\theta_n , 2\xi_n ,x_n ,\lambda_n)\psi_n -\psi \|_{L^2 (\mathbb{R}^d)}+\|\hat{\psi}\|_{L^2 (B(2\xi_n ,\lambda_n^{-1}))} \rightarrow 0 .
 \end{align*}
This contradicts the definition of $\tilde{K}$. Therefore $\lambda_n $ and $\lambda_n^{-1}$ are bounded.\\
Case3. Suppose $x_n$ is unbounded or $\xi_n $ is unbounded. Passing to a subsequence if necessary, we may assume $|x_n| \rightarrow \infty$ or $|\xi_n| \rightarrow \infty $. Then we get the contradiction by the same argument.\\
Therefore all parameters are bounded, and so we may assume 
\begin{align*}
g_n^{-1} \rightarrow g_0\quad \text{in SOT  for  some $g_0 \in G$.}
\end{align*}
Then we establish that
\begin{align*}
(\phi_n ,\psi_n) =g_n^{-1} g_n (\phi_n ,\psi_n) \rightarrow g_0 (\phi, \psi) \quad \text{in} \  L^2 (\mathbb{R}^d)^2. 
\end{align*}
\begin{cor}
Let $(u,v):I\times \mathbb{R}^d \colon \rightarrow \mathbb{C}^2$ be a  nonzero solution to NLS. Assume $Gu[I] :=\{Gu(t) \mid t\in I\}$ is precompact in $G\setminus L^2 (\mathbb{R}^d)^2$. 
Then there exists $g\in \text{Map}(I,G) $ such that $\{g(t)u(t) \mid t\in I\}$ is precompact in $L^2 (\mathbb{R}^d)^2$.
\end{cor}
\end{proof}


\section{Proof of the profile decomposition in the radial case}\label{AppendixB}

\begin{lem}\label{lem1-1}
Let $\{u_n\}_{L^2 (\mathbb{R}^d)}, u\in L^2 (\mathbb{R}^d)$. Then, following three conditions are equivalent.\\
1.\ $u_n \rightharpoonup u \ weakly\ in\ L^2 (\mathbb{R}^d)$.\\
2.\ $e^{i\alpha t\Delta } u_n \rightharpoonup e^{i\alpha t \Delta } u \ weakly\ in \ L^{\frac{2(d+2)}{d}} (\mathbb{R}^{1+d})$\text{ for some $\alpha>0$}.\\
3.\ $e^{i\alpha t\Delta } u_n \rightharpoonup e^{i\alpha t\Delta } u \ weakly\ in \ L^{\frac{2(d+2)}{d}} (\mathbb{R}^{1+d})$\text{ for any $\alpha>0$}.
\end{lem}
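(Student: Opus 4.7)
The argument is a standard duality-plus-density pattern built on the Strichartz estimate and the unitarity of $e^{i\alpha t\Delta}$ on $L^2$. The implication $(3)\Rightarrow(2)$ is trivial, so I would only need to treat $(1)\Rightarrow(3)$ and $(2)\Rightarrow(1)$.

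For $(1)\Rightarrow(3)$, I would fix $\alpha>0$ arbitrarily. Weak $L^2$-convergence implies $\sup_n\|u_n\|_{L^2}<\infty$, so by the Strichartz estimate $\{e^{i\alpha t\Delta}u_n\}$ is bounded in $L^{2(d+2)/d}(\mathbb{R}^{1+d})$; hence it suffices to check pairings against a dense subset of the dual space $L^{2(d+2)/(d+4)}(\mathbb{R}^{1+d})$. For $\varphi$ in that space, the dual Strichartz estimate yields
\[
\Phi(x):=\int e^{-i\alpha t\Delta}\varphi(t,\cdot)(x)\,dt\in L^2(\mathbb{R}^d),
\]
and Fubini combined with the identity $\langle e^{i\alpha t\Delta}f,g\rangle=\langle f,e^{-i\alpha t\Delta}g\rangle$ gives the adjoint formula $\iint (e^{i\alpha t\Delta}u_n)\overline{\varphi}\,dt\,dx=\int u_n\overline{\Phi}\,dx$. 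The right-hand side converges to $\int u\overline{\Phi}\,dx$ by hypothesis~(1), which is the claim.

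For $(2)\Rightarrow(1)$, I would test (2) against $\varphi(t,x):=\chi(t)e^{i\alpha t\Delta}\psi(x)$ with $\chi\in C_c^\infty(\mathbb{R})$ satisfying $\int\chi=1$, and $\psi\in\mathcal{S}(\mathbb{R}^d)$. This $\varphi$ lies in $L^{2(d+2)/(d+4)}_{t,x}$ since $\mathcal{S}$ is invariant under the Schr\"odinger flow and $\operatorname{supp}\chi$ is compact, so Schwartz seminorms of $e^{i\alpha t\Delta}\psi$ are bounded uniformly in $t\in\operatorname{supp}\chi$. A direct computation gives $\Phi=\psi$, so the adjoint identity produces $\int u_n\overline{\psi}\,dx\to\int u\overline{\psi}\,dx$ for every $\psi\in\mathcal{S}$; combined with $L^2$-boundedness of $\{u_n\}$ and density of $\mathcal{S}$ in $L^2$, this delivers weak $L^2$-convergence.

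The main obstacle is the $L^2$-boundedness of $\{u_n\}$ needed in $(2)\Rightarrow(1)$: since the Strichartz estimate is not reversible, weak convergence in $L^{2(d+2)/d}_{t,x}$ does not by itself force $\sup_n\|u_n\|_{L^2}<\infty$. In the profile-decomposition applications of this paper the sequence $\{u_n\}$ is assumed bounded in $L^2$ from the outset, so the subtlety does not arise; otherwise one must add that boundedness as a hypothesis.
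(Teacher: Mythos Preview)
Your proof is correct and complete in substance; the paper itself does not prove this lemma but simply cites Merle--Vega (Lemma~3.63 in \cite{MR1628235}), so there is nothing to compare against beyond noting that you have supplied the standard duality-and-density argument that the cited reference contains.

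Your flag about $L^2$-boundedness in $(2)\Rightarrow(1)$ is well taken: the lemma as literally stated in the paper does not assume $\sup_n\|u_n\|_{L^2}<\infty$, and weak convergence in $L^{2(d+2)/d}_{t,x}$ alone does not furnish it. In the paper's only use of this lemma (the proof of Lemma~\ref{ap-lem4}), the sequence in question arises from a bounded $L^2$ sequence, so the hypothesis is available; your remark that the statement should carry an explicit boundedness assumption is the right way to read it.
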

\begin{proof}
See for instance Lemma 3.63 in \cite{MR1628235}.
\end{proof}
\begin{lem}\label{sot-lemma}Fix $\kappa >0$. Then for any $\{g_n = g_{\kappa}(\theta_n , \xi_n , x_n , \lambda_n) \}\subset G_{\kappa}$ and $\{t_n\} \subset \mathbb{R}$, following are equivalent.\\
1.\  $g_n U_{\kappa}(t_n) \rightarrow 0\quad \text{in WOT}$\\
2.\ $g_n U_{\kappa} (t_n) \rightarrow g U_{\kappa}(t_0) \quad \text{in SOT for some $g \in G_{\kappa}$ and $t_0 \in \mathbb{R}$ passing to a subsequence if necessary.}$
\end{lem}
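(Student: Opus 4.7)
My plan is to establish the dichotomy: passing to a subsequence, exactly one of (1) and (2) holds (they are incompatible since $gU_\kappa (t_0)$ is a nonzero unitary). The starting point is the Galilean/scaling commutation identity
\eqq{h(\theta ,\xi _0,x_0,\lambda )\,e^{it\Delta }=e^{it\lambda ^2\Delta }\,h(\theta +t\lambda ^2|\xi _0|^2,\xi _0,x_0-2t\lambda ^2\xi _0,\lambda ),}
which is verified directly in Fourier variables using the Galilean covariance of $e^{it\Delta }$ together with the scaling relation $e^{it\Delta }S_\lambda =S_\lambda e^{it\lambda ^{-2}\Delta }$. Applied component-wise in $g_\kappa$, it yields
\eqq{g_nU_\kappa (t_n)=U_\kappa (\tau _n)\,g_n',\quad \tau _n:=t_n\lambda _n^2,\quad g_n':=g_\kappa (\theta _n+t_n\lambda _n^2|\xi _n|^2,\,\xi _n,\,x_n-2t_n\lambda _n^2\xi _n,\,\lambda _n).}
I then apply Lemma \ref{appendix-lem6} to $\shugo{g_n'}$ and pass to a subsequence so that either (A) $g_n'\to g'$ in SOT for some $g'\in G_\kappa$, or (B) $g_n'\to 0$ in WOT, and such that $\tau _n$ has a limit in $[-\I ,\I ]$.

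In case (A) with $\tau _n\to \tau _\infty \in \R$, strong continuity of the Schr\"odinger group gives $U_\kappa (\tau _n)\to U_\kappa (\tau _\infty )$ in SOT, hence $g_nU_\kappa (t_n)\to U_\kappa (\tau _\infty )g'$ in SOT; the reverse of the commutation identity rewrites the right-hand side as $gU_\kappa (t_0)$ for appropriate $g$ and $t_0$, yielding (2). In case (A) with $\tau _n\to \pm \I$, the pairing $\LR{g_nU_\kappa (t_n)\Phi ,\Psi}=\LR{g_n'\Phi ,U_\kappa (-\tau _n)\Psi}$ combines a strongly convergent factor $g_n'\Phi \to g'\Phi$ with a weakly null factor $U_\kappa (-\tau _n)\Psi \rightharpoonup 0$ (the standard fact that $e^{it\Delta }\varphi \rightharpoonup 0$ in $L^2$ as $|t|\to \I $, shown first for $\varphi \in L^1\cap L^2$ by the dispersive bound $\tnorm{e^{it\Delta }\varphi }{L^\I }\lec |t|^{-d/2}\tnorm{\varphi }{L^1}$ and then by density), so the pairing tends to $0$ and (1) holds.

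For case (B), it suffices by density to test against $\Phi =(\phi ,0)$ and $\Psi =(\varphi ,0)$ with $\phi ,\varphi $ Schwartz, the three remaining component combinations being analogous. Applying Plancherel and the substitution $\eta =\lambda _n(\xi -\xi _n)$ in the Fourier representation of $h(\theta _n',\xi _n,x_n',\lambda _n)\phi $, and then expanding $|\xi _n+\lambda _n^{-1}\eta |^2$ in the phase contributed by $U_\kappa (\tau _n)$ (which makes the original parameters $x_n,t_n$ re-emerge), the pairing takes the form
\eqq{C_n\,\lambda _n^{-d/2}\int e^{i\lambda _n^{-1}x_n\cdot \eta +it_n|\eta |^2}\,\bbar{\hhat{\phi }(\eta )}\,\hhat{\varphi }(\xi _n+\lambda _n^{-1}\eta )\,d\eta ,\qquad |C_n|=1.}
By Lemma \ref{appendix-lem6}, at least one of $\lambda _n,\lambda _n^{-1},|\xi _n|,|x_n'|$ diverges: the prefactor $\lambda _n^{\mp d/2}$ handles $\lambda _n\to 0,\I$, and when $|\xi _n|\to \I$ with $\lambda _n,\lambda _n^{-1}$ bounded, $\hhat{\varphi }(\xi _n+\lambda _n^{-1}\eta )\to 0$ on the effective $\eta$-support of $\hhat{\phi }$ by dominated convergence.

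The main obstacle I anticipate is the remaining sub-case $|x_n'|\to \I$ with $\lambda _n,\lambda _n^{-1},|\xi _n|$ all bounded. From $x_n'=x_n-2\tau _n\xi _n$, one of the following must hold: either $|x_n|/\lambda _n\to \I $, in which case Riemann--Lebesgue applied to the oscillation $e^{i\lambda _n^{-1}x_n\cdot \eta }$ via repeated integration by parts gives decay; or $|\tau _n|\to \I $, and hence $|t_n|\to \I $ since $\lambda _n$ is bounded below, and then completing the square $t_n|\eta |^2+\lambda _n^{-1}x_n\cdot \eta =t_n|\eta -\eta _c|^2+\text{const}$ with $\eta _c=-x_n/(2t_n\lambda _n)$ and applying the standard stationary-phase bound yields an integral of size $O(|t_n|^{-d/2})$, beating the bounded prefactor. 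Thus the pairing tends to $0$ in every sub-case of (B), establishing (1).
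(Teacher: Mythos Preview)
Your argument is essentially correct and is precisely the kind of expansion the paper's one-line proof (``same argument as Lemma~\ref{appendix-lem6}'') is pointing to: rerun the parameter-by-parameter case analysis, now with the extra time parameter. The commutation $g_nU_\kappa (t_n)=U_\kappa (\tau _n)g_n'$ is a tidy way to recycle Lemma~\ref{appendix-lem6} as a black box in case (A); in case (B) you end up doing the direct computation anyway, which is what the paper intends.

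One point in case (B) needs tightening. In the sub-case $|x_n'|\to \infty$ with $\lambda _n,\lambda _n^{-1},|\xi _n|$ bounded, you offer Riemann--Lebesgue when $|x_n|/\lambda _n\to \infty$ and stationary phase when $|\tau _n|\to \infty$, but these are not mutually exclusive, and the Riemann--Lebesgue argument fails if $t_n$ is unbounded: integrating by parts in $\eta$ also differentiates $e^{it_n|\eta |^2}$, producing a factor of size $|t_n|$. The correct split is on $\tau _n$ first (after a further subsequence): if $|\tau _n|\to \infty$ then $|t_n|\to \infty$ and your dispersive/stationary-phase bound gives $O(|t_n|^{-d/2})$ regardless of $x_n$; if $\tau _n$ stays bounded then so does $t_n$, whence $|x_n|\to \infty$ and the amplitude has uniformly bounded $\eta$-derivatives, so Riemann--Lebesgue is legitimate. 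A smaller cosmetic point: for $\lambda _n\to 0$ the displayed integral carries prefactor $\lambda _n^{-d/2}$, which diverges, so one should revert to the unsubstituted form with prefactor $\lambda _n^{+d/2}$ (your notation $\lambda _n^{\mp d/2}$ suggests you have this in mind).
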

\begin{proof}
This follows from same argument in the proof of Lemma \ref{appendix-lem6}.
\end{proof}

\begin{lem}\label{ap-lem4}
Let $\{(u_n , v_n)\}$ be a bounded sequence of $L^2 (\mathbb{R}^d)^2$, and let
\begin{align*}
\begin{pmatrix}
u_n \\
v_n
\end{pmatrix}
=\sum_{j=1}^{J} g_n^j U_{\kappa}(t_n^j) 
\begin{pmatrix}
\phi^j \\
\psi^j
\end{pmatrix}
+W_n^J
\end{align*}
be a profile decomposition given in Theorem \ref{profile}, where $U_{\kappa}(t)=(e^{it\Delta} , e^{i \kappa t\Delta})$. Assume also that $[g_n U_{\kappa}(t_n)]^{-1}(u_n ,v_n) \rightharpoonup (\phi ,\psi)=:\Psi$
weakly in $L^2 \times L^2 $ for some $\{g_n\} \subset G_{\kappa} ,\ \{t_n\} \subset \mathbb{R} $ and $(\phi ,\psi)\in L^2 \times L^2\setminus \{(0,0)\}$. Then, after passing to a subsequence if necessary, there exists a unique $j_0 \leq J^{\ast}$ such that
\begin{align*}
[g_n U_{\kappa}(t_n)]^{-1} g_n^{j_0} U_{\kappa}(t_n^{j_0}) \rightarrow gU_{\kappa}(t_0)\ in\ SOT\ for \ some\ g\in G_{\kappa} \ and\ t_0\in \mathbb{R} 
\end{align*}
and
\begin{align*}
(\phi ,\psi) =gU_{\kappa}(t_0) (\phi^{j_0} , \psi^{j_0})
\end{align*}
\end{lem}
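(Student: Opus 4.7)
The plan is to analyze, for each profile index $j \leq J^*$, the compositions $h_n^j := [g_n U_{\kappa}(t_n)]^{-1} g_n^j U_{\kappa}(t_n^j)$, which lie in $G_\kappa \cdot U_\kappa(\R)$. By Lemma~\ref{sot-lemma} together with a standard diagonal extraction, I may pass to a subsequence so that for every $j \leq J^*$, either $h_n^j \to 0$ in WOT, or $h_n^j \to \hat{g}^j U_\kappa(\hat{t}^j)$ in SOT for some $\hat{g}^j \in G_\kappa$ and $\hat{t}^j \in \R$. Uniqueness of $j_0$ is then immediate from \eqref{eqeq1}: if two distinct indices $j \neq k$ both produced SOT limits, the composition
\eqq{(h_n^j)^{-1} h_n^k = U_\kappa(-t_n^j)(g_n^j)^{-1} g_n^k U_\kappa(t_n^k)}
would also converge in SOT, which is precisely what the asymptotic orthogonality rules out.

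The main obstacle is the \emph{existence} of $j_0$, which I would establish by contradiction. Suppose $h_n^j \to 0$ in WOT for every $j \leq J^*$. Applying $[g_n U_\kappa(t_n)]^{-1}$ to both sides of \eqref{eqeq2}, each profile term $h_n^j(\phi^j,\psi^j)$ then vanishes weakly, so for every fixed $J$ one obtains (along a further subsequence)
\eqq{[g_n U_\kappa(t_n)]^{-1} W_n^J \rightharpoonup (\phi,\psi) \quad \text{weakly in}\quad L^2(\R^d)^2.}
The key observation is that the action of $G_\kappa$ combined with time translation preserves the Strichartz norm of the free propagator, so
\eqq{\norm{U_\kappa(t) [g_n U_\kappa(t_n)]^{-1} W_n^J}{L_{t,x}^{2(d+2)/d}} = \norm{U_\kappa(t) W_n^J}{L_{t,x}^{2(d+2)/d}},}
which tends to $0$ as $n \to \infty$ and then $J \to J^*$ by \eqref{eqeq3}. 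Lower semicontinuity of the $L_{t,x}^{2(d+2)/d}$-norm of the free evolution under weak $L^2$-convergence of initial data (equivalently, the contrapositive of the inverse Strichartz inequality Proposition~\ref{prop}) then forces $U_\kappa(t)(\phi,\psi) \equiv 0$ and hence $(\phi,\psi) = 0$, contradicting the hypothesis.

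Once $j_0$ is identified, set $g := \hat{g}^{j_0}$ and $t_0 := \hat{t}^{j_0}$. Fix any $J \geq j_0$; applying $[g_n U_\kappa(t_n)]^{-1}$ to \eqref{eqeq2} yields
\eqq{[g_n U_\kappa(t_n)]^{-1}(u_n,v_n) = \sum_{j=1}^{J} h_n^j(\phi^j,\psi^j) + [g_n U_\kappa(t_n)]^{-1} W_n^J.}
Terms with $j \neq j_0$ tend to $0$ weakly, while the $j_0$-th term converges strongly to $gU_\kappa(t_0)(\phi^{j_0},\psi^{j_0})$. For the remainder I use the factorization
\eqq{[g_n U_\kappa(t_n)]^{-1} W_n^J = h_n^{j_0} \cdot \big( U_\kappa(-t_n^{j_0})(g_n^{j_0})^{-1} W_n^J \big),}
in which the first factor converges strongly and the second converges weakly to $0$ by \eqref{eqeq4}; hence the product vanishes weakly. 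Taking weak limits in the displayed identity then yields $(\phi,\psi) = gU_\kappa(t_0)(\phi^{j_0},\psi^{j_0})$, as required.
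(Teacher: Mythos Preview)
Your proof is correct and follows essentially the same strategy as the paper's: argue by contradiction for existence (if every $h_n^j\to 0$ in WOT, the remainder inherits the nonzero weak limit $\Psi$, and the invariance of the Strichartz norm under $G_\kappa\cdot U_\kappa(\R)$ together with weak lower semicontinuity contradicts \eqref{eqeq3}), then derive uniqueness from the asymptotic orthogonality \eqref{eqeq1}. Two small remarks: the weak lower semicontinuity step is more cleanly justified by Lemma~\ref{lem1-1} (weak $L^2$ convergence of data $\Leftrightarrow$ weak $L_{t,x}^{2(d+2)/d}$ convergence of the free flow) than by the inverse Strichartz inequality you cite; and your explicit verification of the identity $(\phi,\psi)=gU_\kappa(t_0)(\phi^{j_0},\psi^{j_0})$ via the factorization through $h_n^{j_0}$ and \eqref{eqeq4} is a nice addition that the paper leaves implicit.
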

\begin{proof}
If not, by Lemma \ref{sot-lemma} we have
\begin{align*}
[g_n U_{\kappa}(t_n)]^{-1} g_n^{j} U_{\kappa}(t_n^j) \rightarrow 0\ in \ WOT\ for\ all\ j\leq J^{\ast}.
\end{align*}
Then it follows from profile decomposition that
\begin{align*}
[g_n U_{\kappa}(t_n)]^{-1} W_n^{\ell} \rightharpoonup \Psi \ for\ all\ \ell \leq J^{\ast}.
\end{align*}
Applying Lemma \ref{lem1-1} and using weak lower semicontinuity of $L^{\frac{2(d+2)}{d}} (\mathbb{R}^{1+d})$ norm , we obtain that
\begin{align*}
0<\|U_{\kappa}(t)\Psi \|_{L_{t,x}^{\frac{2(d+2)}{d}} (\mathbb{R}^{1+d})^2} 
\leq \liminf_{n\rightarrow \infty} \|U_{\kappa}(t)[g_n U_{\kappa}(t_n)]^{-1} W_n^{\ell}\|_{L_{t,x}^{\frac{2(d+2)}{d}} (\mathbb{R}^{1+d})^2}
=\liminf_{n\rightarrow \infty} \|U_{\kappa}(t)W_n^{\ell}\|_{L_{t,x}^{\frac{2(d+2)}{d}} (\mathbb{R}^{1+d})^2}.
\end{align*}
This contradicts the property of $W_n^{\ell}$. Finally we show the uniqueness. If $j_1 \neq j_0$, then we have by asymptotic orthogonality that
\begin{align*}
[g_n U_{\kappa}(t_n)]^{-1} g_n^{j_1} U_{\kappa}(t_n^{j_1}) 
= [g_n U_{\kappa}(t_n)]^{-1} (g_n^{j_0} U_{\kappa}(t_n^{j_0})) U_{\kappa}(-t_n^{j_0})( g_n^{j_0})^{-1} g_n^{j_1} U_{\kappa}(t_n^{j_1}) \rightarrow 0 \ in\ WOT.
\end{align*}
\end{proof}
\begin{proof}[Proof of Theorem \ref{radial-profile}]
1. First we show by contradiction that all $\{\lambda_n^{j} \xi_n^j\}_{n}$ and $\{(\lambda_n^j)^{-1} x_n^j -2t_n^{j} \lambda_n^{j} \xi_n^{j}\}_n$ are bounded. If not , there exists 
$j_0 \leq J^{\ast}$ such that 
\begin{align*}
|\lambda_n^{j_0} \xi_n^{j_0}| +|(\lambda_n^{j_0})^{-1} x_n^{j_0} -2t_n^{j_0} \lambda_n^{j_0} \xi_n^{j_0}| \rightarrow \infty.
\end{align*}
 Since $d\geq 2$ , we can take a sequence 
$\{A_{\ell}\}_{\ell} \subset SO(d)$ such that 
\begin{align*}
|(A_{\ell} -A_{m})[\lambda_n^{j_0} \xi_n^{j_0}]| +|(A_{\ell} -A_{m})[(\lambda_n^{j_0})^{-1} x_n^{j_0} -2t_n^{j_0} \lambda_n^{j_0} \xi_n^{j_0}]| \rightarrow \infty \ for\ \ell \neq m .
\end{align*}
Then we have $\{G_n^{\ell} := g_{\kappa}(\theta_n^{j_0} , A_{\ell} \xi_n^{j_0} , A_{\ell} x_n^{j_0} , \lambda_n^{j_0})U_{\kappa}(t_n^{j_0})\}$ is asymptotically orthogonal. On the other hand, from radial property of 
$\{(u_n ,v_n)\}$ and a profile decomposition we have 
\footnote{
Set $A(\phi ,\psi)= (\phi (A\cdot ) , \psi (A\cdot))$ for $(\phi ,\psi ) \in L^2 (\mathbb{R}^d)^2.$}
\begin{align*}
[G_n^{\ell}]^{-1}(u_n, v_n) 
&= U_{\kappa}(-t_n^{j_0}) g_{\kappa}(\theta_n^{j_0} , A_{\ell}\xi_n^{j_0}, A_{\ell} x_n^{j_0} , \lambda_n^{j_0})^{-1} A_{\ell} (u_n, v_n)\\
&=U_{\kappa}(-t_n^{j_0}) A_{\ell} (g_n^{j_0})^{-1} (u_n. v_n) \\
&=A_{\ell} U_{\kappa}(-t_n^{j_0}) (g_n^{j_0})^{-1} (u_n ,v_n) \rightharpoonup A_{\ell} (\phi^{j_0} , \psi^{j_0}) \ weakly\ in\ L^2\times L^2.
\end{align*}
Therefore by Lemma \ref{ap-lem4}, there exist $j_{\ell} ,\ g^{\ell} \in G_{\kappa}$ and $t^{\ell}$ such that
\begin{align*}
[G_n^{\ell}]^{-1} g^{j_{\ell}} U_{\kappa}(t_n^{j_{\ell}}) \rightarrow g^{\ell}U_{\kappa}(t^{\ell}) \ in\ SOT\ and \ A_{\ell} (\phi^{j_0} ,\psi^{j_0}) =g^{\ell} U_{\kappa}(t^{\ell}) (\phi^{\ell} ,\psi^{\ell}).
\end{align*}
Noting that $[g_n^{j_{\ell}} U_{\kappa}(t_n^{j_{\ell}})]^{-1} g_n^{j_m}U_{\kappa}(t_n^{j_m}) = [[G_n^{\ell}]^{-1}g_n^{j_{\ell}}U_{\kappa}(t_n^{j_{\ell}}) ]^{-1} [[G_n^{\ell}]^{-1} G_n^{m}][ [G_n^m]^{-1}
g_n^{j_m} U_{\kappa}(t_n^{j_m})]\rightarrow 0 \ in \ WOT$ for $\ell \neq m$, we obtain $j_{\ell} \neq j_{m} $ for $\ell \neq m$. Therefore we obtain that
\begin{align*}
\liminf_{n\rightarrow \infty} \|(u_n ,v_n)\|_{L^2 \times L^2}^2 \geq \sum_{\ell=1}^{\infty} \|(\phi^{\ell},\psi^{\ell})\|_{L^2 \times L^2 }^2 =\sum_{\ell=1}^{\infty} \|(\phi^{j_0} ,\psi^{j_0})\|_{L^2 \times L^2 }^2 =\infty.
\end{align*}
This  contradicts boundedness of $\{(u_n ,v_n)\}$. Therefore passing to a subsequence if necessary, we may assume that
\begin{align*}
g_{\kappa}(t_n^{j}|\lambda_n^j \xi_n^j|^2 , \lambda_n^j \xi_n^j , (\lambda_n^j)^{-1} x_n^j -2t_n^j \lambda_n^j \xi_n^j ,1) \rightarrow h_j \ in\ SOT\ for \ some\ h_j\in G_{\kappa}.
\end{align*}
Noting that 
\begin{align*}
g_{\kappa}(\theta_n^j ,\xi_n^j , \lambda_n^j ,x_n^j) U_{\kappa}(t_n^j) 
&=g_{\kappa}(\theta_n^j ,0,0 , \lambda_n^j) g_{\kappa}(0, \lambda_n^j \xi_n^j , (\lambda_n^j)^{-1}x_n^j ,1) U_{\kappa}(t_n^j)\\
&=g_{\kappa }(\theta_n^j , 0,0, \lambda_n^j) U_{\kappa}(t_n^j) g_{\kappa}(t_n^{j}|\lambda_n^j \xi_n^j|^2 , \lambda_n^j \xi_n^j , (\lambda_n^j)^{-1} x_n^j -2t_n^j \lambda_n^j \xi_n^j ,1),
\end{align*}
we may assume that scaling parameters and translation parameters are zero (after modifying remainder terms).\\
2. Next we prove that $W_n^j$ and $(\phi^j,\psi^j)$ are radially symmetric. Let $A\in SO(d)$. Then from the profile decomposition, we obtain that
\begin{align}\label{lemc:eq1}
\sum_{j=1}^{J} g_n^j U_{\kappa}(t_n^j) (\phi^j ,\psi^j) +W_n^J =\sum_{j=1}^J g_n^j U_{\kappa}(t_n^j) A(\phi^j ,\psi^j) +AW_n^J,
\end{align}
and so for $1\leq \ell \leq J$, we have
\begin{align*}
\sum_{j=1}^{J} [g_n^{\ell} U_{\kappa}(t_n^{\ell})]^{-1} g_n^j U_{\kappa}(t_n^j) (\phi^j ,\psi^j) +[g_n^{\ell} U_{\kappa}(t_n^{\ell})]^{-1} W_n^J
=\sum_{j=1}^J [g_n^{\ell} U_{\kappa}(t_n^{\ell})]^{-1} g_n^j U_{\kappa}(t_n^j) A(\phi^j ,\psi^j) +A[g_n^{\ell} U_{\kappa}(t_n^{\ell})]^{-1} W_n^J
\end{align*} 
Taking a weak limit in above equation, we establish that 
\begin{align*}
(\phi^{\ell} ,\psi^{\ell}) =A(\phi^{\ell} ,\psi^{\ell}).
\end{align*}
From the equation \eqref{lemc:eq1}, we also have $W_n^{J}=AW_n^J$.
\end{proof}


\section{Proof of lemmas in Section~\ref{sec3.5}}\label{AppendixC}

\begin{proof}[Proof of Lemma~\ref{lem:localconstancy}]
We may assume that $(0<)\,S_J(u,v)\le \e _1$ with $\e_1$ sufficiently small, by cutting $J$ into finite number of small intervals.
Take arbitrary $t_1,t_2\in J$.
By the almost periodicity, we have
\eq{locconst1}{\int _{|x-x(t_2)|\le \frac{R}{N(t_2)}}\Big( |u(t_2,x)|^2+|v(t_2,x)|^2\Big) \,dx \ge \frac{9}{10}M(u,v)}
for sufficiently large $R>0$ which is independent of $t_1,t_2$.
We divide the left hand side of the above inequality with respect to frequency.
Using the H\"older inequality and the Hausdorff-Young inequality,
\eqq{&\int _{|x-x(t_2)|\le \frac{R}{N(t_2)}}|P_{|\xi -\xi (t_1)|\le RN(t_1)}u(t_2,x)|^2\,dx\lec \Big( \frac{R}{N(t_2)}\Big) ^4\norm{P_{|\xi -\xi (t_1)|\le RN(t_1)}u(t_2)}{L^\I (\R^4)}^2\\
&\le \Big( \frac{R}{N(t_2)}\Big) ^4\norm{\hhat{u}(t_2)}{L^1(\shugo{|\xi -\xi (t_1)|\le 2RN(t_1)})}^2\le \Big( \frac{R}{N(t_2)}\Big) ^4\big( RN(t_1)\big) ^4 \norm{\hhat{u}(t_2)}{L^2(\R ^4)}^2.}
Together with a similar estimate for $v$ (replacing $\xi (t_1)$ with $2\xi (t_1)$), we have
\eq{locconst2}{&\int _{|x-x(t_2)|\le \frac{R}{N(t_2)}}\Big( |P_{|\xi -\xi (t_1)|\le RN(t_1)}u(t_2,x)|^2+|P_{|\xi -2\xi (t_1)|\le RN(t_1)}v(t_2,x)|^2\Big) \,dx\\
&\lec R^8 \Big( \frac{N(t_1)}{N(t_2)}\Big) ^4M(u,v).}
On the other hand, we use the equations (write $P_u:=P_{|\xi -\xi (t_1)|>RN(t_1)}$ and $P_v:=P_{|\xi -2\xi (t_1)|>RN(t_1)}$)
\eqq{&P_uu(t_2)=e^{i(t_2-t_1)\Delta}P_uu(t_1)-iP_u\int _{t_1}^{t_2}e^{i(t_2-t')\Delta}(\bbar{u(t')}v(t'))\,dt',\\
&P_vv(t_2)=e^{i(t_2-t_1)\kappa \Delta}P_vv(t_1)-iP_v\int _{t_1}^{t_2}e^{i(t_2-t')\kappa \Delta}(u(t')^2)\,dt',}
the Strichartz estimates, and the almost periodicity again to obtain
\eq{locconst3}{&\int _{\R ^4}\Big( |P_uu(t_2,x)|^2+|P_vv(t_2,x)|^2\Big) \,dx\le 2\int _{\R ^4}\Big( |P_uu(t_1,x)|^2+|P_vv(t_1,x)|^2\Big) \,dx+C\e _1^{4/3}\\
&\le \frac{1}{10}M(u,v)+C\e _1^{4/3}}
for $R$ sufficiently large.
From \eqref{locconst1}, \eqref{locconst2}, \eqref{locconst3}, and the fact that $M(u,v)>0$, we obtain the estimate
\eqq{\Big( \frac{N(t_1)}{N(t_2)}\Big) ^4\gec R^{-8}}
whenever $R\gg 1$ and $\e _1\ll 1$, which implies the claim.
\end{proof}

\begin{proof}[Proof of Lemma~\ref{lem:movement-xi}]
We may assume again that $(0<)\,S_J(u,v)\le \e _1$ with $\e_1$ sufficiently small.
Take $t_1,t_2\in J$ arbitrarily.
Using the almost periodicity we have
\eqq{&\norm{P_{|\xi -\xi (t_1)|\le RN(J)}u(t_1)}{L^2}^2+\norm{P_{|\xi -2\xi (t_1)|\le RN(J)}v(t_1)}{L^2}^2\ge \frac{9}{10}M(u,v),\\
&\norm{P_{|\xi -\xi (t_2)|\le RN(J)}u(t_2)}{L^2}^2+\norm{P_{|\xi -2\xi (t_2)|\le RN(J)}v(t_2)}{L^2}^2\ge \frac{9}{10}M(u,v)}
for sufficiently large $R>0$ independent of $t_1,t_2$, where $N(J):=\sup _{t\in J}N(t)$.
But from the Duhamel formula and the Strichartz estimates we see that
\eqq{&\norm{P_{|\xi -\xi (t_2)|\le RN(J)}u(t_2)}{L^2}+\norm{P_{|\xi -2\xi (t_2)|\le RN(J)}v(t_2)}{L^2}\\
&\le \norm{P_{|\xi -\xi (t_2)|\le RN(J)}u(t_1)}{L^2}+\norm{P_{|\xi -2\xi (t_2)|\le RN(J)}v(t_1)}{L^2}+C\e _1^{2/3},}
hence for sufficiently small $\e _1>0$,
\eqq{\norm{P_{|\xi -\xi (t_2)|\le RN(J)}u(t_1)}{L^2}^2+\norm{P_{|\xi -2\xi (t_2)|\le RN(J)}v(t_1)}{L^2}^2\ge \frac{4}{5}M(u,v).}

Now assume that $|\xi (t_1)-\xi (t_2)|>10RN(J)$, which would imply that $\shugo{|\xi -\xi (t_1)|\le 2RN(J)}\cap \shugo{|\xi -\xi (t_2)|\le 2RN(J)}=\emptyset$ and similarly for $2\xi (\cdot )$, then we had a contradiction as follows,
\eqq{M(u,v)\ge &\norm{P_{|\xi -\xi (t_1)|\le RN(J)}u(t_1)}{L^2}^2+\norm{P_{|\xi -2\xi (t_1)|\le RN(J)}v(t_1)}{L^2}^2\\
&+\norm{P_{|\xi -\xi (t_2)|\le RN(J)}u(t_1)}{L^2}^2+\norm{P_{|\xi -2\xi (t_2)|\le RN(J)}v(t_1)}{L^2}^2\\
\ge &\frac{17}{10}M(u,v).}
Therefore, we have $|\xi (t_1)-\xi (t_2)|\le 10RN(J)$.
\end{proof}

\begin{proof}[Proof of Lemma~\ref{lem:tightness-Str}]
By the Duhamel formula and the Strichartz estimates, we have
\eqq{\norm{u}{L^2_tL^4_x(J\times \R^4)}+\norm{v}{L^2_tL^4_x(J\times \R^4)}\lec _{u,v}1+S_J(u,v)^{2/3}.}
The desired estimate follows from an interpolation between the above and \eqref{tightness} if $S_J(u,v)\le 1$.
When $S_J(u,v)>1$, we first divide $J$ into $O(S_J(u,v))$ subintervals $\shugo{J_k}$ so that $S_{J_k}(u,v)\sim 1$ on each $J_k$, and sum up the obtained estimates on $J_k$.
\end{proof}

\begin{proof}[Proof of Lemma~\ref{lem:Strichartz-SJ}]
The almost periodicity with the non-zero assumption gives an $R=R(u,v)>0$ satisfying
\eqq{\inf _{t\in J}\int _{|x-x(t)|\le \frac{R}{N(t)}}\big( |u(t,x)|^2+|v(t,x)|^2\big) \,dx\gec _{u,v}1.}
Applying the H\"older inequality to the left hand side, we have
\eqq{1\lec _{u,v}\Big( \frac{R}{N(t)}\Big) ^{4/3}\Big( \int _{\R^4}\big( |u(t,x)|^3+|v(t,x)|^3\big) \,dx\Big) ^{2/3}}
for any $t\in J$.
The first inequality in \eqref{Strichartz-SJ} then follows after an integration in $t$.

For the second inequality in \eqref{Strichartz-SJ}, we may focus on the case $S_J(u,v)>1$.
Applying Lemma~\ref{lem:tightness-Str} with $\eta =\frac{S_J(u,v)/100}{1+S_J(u,v)}$ ($\sim 1$), we see that there exists $R=R(u,v)>0$ satisfying
\eqq{S_J(u,v)\lec \norm{P_{|\xi -\xi (t)|\le RN(t)}u}{L^3(J\times \R^4 )}^3+\norm{P_{|\xi -2\xi (t)|\le RN(t)}v}{L^3(J\times \R^4 )}^3,}
which is, via the Hausdorff-Young inequality followed by the H\"older, bounded by
\eqq{\int _J\Big( \big( RN(t)\big) ^{2/3}\big( \norm{u(t)}{L^2}+\norm{v(t)}{L^2}\big) \Big) ^3\,dt\lec _{u,v}\int _JN(t)^2\,dt,}
as desired.
\end{proof}

\begin{acknowledgement}
The first author was partially supported by Grant-in-Aid for Early-Career Scientists 18K13444. The second author was supported in part by Grant-in-Aid for Young Scientists (B) 24740086 and 16K17626. 
\end{acknowledgement}

\providecommand{\bysame}{\leavevmode\hbox to3em{\hrulefill}\thinspace}
\providecommand{\MR}{\relax\ifhmode\unskip\space\fi MR }
\providecommand{\MRhref}[2]{%
  \href{http://www.ams.org/mathscinet-getitem?mr=#1}{#2}
}
\providecommand{\href}[2]{#2}

\end{document}